\newtheorem{theorem}{Theorem}[section]
\newtheorem{proposition}[theorem]{Proposition}
\newtheorem{lemma}[theorem]{Lemma}
\newtheorem{corollary}[theorem]{Corollary}
\newtheorem{remark}[theorem]{Remark}
\newcommand\R{\mathbb{R}}
\newcommand\Z{\mathbb{Z}}
\newcommand\C{\mathbb{C}}
\renewcommand\Re{{\operatorname{Re\,}}}
\renewcommand\Im{{\operatorname{Im\,}}}
\newcommand\Log{{\operatorname{Log}}}
\newcommand\eps{\varepsilon}
\begin{document}
\title[Upper bound for de Bruijn-Newman constant]{Effective approximation of heat flow evolution of the Riemann $\xi$ function, and a new upper bound for the de Bruijn-Newman constant}

\author{D.H.J. Polymath}
\address{\tt{http://michaelnielsen.org/polymath1/index.php}}

\begin{abstract}
For each $t \in \R$, define the entire function
$$ H_t(z) \coloneqq \int_0^\infty e^{tu^2} \Phi(u) \cos(zu)\ du$$
where $\Phi$ is the super-exponentially decaying function
$$ \Phi(u) \coloneqq \sum_{n=1}^\infty (2\pi^2  n^4 e^{9u} - 3\pi n^2 e^{5u} ) \exp(-\pi n^2 e^{4u} ).$$
This is essentially the heat flow evolution of the Riemann $\xi$ function.
From the work of de Bruijn and Newman, there exists a finite constant $\Lambda$ (the \emph{de Bruijn-Newman constant}) such that the zeroes of $H_t$ are all real precisely when $t \geq \Lambda$.  The Riemann hypothesis is equivalent to the assertion $\Lambda \leq 0$; recently, Rodgers and Tao established the matching lower bound $\Lambda \geq 0$.  Ki, Kim and Lee established the upper bound $\Lambda < \frac{1}{2}$.

In this paper we establish several effective estimates on $H_t(x+iy)$ for $t \geq 0$, including some that are accurate for small or medium values of $x$.  By combining these estimates with numerical computations, we are able to obtain a new upper bound $\Lambda \leq 0.22$ unconditionally, as well as improvements conditional on further numerical verification of the Riemann hypothesis.  We also obtain some new estimates controlling the asymptotic behavior of zeroes of $H_t(x+iy)$ as $x \to \infty$.
\end{abstract}

\maketitle

\section{Introduction}

Let $H_0 \colon \C \to \C$ denote the function
\begin{equation}\label{hoz}
 H_0(z) \coloneqq \frac{1}{8} \xi\left(\frac{1}{2} + \frac{iz}{2}\right),
\end{equation}
where $\xi \colon \C \to \C$ denotes the Riemann $\xi$ function
\begin{equation}\label{sas}
 \xi(s) \coloneqq \frac{s(s-1)}{2} \pi^{-s/2} \Gamma\left(\frac{s}{2}\right) \zeta(s)
\end{equation}
(which is an entire function after removing all singularities) and $\zeta$ is the Riemann $\zeta$ function.
Then $H_0$ is an entire even function with functional equation $H_0(\overline{z}) = \overline{H_0(z)}$, and the Riemann hypothesis (RH) is equivalent to the assertion that all the zeroes of $H_0$ are real.

It is a classical fact (see \cite[p. 255]{titch}) that $H_0$ has the Fourier representation
$$ H_0(z) = \int_0^\infty \Phi(u) \cos(zu)\ du$$
where $\Phi$ is the super-exponentially decaying function
\begin{equation}\label{phidef}
 \Phi(u) \coloneqq \sum_{n=1}^\infty (2\pi^2  n^4 e^{9u} - 3\pi n^2 e^{5u} ) \exp(-\pi n^2 e^{4u} ).
\end{equation}
The sum defining $\Phi(u)$ converges absolutely for negative $u$ also.  From Poisson summation one can verify that $\Phi$ satisfies the functional equation $\Phi(u) = \Phi(-u)$ (i.e., $\Phi$ is even); this fact is of course closely related to the functional equation for $\zeta$. 

De Bruijn \cite{debr} introduced (with somewhat different notation) the more general family of functions $H_t \colon \C \to \C$ for $t \in \R$, defined by the formula
\begin{equation}\label{htdef}
 H_t(z) \coloneqq \int_0^\infty e^{tu^2} \Phi(u) \cos(zu)\ du.
\end{equation}
As noted in \cite[p.114]{csv}, one can view $H_t$ as the evolution of $H_0$ under the backwards heat equation $\partial_t H_t(z)= -\partial_{zz} H_t(z)$.
As with $H_0$, each of the $H_t$ are entire even functions with functional equation $H_t(\overline{z}) = \overline{H_t(z)}$; from the super-exponential decay of $e^{tu^2} \Phi(u)$ we see that the $H_t$ are in fact entire of order $1$.  It follows from the work of P\'olya \cite{polya} that if $H_t$ has purely real zeroes for some $t$, then $H_{t'}$ has purely real zeroes for all $t'>t$; de Bruijn showed that the zeroes of $H_t$ are purely real for $t \geq 1/2$.  Newman \cite{newman} strengthened this result by showing that there is an absolute constant $-\infty < \Lambda \leq 1/2$, now known as the \emph{De Bruijn-Newman constant}, with the property that $H_t$ has purely real zeroes if and only if $t \geq \Lambda$.  The Riemann hypothesis is then clearly equivalent to the upper bound $\Lambda \leq 0$.  Recently in \cite{brad} the complementary bound $\Lambda \geq 0$ was established, answering a conjecture of Newman \cite{newman}, and improving upon several previous lower bounds for $\Lambda$ \cite{cnv,nrv,crv,cosv,odlyzko,saouter}.  Furthermore, Ki, Kim, and Lee \cite{kkl} sharpened the upper bound $\Lambda \leq 1/2$ of de Bruijn \cite{debr} slightly to $\Lambda < 1/2$.  

In this paper we improve the upper bound:

\begin{theorem}[New upper bound]\label{new-upper}  We have $\Lambda \leq 0.22$.
\end{theorem}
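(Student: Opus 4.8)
The plan is to combine the classical criterion of P\'olya and de Bruijn with explicit, effective estimates on $H_t(x+iy)$ for $t \geq 0$ in order to show that $H_{0.2}$ — or more precisely $H_t$ for some value of $t$ between $0.2$ and $0.22$ — has only real zeroes. By the cited work of P\'olya \cite{polya} and Newman \cite{newman}, once we know that $H_{t_0}$ has purely real zeroes for a single $t_0 \in (0,1/2)$, we immediately conclude $\Lambda \leq t_0$. So the entire burden is to verify the reality of zeroes of a single function in the family, and we are free to pick $t_0$ as large as $0.22$ (leaving a little room below de Bruijn's $1/2$ where nothing subtle happens, but aiming well below it to get an interesting theorem).

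The strategy for proving that all zeroes of $H_{t_0}(x+iy)$ are real splits according to the size of $x$. \textbf{Region of large $x$.} For $|x|$ large (say $x \geq X$ for an effective threshold $X$), the plan is to establish an asymptotic expansion for $H_t(x+iy)$ — essentially a main term coming from the leading behavior of $\Phi$ (an exponential-type factor times a gamma-type factor) plus a rigorously controlled error term. From such an expansion one reads off that $H_t(x+iy)$ cannot vanish when $y \neq 0$ and $x$ is large: the real-axis behavior dominates, much as in the classical proof that $\xi$ has no zeroes far to the right in the critical strip after heat flow. This also yields the promised asymptotic control of zeroes as $x \to \infty$. \textbf{Region of small and medium $x$.} For $x$ in a bounded range $[-X, X]$, the asymptotics are not sharp enough, so here the plan is to use the effective estimates on $H_t(x+iy)$ that are accurate for small or medium $x$ (advertised in the abstract) to reduce the reality claim to a \emph{finite} numerical computation: cover $[-X,X]$ by a finite mesh, bound $H_{t_0}$ and its derivatives on each mesh cell, and use a winding-number or sign-change argument (an effective Rouch\'e-type count, or verification that $\frac{H_{t_0}'}{H_{t_0}}$ has the right behavior on a contour) to certify that the number of zeroes in a box equals the number of real zeroes in that box. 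The evenness and the functional equation $H_{t_0}(\overline z) = \overline{H_{t_0}(z)}$ halve the work and pin complex zeroes into conjugate pairs off the real axis, which the mesh bounds then exclude.

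The key steps, in order, would be: (i) fix $t_0 \leq 0.22$ and set up the integral representation \eqref{htdef} with $e^{t_0 u^2}\Phi(u)$, recording its super-exponential decay explicitly; (ii) derive the large-$x$ asymptotic expansion of $H_{t_0}(x+iy)$ with fully effective error bounds, and deduce a lower bound $|H_{t_0}(x+iy)| > 0$ for $|x| \geq X$, $y \neq 0$ in the relevant strip $|y| < 1$ (the region where complex zeroes could a priori lurk after heat flow — de Bruijn's argument already confines zeroes to a horizontal strip whose width shrinks as $t$ grows); (iii) derive the medium-$x$ effective estimates and the derivative bounds needed for the numerical step; (iv) carry out (or invoke) the numerical verification over the finite region $|x| \leq X$ to certify no complex zeroes there; (v) assemble: $H_{t_0}$ has only real zeroes, hence $\Lambda \leq t_0 \leq 0.22$.

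The main obstacle, I expect, is step (ii) together with making step (iv) actually finite and rigorous. The difficulty in (ii) is that the naive asymptotics of $H_t$ degrade badly as $t \to 1/2$ and as $y$ approaches the edge of the zero-free strip, so one needs estimates whose error terms are uniform and genuinely effective — not just $O(\cdot)$ statements but inequalities with explicit constants — over the whole relevant range of $(x,y,t)$; controlling the tail of the sum over $n$ in $\Phi$ and the stationary-phase contribution simultaneously is delicate. The difficulty in (iv) is that one must choose $X$ and the mesh so that the a priori bounds from (iii) dominate the discretization error on every cell, and so that the number of cells is computationally feasible; shrinking $X$ helps the computation but requires the large-$x$ regime in (ii) to kick in sooner, which stresses the effectivity there. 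Balancing these two competing demands — pushing the effective asymptotics down to as small an $X$ as possible while keeping the numerics tractable — is where the real work lies, and it is presumably why the bound comes out at $0.22$ rather than all the way down to $0$.
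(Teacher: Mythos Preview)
Your outline has a genuine gap that the paper's approach is specifically designed to avoid. The issue is the feasibility of step (iv). For the large-$x$ asymptotics in step (ii) to exclude complex zeroes of $H_{t_0}$ at $t_0 \approx 0.2$, one needs $X$ of order at least $6\times 10^{10}$ (and to exclude \emph{all} complex zeroes, as opposed to just those with $|y|\geq y_0>0$, one would need $X$ considerably larger still; cf.\ Theorem~\ref{Zero}, where the asymptotic control of zeroes kicks in only for $x\geq\exp(C/t)$). A direct winding-number or mesh verification of $H_{t_0}$ over the box $|x|\leq X$, $|y|\leq 1$ with $X$ of this size is not computationally feasible: the function $H_{t_0}(x+iy)$ oscillates on a scale $\sim 1/\log x$, so the mesh would need on the order of $10^{11}$ points, each requiring a sum of length $\sim\sqrt{x/4\pi}\approx 7\times 10^4$. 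This is several orders of magnitude beyond what can be done.

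The paper circumvents this with two ideas you are missing. First, instead of proving that \emph{all} zeroes of $H_{t_0}$ are real, it only shows there are no zeroes with $|y|\geq y_0=0.2$; by de Bruijn's Theorem~\ref{debr-bound} this still yields $\Lambda\leq t_0+\tfrac12 y_0^2=0.22$. Second, and more importantly, for the region $|x|\leq X$ the paper does not compute $H_{t_0}$ at all. Instead it uses Platt's existing numerical verification of RH up to height $3\times 10^{10}$ to certify that at time $t=0$ there are no zeroes with $|y|\geq y_0$ in this region, and then analyzes the \emph{dynamics} of zeroes under the backward heat flow (Proposition~\ref{dynam}) to show that no such zero can appear there for $0\leq t\leq t_0$. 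The only thing that could go wrong is a zero migrating in from the right, and this is blocked by verifying non-vanishing on a narrow ``barrier'' strip $X\leq x\leq X+1$ for all $t\in[0,t_0]$ --- a computation requiring only $\sim 10^6$ evaluations rather than $\sim 10^{11}$. This barrier-plus-dynamics reduction (Theorem~\ref{ubc-0}) is the heart of the argument, and without it the numerical step cannot be made finite in practice.
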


The proof of Theorem \ref{new-upper} combines numerical verification with some new asymptotics and observations about the $H_t$ which may be of independent interest.  Firstly, by analyzing the dynamics of the zeroes of $H_t$, we establish in Section \ref{dynamics-sec} the following criterion for obtaining upper bounds on $\Lambda$:

\begin{theorem}[Upper bound criterion]\label{ubc-0}  Suppose that $t_0, X > 0$ and $0 < y_0 \leq 1$ obey the following hypotheses:
\begin{itemize}
\item[(i)]  (Numerical verification of RH at initial time $0$) There are no zeroes $\zeta(\sigma+iT) = 0$ with $\frac{1+y_0}{2} \leq \sigma \leq 1$ and $0 \leq T \leq \frac{X}{2}$.
\item[(ii)]  (Asymptotic zero-free region at final time $t_0$) There are no zeroes $H_{t_0}(x+iy)=0$ with $x \geq X+\sqrt{1-y_0^2}$ and $y_0 \leq y \leq \sqrt{1-2t_0}$.
\item[(iii)]  (Barrier at intermediate times) There are no zeroes $H_{t}(x+iy)=0$ with $X \leq x \leq X+\sqrt{1-y_0^2}$, $\sqrt{y_0^2+2(t_0-t)} \leq y \leq \sqrt{1-2t}$, and $0 \leq t \leq t_0$.
\end{itemize}
Then $\Lambda \leq t_0 + \frac{1}{2} y_0^2$.
\end{theorem}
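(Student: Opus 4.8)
The plan is to deduce Theorem \ref{ubc-0} from the Pólya-type monotonicity principle by tracking the zeroes of $H_t$ in the region $x \geq X$ as $t$ increases from $0$ to $t_0$, and then controlling what happens for $t \geq t_0$ separately. The key conceptual point is that if all zeroes of $H_{t_0}$ with $x \geq X + \sqrt{1-y_0^2}$ lie in the strip $|y| \leq \sqrt{1-2t_0}$ (this is what hypothesis (ii) gives, together with the functional equation $H_{t_0}(\bar z) = \overline{H_{t_0}(z)}$ which makes the zero set symmetric about the real axis), and if moreover these zeroes are in fact confined to an even thinner strip for larger $x$, then the backwards heat evolution — which under a suitable change of variables becomes a forwards heat-type flow on the zeroes — will push all zeroes onto the real line once $t$ reaches $t_0 + \tfrac12 y_0^2$. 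The quantity $\sqrt{1-2t}$ appearing throughout is exactly the width of the known zero-free strip: since $H_t$ has order $1$ with super-exponential Fourier data, a zero at height $y$ gives, via the standard de Bruijn argument, a bound of the form $y \leq \sqrt{(1-2t)_+}$ coming from the Gaussian weight $e^{tu^2}$, and this is where the constants $1 - 2t$ and the final answer $t_0 + \tfrac12 y_0^2$ originate.

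The steps I would carry out, in order: First, set up the dynamics — write down precisely how the zeroes $z_j(t) = x_j(t) + i y_j(t)$ of $H_t$ move as $t$ varies, using the backwards heat equation $\partial_t H_t = -\partial_{zz} H_t$; this yields an ODE system (a ``Calogero-type'' or ``Dyson Brownian motion''-like flow) in which pairs of complex-conjugate zeroes attract the real axis. Second, establish a barrier argument: the three hypotheses are designed so that no zero can cross the moving boundary curve $\{x = X,\ \sqrt{y_0^2 + 2(t_0 - t)} \leq y \leq \sqrt{1-2t}\}$ during $0 \leq t \leq t_0$ — hypothesis (iii) directly forbids zeroes on the vertical segment, hypothesis (ii) controls the configuration at the final time to the right of the barrier, and hypothesis (i), translated from $\zeta$ to $H_0$ via \eqref{hoz}--\eqref{sas}, controls the configuration at the initial time. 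Third, I would argue that the barrier, once established, confines the ``bad'' (non-real) zeroes to the region $x \leq X$ throughout $[0, t_0]$, or alternatively that it forces all zeroes with $x \geq X$ to already be real by time $t_0$; then for $t \geq t_0$ one runs the standard de Bruijn/Pólya argument on the remaining bounded region $x \leq X$, where the finitely many zeroes in a compact set can be shown to become real by time $t_0 + \tfrac12 y_0^2$ using the explicit strip width $\sqrt{1-2t}$ together with the head-start encoded in $y_0$. Finally, invoke Newman's theorem that the set of $t$ with $H_t$ having only real zeroes is a closed half-line $[\Lambda, \infty)$, so that $H_{t_0 + \frac12 y_0^2}$ having only real zeroes yields $\Lambda \leq t_0 + \tfrac12 y_0^2$.

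The main obstacle I anticipate is making the barrier/confinement argument rigorous and global: one must rule out the possibility that a zero sneaks across the line $x = X$ not through the explicitly-forbidden segment but by first moving far to the right (large $x$) at some intermediate time and then coming back, or by the real-axis zeroes interacting with the complex ones near the endpoints $y = \sqrt{1-2t}$ and $y = \sqrt{y_0^2 + 2(t_0-t)}$ of the barrier. Handling this requires a quantitative zero-counting / argument-principle estimate — controlling $\frac{1}{2\pi i}\oint \frac{H_t'}{H_t}\,dz$ around the relevant region uniformly in $t$ — together with good a priori asymptotics for $H_t(x+iy)$ as $x \to \infty$ (the ``asymptotic zero-free region'' alluded to in hypothesis (ii) and promised later in the paper), so that no zeroes can escape to or return from infinity. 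The choice of the two boundary curves $\sqrt{1-2t}$ and $\sqrt{y_0^2 + 2(t_0-t)}$ is precisely engineered so that both are preserved under the heat flow on zeroes (the first because it is the natural zero-free strip, the second because $y^2 - 2(t_0 - t)$ is the conserved quantity that degenerates to $y_0^2$ at $t = t_0$ and to $y_0^2 + 2t_0$ at $t = 0$); verifying this compatibility, i.e. that a zero touching either curve would be pushed back inside, is the technical heart of the argument. The remaining ingredients — the translation of hypothesis (i) from $\zeta$-zeroes to $H_0$-zeroes via the substitution $s = \tfrac12 + \tfrac{iz}{2}$ (so $\sigma = \tfrac12 + \tfrac{y}{2}$, $T = \tfrac{x}{2}$, explaining the ranges $\tfrac{1+y_0}{2} \leq \sigma \leq 1$ and $0 \leq T \leq \tfrac{X}{2}$, with the geometric shifts $\sqrt{1-y_0^2}$ and $\sqrt{1-2t_0}$ accounting for the curvature of the boundary) — are bookkeeping that I would carry out carefully but expect to be routine.
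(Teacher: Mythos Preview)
Your high-level architecture is correct and matches the paper: zero dynamics via the backwards heat equation, a barrier argument over $0 \le t \le t_0$, then de Bruijn's theorem (Theorem~\ref{debr-bound}) to conclude $\Lambda \le t_0 + \tfrac12 y_0^2$. The translation of hypothesis (i) via $s = \tfrac12 + \tfrac{iz}{2}$ is also right.

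However, you have the direction of the confinement argument backwards, and this leads you to anticipate the wrong obstacles. The paper does \emph{not} track zeroes in the region $x \ge X$ or worry about zeroes escaping to infinity and returning. Instead it shows directly that the region $\{0 \le x \le X,\ y \ge Y(t)\}$, with $Y(t) = \sqrt{y_0^2 + 2(t_0 - t)}$, stays zero-free for all $0 \le t \le t_0$. Hypothesis (i) gives this at $t=0$; one then takes the \emph{minimal} time $t_1$ at which it fails and argues that a zero must lie on the boundary of this region at time $t_1$. The right edge $x = X$ is excluded by (iii), the left edge $x=0$ by positivity of $H_t$ on the imaginary axis, so the zero lies on the bottom edge $y = Y(t_1)$. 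No argument-principle or zero-counting estimate is used.

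The actual technical heart, which your proposal does not identify, is a pointwise derivative inequality at that boundary zero: one must show $\Im \tfrac{d}{dt} z_j(t_1) < \tfrac{d}{dt} Y(t_1) = -1/Y(t_1)$, so that for $t$ slightly less than $t_1$ the zero was already inside the forbidden region, contradicting minimality. Using the explicit formula $\tfrac{d}{dt} z_j = 2\sum_{k\ne j}' (z_j - z_k)^{-1}$ (Proposition~\ref{dynam}), the contribution of the conjugate zero $x - iY(t_1)$ already gives exactly $-1/Y(t_1)$, and every zero with $|y_k| \le Y(t_1)$ contributes nonpositively. The only potentially dangerous terms come from zeroes $x_k + iy_k$ with $y_k > Y(t_1)$; pairing each with its conjugate, the sign of the combined contribution reduces to the inequality $y_k^2 \le (x - x_k)^2 + Y(t_1)^2$. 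This is where the geometry of the barrier enters: by (iii) any such zero has $|x_k| \ge X + \sqrt{1 - Y(t_1)^2}$, hence $(x - x_k)^2 \ge 1 - Y(t_1)^2$, while de Bruijn's strip bound gives $y_k < 1$. This single inequality is the reason for the precise width $\sqrt{1 - y_0^2}$ of the barrier, and it is the step you should be aiming for rather than a global winding-number argument. (The case of a repeated zero at height $Y(t_1)$ is handled separately via the local Hermite-polynomial expansion in Proposition~\ref{dynam}(ii).)
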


Informally, hypothesis (i) implies that at time $t=0$, there are no zeroes $H_t(x+iy)=0$ with large values of $y$ to the left of the barrier region in (iii).  The absence of zeroes in that barrier, together with a continuity argument and an analysis of the time derivative of each zero, can then be used to show that for later times $0 < t \leq t_0$, there continue to be no zeroes $H_t(x+iy)=0$ with large values of $y$ to the left of the barrier; see Figure \ref{barrier-fig}.  Hypothesis (ii) then gives the complementary assertion to the right of the barrier, and one can use an existing theorem of de Bruijn (Theorem \ref{debr-bound}) to conclude.

In practice, we have found it convenient numerically to replace the barrier region in Theorem \ref{ubc-0} with the larger and simpler region
$$ X \leq x \leq X+1; \quad y_0 \leq y \leq 1; \quad 0 \leq t \leq t_0.$$

\begin{figure}[ht!]
  \includegraphics[width=0.9\linewidth]{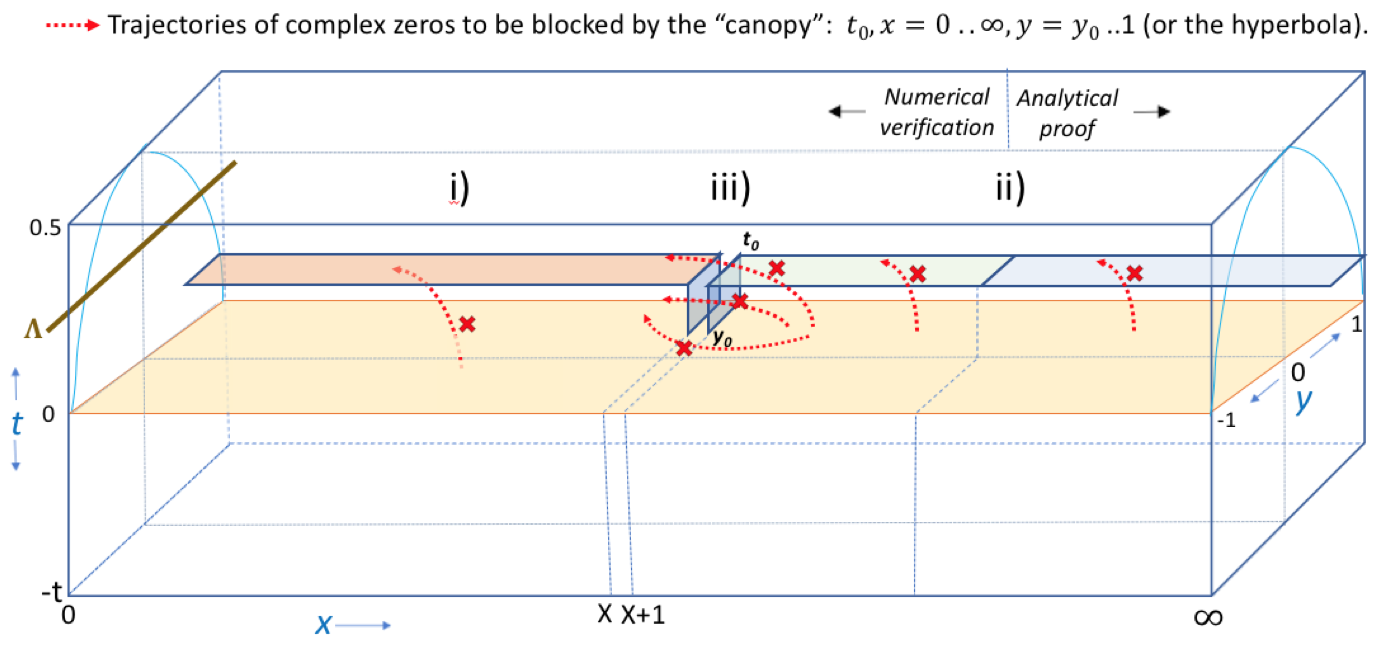}
  \caption{A visualization of Theorem \ref{ubc-0}.  If at time $t_0$ one can show that there are no zeroes $H_t(x+iy)=0$ in the ``canopy'' $0 \leq x <\infty$, $y_0 \leq y \leq 1$, then a theorem of de Bruijn allows one to conclude the desired bound $\Lambda \leq t_0 + \frac{1}{2} y_0^2$.  The hypothesis (i) prevents zeroes hitting this canopy from an initial position to the left of the barrier; the hypothesis (ii) prevents zeroes from lying in the canopy to the right of the barrier; and the hypothesis (iii) prevents zeroes from starting to the right of the barrier and reaching the canopy to the left of the barrier.}
\label{barrier-fig}
\end{figure}

We will obtain Theorem \ref{new-upper} by applying Theorem \ref{ubc-0} with the specific numerical choices $t_0 = 0.2$, $X = 6 \times 10^{10} + 83952 - 0.5$, and $y_0 = 0.2$.  The reason we choose $X$ close to $6 \times 10^{10}$ is that this is near the limit of known numerical verifications of the Riemann hypothesis such as \cite{platt}, which we need for the hypothesis (i) of the above theorem; the shift $83952 - 0.5$ is in place to make the partial Euler product $\prod_{p \leq 11} \left(1 - \frac{1}{p^{\frac{1-iX}{2}}}\right)^{-1}$ large, which helps in keeping the functions $H_t(x+iy), H_{t_0}(x+iy)$ large in magnitude, which in turn is helpful for numerical verifications of (ii) and (iii); see also Figure \ref{euler}.  The choices $t_0=0.2, y_0=0.2$ are then close to the limit of our ability to numerically verify hypothesis (ii) for this choice of $X$.  (The hypothesis (iii) is also verified numerically, but can be done quite quickly compared to (ii), and so does not present the main bottleneck to further improvements to Theorem \ref{new-upper}.)  Further upper bounds to $\Lambda$ can be obtained if one assumes the Riemann hypothesis to hold up to larger heights than that in \cite{platt}: see Section \ref{further-sec}.

To verify (ii) and (iii), we need efficient approximations (of Riemann-Siegel type) for $H_t(x+iy)$ in the regime where $t,y$ are bounded and $x$ is large.  For sake of numerically explicit constants, we will focus attention on the region
\begin{equation}\label{region}
0 < t \leq \frac{1}{2}; \quad 0 \leq y \leq 1; \quad x \geq 200,
\end{equation}
though the results here would also hold (with different explicit constants) if the numerical quantities $\frac{1}{2}, 1, 200$ were replaced by other quantities.

A key difficulty here is that $H_t(x+iy)$ decays exponentially fast in $x$ (basically because of the Gamma factor in \eqref{sas}); see Figure \ref{loghtandlogbt}.  This means that any direct attempt to numerically establish a zero-free region for $H_t(x+iy)$ for large $x$ would require enormous amounts of numerical precision.  To get around this, we will first renormalise the function $H_t(x+iy)$ by dividing it by a nowhere vanishing explicit function $B_t(x+iy)$ (basically a variant of the aforementioned Gamma factor) that removes this decay.  To describe this function, we first introduce the function $M_0: \C \backslash (-\infty,1] \to \C \backslash \{0\}$ defined by the formula
\begin{equation}\label{M-def}
 M_0(s) \coloneqq \frac{1}{8} \frac{s(s-1)}{2} \pi^{-s/2} \sqrt{2\pi} \exp\left( \left(\frac{s}{2}-\frac{1}{2}\right)\Log \frac{s}{2} - \frac{s}{2} \right),
\end{equation}
where $\Log$ denotes the standard branch of the complex logarithm, with branch cut at the negative axis and imaginary part in $(-\pi,\pi]$. One may interpret $M_0(s)$ as the Stirling approximation to the factor $\frac{1}{8} \frac{s(s-1)}{2} \pi^{-s/2} \Gamma\left(\frac{s}{2}\right)$ appearing in \eqref{hoz}, \eqref{sas}; it decays exponentially as one moves to infinity $s \to \pm i \infty$ along the critical strip.  We may form a holomorphic branch $\log M_0: \C \backslash (-\infty,1] \to \C$ of the logarithm of $M_0$ by the formula
\begin{equation}\label{logM}
 \log M_0(s) \coloneqq \Log s + \Log(s-1) - \frac{s}{2} \log \pi + \log \frac{\sqrt{2\pi}}{16} + 
 \left(\frac{s}{2}-\frac{1}{2}\right)\Log \frac{s}{2} - \frac{s}{2};
\end{equation}
differentiating this, we see that the logarithmic derivative $\alpha: \C \backslash (-\infty,1] \to \C$ of this function, defined by
\begin{equation}\label{alpha-def}
\alpha \coloneqq (\log M_0)' = \frac{M'_0}{M_0}
\end{equation}
is given explicitly by the formula
\begin{equation}\label{alpha-form}
\begin{split}
 \alpha(s) &= \frac{1}{s} + \frac{1}{s-1} - \frac{1}{2} \log \pi + \frac{1}{2} \Log \frac{s}{2} - \frac{1}{2s} \\
&= \frac{1}{2s} + \frac{1}{s-1} + \frac{1}{2} \Log \frac{s}{2\pi}.
\end{split}
\end{equation}
For any time $t \in \R$, we then define the deformation $M_t: \C \backslash (-\infty,1]$ of $M_0$ by the formula
\begin{equation}\label{Mt-def}
M_t(s) \coloneqq \exp\left( \frac{t}{4} \alpha(s)^2 \right) M_0(s)
\end{equation}
for any $t \geq 0$.
In the region \eqref{region}, we introduce the quantity
\begin{equation}\label{bo-def} 
B_t(x+iy) \coloneqq M_t\left(\frac{1+y-ix}{2}\right).
\end{equation} 
For fixed $t \geq 0$ and $y>0$, $B_t(x+iy)$ is non-vanishing, and it is easy to verify the asymptotic $|B_t(x+iy)| = e^{-(\frac{\pi}{8} + o(1)) x}$.  As it turns out, $B_t(x+iy)$ is an asymptotic approximation to $H_t(x+iy)$ in the region \eqref{region}, in the sense that
\begin{equation}\label{limx}
 \lim_{x \to \infty} \frac{H_t(x+iy)}{B_t(x+iy)} = 1
\end{equation}
for any fixed $t>0$ and $y>0$; see Figure \ref{ht-bt}.  (However, the convergence of \eqref{limx} is \emph{not} uniform as $t$ approaches zero.)

\begin{figure}[ht!]
  \includegraphics[width=1.0\linewidth]{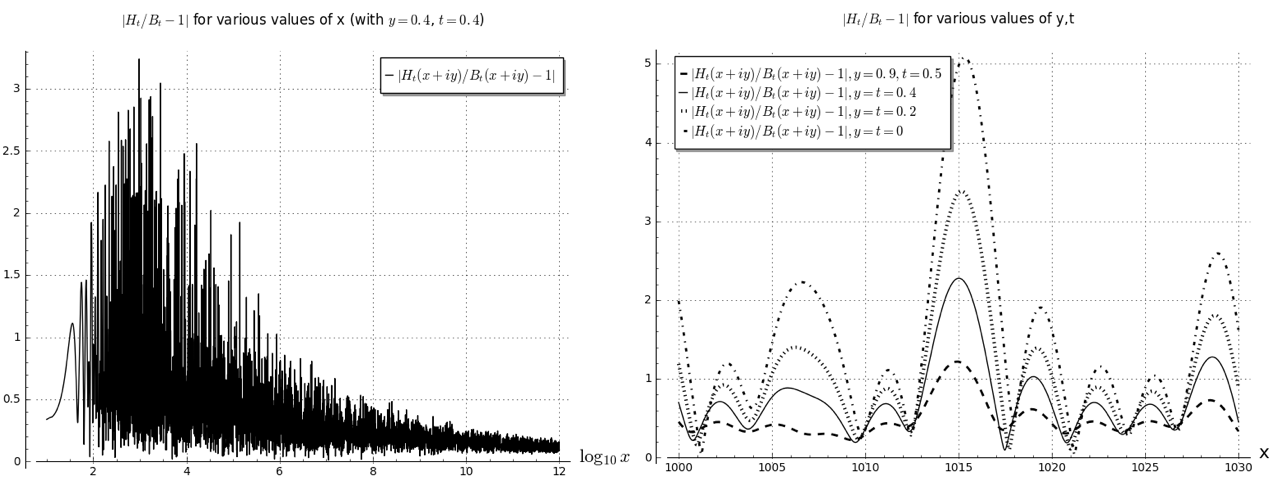}
  \caption{The quantity $|H_t(x+iy)/B_t(x+iy) - 1|$ when $y=t=0.4$ and $\log_{10} x \leq 12$ (left) and when for $1000 \leq x \leq 1030$ and various choices of $(y,t)$ (right).}
\label{ht-bt}
\end{figure}

\begin{figure}[ht!]
  \includegraphics[width=0.8\linewidth]{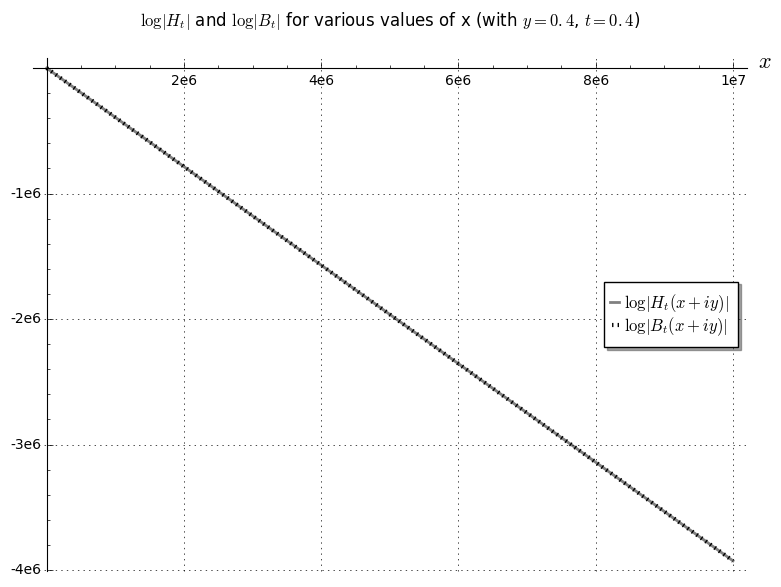}
  \caption{The quantities $\log|H_t(x+iy)|$ and $\log|B_t(x+iy)|$ when $y=t=0.4$ and $\log_{10} x \leq 7$.   Both quantities decay like $- \frac{\pi}{8} x \approx -0.393 x$.}
\label{loghtandlogbt}
\end{figure}

In fact we have the following significantly more accurate approximation (of Riemann-Siegel type) with effective error estimates.
For any real number $X$, let $O_{\leq}(X)$ denote a quantity that is bounded in magnitude by $X$. We also use $x_+ = \max(x,0)$ to denote the positive part of a real number $x$.

\begin{theorem}[Effective Riemann-Siegel approximation to $H_t(x+iy)$]\label{eff}  Let $t,x,y$ lie in the region \eqref{region}.  Then we have
\begin{equation}\label{ratio-form-eff}
\frac{H_t(x+iy)}{B_t(x+iy)} = f_t(x+iy) + O_{\leq}\left( e_A + e_B + e_{C,0} \right)
\end{equation}
where
\begin{align}
f_t(x+iy) &\coloneqq \sum_{n=1}^N \frac{b_n^t}{n^{s_*}} + \gamma \sum_{n=1}^N n^y \frac{b_n^t}{n^{\overline{s_*} + \kappa}}\label{ft-def} \\
b_n^t &\coloneqq \exp( \frac{t}{4} \log^2 n ) \label{bn-def}\\
\gamma = \gamma(x+iy) &\coloneqq \frac{M_t\left(\frac{1-y+ix}{2}\right)}{M_t\left(\frac{1+y-ix}{2}\right)} \label{lambda-def} \\
s_* = s_*(x+iy) &\coloneqq \frac{1+y-ix}{2} +\frac{t}{2} \alpha\left(\frac{1+y-ix}{2}\right) \label{sn-def}\\
\kappa = \kappa(x+iy) &\coloneqq \frac{t}{2} \left(\alpha\left(\frac{1-y+ix}{2}\right) - \alpha\left(\frac{1+y+ix}{2}\right)\right) \label{kappa-def}\\
N &\coloneqq \left\lfloor \sqrt{\frac{x}{4\pi} + \frac{t}{16}} \right\rfloor \label{N-def-main} 
\end{align}
and $e_A, e_B, e_{C,0}$ are certain explicitly computable positive quantities\footnote{See \eqref{ea-def}-\eqref{ec-def} for the precise definition of these quantities.} depending on $t$ and $x+iy$.  Furthermore, we have the following bounds:
\begin{align}
|\gamma| &\leq e^{0.02 y} \left( \frac{x}{4\pi} \right)^{-y/2}  \label{gamma-bound} \\
\Re s_* &\geq \frac{1+y}{2} +\frac{t}{4} \log \frac{x}{4\pi} - \frac{t}{2x^2} \left(1-3y+\frac{4y(1+y)}{x^2}\right)_+  \label{res-bound} \\
|\kappa| &\leq  \frac{ty}{2(x-6)} \label{kappa-bound} \\
e_A + e_B &\leq \sum_{n=1}^N (1 + |\gamma| N^{|\kappa|} n^y) \frac{b_n^t}{n^{\Re s_*}} \left( \exp\left( \frac{\frac{t^2}{16} \log^2 \frac{x}{4\pi n^2} + 0.626}{x-6.66} \right)-1 \right) \label{eab-bound} \\
e_{C,0} &\leq \left(\frac{x}{4\pi}\right)^{-\frac{1+y}{4}} \exp\left( - \frac{t}{16} \log^2 \frac{x}{4\pi} + \frac{1.24 \times (3^y+3^{-y})}{N-0.125} + \frac{3 |\log \frac{x}{4\pi} + i \frac{\pi}{2}|+10.44}{x-12} \right) \label{ec-bound}
\end{align}
\end{theorem}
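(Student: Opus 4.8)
The plan is to derive Theorem~\ref{eff} from an exact series representation of $H_t$, followed by an effective Riemann--Siegel truncation using the functional equation. \emph{Step 1 (exact series).} Expanding $\Phi$ via \eqref{phidef} and interchanging summation with the integral in \eqref{htdef} --- legitimate by the super-exponential decay --- writes $H_t(x+iy)$ as an absolutely convergent sum over $n\ge1$ of integrals $\int_\R e^{tu^2}e^{ju}e^{-\pi n^2 e^{4u}}e^{i(x+iy)u}\,du$ with $j\in\{5,9\}$. The substitution $v=\pi n^2 e^{4u}$, after expanding $\log^2(v/\pi n^2)=(\log v-\log\pi n^2)^2$, turns each such integral into an explicit constant times a \emph{Gaussian-weighted Gamma integral} $G_t(w)\coloneqq\int_0^\infty v^{w-1}e^{-v}\exp(\tfrac t{16}\log^2 v)\,dv$ evaluated at an argument $w$ that is affine in $\log n$; the constant that is split off contains exactly the weight $b_n^t=\exp(\tfrac t4\log^2 n)$ of \eqref{bn-def}. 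At $t=0$ one has $G_0=\Gamma$ and the identity collapses to the classical $H_0(z)=\tfrac18\xi(\tfrac12+\tfrac{iz}2)$; the weight $\exp(\tfrac t{16}\log^2 v)$ is the sole new ingredient.

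\emph{Step 2 (effective Laplace approximation of $G_t$).} The central estimate is a quantitative version of $G_t(w)=\Gamma(w)\exp(\tfrac t{16}\log^2 w)\bigl(1+O_{\le}(\text{explicit})\bigr)$, uniform over the range of $w$ that occurs. Since $\exp(\tfrac t{16}\log^2 v)$ is not a classical special-function weight, this is obtained by a direct saddle-point/Laplace analysis of $v^{w-1}e^{-v}$ (saddle near $v=w$), with the relative error controlled by comparison with a Gaussian. Combined with Stirling's formula for $\Gamma$ in the guise of the function $M_0$ of \eqref{M-def} and its logarithmic derivative $\alpha$ of \eqref{alpha-form}, and after dividing by $B_t(x+iy)=M_t(\tfrac{1+y-ix}2)$, one integral of Step~1 becomes $b_n^t n^{-s_*}$ up to a computable relative error; the definitions of $s_*,\kappa,\gamma$ in \eqref{lambda-def}--\eqref{kappa-def} (the $\tfrac t2\alpha$ corrections, and the $\exp(\tfrac t4\alpha^2)$ factor in \eqref{Mt-def}) are precisely what is needed to absorb the $\log n$-dependence of $w$ and make this identification clean.

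\emph{Step 3 (Riemann--Siegel truncation).} Split the series of Step~1 at $N=\lfloor\sqrt{x/4\pi+t/16}\rfloor$ as in \eqref{N-def-main} (no ``Dirichlet series'' form converges in the region \eqref{region}, since by \eqref{res-bound} the relevant real parts are $\tfrac{1\pm y}2+O(t)<1$). For $n\le N$, Step~2 produces the main sum $\sum_{n\le N}b_n^t n^{-s_*}$ of $f_t$, with total error $e_A$. The tail $n>N$ is rewritten using the functional equation $\Phi(u)=\Phi(-u)$ (equivalently the functional equation for $\xi$): its ``$n\le N$'' part, after applying Step~2 with the reflected roles, gives the companion sum $\gamma\sum_{n\le N}n^y b_n^t n^{-\overline{s_*}-\kappa}$ of $f_t$ --- where $\gamma$ is the ratio of Stirling factors $M_t(\tfrac{1-y+ix}2)/M_t(\tfrac{1+y-ix}2)$ and $n^y$ arises from the $e^{-yu}$ factor under $u\mapsto-u$ --- with total error $e_B$, while its genuine remainder is a single contour integral bounded by $e_{C,0}$. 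This yields \eqref{ratio-form-eff}. The auxiliary estimates \eqref{gamma-bound}--\eqref{kappa-bound} are then Stirling-type computations from \eqref{logM}, \eqref{alpha-form} and \eqref{Mt-def} ($|\gamma|$ from the Stirling-factor ratio plus the $\alpha^2$ terms; $\Re s_*=\tfrac{1+y}2+\tfrac t2\Re\alpha(\tfrac{1+y-ix}2)$; $\kappa$ from bounding the $\alpha$-difference in \eqref{kappa-def}), and \eqref{eab-bound}, \eqref{ec-bound} are the effective forms of the Step~2 relative error summed over $n\le N$ and of the Step~3 remainder integral, with all numerical constants coming from bookkeeping the estimates using $x\ge200$, $0<t\le\tfrac12$, $0\le y\le1$.

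\emph{Main obstacle.} The crux is Step~2: an effective Laplace approximation of the Gaussian-weighted Gamma integral $G_t(w)$, uniform over $n\le N$ and over the whole region \eqref{region}, with fully explicit constants. The non-standard weight forces the saddle-point analysis and its remainder to be carried out by hand rather than quoted, and one must track precisely how the $\log n$-shift in the argument interacts with Stirling's formula so that it matches the $\tfrac t2\alpha$ corrections in $s_*$ and $\kappa$; summing the resulting relative errors over $n\le N$ to obtain \eqref{eab-bound} is where nearly all the technical work lies. A secondary difficulty is rendering the remainder bound \eqref{ec-bound} effective, which requires a careful choice of contour and explicit Stirling estimates along it.
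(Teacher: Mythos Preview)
Your Step~1 already contains a genuine gap. The integrals you write down, $\int_\R e^{tu^2}e^{ju}e^{-\pi n^2 e^{4u}}e^{i(x+iy)u}\,du$, do not converge for $t>0$: as $u\to-\infty$ one has $e^{-\pi n^2 e^{4u}}\to 1$, so the integrand grows like $e^{tu^2+(j-y)u}$. Even at $t=0$, where each such integral does converge (to a constant times $r_{0,n}(s)$ with $s=\tfrac{1+y-ix}{2}$), the resulting series over $n$ diverges in the critical strip, since the $n$th term is essentially $n^{-s}$ with $\Re s<1$. The reason $\Phi$ itself is tame for $u<0$ is exactly the functional equation $\Phi(u)=\Phi(-u)$, which is a Poisson-summation identity involving massive cancellation among the individual terms; that cancellation is destroyed once you interchange sum and integral over all of~$\R$. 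If you instead keep the integration on $[0,\infty)$ (where the interchange \emph{is} legitimate), the substitution $v=\pi n^2 e^{4u}$ yields \emph{incomplete} integrals $\int_{\pi n^2}^\infty(\cdots)\,dv$, not your $G_t(w)$, and the clean Laplace picture of Step~2 disappears. Step~3 inherits the same difficulty: the functional equation relates the full sum to itself, not a tail $n>N$ to a head plus a single contour-integral remainder; the specific $e_{C,0}$ term comes from the Riemann--Siegel contour $\int_{N\swarrow N+1}\frac{w^{-s}e^{i\pi w^2}}{e^{\pi iw}-e^{-\pi iw}}\,dw$, which is not available in your setup.

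The paper's route avoids all of this by first writing $H_t$ as a heat-kernel convolution of $H_0$,
\[
H_t(z)=\int_\R \tfrac18\,\xi\!\left(\tfrac{1+iz}{2}+\sqrt t\,v\right)\tfrac{1}{\sqrt\pi}e^{-v^2}\,dv,
\]
and \emph{then} inserting the classical Riemann--Siegel formula $\tfrac18\xi(s)=\sum_{n\le N}r_{0,n}(s)+R_{0,N}(s)+\{\text{reflected}\}$ for $\xi$. The heat evolution is carried out term-by-term: a contour shift by $\tfrac{\sqrt t}{2}\alpha_n$ with $\alpha_n=\alpha(s)-\log n$ and a second-order Taylor expansion of $\log M_0$ give $r_{t,n}(s)=M_t(s)\,b_n^t\,n^{-s-\frac t2\alpha(s)}(1+O_\le(\eps_{t,n}))$ (this replaces your $G_t$ analysis), while the heat-evolved remainder $R_{t,N}$ is estimated using Arias de Reyna's effective Riemann--Siegel expansion. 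Dividing by $B_t$ and assembling gives \eqref{ratio-form-eff}; the bounds \eqref{gamma-bound}--\eqref{ec-bound} then follow from elementary estimates on $\alpha$, $\alpha'$, and $M_0$ collected in a final proposition.
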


This theorem will be proven in Section \ref{initial-sec}; see Figures \ref{htft}, \ref{htft-2} for a numerical illustration of the approximation.  The strategy is to express $H_t$ as a convolution of $H_0$ with a gaussian heat kernel, then apply an effective Riemann-Siegel expansion to $H_0$ to rewrite $H_t$ as the sum of various contour integrals; see Section \ref{heatflow-sec} for details.  One then uses the saddle point method to shift each such contour to a location that is suitable for effective estimation.   We remark that $f_t(x+iy)$ is a holomorphic function of $x+iy$ in the region \eqref{region} as long as $N$ is constant, but has jump discontinuities when $N$ is incremented.

\begin{figure}[ht!]
  \includegraphics[width=\linewidth]{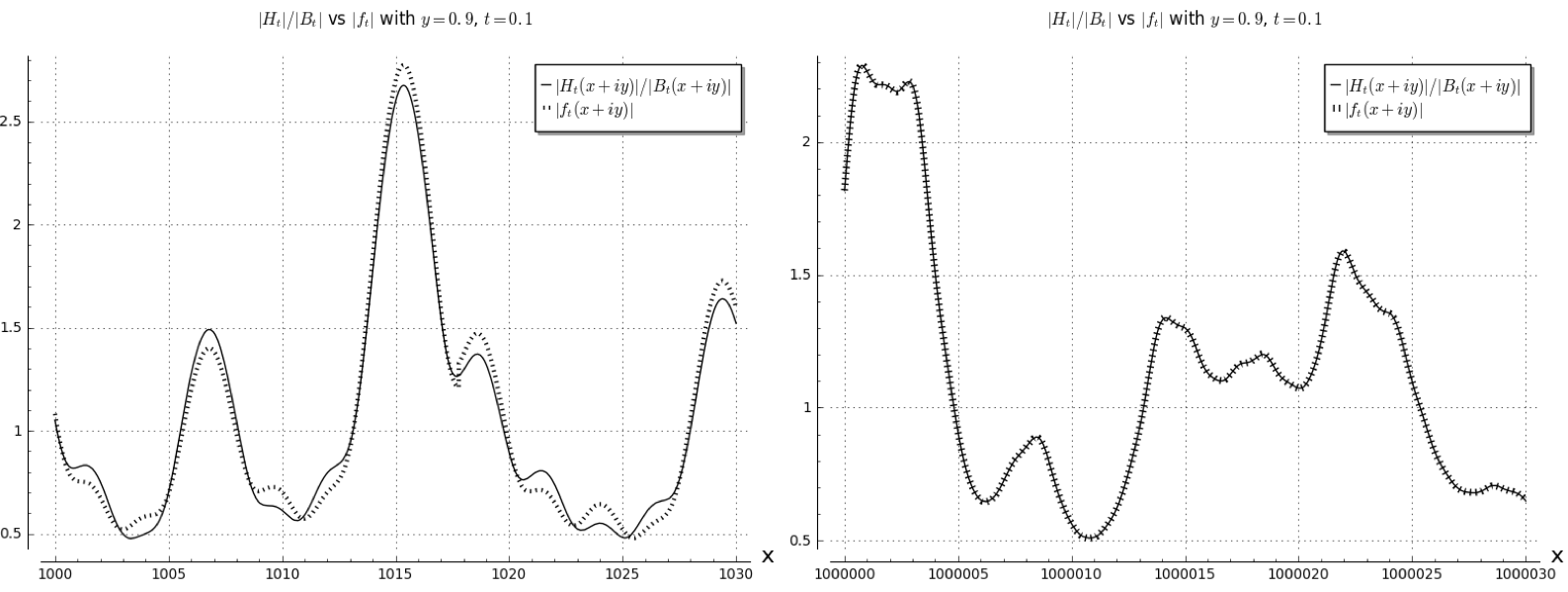}
  \caption{Comparison of $|f_t(x+iy)|$ and $|H_t(x+iy)|/|B_t(x+iy)|$ for $y=0.9$, $t=0.1$, $1000 \leq x \leq 1030$, (left) and when $10^6 \leq x \leq 10^6 + 30$ (right). The approximation improves as $x$ gets larger.}
	\label{htft}
\end{figure}

\begin{figure}[ht!]
  \includegraphics[width=\linewidth]{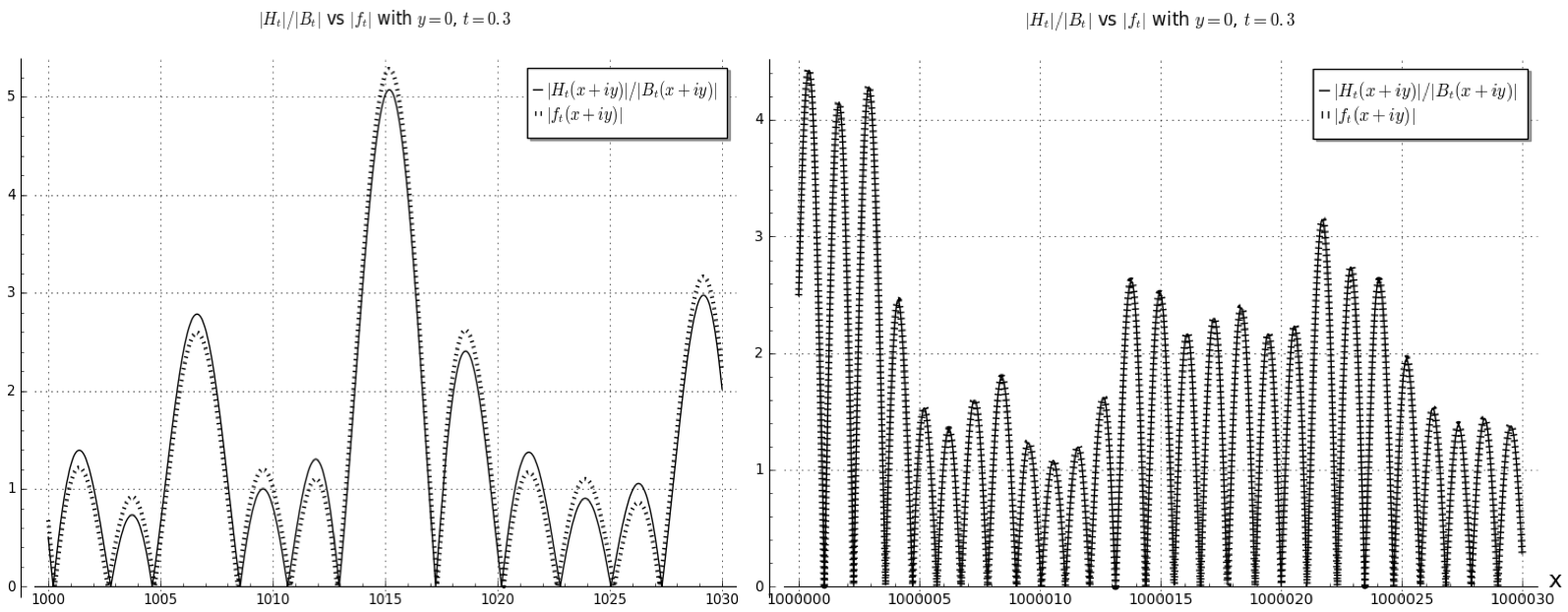}
  \caption{Comparison of $|f_t(x+iy)|$ and $|H_t(x+iy)|/|B_t(x+iy)|$ for $y=0$, $t=0.3$, $1000 \leq x \leq 1030$ (left), and when $10^6 \leq x \leq 10^6 + 30$ (right). Again notice the improving approximation with $x$.}
	\label{htft-2}
\end{figure}

\begin{figure}[ht!]
  \includegraphics[width=0.8\linewidth]{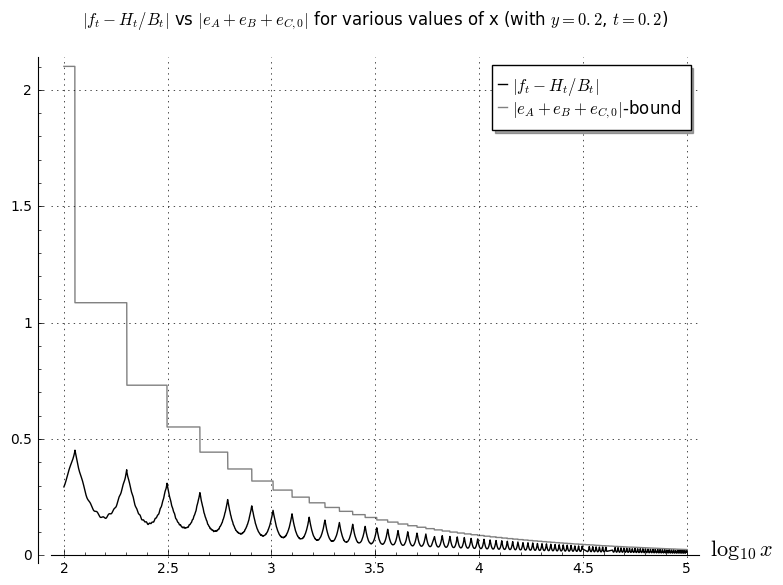}
  \caption{The error upper bound $|e_A+e_B+e_{C,0}|$ versus $|f_t - H_t/B_t|$ when $y=t=0.2$ and $2 \leq \log_{10} x \leq 5$.}
\label{errorboundtot}
\end{figure}

\begin{figure}[ht!]
  \includegraphics[width=1.0\linewidth]{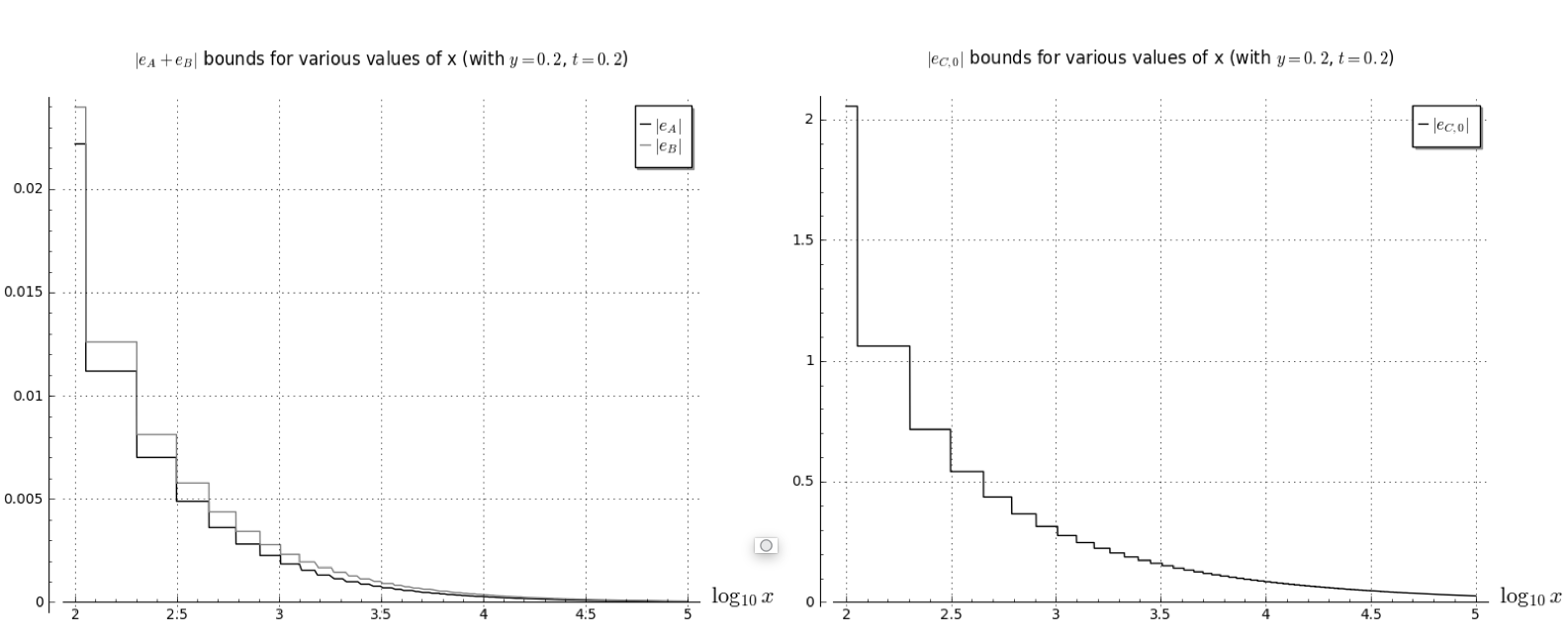}
  \caption{The individual error upper bounds $|e_A|$, $|e_B|$ and $|e_{C,0}|$ with $y=t=0.2$ and $2 \leq \log_{10} x \leq 5$. The $e_{C,0}$-term clearly dominates.}
\label{ind_errorbounds}
\end{figure}

From \eqref{ratio-form-eff} and the triangle inequality, we have a numerically verifiable criterion to establish non-vanishing of $H_t$ at a given point:

\begin{corollary}[Criterion for non-vanishing]\label{zero-test}  Let $t,x,y$ lie in the region \eqref{region}, and let $f_t, e_A, e_B, e_{C,0}$ be as in Theorem \ref{eff}.  If one has the inequality
\begin{equation}\label{criterion}
|f_t(x+iy)| > e_A + e_B + e_{C,0} 
\end{equation}
then $H_t(x+iy) \neq 0$.  
\end{corollary}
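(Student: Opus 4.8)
\textbf{Proof proposal for Corollary \ref{zero-test}.}

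The plan is to deduce the corollary directly from Theorem \ref{eff} by the triangle inequality, the only point needing a moment's care being the non-vanishing of the normalising factor $B_t(x+iy)$. First I would recall that in the region \eqref{region} one has $x \geq 200 > 0$, so the argument $\frac{1+y-ix}{2}$ of $M_t$ in \eqref{bo-def} has imaginary part $-x/2 \neq 0$ and in particular avoids the branch cut $(-\infty,1]$ on which $M_t$ is undefined. Since $M_0$ takes values in $\C \setminus \{0\}$ and, by \eqref{Mt-def}, $M_t = \exp(\frac{t}{4}\alpha^2)\,M_0$ is a nowhere-vanishing multiple of $M_0$, we conclude $B_t(x+iy) \neq 0$. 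Consequently $H_t(x+iy) = 0$ if and only if $H_t(x+iy)/B_t(x+iy) = 0$.

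Next I would invoke \eqref{ratio-form-eff} of Theorem \ref{eff}, which (unpacking the notation $O_{\leq}$) asserts precisely that
\[ \left| \frac{H_t(x+iy)}{B_t(x+iy)} - f_t(x+iy) \right| \leq e_A + e_B + e_{C,0}. \]
Applying the reverse triangle inequality together with the hypothesis \eqref{criterion} then gives
\[ \left| \frac{H_t(x+iy)}{B_t(x+iy)} \right| \geq |f_t(x+iy)| - (e_A + e_B + e_{C,0}) > 0, \]
so that $H_t(x+iy)/B_t(x+iy) \neq 0$, and therefore $H_t(x+iy) \neq 0$ by the previous paragraph.

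No genuine obstacle arises here: the entire analytic content is already packaged in Theorem \ref{eff}, and the corollary merely restates its error term as a numerically checkable inequality. The actual labour, of course, lies downstream, in evaluating $f_t$ at a given point and in bounding $e_A, e_B, e_{C,0}$ via the explicit estimates \eqref{eab-bound}--\eqref{ec-bound}; but that computational step is what the corollary is designed to feed into, not part of its proof.
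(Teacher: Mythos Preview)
Your proof is correct and matches the paper's own justification, which simply states that the corollary follows from \eqref{ratio-form-eff} and the triangle inequality. You have even been slightly more careful than the paper by explicitly verifying the non-vanishing of $B_t(x+iy)$, which the paper only asserts informally in the introduction.
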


Actually, for some regions of $x,y,t$ we will use a more complicated criterion than \eqref{criterion}, in order to exploit the argument principle.  To numerically estimate $f_t(x+iy)$ in a feasible amount of time, we will use Taylor expansion to be able to efficiently compute many values of $f_t(x+iy)$ simultaneously (see Section \ref{multiple-sec}), and for some ranges of the parameters $t,x,y$ we will also use an Euler product mollifier to reduce the amount of oscillation in the sum $f_t(x+iy)$ (see Section \ref{b-bound}).

For $y,t$ fixed and $x$ sufficiently large, we have the asymptotics
\begin{align*}
e_A,e_B, e_{C,0} &= O( x^{-ct} )\\
f_t(x+iy) &= 1 + O(x^{-ct} )
\end{align*}
for some absolute constant $c>0$; see Proposition \ref{asymp}(i) and its proof.  This gives the crude asymptotic \eqref{limx} in the region \eqref{region} at least.   In practice, the $e_{C,0}$ term numerically dominates the $e_A+e_B$ term, although both errors will be quite small in the ranges of $x$ under consideration; in particular, for the ranges needed to verify conditions (ii) and (iii) of Theorem \ref{ubc-0}, we can make $e_A+e_B$ and $e_{C,0}$ both significantly smaller than $|f_t(x+iy)|$.  In the spirit of expanding the Riemann-Siegel approximation to higher order, we also obtain an even more accurate explicit approximation in which a correction term $-\frac{C_t}{B_t}$ is added to $f_t$, and the error term $e_{C,0}$ is replaced by a smaller quantity $e_C$; see \eqref{ratio-form-refined} and Figure \ref{htft-c}.

\begin{figure}[ht!]
  \includegraphics[width=0.6\linewidth]{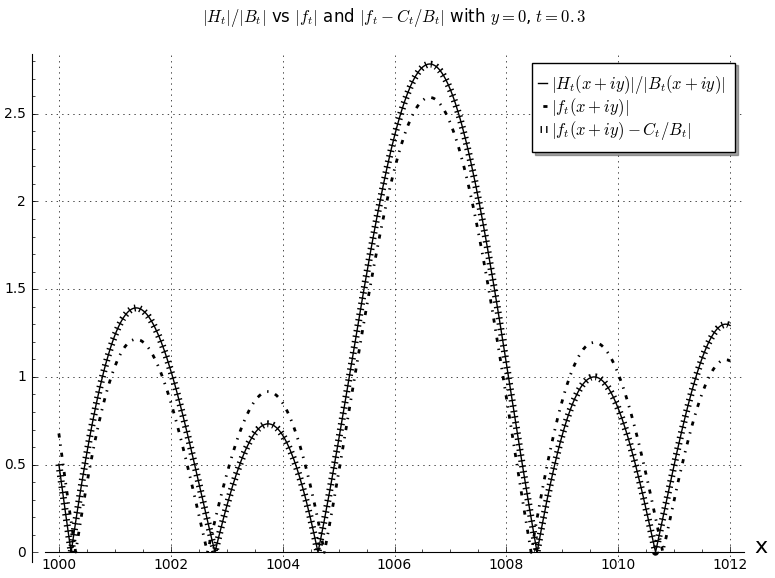}
  \caption{Comparison of $|f_t(x+iy)|$, $|f_t(x+iy)-\frac{C_t(x+iy)}{B_t(x+iy)}|$ and $|H_t(x+iy)|/|B_t(x+iy)|$ for $y=0$, $t=0.3$, $1000 \leq x \leq 1012$.}
	\label{htft-c}
\end{figure}

In addition to establishing upper bounds such as Theorem \ref{new-upper}, one can use Theorem \ref{eff} and Corollary \ref{zero-test} (together with variants in slightly larger regions than \eqref{region}, for instance if $y$ is allowed to be as large as $10$) to obtain asymptotic control on the zeroes of $H_t$, refining previous work of Ki, Kim, and Lee \cite{kkl}.  Indeed, in Section \ref{asymptotic-sec} we will establish

\begin{theorem}[Distribution of zeroes of $H_t$]\label{Zero}  Let $0 < t \leq 1/2$, let $C>0$ be a sufficiently large absolute constant, and let $c>0$ be a sufficiently small absolute constant.  For $x \geq 4\pi$, define
$$ g(x,t) \coloneqq  \frac{x}{4\pi} \log \frac{x}{4\pi} - \frac{x}{4\pi} + \frac{11}{8} + \frac{t}{16} \log \frac{x}{4\pi} $$
and for all $n \geq C$, let $x_n$ be the unique real number greater than $4\pi$ such that
\begin{equation}\label{lip}
 g(x_n,t) = n.
\end{equation}
(This is well-defined since the $g(x,t)$ is increasing in $x$ for $x \geq 4\pi$.)
\begin{itemize}
\item[(i)]  If $x \geq \exp(\frac{C}{t})$ and $H_t(x+iy)=0$, then $y=0$, and
$$ x = x_n + O(x^{-ct})$$
for some $n$.  
\item[(ii)]  Conversely, for each $n \geq \exp( \frac{C}{t} )$ there is exactly one zero $H_t$ in the disk $\{ x+iy: |x+iy - x_n| \leq \frac{c}{\log x_n} \}$ (and by part (i), this zero will be real and lie within $O(x^{-ct})$ of $x_n$).
\item[(iii)]  If $X \geq \exp(\frac{C}{t})$, the number $N_t(X)$ of zeroes with real part between $0$ and $X$ (counting multiplicity) is
$$ N_t(X) = g(X,t) + O(1).$$
\item[(iv)]  For any $X \geq 0$, one has
$$ N_t(X+1) - N_t(X) \leq O( \log(2+X) )$$
and
$$ N_t(X) = g(X,t) + O( \log(2+X) ).$$
\end{itemize}
Here and in the sequel we use $X = O(Y)$ to denote the estimate $|X| \leq AY$ for some constant $A$ that is absolute (in particular, $A$ is independent of $t$ and $C$).
\end{theorem}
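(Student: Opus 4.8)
The plan is to run everything off Theorem~\ref{eff}: we replace $H_t(x+iy)$ by $B_t(x+iy)\,f_t(x+iy)$ up to the explicit error $B_t(x+iy)\,O_{\le}(e_A+e_B+e_{C,0})$, and then exploit that once $x$ is as large as $e^{C/t}$ the sum $f_t$ is totally dominated by its $n=1$ term. The first step is therefore to prove that, for suitable absolute constants $C$ (large) and $c>0$ (small),
\[
\frac{H_t(x+iy)}{B_t(x+iy)} = 1+\gamma(x+iy) + O(x^{-ct})
\qquad(0<t\le \tfrac12,\ 0\le y\le 1,\ x\ge e^{C/t}).
\]
This is a matter of estimating the $n\ge2$ part of \eqref{ft-def} together with the bounds \eqref{eab-bound}, \eqref{ec-bound}: from \eqref{res-bound} one gets $b_n^t n^{-\Re s_*}\le n^{-(1+y)/2}\exp\!\big(-\tfrac t4\log n\,\log\tfrac{x}{4\pi n^2}\big)\le n^{-(1+y)/2}$ for $1\le n\le N$, and feeding this together with \eqref{gamma-bound} into \eqref{eab-bound}, \eqref{ec-bound} shows that the tail and both error terms are $O(x^{-ct})$ as soon as $\tfrac{t}{16}\log^2\tfrac{x}{4\pi}$ dominates the $O(1)$ additive constants in \eqref{ec-bound} --- which is exactly what $x\ge e^{C/t}$ provides. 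Along the way I would record, from \eqref{alpha-form}, \eqref{lambda-def}, that on the real axis $\gamma(x)$ has modulus $1$ and argument $\pi + 2\pi g(x,t) + O(1/x)$ --- the constant $\tfrac{11}{8}$ in $g$ being chosen precisely so that this holds, whence $1+\gamma(x)=0 \iff g(x,t)\in\Z + O(x^{-ct})$ --- and that $|\gamma'(x)/\gamma(x)| \asymp \log x$ and $-\,\partial_y\log|\gamma(x+iy)| \asymp \log x$ uniformly for $x\ge e^{C/t}$, $0\le y\le1$.

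Parts (i) and (ii) then follow from Rouch\'e's theorem on the disks $D_n := \{\,|z-x_n|\le c/\log x_n\,\}$. For $n\ge e^{C/t}$ the estimates on $\gamma$ give $1+\gamma(z)\approx \gamma'(x_n)(z-x_n)$ on $D_n$, so $1+\gamma$ has exactly one zero in $D_n$ and $|1+\gamma(z)|\gtrsim c$ on $\partial D_n$; since $|f_t(z)-(1+\gamma(z))|+e_A+e_B+e_{C,0}=O(x^{-ct})$ is far smaller on $\partial D_n$, Rouch\'e transfers this to $H_t/B_t$, and hence (as $B_t$ is zero-free in the region) to $H_t$ --- giving (ii). For the complement: if $x\ge e^{C/t}$, $0\le y\le\sqrt{1-2t}$ and $z=x+iy$ lies in no $D_n$, then either $|\gamma(z)|$ is bounded away from $1$ (using $-\partial_y\log|\gamma|\asymp\log x$) or $\arg\gamma(z)$ is bounded away from $\pi$ (using the phase formula), so $|1+\gamma(z)|\gtrsim c\gg x^{-ct}$ and $H_t(z)\ne0$. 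Hence every zero with real part $\ge e^{C/t}$ lies in a unique $D_n$; by the functional equation $H_t(\bar z)=\overline{H_t(z)}$ the conjugate point lies in the same (real-symmetric) disk and is also a zero, so by uniqueness the zero is real. Finally, on the real axis $1+\gamma(x)+O(x^{-ct})=0$ forces $g(x,t)\in\Z+O(x^{-ct})$, hence $|g(x,t)-n|=O(x^{-ct})$ for some $n$; dividing by $g'(x,t)\asymp\log x$ gives $x=x_n+O(x^{-ct})$. This proves (i), and since $e^{C/t}\ge x$ makes $x^{-ct}$ small the disks $D_n$ for large $n$ are disjoint and genuinely isolate the zeros.

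For the counting statements I would use the argument principle for $H_t$ on a rectangle $R$ with horizontal sides at $\Im z=\pm1$ (which contains every zero with a given real part, since the zeros of $H_t$ satisfy $|\Im z|\le\sqrt{1-2t}\le1$) and left side at $\Re z=200$, the strip $0\le\Re z\le200$ contributing only $O(1)$ zeros. On $\partial R$ one writes $\arg H_t = \arg B_t + \arg(H_t/B_t)$, computes $\arg B_t$ explicitly from \eqref{logM}, \eqref{Mt-def} (a Stirling-type computation), and replaces $H_t/B_t$ by $f_t$ via Theorem~\ref{eff}; exactly as in the classical derivation of the Riemann--von Mangoldt formula, the main term $g(X,t)$ emerges and the remaining contributions are $O(\log(2+X))$ by a Backlund-type bound on the argument variation of the Dirichlet-type sum $f_t$ along the horizontal edges (on which $H_t$ is zero-free, since $\sqrt{1-2t}<1$ for $t>0$). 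This yields the second estimate of (iv); for $X\ge e^{C/t}$ one upgrades the error to $O(1)$ by invoking parts (i),(ii), which identify the zeros of real part $\ge e^{C/t}$ exactly with the (evenly $g$-spaced) points $x_n$, so that the difference $N_t(X)-g(X,t)$ is eventually constant up to $O(x^{-ct})$, giving (iii). The first estimate of (iv), $N_t(X+1)-N_t(X)=O(\log(2+X))$, I would instead obtain from Jensen's inequality applied to $H_t/B_t$ on a disk of radius $\asymp1$ centred off the real axis (in a mildly enlarged version of the region \eqref{region}, allowing $\Im z$ up to say $4$, where $|\gamma|$ is so small that $|f_t|\ge\tfrac12$ at the centre), bounding $|H_t/B_t|$ from above on the boundary by $X^{O(1)}$ via Theorem~\ref{eff} and the functional equation.

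The \textbf{main obstacle} is the uniformity in $t$ of Step~1: one must verify that the threshold can be taken of the shape $e^{C/t}$ with an \emph{absolute} constant $C$, and the gain $x^{-ct}$ with absolute $c$, i.e.\ track the $t$-dependence through \eqref{eab-bound}--\eqref{ec-bound} precisely enough to see that the favourable term $-\tfrac{t}{16}\log^2\tfrac{x}{4\pi}$ really beats every additive constant once $x\ge e^{C/t}$. A secondary delicate point is producing the exact value $\tfrac{11}{8}$ in $g$ so that the locus $1+\gamma(x)=0$ coincides with $g(x,t)\in\Z$ up to $O(x^{-ct})$ --- any slip there degrades the conclusion of (i) --- and, for the passage from the $O(\log)$ bound of (iv) to the $O(1)$ bound of (iii), carefully handling the ``moderate'' range $200\le\Re z\le e^{C/t}$, where Theorem~\ref{eff} applies but its error terms are not yet small and where $H_t$ may carry many (necessarily complex) zeros lying just inside $|\Im z|\le\sqrt{1-2t}$, so that its contribution to $N_t(X)-g(X,t)$ is controlled.
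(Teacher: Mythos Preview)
Your treatment of (i) and (ii) is correct and essentially the same as the paper's: both establish $H_t/B_t = 1 + \gamma + O(x^{-ct})$ for $x \geq e^{C/t}$ (packaged in the paper as Proposition~\ref{asymp}(i)) and then run Rouch\'e on disks around the $x_n$. The paper writes the local approximation as a sine rather than $\gamma'(x_n)(z-x_n)$, but these are equivalent, and the reality argument via $H_t = H_t^*$ is the same.

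The gap is in your route to (iii) and the second half of (iv). You take the counting rectangle with horizontal edges at $y = \pm 1$. On those edges $\Re s_* \approx 1 + \tfrac{t}{4}\log\tfrac{x}{4\pi}$, so on the stretch $200 \le x \le e^{C/t}$ the tail $\sum_{n\ge 2} b_n^t n^{-\Re s_*}$ behaves like a partial harmonic series and is not small; $H_t/B_t$ is not bounded away from zero there, and a Backlund argument along a horizontal segment of length $\asymp e^{C/t}$ does not yield $O(\log X)$, let alone $O(1)$. Your proposed upgrade from (iv) to (iii) then fails on uniformity grounds: parts (i),(ii) do show that $N_t(X)-g(X,t)$ is eventually constant for $X\ge e^{C/t}$, but that constant equals $N_t(e^{C/t})-g(e^{C/t},t)$, which from (iv) is only $O(\log e^{C/t}) = O(1/t)$, not $O(1)$ with an absolute implied constant.

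The paper's fix is to push the horizontal edges out to $y = \pm 3$ (this needs the analogue of Theorem~\ref{eff} with the region \eqref{region} loosened to, say, $|y|\le 10$, which you already invoke for your Jensen step). At $y=3$ one has $\Re s_* \ge 2$ \emph{regardless of $t$}, so the tail is at most $\zeta(2)-1<0.65$ and $H_t(x+3i) = (1+O_{\le}(0.7))\,M_t\bigl(\tfrac{4-ix}{2}\bigr)$ for all $x \ge C$ with $C$ absolute (Proposition~\ref{asymp}(ii)). Hence the argument variation of $H_t/M_t$ along the \emph{entire} upper edge is $O(1)$, which gives (iii) directly. For (iv) the $O(\log X)$ loss then lives entirely on the short right edge $\{X+iy: 0\le y\le 3\}$, where your Jensen/Backlund count is exactly the right tool. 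For the first bound in (iv) the paper instead evaluates the Hadamard expansion of $H_t'/H_t$ at $X+4i$ and takes imaginary parts, but your Jensen-on-a-disk approach at $y\approx 4$ works just as well.
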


Roughly speaking, these estimates tell us that the zeroes of $H_t$ behave (on macroscopic scales) like those of $H_0$ in the region $x = O(\exp(O(1/t)))$, and are very evenly spaced (and on the real axis) outside of this range.  The factor $\frac{t}{16} \log \frac{x_n}{4\pi}$ in \eqref{lip} indicates that as time $t$ advances, the zeroes (or at least those with large values of $x$) will tend to move towards the origin at a speed of approximately $\frac{\pi}{4}$.  Although we will not prove this here, the conclusions (i) and (iii) suggest that one in fact has an asymptotic of the form
$$ N_t(X) = \left\lfloor g(X,t) + O( X^{-ct} ) \right\rfloor$$
when $X \geq \exp(C/t)$; in particular (since the sawtooth function $x - \lfloor x \rfloor$ has average value $\frac{1}{2}$) one would have the heuristic approximation
$$ N_t(X) \approx \frac{X}{4\pi} \log \frac{X}{4\pi} - \frac{X}{4\pi} + \frac{7}{8} + \frac{t}{16} \log \frac{X}{4\pi} $$
 after performing some averaging in $X$, thus recovering the familiar $\frac{7}{8}$ term in the usual averaged asymptotics for $N_0(X)$.

\begin{figure}[ht!]
  \includegraphics[width=1.0\linewidth]{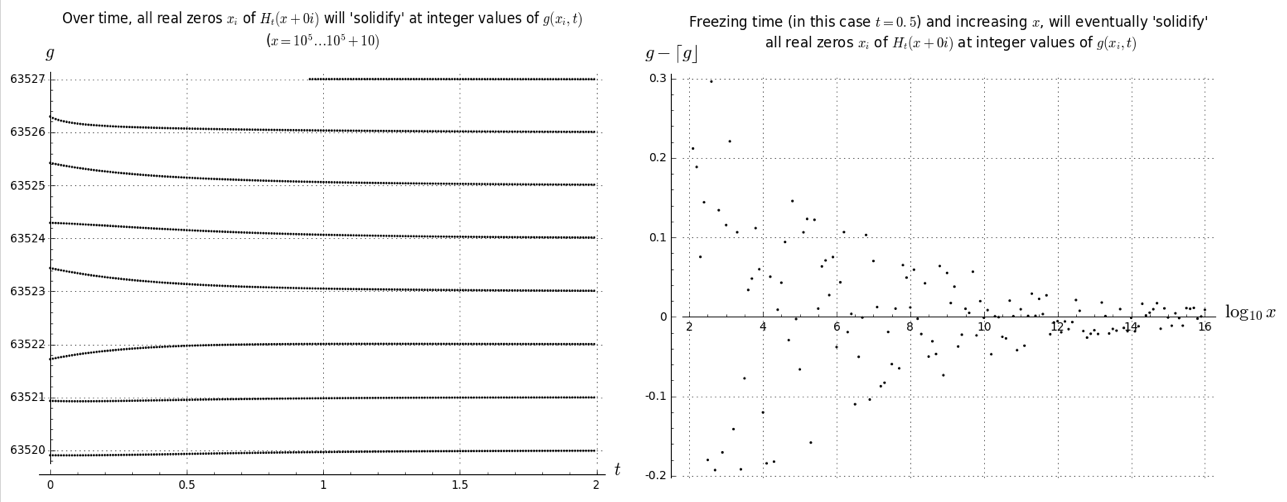}
  \caption{The real zeroes $x_i$ of $H_t$ will converge to integer values of $g(x_i,t)$ when $t$ (left) and/or $x$ (right) increases.}
\label{integer_conv_zeros}
\end{figure}

The results in Theorem \ref{Zero} refine previous results of Ki, Kim, and Lee \cite[Theorems 1.3, 1.4]{kkl}, which gave similar results but with constants that depended on $t$ in a non-uniform (and ineffective) fashion, and error terms that were of shape $o(1)$ rather than $O(x^{-ct})$ in the limit $x \to \infty$ (holding $t$ fixed).  The results may also be compared with those in \cite{arias-lune}, who (in our notation) show that assuming RH, the zeroes of $H_0$ are precisely the solutions $x_n$ to the equation
$$ \frac{1}{2\pi} \mathrm{arg}\left( - e^{2 i \vartheta(x_n/2)} \frac{\zeta'(\frac{1-ix_n}{2})}{\zeta'(\frac{1+ix_n}{2})}\right) = n $$
for integer $n$, where $-\vartheta(t)$ is the phase of $\zeta(\frac{1}{2}+it)$ and one chooses a branch of the argument so that the left-hand side is $-\frac{1}{2}$ when $x_n=0$.

\begin{remark}  One can draw an analogy between the various potential behaviours of zeroes of $H_t$ and the three classical states of matter.  A ``gaseous'' state corresponds to the situation in which some fraction of the zeroes of $H_t$ are strictly complex.  A ``liquid'' state corresponds to a situation in which the zeroes are real, but disordered (with highly unequal spacings between zeroes).  A ``solid'' state corresponds to a situation in which the zeroes are real and arranged roughly in an arithmetic progression.  Thus for instance the Riemann hypothesis and the GUE hypothesis assert (roughly speaking) that the zeroes of $H_0$ should exhibit liquid behaviour everywhere, while Theorem \ref{Zero} asserts that the zeroes of $H_t$, $t>0$ ``solidify'' in the region $x \geq \exp(C/t)$.  Below this region we expect liquid behaviour.  In general, as the parameter $t$ increases, the zeroes appear\footnote{This is the picture for positive $t$ at least.  As $t$ becomes very negative, it appears that the ``gaseous'' zeroes become more ordered again, for instance organizing themselves into curves in the complex plane.  See \cite{sharkfin} for further discussion of this phenomenon.} to ``cool'' down, transitioning from gaseous to liquid to solid type states; see \cite{brad} for some formalisations of this intuition. 
\end{remark}

\subsection{About this project}

This paper is part of the \emph{Polymath project}, which was launched
by Timothy Gowers in February 2009 as an experiment to see if research
mathematics could be conducted by a massive online collaboration.
The current project (which was administered by Terence Tao) is the fifteenth
project in this series.  Further information on the Polymath project can be
found on the web site {\tt michaelnielsen.org/polymath1}.  Information
about this specific project may be found at
\begin{center}
\small{{\tt michaelnielsen.org/polymath1/index.php?title=De\_Bruijn-Newman\_constant}}
\end{center}
and a full list of participants and their grant acknowledgments may be
found at
\begin{center}
\small{{\tt michaelnielsen.org/polymath1/index.php?title=Polymath15\_grant\_acknowledgments}}
\end{center}

We thank the anonymous referees for their careful reading of the paper and for several useful corrections and suggestions.

\section{Notation}

We use the standard branch $\Log$ of the logarithm to define the standard complex powers $z^w \coloneqq \exp( w \Log z)$, and in particular define the standard square root $\sqrt{z} \coloneqq z^{1/2} = \exp( \frac{1}{2} \Log z)$.  We record the familiar gaussian identity
\begin{equation}\label{gaussian}
 \int_\R \exp\left(-(au^2+bu+c)\right)\ du = \sqrt{\frac{\pi}{a}} \exp\left( \frac{b^2}{4a} - c\right)
\end{equation}
for any complex numbers $a,b,c$ with $\Re a > 0$.

When using order of magnitude notation such as $O_{\leq}(X)$, any expression of the form $A=B$ using this notation should be interpreted as the assertion that any quantity of the form $A$ is also of the form $B$, thus for instance $O_{\leq}(1) + O_{\leq}(1) = O_{\leq}(3)$.  (In particular, the equality relation is no longer symmetric with this notation.)

If $F$ is a meromorphic function, we use $F'$ to denote its derivative.  We also use $F^*$ to denote the reflection $F^*(s) := \overline{F(\overline{s})}$ of $F$.  Observe from analytic continuation that if $F:\Omega \to \C$ is holomorphic on a connected open domain $\Omega \subset \C$ containing an interval in $\R$, and is real-valued on $\Omega \cap \R$, then it is equal to its own reflection: $F = F^*$ (since the holomorphic function $F - F^*$ has an uncountable number of zeroes).


\section{Dynamics of zeroes}\label{dynamics-sec}

In this section we control the dynamics of the zeroes of $H_t$ in order to establish Theorem \ref{ubc-0}.  As $H_t$ is even with functional equation $H_t = H_t^*$, the zeroes are symmetric around the origin and the real axis; from \eqref{htdef} and the positivity of $\Phi$, we also see that $H_t(iy) > 0$ for all $y \in \R$, so there are no zeroes on the imaginary axis.  From the super-exponential decay of $\Phi$ and \eqref{htdef} we see that the entire function $H_t$ is of order $1$; by Jensen's formula, this implies that the number of zeroes in a large disk $D(0,R)$ is at most $O( R^{1+o(1)})$ as $R \to \infty$.

We begin with the analysis of the dynamics of a single zero of $H_t$:

\begin{proposition}[Dynamics of a single zero]\label{dynam}  Let $t_0 \in \R$, and let $(z_k(t_0))_{k \in \Z \backslash \{0\}}$ be an enumeration of the zeroes of $H_{t_0}$ in $\C$ (counting multiplicity), with the symmetry condition $z_{-k}(t_0) = -z_k(t_0)$.
\begin{itemize}
\item[(i)]  If $j \in \Z \backslash \{0\}$ is such that $z_j(t_0)$ is a simple zero of $H_{t_0}$, then there exists a neighbourhood $U$ of $z_j(t_0)$, a neighbourhood $I$ of $t_0$ in $\R$, and a smooth map $z_j: I \to U$ such that for every $t \in I$, $z_j(t)$ is the unique zero of $H_t$ in $U$.  Furthermore one has the equation
\begin{equation}\label{zjk}
 \frac{d z_j}{d t}(t_0) = 2 \sum^{\prime}_{k \neq j} \frac{1}{z_j(t_0) - z_k(t_0)} 
\end{equation}
where the sum is over those $k \in \Z \backslash \{0\}$ with $k \neq j$, and the prime means that the $k$ and $-k$ terms are summed together (except for the $k=-j$ term, which is summed separately) in order to make the sum convergent.
\item[(ii)]  If $j \in \Z \backslash \{0\}$ is such that $z_j(t_0)$ is a repeated zero of $H_{t_0}$ of order $m \geq 2$, then there is a neighbourhood $U$ of $z_j(t_0)$ such that for $t$ sufficiently close to $t_0$, there are precisely $m$ zeroes of $H_t$ in $U$, and they take the form
$$ z_j(t_0) + \sqrt{2} (t-t_0)^{1/2} \lambda_j + O( |t-t_0|)$$
for $j=1,\dots,m$ as $t \to t_0$, where $\lambda_1 < \dots < \lambda_m$ are the roots of the $m^{\operatorname{th}}$ Hermite polynomial
\begin{align}
\operatorname{He}_m(z) &\coloneqq (-1)^m \exp\left(\frac{z^2}{2}\right) \frac{d^m}{dz^m} \exp\left(-\frac{z^2}{2}\right)\label{heform}\\
&= \sum_{0 \leq l \leq m/2} \frac{m!}{l! (m-2l)!} (-1)^l \frac{z^{m-2l}}{2^l}\label{heform2}
\end{align}
and the implied constant in the $O()$ notation can depend on $t_0$, $j$, and $m$.
\end{itemize}
\end{proposition}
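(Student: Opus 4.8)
The plan is to treat parts (i) and (ii) separately, using throughout that $(t,z)\mapsto H_t(z)$ is jointly analytic and obeys the backwards heat equation $\partial_t H_t=-\partial_{zz}H_t$ (both follow from \eqref{htdef} by differentiation under the integral sign, legitimate by the super-exponential decay of $e^{tu^2}\Phi(u)$ on bounded sets of $t$). For part (i), set $z_*\coloneqq z_j(t_0)$; since $z_*$ is a simple zero, $H_{t_0}'(z_*)\neq 0$, so the holomorphic implicit function theorem in $z$ (with analytic dependence on the parameter $t$) produces a neighbourhood $U$ of $z_*$, an interval $I$ around $t_0$, and a real-analytic map $z_j\colon I\to U$ with $H_t(z_j(t))=0$, unique in $U$ after shrinking $U,I$ (using continuity of the zero set of $H_{t_0}$ near $z_*$). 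Differentiating $H_t(z_j(t))=0$ in $t$ and inserting $\partial_t H_t=-\partial_{zz}H_t$ gives $-H_t''(z_j(t))+H_t'(z_j(t))z_j'(t)=0$, and since simplicity persists for $t$ near $t_0$ we may divide to get $z_j'(t)=H_t''(z_j(t))/H_t'(z_j(t))$. It remains to identify $H_{t_0}''(z_*)/H_{t_0}'(z_*)$ with twice the sum in \eqref{zjk}. As recalled in the excerpt, $H_{t_0}$ is entire of order $1$, even, has $z_{-k}(t_0)=-z_k(t_0)$, and satisfies $H_{t_0}(0)>0$; hence its Hadamard factorization simplifies (via evenness and $H_{t_0}(0)\neq 0$ --- e.g.\ by noting $z\mapsto H_{t_0}(\sqrt z)$ is entire of order $1/2$, hence of genus $0$) to $H_{t_0}(z)=H_{t_0}(0)\prod_{k>0}(1-z^2/z_k(t_0)^2)$, converging absolutely since the exponent of convergence of the zeros is at most $1<2$. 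Taking logarithmic derivatives, $H_{t_0}'(z)/H_{t_0}(z)=\sum_{k>0}\bigl(\tfrac{1}{z-z_k(t_0)}+\tfrac{1}{z+z_k(t_0)}\bigr)$ locally uniformly off the zeros; expanding near $z_*$ as $H_{t_0}'(z)/H_{t_0}(z)=\tfrac{1}{z-z_*}+\tfrac{H_{t_0}''(z_*)}{2H_{t_0}'(z_*)}+O(z-z_*)$, subtracting the pole at $z_*$, and using $z_*-z_{-j}(t_0)=2z_*$ to absorb the leftover $\tfrac{1}{2z_*}$, we get $\tfrac{H_{t_0}''(z_*)}{2H_{t_0}'(z_*)}=\sum^{\prime}_{k\neq j}\tfrac{1}{z_*-z_k(t_0)}$, which is \eqref{zjk}.

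For part (ii), write $z_*\coloneqq z_j(t_0)$ and expand $H_{t_0}(z)=\sum_{\ell\geq m}a_\ell(z-z_*)^\ell$ with $a_m\neq 0$ (the zero has order exactly $m$). Since $H_t$ is analytic in $t$ with $\partial_t^k H_t=(-1)^k\partial_z^{2k}H_t$, for $s\coloneqq t-t_0$ of small modulus one has the convergent expansion $H_{t_0+s}(z)=\sum_{k\geq0}\tfrac{(-s)^k}{k!}\partial_z^{2k}H_{t_0}(z)=e^{-sD^2}H_{t_0}(z)$ with $D\coloneqq\partial_z$. Applying $e^{-sD^2}$ termwise, using the classical identity $e^{-\frac12 D^2}w^\ell=\operatorname{He}_\ell(w)$ and the rescaling $w=\sqrt{2s}\,v$ (under which $sD_w^2=\tfrac12 D_v^2$), yields
\[ H_{t_0+s}\bigl(z_*+\sqrt{2s}\,v\bigr)=\sum_{\ell\geq m}a_\ell\,(2s)^{\ell/2}\operatorname{He}_\ell(v), \]
the termwise interchange and convergence being justified by Cauchy estimates on the $a_\ell$ from the order-$1$ growth of $H_{t_0}$ together with standard bounds on $\operatorname{He}_\ell$. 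Dividing by $a_m(2s)^{m/2}$ and writing $\sigma\coloneqq(2s)^{1/2}$ for a fixed branch, the right-hand side becomes $\operatorname{He}_m(v)+\sum_{\ell>m}\tfrac{a_\ell}{a_m}\sigma^{\ell-m}\operatorname{He}_\ell(v)$, an analytic function of $\sigma$ near $0$, locally uniformly in $v$, equal to $\operatorname{He}_m(v)$ at $\sigma=0$.

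The Hermite polynomial $\operatorname{He}_m$ of \eqref{heform}--\eqref{heform2} has $m$ simple real roots $\lambda_1<\dots<\lambda_m$, so by Hurwitz's theorem, for $|s|$ small the function $v\mapsto \tfrac{1}{a_m\sigma^m}H_{t_0+s}(z_*+\sigma v)$ has exactly $m$ zeros near $\{\lambda_1,\dots,\lambda_m\}$, one $v_j(\sigma)$ near each $\lambda_j$; since this function is analytic in $\sigma$ and each $\lambda_j$ is simple, the implicit function theorem gives $v_j(\sigma)=\lambda_j+O(\sigma)$. Undoing the substitution, these are zeros $z_*+\sigma v_j(\sigma)=z_*+\sqrt{2}\,(t-t_0)^{1/2}\lambda_j+O(|t-t_0|)$ of $H_{t_0+s}$, and comparing the zero counts of $H_{t_0}$ and $H_{t_0+s}$ in a small fixed disc about $z_*$ by Rouch\'e shows these account for all zeros there; this is the claimed expansion.

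The implicit function theorem and Hadamard bookkeeping in part (i) are routine. The substantive point is part (ii): the main work is to legitimise the identity $H_{t_0+s}=e^{-sD^2}H_{t_0}$ and its termwise action on the Taylor expansion at $z_*$ --- so that the local profile of $H_{t_0+s}$ is a rescaled Hermite polynomial to leading order --- and then to run Hurwitz's theorem and the implicit function theorem in the rescaled variable $v$, keeping careful track of the half-integer powers of $t-t_0$, so as to obtain the error $O(|t-t_0|)$ rather than merely $o(|t-t_0|^{1/2})$.
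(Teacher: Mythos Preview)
Your proof is correct and, for part (i), essentially identical to the paper's: implicit function theorem, differentiate $H_t(z_j(t))=0$, insert $\partial_t H_t=-H_t''$, and then read off $H_{t_0}''/H_{t_0}'$ at the zero from the Hadamard product via the expansion of the logarithmic derivative near its simple pole.

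For part (ii) the two arguments rest on the same algebraic fact---that applying the heat operator to a monomial produces a Hermite polynomial---but are packaged differently. The paper keeps only the leading Taylor coefficient $a_m$: it computes $\partial_z^{2k}H_{t_0}$ to leading order in $z-z_*$, converts to $\partial_t^k$ via the heat equation, and then Taylor-expands in $t$, recognising the finite sum as $\operatorname{He}_m$ through \eqref{heform2}; all higher $a_\ell$ are absorbed into the $O(|t-t_0|^{1/2})$ error from the start, so only finite sums appear. You instead invoke the operator identity $e^{-\tfrac12 D^2}w^\ell=\operatorname{He}_\ell(w)$ to write the full expansion $H_{t_0+s}(z_*+\sqrt{2s}\,v)=\sum_{\ell\ge m}a_\ell(2s)^{\ell/2}\operatorname{He}_\ell(v)$ as an honest Hermite series, and then peel off the leading term. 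This is more structural and gives the same conclusion, but it costs you a convergence argument: your appeal to ``Cauchy estimates from order-$1$ growth together with standard bounds on $\operatorname{He}_\ell$'' is a little thin as stated. The cleanest justification is to note that $H_{t_0+s}(z_*+w)$ is jointly entire in $(s,w)$ by \eqref{htdef}, so its double Taylor series converges absolutely; using $\partial_s=-\partial_w^2$ to relate the coefficients then shows this double series is exactly your termwise Hermite expansion. After that, your Hurwitz/implicit-function endgame matches the paper's inverse-function/Rouch\'e endgame.
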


The differential equation \eqref{zjk} was previously derived in \cite[Lemma 2.4]{csv} in the case $t > \Lambda$ (in which all zeroes are real and simple); however, in our applications we also need to consider the regime $t \leq \Lambda$ in which the zeroes are permitted to be complex or repeated.
The roots $\lambda_1,\dots,\lambda_m$ appearing in Proposition \ref{dynam}(ii) can be given explicitly for small values of $m$ as
$$ \lambda_1 = -1; \quad \lambda_2 = +1$$
when $m=2$,
$$ \lambda_1 = -\sqrt{3}; \quad x_2 = 0; \quad \lambda_3 = +\sqrt{3}$$
when $m=3$, and
$$ x\lambda1 = -\sqrt{3+\sqrt{6}}; \quad \lambda_2 = -\sqrt{3-\sqrt{6}}; \quad \lambda_3 = \sqrt{3 - \sqrt{6}}; \quad  \lambda_4 = \sqrt{3 + \sqrt{6}}$$
when $m=4$.  From \eqref{heform} and iterating Rolle's theorem we see that all the roots $\lambda_1,\dots,\lambda_m$ of $\operatorname{He}_m$ are real; from the Hermite equation $\left(\frac{d^2}{dz^2} - z \frac{d}{dz} + m\right) \operatorname{He}_m(z) = 0$ and the Picard uniqueness theorem for ODE we see that the zeroes are all simple.

\begin{proof}  First suppose we are in the situation of (i).  As $z_j(t_0)$ is simple, $\frac{\partial}{\partial z} H_t$ is non-zero at $z_j(t_0)$; since $H_t(z)$ is a smooth function of both $t$ and $z$, we conclude from the implicit function theorem that there is a unique solution $z_j(t) \in U$ to the equation
$$ H_t( z_j(t) ) = 0$$
with $z_j(t)$ in a sufficiently small neighbourhood $U$ of $z_j(t_0)$, if $t$ is in a sufficiently small neighbourhood $I$ of $t_0$; furthermore, $z_j(t)$ depends smoothly on $t$, and agrees with $z_j(t_0)$ when $t=t_0$.  Differentiating the above equation at $t_0$, we obtain
$$ \frac{\partial H_t}{\partial t}|_{t=t_0}( z_j(t_0) ) + \frac{d z_j}{d t}(t_0) H'_{t_0}(z_j(t_0)) = 0,$$
where the primes denote differentiation in the $z$ variable.
On the other hand, from \eqref{htdef} and differentiation under the integral sign (which can be justified using the rapid decrease of $\Phi$) we have the backwards heat equation
\begin{equation}\label{back}
\frac{\partial H_t}{\partial t} = -H''_t
\end{equation}
for all $t \geq 0$.  Inserting this into the previous equation, we conclude that
\begin{equation}\label{tzj}
\frac{d z_j}{d t}(t_0)  = \frac{H''_t}{H'_t}( z_j(t_0) ),
\end{equation}
noting that the denominator $H'_t(z_j(t_0))$ is non-vanishing by the hypothesis that the zero at $z_j(t_0)$ is simple.  Henceforth we omit the dependence on $t_0$ for brevity.  From Taylor expansion of $H_t$, $H'_t$, and $H''_t$ around the simple zero $z_j$ we see that
\begin{equation}\label{htz-eq}
 \frac{H''_t}{H'_t}( z_j) = 2 \lim_{z \to z_j}\left(  \frac{H'_t}{H_t}( z) - \frac{1}{z-z_j} \right).
\end{equation}
On the other hand, as $H_t$ is even, non-zero at the origin (as follows from \eqref{htdef} and the positivity of $\Phi$), and entire of order $1$, we see from the Hadamard factorization theorem that
$$ H_t(z) = H_t(0) \prod_k^{\prime} \left(1 - \frac{z}{z_k}\right),$$
where the prime indicates that the $k$ and $-k$ factors are multiplied together.  The product is locally uniformly convergent, so we may take logarithmic derivatives and conclude that
$$ \frac{H'_t}{H_t}(z) = \sum_k^{\prime} \frac{1}{z-z_k}.$$
Inserting this into \eqref{tzj}, \eqref{htz-eq} and using the continuity of $z \mapsto \sum_{k: k \neq j}^\prime \frac{1}{z-z_k}$ at $z_j$ (which follows from the growth in the number of zeroes, either from the dominated convergence theorem or the Weierstrass $M$-test), we obtain the claim (i).

Now we prove (ii).  We abbreviate $z_j(t_0)$ as $z_j$.  By Taylor expansion we have
$$ \frac{\partial^{2k} H_{t_0}}{\partial z^{2k}}(z) = m (m-1) \dots (m-2k+1) a_m (z-z_j)^{m-2k} + O( |z-z_j|^{\max(m-2k+1,0)} )$$
as $z \to z_j$ for any fixed integer $k \geq 0$ and some non-zero complex number $a_m = a_m(z_j, t_0)$ (with the implied constant in the $O()$ notation allowed to depend on $k$, $z_j$, $t_0$); applying the backwards heat equation \eqref{back} we thus have
$$ \frac{\partial^k H_t}{\partial t^k}|_{t=t_0}(z) = (-1)^k m (m-1) \dots (m-2k+1) a_m (z-z_j)^{m-2k} + O( |z-z_j|^{\max(m-2k+1,0)} ).$$
Performing Taylor expansion in time and using \eqref{heform2}, we conclude that in the regime $z - z_j = O( |t-t_0|^{1/2} )$, one has the bound
$$ H_t(z) = 2^{\frac{m}{2}} a_m ((t-t_0)^{1/2})^m \left( \operatorname{He}_m\left( \frac{z-z_j}{\sqrt{2} (t-t_0)^{1/2}} \right) + O\left( |t-t_0|^{1/2} \right) \right)$$
as $t \to t_0$, using (say) the standard branch of the square root.  By the inverse function theorem (and the simple nature of the zeroes of $\operatorname{He}_m$), we conclude that for $t$ sufficiently close but not equal to $t_0$, we have $m$ zeroes of $H_t$ of the form
$$ z_j + \sqrt{2} (t-t_0)^{1/2} \lambda_j + O( |t-t_0| ).$$
By Rouche's theorem, if $U$ is a sufficiently small neighborhood of $z_j$ then these are the only zeroes of $H_t$ in $U$ for $t$ sufficiently close to $t_0$.  The claim follows.
\end{proof}

Next, we recall the following bound of de Bruijn:

\begin{theorem}\label{debr-bound}  Suppose that $t_0 \in \R$ and $y_0 > 0$ is such that there are no zeroes $H_{t_0}(x+iy)=0$ with $x \in \R$ and $y > y_0$.  Then for any $t>t_0$, there are no zeroes $H_{t}(x+iy)=0$ with $x \in \R$ and $y > \max( y_0^2 - 2(t-t_0), 0)^{1/2}$.  In particular one has $\Lambda \leq t_0 + \frac{1}{2} y_0^2$.
\end{theorem}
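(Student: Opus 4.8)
The plan is to follow the zeroes of $H_t$ as $t$ increases past $t_0$ by means of the differential equation \eqref{zjk} of Proposition \ref{dynam}, and to show that the square of the height of the topmost zero decreases at rate at least $2$. Since $H_t = H_t^*$ and $H_t$ is even, the zero set of $H_t$ is symmetric about both coordinate axes, so it suffices to control $Y(t) \coloneqq \sup\{ \Im z : H_t(z) = 0\}$; the hypothesis gives $Y(t_0) \le y_0$, and the target is $Y(t)^2 \le \max(Y(t_0)^2 - 2(t-t_0), 0)$ for all $t > t_0$, which (using $Y(t_0)\le y_0$) is exactly the stated assertion that $H_t$ has no zero $x+iy$ with $|y|>\max(y_0^2-2(t-t_0),0)^{1/2}$. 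Granting this, the bound $\Lambda \le t_0 + \tfrac{1}{2} y_0^2$ is immediate, since for $t > t_0 + \tfrac12 y_0^2$ the right-hand side vanishes, so $H_t$ has only real zeroes and hence $t \ge \Lambda$; and by P\'olya's theorem (cited in the introduction) it suffices to prove the inequality for $t_0 < t < t_0 + \tfrac12 y_0^2$, since once all zeroes are real they stay real.

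The crux is the one-sided differential inequality $\frac{d}{dt} Y(t)^2 \le -2$, valid at every time $t > t_0$ at which some topmost zero $z_j = z_j(t)$ — one with $\Im z_j = Y(t)$, necessarily $>0$ — is simple. Taking imaginary parts in \eqref{zjk} and using $\Im \frac{1}{w} = -\Im w / |w|^2$ gives $\frac{d}{dt}\Im z_j(t) = 2\sum_{k \neq j} \Im \frac{1}{z_j - z_k} = -2\sum_{k\neq j}\frac{Y(t) - \Im z_k}{|z_j - z_k|^2}$, where the sum over imaginary parts is now absolutely convergent (because $H_t$ is entire of order $1$ and $|\Im z_k|\le Y(t)$ for all $k$), so that the primed grouping of \eqref{zjk} is unnecessary here. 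Every summand is $\le 0$ by maximality of $Y(t)$, so the sum is at most its summand coming from the zero $\overline{z_j}$ — a separate index $k$, since $Y(t)>0$ — which equals $-2\,\frac{2Y(t)}{(2Y(t))^2} = -\frac{1}{Y(t)}$. Hence $\frac{d}{dt}\Im z_j(t) \le -\frac{1}{Y(t)}$ and therefore $\frac{d}{dt} Y(t)^2 = 2 Y(t)\,\frac{d}{dt}\Im z_j(t) \le -2$; if several topmost zeroes are tied, the same bound holds for the upper-right Dini derivative of $Y^2$, using any topmost zero that is simple.

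It remains to integrate this inequality, and here three routine but genuinely necessary facts must be supplied: (a) $Y(t) < \infty$ for all $t \ge t_0$, with the supremum attained; (b) $t \mapsto Y(t)$ is continuous on $[t_0,\infty)$; and (c) the set of times at which \emph{every} topmost zero of $H_t$ is repeated is negligible. For (a), crude uniform upper and lower bounds for $|H_t(x+iy)|$ read off directly from \eqref{htdef} and the super-exponential decay of $\Phi$, together with the argument principle, keep the zeroes inside the strip $\{|\Im z| \le y_0\}$ and prevent their escape to $i\infty$ as $t$ increases from $t_0$; this is in essence de Bruijn's original observation. For (b), continuous dependence of the zeroes on $t$ is furnished by Proposition \ref{dynam} (implicit function theorem at simple zeroes, Rouch\'e's theorem at repeated ones), combined with the bounds of (a). For (c), the occurrence of a repeated zero is detected by an analytic function of $t$ that is not identically zero, so such zeroes occur only on a discrete set of times, where the continuity of (b) suffices. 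A standard comparison lemma then upgrades ``$Y^2$ continuous, with upper-right derivative $\le -2$ off a discrete set'' to $Y(t)^2 \le Y(t_0)^2 - 2(t-t_0)$ up to the first time $Y$ reaches $0$, beyond which $Y \equiv 0$ by P\'olya; since $Y(t_0) \le y_0$, this is the desired bound.

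The differential-inequality computation of the second paragraph is short and stable, so I expect the real difficulty to lie in the three technical points of the third paragraph — above all in ruling out the escape of zeroes to imaginary infinity (the a priori finiteness of $Y(t)$) and in handling the possibility of repeated, or infinitely many, topmost zeroes. This is precisely where de Bruijn's original treatment is careful, and where I would follow it.
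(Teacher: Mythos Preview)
The paper does not prove this theorem at all: its entire proof reads ``See \cite[Theorem 13]{debr}.'' Your proposal, by contrast, supplies a genuine argument via the zero dynamics of Proposition \ref{dynam}, and your core differential-inequality computation is correct—indeed it is essentially the same computation the paper carries out in the proof of Proposition \ref{ubc}, where the conjugate zero $\overline{z_j}$ likewise contributes exactly $-\frac{1}{Y(t)}$ and the remaining terms are nonpositive. So you are effectively showing that de Bruijn's theorem can be recovered from the paper's own machinery, which is a nice observation the paper itself does not make.

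Two comments on the technical points you flag. First, de Bruijn's original argument does not proceed via the ODE for zeroes (that viewpoint came later, e.g.\ in \cite{csv}); he works directly with the integral representation and Jensen-type estimates. So when you say you would ``follow de Bruijn'' for (a)--(c), be aware that this means importing a rather different framework; it may be cleaner to stay within the zero-dynamics setup, arguing as in the paper's proof of Proposition \ref{ubc} (Hurwitz/Rouch\'e for continuity, plus an a priori strip bound from the integral representation to prevent escape to $i\infty$). Second, your point (a) as phrased (``keep the zeroes inside the strip $\{|\Im z|\le y_0\}$'') is already close to the monotonicity $Y(t)\le Y(t_0)$ that the differential inequality itself delivers, so one should be careful not to make the argument circular; what one actually needs independently is only the much weaker statement that $Y(t)<\infty$ locally uniformly in $t$, which does follow from crude growth bounds on $H_t$ in the integral representation \eqref{htdef}.
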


\begin{proof} See \cite[Theorem 13]{debr}.
\end{proof}

We are now ready to prove Theorem \ref{ubc-0}.  The main step is to establish

\begin{proposition}[Zero-free region criterion]\label{ubc}  Suppose that $t_0, X > 0$ and $0 < y_0 \leq 1$ obey the following hypotheses:
\begin{itemize}
\item[(i)]  There are no zeroes $H_0(x+iy)=0$ with $0 \leq x \leq X$ and $\sqrt{y_0^2+2 t_0} \leq y \leq 1$.
\item[(ii)]  There are no zeroes $H_{t_0}(x+iy)=0$ with $x \geq X+\sqrt{1-y_0^2}$ and $y_0 \leq y \leq \sqrt{1-2t_0}$.
\item[(iii)]  There are no zeroes $H_{t}(x+iy)=0$ with $X \leq x \leq X+\sqrt{1-y_0^2}$, $\sqrt{y_0^2 + 2(t_0-t)} \leq y \leq \sqrt{1-2t}$, and $0 \leq t \leq t_0$.
\end{itemize}
Then there are no zeroes $H_{t_0}(x+iy) = 0$ with $x \in \R$ and $y \geq y_0$.
\end{proposition}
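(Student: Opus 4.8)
The plan is to run a continuity argument in $t$: track the zeroes of $H_t$ inside an evolving rectangle and use the differential equation \eqref{zjk} for the motion of a single zero to show that no zero can cross into that rectangle as $t$ increases from $0$ to $t_0$. First I would dispose of degenerate cases: when $2t_0+y_0^2\ge 1$, Theorem \ref{debr-bound}, applied with initial time $0$ and the classical fact that the zeroes of $H_0$ lie in the open strip $|\Im z|<1$, already forces every zero of $H_{t_0}$ to satisfy $|\Im z|\le\max(1-2t_0,0)^{1/2}\le y_0$, which gives the conclusion (the boundary equality $2t_0+y_0^2=1$ handled by a routine limiting argument); so assume henceforth $2t_0+y_0^2<1$. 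Throughout I will use that the zeroes of $H_t$ are invariant under $z\mapsto-z$ and under $z\mapsto\bar z$ (since $H_t$ is even and $H_t=H_t^*$), that $H_t(iy)>0$ for $y\in\R$ (from \eqref{htdef} and the positivity of $\Phi$), and — again by Theorem \ref{debr-bound} with initial time $0$ — that every zero of $H_t$ satisfies $|\Im z|\le\sqrt{1-2t}$ for $0\le t<\tfrac12$.

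Write $R:=X+\sqrt{1-y_0^2}$, $y_-(t):=\sqrt{y_0^2+2(t_0-t)}$, $y_+(t):=\sqrt{1-2t}$ (so that $y_-(t)<y_+(t)$ for $t\in[0,t_0]$, by the assumption above), and for such $t$ set
\[ \Omega_t:=\{\,x+iy:\ 0\le x\le X,\ y_-(t)\le y\le y_+(t)\,\},\qquad \mathcal B_t:=\{\,x+iy:\ X\le x\le R,\ y_-(t)\le y\le y_+(t)\,\}. \]
Hypothesis (iii) is precisely the assertion that $H_t$ has no zero in $\mathcal B_t$ for any $t\in[0,t_0]$. The core of the proof is the parallel claim that $H_t$ has no zero in $\Omega_t$ for any $t\in[0,t_0]$. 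Granting this, taking $t=t_0$ and combining with hypothesis (iii) at $t_0$, hypothesis (ii), and the strip bound shows that $H_{t_0}$ has no zero with $\Re z\ge 0$ and $\Im z\ge y_0$; the symmetry $z\mapsto-\bar z$ (with $H_{t_0}(iy)>0$) then removes the constraint $\Re z\ge 0$, yielding the Proposition.

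I would prove the claim by a first-entry-time contradiction. Let $E:=\{t\in[0,t_0]:H_t\text{ has a zero in }\Omega_t\}$. By joint continuity of $(t,z)\mapsto H_t(z)$ and compactness of the (continuously varying) rectangles $\Omega_t$, $E$ is closed, and hypothesis (i) gives $0\notin E$. If $E\ne\emptyset$, put $T:=\min E\in(0,t_0]$, so $H_T$ has a zero $z^\dagger\in\Omega_T$ while $H_t$ has none in $\Omega_t$ for $t<T$. The first step is to pin down $z^\dagger$: it cannot lie on the left edge ($H_T(iy)>0$) or on the right edge ($\Re z=X$, which lies in $\mathcal B_T$, excluded by hypothesis (iii)), and it cannot lie in the interior or on the top edge, since there a whole neighbourhood of $z^\dagger$ would be contained in $\Omega_t$ for $t<T$ near $T$ — because $y_+$ is decreasing and $y_-(t)\to y_-(T)<\Im z^\dagger$ in those cases — whereupon Hurwitz's theorem would produce a zero of $H_t$ in $\Omega_t$, contradicting minimality of $T$. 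Hence $z^\dagger=x^\dagger+iy^\dagger$ lies on the bottom edge with $0<x^\dagger<X$ and $y^\dagger=y_-(T)$. A similar minimality argument shows $z^\dagger$ is a simple zero: were its order $m\ge 2$, then by Proposition \ref{dynam}(ii) (with $(t-T)^{1/2}=i(T-t)^{1/2}$ for $t<T$ in the standard branch) the zeroes of $H_t$ near $z^\dagger$ for $t<T$ close to $T$ would include one of imaginary part $y^\dagger+\sqrt2\,\lambda_m(T-t)^{1/2}+O(T-t)$ with $\lambda_m>0$ the largest root of $\operatorname{He}_m$; since $y_-(t)=y^\dagger+\tfrac{1}{y^\dagger}(T-t)+O((T-t)^2)$, that zero would lie in $\Omega_t$, a contradiction.

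It remains to contradict the dynamics at time $T$. Let $z_j(t)$ be the smooth continuation of $z^\dagger$ given by Proposition \ref{dynam}(i). For $t<T$ near $T$ we have $z_j(t)\notin\Omega_t$ while $\Re z_j(t)\to x^\dagger\in(0,X)$ and $\Im z_j(t)<y_+(t)$ (equality would force $z_j(t)\in\Omega_t$), so necessarily $\Im z_j(t)<y_-(t)$; letting $t\to T^-$ and differentiating gives $\frac{d}{dt}\Im z_j(T)\ge y_-'(T)=-1/y^\dagger$. On the other hand, grouping the zeroes of $H_T$ into orbits of the group generated by $z\mapsto -z$ and $z\mapsto\bar z$ and substituting into \eqref{zjk} yields
\[ \frac{d}{dt}\Im z_j(T)\ =\ -\frac{1}{y^\dagger}\ -\ 2S', \]
where $-1/y^\dagger$ is the contribution of the conjugate zero $\bar z^\dagger$, and $S'$ is a sum of the contributions of all other zeroes grouped by orbit. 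The part of $z^\dagger$'s orbit other than $z^\dagger$ and $\bar z^\dagger$ contributes $\tfrac{y^\dagger}{2|z^\dagger|^2}>0$ to $S'$; each real zero contributes a positive amount; and a complex-conjugate orbit through $x_k+iy_k$ ($y_k>0$) contributes $F\big((x^\dagger-x_k)^2\big)+F\big((x^\dagger+x_k)^2\big)$, where a short computation gives
\[ F(r)\ =\ \frac{y^\dagger-y_k}{r+(y^\dagger-y_k)^2}+\frac{y^\dagger+y_k}{r+(y^\dagger+y_k)^2}\ =\ \frac{2y^\dagger\big(r+(y^\dagger)^2-y_k^2\big)}{\big(r+(y^\dagger-y_k)^2\big)\big(r+(y^\dagger+y_k)^2\big)}. \]
When $y_k\le y^\dagger$ this is manifestly $\ge0$. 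When $y_k>y^\dagger$, the fact that $H_T$ has no zeroes in $\Omega_T$ off its bottom edge, none in $\mathcal B_T$, and — by the symmetry $z\mapsto-\bar z$ — none in the reflections of these sets across the imaginary axis forces $|x_k|>R$; hence both arguments of $F$ exceed $(R-X)^2=1-y_0^2$, whereas $y_k^2-(y^\dagger)^2\le y_+(T)^2-y_-(T)^2=1-y_0^2-2t_0<1-y_0^2$, so $F>0$ there as well. Thus $S'>0$, giving $\frac{d}{dt}\Im z_j(T)<-1/y^\dagger=y_-'(T)$, contradicting the inequality above; so $E=\emptyset$, the claim holds, and the Proposition follows. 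I expect the one genuinely delicate point to be this last positivity of $S'$, and it is exactly the matching of the barrier width $\sqrt{1-y_0^2}$ to de Bruijn's strip half-width $\sqrt{1-2t}$ that makes it work: any zero sitting above the descending floor is thereby forced far enough from $z^\dagger$ to contribute to $S'$ with the favourable sign.
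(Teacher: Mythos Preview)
Your proof is correct and follows essentially the same route as the paper's: a first-entry-time argument in $t$, locating the offending zero on the descending lower boundary $y=y_-(T)$ via Hurwitz/Rouch\'e, ruling out multiplicity by Proposition~\ref{dynam}(ii), and then deriving a contradiction from the velocity formula \eqref{zjk} by checking the sign of each orbit's contribution. The differences are cosmetic: you keep the explicit ceiling $y_+(t)=\sqrt{1-2t}$ on $\Omega_t$ where the paper first invokes Theorem~\ref{debr-bound} to drop it, and you group zeroes into full four-element orbits where the paper pairs only with complex conjugates; your explicit treatment of the degenerate case $2t_0+y_0^2\ge 1$ and your more careful justification that zeroes with $y_k>y^\dagger$ must satisfy $|x_k|>R$ are welcome clarifications of points the paper handles somewhat tersely.
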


\begin{proof}  It is well known that the Riemann $\xi$ function has no zeroes outside of the strip $\{ 0 \leq \Re s \leq 1 \}$, hence there are no zeroes $H_0(x+iy)=0$ with $y > 1$.  By Theorem \ref{debr-bound}, we may thus remove the upper bound constraints $y \leq 1$, $y \leq \sqrt{1-2t_0}$, and $y \leq \sqrt{1-2t}$ from (i), (ii), and (iii) respectively.

By hypotheses (ii), (iii) and the symmetry properties of $H_t$, it suffices to show that for every $0 \leq t \leq t_0$, there are no zeroes $H_t(x+iy) = 0$ with $0 \leq x \leq X$ and $y \geq Y(t)$, where $Y(t) \coloneqq \sqrt{y_0^2 + 2(t_0-t)}$.  By hypothesis (i), this is true at time $t=0$.  Suppose the claim failed for some time $0 < t \leq t_0$.  Let $t_1 \in (0,t_0]$ be the minimal time in which this occurred (such a time exists because $H_t$ varies continuously in $t$, and there are no zeroes $H_t(x+iy)=0$ with (say) $y>1$).  From Rouche's theorem (or Proposition \ref{dynam}) we conclude that there is a zero $H_{t_1}(x+iy)=0$ with $x+iy$ on the boundary of the region $\{ x+iy: 0 \leq	 x \leq X, y \geq Y(t_1) \}$.  The right side $x=X$ of this boundary is ruled out by hypothesis (iii), and (as mentioned at the start of the section) the left side $x=0$ is ruled out by \eqref{htdef} and the positivity of $\Phi$.  Thus by the symmetry properties of $H_{t_1}$ we must have
$$ H_{t_1}(x+iY(t_1)) = 0$$
for some $0 < x < X$.

Suppose first that $H_{t_1}$ has a repeated zero at $x+iy_0$.  Using Proposition \ref{dynam}(ii) and observing (from the symmetry of $\operatorname{He}_m$) that at least one of the roots $x_1,\dots,x_m$ is positive, we then see that for $t<t_1$ sufficiently close to $t_1$, $H_t$ has a zero in the region $\{ x+iy: 0 \leq x \leq X, y \geq Y(t) \}$, contradicting the minimality of $t_1$.  Thus the zero $x+i Y(t_1)$ of $H_{t_1}$ must be simple.  In particular, by Proposition \ref{dynam}(i) we can write $x+i Y(t_1) = z_j(t_1)$ for some smooth function $z_j$ in a neighbourhood of $t_1$ obeying \eqref{zjk}, such that $z_j(t)$ is a zero of $H_t$ for all $t$ close to $t_1$.  We will prove that
\begin{equation}\label{im}
\Im \frac{d}{d t} z_j( t_1 ) < \frac{d}{dt} Y(t_1),
\end{equation}
which implies that there is a zero of $H_t$ in the region $\{ x+iy: 0 \leq x \leq X, y \geq Y(t) \}$  for $t<t_1$ sufficiently close to $t_1$, giving the required contradiction.  

The right-hand side of \eqref{im} is
$$ \frac{d}{d t} Y(t_1) = -\frac{1}{Y(t_1)}.$$
By Proposition \ref{dynam}(i), the left-hand side of \eqref{im} is
$$ -2 \sum_{k \neq j}^{\prime} \frac{Y(t_1) - y_k}{(x-x_k)^2 + (Y(t_1)-y_k)^2}$$
where we write $z_k = x_k + i y_k$.  Clearly any zero $x_k+iy_k$ with imaginary part $y_k$ in $[-Y(t_1),Y(t_1)]$ gives a non-positive contribution to this sum, the contribution of the zero $x- iY(t_1)$ is $-\frac{1}{Y(t_1)}$, the contribution of the zero $-x+iY(t_1)$ vanishes, and the contribution of $-x-iY(t_1)$ is negative.  Grouping the remaining zeroes with their complex conjugates, it then suffices to show that
$$ \frac{Y(t_1) - y_k}{(x-x_k)^2 + (Y(t_1)-y_k)^2} - \frac{Y(t_1) + y_k}{(x-x_k)^2 + (Y(t_1)+y_k)^2} \leq 0$$
whenever $y_k > Y(t_1)$.  Cross-multiplying and canceling like terms, this inequality eventually simplifies to
$$ y_k^2 \leq (x-x_k)^2 + Y(t_1)^2.$$
But from the hypothesis (iii) and the assumption $y_k > Y(t_1)$, we have $|x_k| \geq X+\sqrt{1-Y(t_1)^2}$, so $(x-x_k)^2 \geq 1-Y(t_1)^2$.  On the other hand from Theorem \ref{debr-bound} one has $y_k < 1$, giving the required contradiction.
\end{proof}

By combining Proposition \ref{ubc} with Theorem \ref{debr-bound}, we obtain Theorem \ref{ubc-0}, noting from \eqref{hoz}, \eqref{sas} that condition (i) of Proposition \ref{ubc} is implied by condition (i) of Theorem \ref{ubc-0}.

\section{Applying the fundamental solution for the heat equation}\label{heatflow-sec}

As discussed in the introduction, we will establish Theorem \ref{eff} by writing $H_t$ in terms of $H_0$ using the fundamental solution to the heat equation.  Namely, for any $t>0$, we have from \eqref{gaussian} that
$$
e^{tu^2} = \int_\R e^{\pm 2 \sqrt{t} vu} \frac{1}{\sqrt{\pi}} e^{-v^2}\ dv$$
for any complex $u$ and either choice of sign $\pm$. Multiplying by $e^{\pm i zu}$ and averaging, we conclude that
$$
e^{tu^2} \cos(zu) = \int_\R \cos\left(\left(z - 2 i \sqrt{t} v\right)u\right) \frac{1}{\sqrt{\pi}} e^{-v^2}\ dv$$
for any complex $z,u$.  Multiplying by $\Phi(u)$ and using Fubini's theorem, we conclude the heat kernel representation
$$ H_t(z) = \int_\R H_0( z - 2i \sqrt{t} v ) \frac{1}{\sqrt{\pi}} e^{-v^2}\ dv $$
for any complex $z$.  Using \eqref{hoz}, we thus have
\begin{equation}\label{htz}
 H_t(z) = \int_\R \frac{1}{8} \xi\left( \frac{1+iz}{2} + \sqrt{t} v \right) \frac{1}{\sqrt{\pi}} e^{-v^2}\ dv.
\end{equation}

\begin{remark}  We have found numerically that the formula \eqref{htz} gives a fast and accurate means to compute $H_t(z)$ when $z$ is of moderate size, e.g., if $z = x+iy$ with $|x| \leq 10^6$ and $|y| \leq 1$.  However, we will not need to directly compute the right-hand side of \eqref{htz} for our application to bounding $\Lambda$, as we will only need to control $H_t(x+iy)$ for large values of $x$, and we will shortly develop tractable approximations of Riemann-Siegel type that are more suitable for this regime.
\end{remark}

We now combine this formula with expansions of the Riemann $\xi$-function.  From \cite[(2.10.6)]{titch} we have the Riemann-Siegel formula
\begin{equation}\label{xio}
 \frac{1}{8} \xi(s) = R_{0,0}(s) + R_{0,0}^*(1-s) 
\end{equation}
for any complex $s$ that is not an integer (in order to avoid the poles of the Gamma function), where $R_{0,0}(s)$ is the contour integral
$$ R_{0,0}(s) := \frac{1}{8} \frac{s(s-1)}{2} \pi^{-s/2} \Gamma\left(\frac{s}{2}\right) \int_{0 \swarrow 1} \frac{w^{-s} e^{i\pi w^2}}{e^{\pi i w} - e^{-\pi i w}}\ dw$$
with $0 \swarrow 1$ any infinite line oriented in the direction $e^{5\pi i/4}$ that crosses the interval $[0,1]$.  From the residue theorem (and the gaussian decrease of $e^{i\pi w^2}$ along the $e^{\pi i/4}$ and $e^{5\pi i/4}$ directions) we may expand
$$ R_{0,0}(s) = \sum_{n=1}^N r_{0,n}(s)+ R_{0,N}(s)$$
for any non-negative integer $N$, where $r_{0,n}, R_{0,N}$ are the meromorphic functions
\begin{align}
 r_{0,n}(s) &:= \frac{1}{8} \frac{s(s-1)}{2} \pi^{-s/2} \Gamma\left(\frac{s}{2}\right) n^{-s},\label{ron-def} \\
R_{0,N}(s) &:= \frac{1}{8} \frac{s(s-1)}{2} \pi^{-s/2} \Gamma\left(\frac{s}{2}\right) \int_{N \swarrow N+1} \frac{w^{-s} e^{i\pi w^2}}{e^{\pi i w} - e^{-\pi i w}}\ dw\label{RON-def}
\end{align}
and $N \swarrow N+1$ denotes any infinite line oriented in the direction $e^{5\pi i /4}$ that crosses the interval $[N,N+1]$.  For any $z$ that is not purely imaginary, we see from Stirling's approximation that the functions $r_{0,n}(\frac{1+iz}{2} + \sqrt{t} v)$ and $R_{0,N}(\frac{1+iz}{2} + \sqrt{t} v)$ grow slower than gaussian as $v \to \pm \infty$ (indeed they grow like $\exp( O( |v| \log |v| ) )$, where the implied constants depend on $t,z$).  From this and
\eqref{htz}, \eqref{xio} we conclude that
\begin{equation}\label{htz-expand}
 H_t(z) = \sum_{n=1}^N r_{t,n}\left(\frac{1+iz}{2}\right) + \sum_{n=1}^N r_{t,n}^*\left(\frac{1-iz}{2}\right) + R_{t,N}\left(\frac{1+iz}{2}\right) + R_{t,N}^*\left(\frac{1-iz}{2}\right)
\end{equation}
for any $t>0$, any $z$ that is not purely imaginary, and any non-negative integer $N$, where $r_{t,n}(s), R_{t,N}(s)$ are defined for non-real $s$ by the formulae
\begin{align*}
 r_{t,n}(s) &:= \int_\R r_{0,n}\left( s + \sqrt{t} v \right) \frac{1}{\sqrt{\pi}} e^{-v^2}\ dv\\
 R_{t,N}(s) &:= \int_\R R_{0,N}\left( s + \sqrt{t} v \right) \frac{1}{\sqrt{\pi}} e^{-v^2}\ dv;
\end{align*}
these can be thought of as the evolutions of $r_{0,n}, R_{0,N}$ respectively under the forward heat equation.

The functions $r_{0,n}(s), R_{0,N}(s)$ grow slower than gaussian as long as the imaginary part of $s$ is bounded and bounded away from zero.  As a consequence, we may shift contours (replacing $v$ by $v + \frac{\sqrt{t}}{2} \alpha_n$) and write
\begin{equation}\label{rtn-def}
 r_{t,n}(s) = \exp\left( - \frac{t}{4} \alpha_n^2\right) \int_\R \exp\left( - \sqrt{t} v \alpha_n\right) r_{0,n}\left( s + \sqrt{t} v + \frac{t}{2} \alpha_n\right) \frac{1}{\sqrt{\pi}} e^{-v^2}\ dv
\end{equation}
for any complex number $\alpha_n$ with $\Im(s), \Im(s + \frac{t}{2} \alpha_n)$ having the same sign.  Similarly we may write
\begin{equation}\label{RTN-def}
 R_{t,N}(s) = \exp\left( - \frac{t}{4} \beta_N^2\right) \int_\R \exp\left( - \sqrt{t} v \beta_N\right) R_{0,N}\left( s + \sqrt{t} v + \frac{t}{2} \beta_N\right) \frac{1}{\sqrt{\pi}} e^{-v^2}\ dv
\end{equation}
for any complex number $\beta_N$ with $\Im s, \Im(s + \frac{t}{2} \beta_N)$ having the same sign.  In the spirit of the saddle point method, we will select the parameters $\alpha_n, \beta_N$ later in the paper in order to make the integrands in \eqref{rtn-def}, \eqref{RTN-def} close to stationary in phase at $v=0$, in order to obtain good estimates and approximations for these terms.

\section{Elementary estimates}

In order to explicitly estimate various error terms arising in the proof of Theorem \ref{eff}, we will need the following elementary estimates:

\begin{lemma}[Elementary estimates]\label{elem-lem} Let $x > 0$.
\begin{itemize}
\item[(i)] If $a > 0$ and $b \geq 0$ are such that $x > b/a$, then
$$O_{\leq}\left(\frac{a}{x}\right) + O_{\leq}\left( \frac{b}{x^2}\right ) = O_{\leq}\left( \frac{a}{x-b/a} \right).$$
More generally, if $a > 0$ and $b,c \geq 0$ are such that $x > b/a, \sqrt{c/a}$, then
$$O_{\leq}\left(\frac{a}{x}\right) + O_{\leq}\left( \frac{b}{x^2} \right) + O_{\leq}\left( \frac{c}{x^3}\right) = O_{\leq}\left( \frac{a}{x-\max(b/a,\sqrt{c/a})} \right).$$
\item[(ii)]  If $x > 1$, then
$$\log\left(1 + O_{\leq}\left(\frac{1}{x}\right) \right) = O_{\leq}\left(\frac{1}{x-1}\right).$$
or equivalently
$$1 + O_{\leq}\left(\frac{1}{x}\right) = \exp\left( O_{\leq}\left(\frac{1}{x-1}\right) \right).$$
\item[(iii)]  If $x > 1/2$, then
$$\exp\left( O_{\leq}\left(\frac{1}{x}\right) \right) = 1 + O_{\leq}\left( \frac{1}{x-0.5} \right).$$
\item[(iv)]  We have
$$ \exp\left(O_{\leq}(x)\right) = 1 + O_{\leq}(e^x-1).$$
\item[(v)] If $z$ is a complex number with $|\Im z| \geq 1$ or $\Re z \geq 1$, then
$$ \Gamma(z) = \sqrt{2\pi} \exp\left( \left(z-\frac{1}{2}\right) \log z - z + O_{\leq}\left( \frac{1}{12(|z| - 0.33)} \right)\right).$$
\item[(vi)] If $a,b > 0, y \geq 0$ and $x \geq x_0 \geq \exp(a/b)$ and $x_0 > c \geq 0$, then
$$\frac{\log^a|x+iy|}{(x-c)^b} \leq \frac{\log^a |x_0+iy|}{(x_0-c)^b}.$$
\end{itemize}
\end{lemma}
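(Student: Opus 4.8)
The six parts are largely independent elementary estimates, so the plan is to treat each separately, with parts (i)--(iv) following from routine manipulations with the $O_{\leq}$ notation and parts (v)--(vi) requiring slightly more care. For part (i), the first assertion: given $|A| \le a/x$ and $|B| \le b/x^2$, I would bound $|A+B| \le a/x + b/x^2 = \frac{a}{x}(1 + \frac{b}{ax})$ and then observe that $\frac{1}{x}(1 + \frac{b}{ax}) \le \frac{1}{x - b/a}$, which after cross-multiplying (valid since $x > b/a > 0$) reduces to $(x - b/a)(1 + \frac{b}{ax}) \le x$, i.e. $x + \frac{b}{a} - \frac{b}{a} - \frac{b^2}{a^2 x} \le x$, which is clear. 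The three-term version is the same computation: bound the sum by $\frac{a}{x}(1 + \frac{b}{ax} + \frac{c}{ax^2})$ and note each of $\frac{b}{ax}, \frac{c}{ax^2}$ is at most $\max(b/a,\sqrt{c/a})/x$ when $x \ge \max(b/a, \sqrt{c/a})$, so the bracket is at most $(1 - \max(b/a,\sqrt{c/a})/x)^{-1}$ by the same one-variable inequality applied with the parameter $\max(b/a,\sqrt{c/a})$ — actually more simply, $\frac{c}{ax^2} \le \frac{\sqrt{c/a}}{x}\cdot\frac{\sqrt{c/a}}{x} \le \frac{\sqrt{c/a}}{x}$ since $\sqrt{c/a}/x \le 1$, and likewise $\frac{b}{ax} \le \frac{b/a}{x}$, so the bracket is $\le 1 + \frac{2\max(b/a,\sqrt{c/a})}{x}$... this is slightly lossy; the cleaner route is to absorb both lower-order terms into a single geometric-type bound $\frac{1}{x-M}$ with $M = \max(b/a,\sqrt{c/a})$ by checking $\frac{a}{x} + \frac{b}{x^2} + \frac{c}{x^3} \le \frac{a}{x-M}$ directly, which upon clearing denominators is a polynomial inequality in $x$ that holds for $x \ge M$.

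For part (ii), I would use the Taylor/integral representation of $\log(1+w)$: for $|w| \le 1/x < 1$, $|\log(1+w)| \le \sum_{k\ge 1} \frac{|w|^k}{k} \le \sum_{k \ge 1} |w|^k = \frac{|w|}{1-|w|} \le \frac{1/x}{1 - 1/x} = \frac{1}{x-1}$; the equivalent exponential form is just exponentiating. For part (iii), similarly, for $|w| \le 1/x$ with $x > 1/2$ we have $|e^w - 1| \le \sum_{k \ge 1} \frac{|w|^k}{k!} \le |w|\sum_{k\ge 1}\frac{|w|^{k-1}}{k!} \le |w| \cdot \frac{e^{|w|}-1}{|w|}$... more directly $|e^w-1| \le |w| e^{|w|} \le \frac{1}{x} e^{1/x}$, and one checks $\frac{1}{x}e^{1/x} \le \frac{1}{x - 1/2}$ for $x > 1/2$ (at $x = 1/2$ the right side blows up, and the inequality $e^{1/x} \le \frac{x}{x-1/2} = \frac{1}{1 - 1/(2x)}$ follows from $e^u \le \frac{1}{1-u/2}$ for $0 < u < 2$, i.e. $e^u(1 - u/2) \le 1$, which holds since the left side has nonpositive derivative for $u \ge 0$ and equals $1$ at $u=0$ — actually its derivative is $e^u(1-u/2) - e^u/2 = e^u(1/2 - u/2)$ which is $\le 0$ for $u \ge 1$, so one should check $0 < u \le 1$ separately, where $e^u \le 1 + u + u^2 \le \frac{1}{1-u/2}$ can be verified). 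Part (iv) is immediate: if $|w| \le x$ then $|e^w - 1| = |\int_0^w e^\zeta d\zeta| \le |w| \max_{|\zeta|\le|w|} e^{\Re\zeta} \le |w| e^{|w|} \le \dots$ — more cleanly, $|e^w-1| \le e^{|w|} - 1 \le e^x - 1$ since $|e^w - 1| \le \sum_{k\ge1}\frac{|w|^k}{k!} = e^{|w|}-1$ and $t \mapsto e^t - 1$ is increasing.

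For part (v), I would invoke a standard effective form of Stirling's formula: $\log\Gamma(z) = (z - \tfrac12)\log z - z + \tfrac12\log(2\pi) + \mu(z)$ where the remainder $\mu(z)$ satisfies $|\mu(z)| \le \frac{1}{12|z|}$ on the right half-plane, and more generally, using the Binet integral representation $\mu(z) = \int_0^\infty \frac{(\{u\}^2 - \{u\} + 1/6)/2}{...}$ or rather the bound $|\mu(z)| \le \frac{1}{12 \Re z}$ when $\Re z > 0$ and the sharper $|\mu(z)| \le \frac{1}{12|z|} \cdot \frac{1}{\cos^2(\tfrac12\arg z)}$ type estimates — but the paper wants the specific bound $\frac{1}{12(|z|-0.33)}$ valid under the hypothesis $|\Im z| \ge 1$ or $\Re z \ge 1$. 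The cleanest approach is to cite a known effective Stirling bound (e.g. from the literature on the Gamma function, such as the estimate $|\mu(z)| \le \frac{1}{12|z|}\sec^2(\theta/2)$ with $\theta = \arg z$, valid for $|\arg z| < \pi$) and then check that under the stated hypothesis $\sec^2(\theta/2) \le \frac{|z|}{|z|-0.33}$, which translates into a geometric condition: when $\Re z \ge 1$ or $|\Im z| \ge 1$, the point $z$ stays a bounded angle away from the negative real axis in a way that makes $|z|\cos^2(\theta/2) \ge |z| - 0.33$; equivalently $\Re(|z| + z)/2 \ge |z| - 0.33$, i.e. $\Re z \ge |z| - 0.66$, which is where the $0.33 = 0.66/2$ comes from — wait, using $\cos^2(\theta/2) = (1+\cos\theta)/2 = (1 + \Re z/|z|)/2$, the bound $\frac{1}{12|z|\cos^2(\theta/2)} = \frac{1}{6(|z| + \Re z)}$, so the claim reduces to $6(|z| + \Re z) \ge 12(|z| - 0.33)$, i.e. $\Re z \ge |z| - 0.66$, i.e. $|z| - \Re z \le 0.66$. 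Now $|z| - \Re z = \frac{(\Im z)^2}{|z| + \Re z}$; when $\Re z \ge 1$ this is... hmm, this can be large if $|\Im z|$ is large. So the correct interpretation must be that we are only claiming this for $z$ in some bounded-height region (which is indeed how it's used — the arguments of $\Gamma$ appearing in the paper have bounded imaginary part after the substitutions), OR the bound is $\frac{1}{12|z|}\sec^2(\theta/2)$ is not the right one and instead one uses $|\mu(z)| \le \frac{1}{12\Re z}$ directly when $\Re z \ge 1$ (giving $\frac{1}{12\Re z} \le \frac{1}{12(|z|-0.33)}$ iff $\Re z \ge |z| - 0.33$, again a bounded-angle condition). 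I expect this to be the main obstacle: pinning down precisely which classical effective Stirling estimate yields the stated constant $0.33$ under precisely the stated hypothesis, and the honest proof likely either cites a specific reference or does a short direct argument via the Binet integral $\mu(z) = 2\int_0^\infty \frac{\arctan(u/z)}{e^{2\pi u}-1}du$ and bounds $|\arctan(u/z)| \le \frac{u}{|z| - u_0}$ on the relevant range — I would present it by citing a standard effective Stirling bound and then verifying the constant, flagging the geometry as the point requiring care.

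Finally, part (vi) is a monotonicity statement. I would write $\phi(x) = \frac{\log^a|x+iy|}{(x-c)^b}$ and show $\phi$ is decreasing on $[x_0, \infty)$ by logarithmic differentiation: $\frac{\phi'(x)}{\phi(x)} = a \frac{d}{dx}\log\log|x+iy| - b\frac{d}{dx}\log(x-c) = \frac{a}{\log|x+iy|}\cdot\frac{x}{x^2+y^2} - \frac{b}{x-c}$. Since $\frac{x}{x^2+y^2} \le \frac{1}{x} \le \frac{1}{x-c}$ (as $x > c \ge 0$), it suffices that $\frac{a}{\log|x+iy|} \le b$, i.e. $\log|x+iy| \ge a/b$; and $|x+iy| \ge x \ge x_0 \ge \exp(a/b)$ gives exactly this. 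Hence $\phi'(x) \le 0$ for $x \ge x_0$, so $\phi(x) \le \phi(x_0)$, which is the claim.
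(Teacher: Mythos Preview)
Your treatment of parts (i), (ii), (iv), and (vi) is essentially the same as the paper's (the paper handles (i) via the geometric series identity $\frac{a}{x-t} = \sum_{k \geq 0} \frac{at^k}{x^{k+1}}$, which is a cleaner way to package your cross-multiplication argument, but the content is the same).

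However, there are two genuine problems. In part (iii) you correctly reach $|e^w - 1| \leq e^{|w|} - 1$ but then discard it for the cruder bound $|w|e^{|w|}$ and attempt to show $\frac{1}{x} e^{1/x} \leq \frac{1}{x - 1/2}$, equivalently $e^u \leq (1 - u/2)^{-1}$ for $0 < u < 2$. This is \emph{false}: at $u = 1$ one has $e \approx 2.718 > 2$. The fix is to keep the sharper bound $e^{|w|} - 1 = \sum_{k \geq 1} \frac{|w|^k}{k!}$ and compare it termwise with $\frac{1}{x - 1/2} = \sum_{k \geq 1} \frac{1}{2^{k-1} x^k}$ using $k! \geq 2^{k-1}$ for $k \geq 1$; this is exactly what the paper does.

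In part (v) you correctly diagnose that none of the standard first-order Stirling remainder bounds (such as $|\mu(z)| \leq \frac{1}{6(|z| + \Re z)}$) will yield $\frac{1}{12(|z| - 0.33)}$ under the stated hypothesis, since that hypothesis permits $|\Im z|$ to be arbitrarily large with $\Re z$ bounded. The missing idea is to go to \emph{second} order: write $\Gamma(z) = \sqrt{2\pi}\exp((z-\tfrac12)\log z - z)(1 + \frac{1}{12z} + R_2(z))$ and use an effective bound on $R_2$ of size $O(1/|z|^2)$. The paper cites Boyd for such a bound, uniform on $\Re z \geq 0$ and with an extra factor $|1 - e^{2\pi i z}|^{-1}$ when $\Re z \leq 0$ (which is where the hypothesis $|\Im z| \geq 1$ is actually used, to control that factor). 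One then has $1 + \frac{1}{12z} + R_2(z) = 1 + O_{\leq}(\frac{1}{12|z|}) + O_{\leq}(\frac{0.0205}{|z|^2})$, combines the two error terms via part (i), and passes into the exponential via part (ii). Your angle-based approaches cannot work because the leading $\frac{1}{12z}$ term already saturates the target bound $\frac{1}{12|z|}$; it is only by extracting it explicitly that the remainder becomes small enough to absorb into the constant $0.33$.
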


\begin{proof}  Claim (i) follows from the geometric series formula
$$ \frac{a}{x-t} = \frac{a}{x} + \frac{at}{x^2} + \frac{at^2}{x^3} + \dots$$
whenever $0 \leq t < x$.

For Claim (ii), we use the Taylor expansion of the logarithm to note that
$$\log\left( 1 + O_{\leq}\left(\frac{1}{x}\right) \right) = O_{\leq}\left(\frac{1}{x} + \frac{1}{2x^2} + \frac{1}{3x^3} + \dots\right)$$
which on comparison with the geometric series formula
$$\frac{1}{x-1} = \frac{1}{x} + \frac{1}{x^2} + \frac{1}{x^3} + \dots$$
gives the claim.  Similarly for Claim (iii), we may compare the Taylor expansion
$$\exp\left( O_{\leq}\left(\frac{1}{x}\right) \right) = 1 + O_{\leq}\left(\frac{1}{x} + \frac{1}{2! x^2} + \frac{1}{3! x^3} + \dots\right)$$
with the geometric series formula
$$ \frac{1}{x-0.5} = \frac{1}{x} + \frac{1}{2x^2} + \frac{1}{2^2 x^3} + \dots$$
and note that $k! \geq 2^k$ for all $k \geq 2$.

Claim (iv) follows from the trivial identity $e^x = 1 + (e^x-1)$ and the elementary inequality $e^{-x} \geq 1 - (e^x-1)$.
For Claim (v), we may use the functional equation $\Gamma = \Gamma^*$ to assume that $\Im z \geq 0$.  From the work of Boyd \cite[(1.13), (3.1), (3.14), (3.15)]{boyd} we have the effective Stirling approximation
$$ \Gamma(z) = \sqrt{2\pi} \exp\left( \left(z-\frac{1}{2}\right) \log z - z \right) \left(1 + \frac{1}{12 z} + R_2(z) \right)$$
where the remainder $R_2(z)$ obeys the bound
$$ |R_2(z)| \leq (2 \sqrt{2}+1) \frac{C_2 \Gamma(2)}{(2\pi)^3 |z|^2} $$
for $\Re z \geq 0$ and
$$|R_2(z)| \leq (2 \sqrt{2}+1) \frac{C_2 \Gamma(2)}{(2\pi)^3 |z|^2 |1 - e^{2\pi i z}|} $$
for $\Re z \leq 0$, where $C_2$ is the constant
$$ C_2 := \frac{1}{2} (1 + \zeta(2)) = \frac{1}{2} \left(1 + \frac{\pi^2}{6}\right).$$
In the latter case, we have $\Im z \geq 1$ by hypothesis, and hence $|1 - e^{2\pi i z}| \geq 1 - e^{-2\pi}$.  We conclude that in all ranges of $z$ of interest, we have
$$|R_2(z)| \leq (2 \sqrt{2}+1) \frac{C_2 \Gamma(2)}{(2\pi)^3 |z|^2 (1 - e^{-2\pi})} \leq \frac{0.0205}{|z|^2}$$
and hence by Claim (i)
$$ \Gamma(z) = \sqrt{2\pi} \exp\left( \left(z-\frac{1}{2}\right) \log z - z \right) \left(1 + O_{\leq}\left( \frac{1}{12(|z| - 0.246)} \right)\right) $$
and the claim then follows by Claim (ii).  

For Claim (vi), it suffices to show that the function $x \mapsto \frac{\log^a |x+iy|}{(x-c)^b}$ is non-increasing for $x \geq \exp(a/b)$.  Since $\log |x+iy| = (\log x) (1 + \frac{\log(1 + \frac{y^2}{x^2})}{2 \log x})$ and the second factor is monotone decreasing in $x$, it suffices to show that $x \mapsto \frac{\log^a x}{(x-c)^b}$ is non-increasing in this region.  Taking logarithms and differentiating, we wish to show that $\frac{a}{x \log x} - \frac{b}{x-c} \leq 0$.  But this is clear since $\frac{b}{x-c} \geq \frac{b}{x}$ and $\log x \geq a/b$.
\end{proof}

\section{Proof of Theorem \ref{eff}}\label{initial-sec}

In this section we establish Theorem \ref{eff}.  
The strategy is to use the expansion \eqref{htz-expand}, which turns out to be an effective approximation in the region \eqref{region}, since we will be able to ensure that quantities such as $s + \sqrt{t} v + \frac{t}{2} \alpha_n$ or $s + \sqrt{t} v + \frac{t}{2} \beta_N$, with $s = \frac{1+i(x+iy)}{2}$, stay away from the real axis where the poles of $\Gamma$ are located (and also where the error terms in the Riemann-Siegel approximation deteriorate).  

Accordingly, we will need effective estimates on the functions $r_{t,n}, R_{t,N}$ appearing in Section \ref{heatflow-sec}.  We will treat these two functions separately.

\subsection{Estimation of $r_{t,n}$}

We recall the function $\alpha(s)$ defined in \eqref{alpha-def}.  From differentiating \eqref{alpha-form} we see that
\begin{equation}\label{alpha-deriv}
 \alpha'(s) = -\frac{1}{2s^2} - \frac{1}{(s-1)^2} + \frac{1}{2 s}
\end{equation}
whenever $s \in \C \backslash (-\infty,1]$.  If $\Im s > 3$, we conclude in particular the useful bound
\begin{equation}\label{alpha-deriv-bound}
\begin{split}
 \alpha'(s) &= O_{\leq}\left( \frac{1}{2\Im(s)^2} \right) + O_{\leq}\left( \frac{1}{\Im(s)^2} \right) + O_{\leq}\left( \frac{1}{2\Im(s)} \right) \\
 &= O_{\leq}\left( \frac{1}{2 \Im(s)-6} \right)
\end{split}
\end{equation}
thanks to Lemma \ref{elem-lem}(i).  

We also recall the function $M_t$ and the coefficients $b_n^t$ from \eqref{Mt-def}, \eqref{bn-def} respectively.  It turns out we have a good approximation
$$r_{t,n}(\sigma+iT) \approx M_t(\sigma+iT) \frac{b_n^t}{n^{\sigma+iT+\frac{t}{2} \alpha(\sigma+iT)}}.$$
More precisely, we have

\begin{proposition}[Estimate for $r_{t,n}$]\label{rtn-prop}  Let $\sigma$ be real, let $T>10$, let $n$ be a positive integer, and let $0 < t \leq 1/2$.  Then 
$$ r_{t,n}(\sigma+iT) = M_t(\sigma+iT) \frac{b_n^t}{n^{\sigma+iT+\frac{t}{2} \alpha(\sigma+iT)}} \left(1 + O_{\leq}(\eps_{t,n}(\sigma+iT))\right)$$
where 
\begin{equation}\label{eps-def}
 \eps_{t,n}(\sigma+iT) \coloneqq \exp\left( \frac{\frac{t^2}{8} |\alpha(\sigma+iT) - \log n|^2 + \frac{t}{4} + \frac{1}{6}}{T-3.33} \right)-1.
\end{equation}
\end{proposition}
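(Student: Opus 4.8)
The plan is to run a saddle-point argument on the contour-shifted integral representation \eqref{rtn-def}, with the shift parameter chosen to be $\alpha_n := \alpha(\sigma+iT) - \log n$; this is the logarithmic derivative at $s=\sigma+iT$ of the Stirling model $M_0(z)\,n^{-z}$ for $r_{0,n}(z)$, and with this choice the first-order Taylor term of the integrand will cancel the $v$-linear factors already present in \eqref{rtn-def}, leaving a Gaussian integral equal to $1$ to leading order.

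First I would check that $\alpha_n$ is admissible for \eqref{rtn-def}, i.e.\ that $\Im(s)$ and $\Im(s+\sqrt t v + \tau\tfrac t2\alpha_n)$ have the same sign for all real $v$ and $\tau\in[0,1]$. Writing $s=\sigma+iT$, formula \eqref{alpha-form} gives $|\Im\alpha(s)| < \tfrac\pi2 + \tfrac1{2T} + \tfrac1T < 2$, so since $t\le\tfrac12$ and $T>10$ one has $\Im(s+\sqrt t v + \tau\tfrac t2\alpha_n) = T + \tau\tfrac t2\Im\alpha(s) > 9$ uniformly in $v$; in particular every argument appearing below lies in $\C\setminus(-\infty,1]$ and stays well away from the poles of $\Gamma$ and from the real axis where the effective Stirling estimate degrades.

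Next I would replace $r_{0,n}$ by its Stirling model. Comparing the definition \eqref{M-def} of $M_0$ with Lemma \ref{elem-lem}(v) --- applicable since the argument $\tfrac{s'}{2}$ of the Gamma factor has imaginary part $>4$ --- gives $r_{0,n}(s') = M_0(s')\,n^{-s'}\exp\!\big(O_{\leq}(\tfrac1{6(|s'|-0.66)})\big)$. Setting $g(z) := \log M_0(z) - z\log n$, so that $g'=\alpha-\log n$ and $g''=\alpha'$ by \eqref{alpha-def}, Taylor's theorem with integral remainder yields $g(s+w) = g(s) + \alpha_n w + R_2(w)$, where $R_2(w) = O_{\leq}\!\big(\tfrac{|w|^2}{2}\sup_{u\in[0,w]}|\alpha'(s+u)|\big)$, the supremum being over the segment from $0$ to $w$. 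Substituting $w = \sqrt t v + \tfrac t2\alpha_n$ into \eqref{rtn-def}, the prefactor $e^{-\frac t4\alpha_n^2}$, the factor $e^{-\sqrt t v\,\alpha_n}$, and the linear term $e^{\alpha_n w}$ combine so that the whole $v$-dependence in the exponent cancels, leaving
\begin{equation*}
 r_{t,n}(s) = e^{\frac t4\alpha_n^2}\, M_0(s)\, n^{-s}\int_\R \exp\big(E(v)\big)\,\frac{e^{-v^2}}{\sqrt\pi}\,dv,
\end{equation*}
where $E(v) := R_2\big(\sqrt t v + \tfrac t2\alpha_n\big) + O_{\leq}\!\big(\tfrac1{6(|s + \sqrt t v + \frac t2\alpha_n| - 0.66)}\big)$. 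Expanding $\alpha_n^2 = \alpha(s)^2 - 2\alpha(s)\log n + \log^2 n$ and invoking \eqref{Mt-def}, \eqref{bn-def} identifies the prefactor $e^{\frac t4\alpha_n^2}M_0(s)n^{-s}$ as precisely $M_t(s)\,b_n^t\,n^{-s-\frac t2\alpha(s)}$, the claimed main term; so it remains to show $\int_\R \exp(E(v))\tfrac{e^{-v^2}}{\sqrt\pi}\,dv = 1 + O_{\leq}(\eps_{t,n}(\sigma+iT))$.

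This last step is the main obstacle. One bounds $|E(v)| \le \tfrac{|w|^2}{2}\sup_{u\in[0,w]}|\alpha'(s+u)| + \tfrac1{6(|s+w|-0.66)}$, uses \eqref{alpha-deriv-bound} together with $\Im(s+u)\ge T - \tfrac t2|\Im\alpha(s)|$ (and the fact that $\Im(s+w)$ is in fact constant in $v$) to make both terms explicit, and expands $|w|^2 = t v^2 + t^{3/2}(\Re\alpha_n)v + \tfrac{t^2}{4}|\alpha_n|^2$. One then estimates $\int_\R(\exp(|E(v)|)-1)\tfrac{e^{-v^2}}{\sqrt\pi}\,dv$ by integrating out the $v$ and $v^2$ terms with the Gaussian identity \eqref{gaussian} --- legitimate because the coefficient of $v^2$ in the exponent is less than $1$ when $t\le\tfrac12$, $T>10$ --- and simplifying with Lemma \ref{elem-lem}(i)--(iv) to reach the stated form $\exp\!\big(\tfrac{\frac{t^2}{8}|\alpha(\sigma+iT)-\log n|^2 + \frac t4 + \frac16}{T-3.33}\big)-1$. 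The delicate part is the bookkeeping: the Gaussian integration contributes an extra factor $\big(1 - \tfrac{t}{4(T-O(1))}\big)^{-1/2}$ and a slightly shifted exponent, and all of this, together with the $v$-independent Stirling term, must be absorbed into that single exponential. This works because the numerator constants in $\eps_{t,n}$ --- notably $\tfrac{t^2}{8}$ and $\tfrac t4$, roughly double the $\tfrac{t^2}{16}$ and $\tfrac t8$ that the Taylor remainder alone supplies --- leave enough room for these corrections, using $T>10$ once more. Everything else is routine estimation.
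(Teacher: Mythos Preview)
Your proposal is correct and follows essentially the same saddle-point argument as the paper: the same choice $\alpha_n=\alpha(\sigma+iT)-\log n$, the same replacement of $r_{0,n}$ by $M_0(s)n^{-s}$ via Lemma~\ref{elem-lem}(v), the same second-order Taylor expansion of $\log M_0$, and the same identification of the prefactor with $M_t(s)\,b_n^t\,n^{-s-\frac t2\alpha(s)}$. The only difference is in the final Gaussian bookkeeping: the paper disposes of the cross term at the outset via $|\sqrt t\,v+\tfrac t2\alpha_n|^2\le 2tv^2+\tfrac{t^2}{2}|\alpha_n|^2$, which produces the constants $\tfrac{t^2}{8}$ and $\tfrac t4$ directly, whereas you retain the cross term and integrate it out with \eqref{gaussian}; your route gives slightly sharper constants that certainly imply the stated bound, so the slack you noticed is real and is exactly the factor-of-two loss from the paper's AM--GM step.
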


\begin{proof}  From \eqref{ron-def}, \eqref{M-def} and Lemma \ref{elem-lem}(v) one has
$$ 
r_{0,n}(s) = M_0(s) n^{-s} \exp\left( O_{\leq}\left( \frac{1}{6(|s|-0.66)} \right) \right)
$$
whenever $\Im s > 2$.  Let $\alpha_n$ denote the quantity
\begin{equation}\label{alphan-def}
\alpha_n \coloneqq \alpha(\sigma+iT) - \log n;
\end{equation}
this is the logarithmic derivative of $M(s) n^{-s}$ at $s=\sigma+iT$.  
By \eqref{alpha-form} and the hypothesis $T \geq 10$, the imaginary part of $\alpha_n$ may be lower bounded by
\begin{equation}\label{iman}
 \Im \alpha_n \geq -\frac{1}{2T} - \frac{1}{T} \geq -0.15;
\end{equation}
in particular, $\sigma+iT$ and $\sigma+iT+\frac{t}{2}\alpha_n$ have imaginary parts of the same sign.
We can now apply \eqref{rtn-def} to obtain
\begin{align*}
 r_{t,n}(\sigma+iT) &= \exp\left( - \frac{t}{4} \alpha_n^2\right) \int_\R \exp\left( - \sqrt{t} v \alpha_n\right) M_0\left( \sigma + iT + \sqrt{t} v + \frac{t}{2} \alpha_n\right) \times \\
&\quad \times \exp\left( -\left(\sigma+iT+\sqrt{t} v + \frac{t}{2} \alpha_n \right) \log n + O_{\leq}\left( \frac{1}{6(|\sigma+iT+\sqrt{t} v + \frac{t}{2} \alpha_n|-0.66)} \right) \right)
\frac{1}{\sqrt{\pi}} e^{-v^2}\ dv.
\end{align*}
From \eqref{iman} we see that $\sigma+iT+\sqrt{t}v+\frac{t}{2}\alpha_n$ has imaginary part at least $T-0.08$. Thus
$$ O_{\leq}\left( \frac{1}{6(|\sigma+iT+\sqrt{t} v + \frac{t}{2} \alpha_n|-0.66)} \right)  = 
O_{\leq}\left( \frac{1}{6(T - 0.74)} \right) = O_{\leq}\left( \frac{1}{6(T - 3.08)} \right).$$
From \eqref{alpha-deriv-bound} we have
$$ \alpha'(s) = O_{\leq}\left( \frac{1}{2(T-3.08)} \right)$$
for all $s$ on the line segment between $\sigma+iT$ and $\sigma + iT + \sqrt{t} v + \frac{t}{2} \alpha_n$.  Applying Taylor's theorem with remainder to the branch of the logarithm $\log M_0$ defined in \eqref{logM}, we conclude that
$$ M_0( \sigma + iT + \sqrt{t} v + \frac{t}{2} \alpha_n) = M_0(\sigma+iT) \exp\left( \alpha(\sigma+iT) (\sqrt{t} v + \frac{t}{2} \alpha_n) + O_{\leq}\left( \frac{|\sqrt{t} v + \frac{t}{2} \alpha_n|^2}{4(T-3.08)}  \right)\right).$$
Combining these estimates, writing $\alpha(\sigma+iT) = \alpha_n + \log n$, estimating $|\sqrt{t} v + \frac{t}{2} \alpha_n|^2$ by $2tv^2 + \frac{t^2}{2} |\alpha_n|^2$, and simplifying, we conclude that
\begin{align*}
 r_{t,n}(s) &= M_0(\sigma+iT) \exp\left( \frac{t}{4} \alpha_n^2 - (\sigma+iT) \log n\right)  \\
&\quad \times  \int_\R \exp\left(  O_{\leq}\left( \frac{\frac{t}{2}v^2 + \frac{t^2}{8} |\alpha_n|^2 + \frac{1}{6}}{T-3.08} \right) \right) \frac{1}{\sqrt{\pi}} e^{-v^2}\ dv.
\end{align*}
Using \eqref{alphan-def}, \eqref{Mt-def}, \eqref{bn-def} we see that
$$ M_0(\sigma+iT) \exp\left( \frac{t}{4} \alpha_n^2 - (\sigma+iT) \log n\right) = M_t(\sigma+iT) \frac{b_n^t}{n^{\sigma+iT+\frac{t}{2} \alpha(\sigma+iT)}} $$
and so it suffices to show that
$$  \int_\R \exp\left( O_{\leq}\left( \frac{\frac{t}{2}v^2 + \frac{t^2}{8} |\alpha_n|^2 + \frac{1}{6}}{T-3.08} \right) \right)\frac{1}{\sqrt{\pi}} e^{-v^2}\ dv = 1 + O\left(\exp\left( \frac{\frac{t^2}{8} |\alpha_n|^2 + \frac{t}{4} + \frac{1}{6}}{T-3.33} \right)-1\right).$$
Since $\frac{1}{\sqrt{\pi}} e^{-v^2}\ dv $ integrates to one, it suffices by Lemma \ref{elem-lem}(iv) to show that
$$
  \int_\R \exp\left( \frac{\frac{t}{2} v^2+ \frac{t^2}{8} |\alpha_n|^2 + \frac{1}{6}}{T-3.08} \right) \frac{1}{\sqrt{\pi}} e^{-v^2}\ dv \leq \exp\left( \frac{\frac{t^2}{8} |\alpha_n|^2 + \frac{t}{4} + \frac{1}{6}}{T-3.33} \right).
$$
Since $\frac{1}{T-3.08} < \frac{1}{T-3.33}$, we can remove the $\frac{t^2}{8} |\alpha_n|^2 + \frac{1}{6}$ terms from both sides and reduce to showing that
\begin{equation}\label{ax}
  \int_\R \exp\left( \frac{t v^2}{2(T-3.08)} \right) \frac{1}{\sqrt{\pi}} e^{-v^2}\ dv \leq \exp\left( \frac{t}{4(T-3.33)} \right).
	\end{equation}
Using \eqref{gaussian}, the left-hand side may be calculated exactly as
$$ \left(1 - \frac{t}{2(T-3.08)}\right)^{-1/2}.$$
Applying Lemma \ref{elem-lem}(ii) and using the hypotheses $t \leq 1/2$, $T \geq 10$, one has
$$ 1 - \frac{t}{2(T-3.08)} = \exp\left( O_{\leq}\left( \frac{t}{2(T-3.33)} \right)\right)$$
and the claim follows.
\end{proof}

\subsection{Estimation of $R_{t,N}$}

We begin with the following estimates of Arias de Reyna \cite{arias} on the term $\int_{N \swarrow N+1} \frac{w^{-s} e^{i\pi w^2}}{e^{\pi i w} - e^{-\pi i w}}$ appearing in \eqref{RON-def}:

\begin{proposition}\label{arias-prop}  Let $\sigma$ be real and $T'>0$, and define the quantities
\begin{align}
s &\coloneqq \sigma + iT' \label{s-def}\\
a &\coloneqq \sqrt{\frac{T'}{2\pi}} \label{a-def}\\
N &\coloneqq \lfloor a \rfloor \label{N-def}\\
p &\coloneqq 1 - 2(a-N) \label{p-def}\\
U &\coloneqq \exp\left( -i\left(\frac{T'}{2} \log \frac{T'}{2\pi} - \frac{T'}{2} - \frac{\pi}{8}\right) \right)\label{U-def}.
\end{align}
Let $K$ be a positive integer.  Then we have the expansion
$$ \int_{N \swarrow N+1} \frac{w^{-s} e^{i\pi w^2}}{e^{\pi i w} - e^{-\pi i w}}\ dw = (-1)^{N-1} U a^{-\sigma} \left(\sum_{k=0}^K \frac{C_k(p,\sigma)}{a^k} + RS_K(s)\right) $$
where $C_0(p,\sigma) = C_0(p)$ is independent of $\sigma$ and is given explicitly by the formula
\begin{equation}\label{C0-def}
C_0(p) \coloneqq \frac{e^{\pi i (\frac{p^2}{2} +\frac{3}{8})} - i \sqrt{2} \cos \frac{\pi p}{2}}{2 \cos(\pi p)}
\end{equation}
(removing the singularities at $p = \pm 1/2$), while for $k \geq 1$ the $C_k(p,\sigma)$ are complex numbers obeying the bounds
\begin{equation}\label{ck-bound-1}
|C_k(p,\sigma)| \leq \frac{\sqrt{2}}{2\pi} \frac{9^\sigma \Gamma(k/2)}{2^k}
\end{equation}
for $\sigma>0$ and
\begin{equation}\label{ck-bound-2}
|C_k(p,\sigma)| \leq \frac{2^{\frac{1}{2}-\sigma}}{2\pi} \frac{\Gamma(k/2)}{2\pi ((3-2\log 2)\pi)^{k/2}}
\end{equation}
for $\sigma \leq 0$, while the error term $RS_K(s)$ is a complex number obeying the bounds
\begin{equation}\label{rsk-bound-1}
|RS_K(s)| \leq \frac{1}{7} 2^{3\sigma/2} \frac{\Gamma((K+1)/2)}{(a/1.1)^{K+1}}
\end{equation}
for $\sigma \geq 0$, and
\begin{equation}\label{rsk-bound-2}
|RS_K(s)| \leq \frac{1}{2} \left(\frac{9}{10}\right)^{\lceil -\sigma \rceil} \frac{\Gamma((K+1)/2)}{(a/1.1)^{K+1}}
\end{equation}
if $\sigma < 0$ and $K + \sigma \geq 2$.
\end{proposition}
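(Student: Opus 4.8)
The plan is to obtain this expansion directly from the effective Riemann--Siegel analysis of Arias de Reyna \cite{arias}: Proposition \ref{arias-prop} is essentially a repackaging of the main estimates there, specialized to the particular contour integral $\int_{N\swarrow N+1} \frac{w^{-s} e^{i\pi w^2}}{e^{\pi i w}-e^{-\pi i w}}\,dw$ appearing in \eqref{RON-def}, and the real work is to match conventions and to confirm that the explicit constants quoted in \eqref{C0-def}--\eqref{rsk-bound-2} are exactly the ones produced by that analysis. So the ``proof'' I would write consists of recalling the relevant saddle-point picture and then invoking the corresponding quantitative lemmas of \cite{arias}.

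First I would recall that, away from the poles at integers, the integrand has phase $i\pi w^2 - s\log w$, whose derivative $2\pi i w - s/w$ vanishes at $w^2 = s/(2\pi i)$; for $s = \sigma+iT'$ this saddle sits at $|w|\approx a = \sqrt{T'/(2\pi)}$, essentially on the positive real axis, and the contour $N \swarrow N+1$ with $N=\lfloor a\rfloor$ is precisely the one running through (a translate of) this saddle in the $e^{5\pi i/4}$ direction. Pulling the slowly varying part of the integrand out at the saddle and using the Gaussian $\int e^{-\pi\tau^2}$ produces the classical Riemann--Siegel prefactor $(-1)^{N-1} U a^{-\sigma}$, where $U = \exp(-i(\tfrac{T'}{2}\log\tfrac{T'}{2\pi} - \tfrac{T'}{2} - \tfrac{\pi}{8}))$ is exactly the phase factor from \eqref{U-def}, the sign $(-1)^{N-1}$ records the index of the interval $[N,N+1]$ containing the saddle, and the translation parameter $p = 1 - 2(a-N)$ from \eqref{p-def} records how far the saddle lies inside that interval.

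Next, after the change of variables moving the saddle to the origin (so that the integral becomes $\int e^{-\pi\tau^2} g(\tau)\,d\tau$ for an explicit $g = g_{p,\sigma}$ built from $w^{-s}$ and the $1/(e^{\pi i w}-e^{-\pi i w})$ factor expressed in the new variable), one expands $g$ in an asymptotic series in powers of $a^{-1}$; integrating the first $K+1$ terms against the Gaussian produces the coefficients $C_k(p,\sigma)$, with the leading term $C_0(p)$ independent of $\sigma$ and matching the closed form \eqref{C0-def} (after the singularities at $p=\pm1/2$ are removed as indicated). The coefficient bounds \eqref{ck-bound-1}, \eqref{ck-bound-2} and the remainder bounds \eqref{rsk-bound-1}, \eqref{rsk-bound-2} for $RS_K(s)$ then come from Arias de Reyna's explicit estimates for the Taylor coefficients and the Taylor remainder of $g$; the split into $\sigma\ge 0$ and $\sigma<0$ reflects that $w^{-s}=w^{-\sigma}w^{-iT'}$ decays honestly along the contour when $\sigma>0$ but, for $\sigma\le 0$, must be controlled by moving the contour and picking up factors such as $9^{\sigma}$ or $(9/10)^{\lceil-\sigma\rceil}$, while the $(a/1.1)^{K+1}$ denominators and the $\Gamma((K+1)/2)$, $\Gamma(k/2)$ factors come from quantifying the region of validity of the expansion and summing the Gaussian moments.

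The main obstacle I anticipate is bookkeeping rather than conceptual: one must trace all of Arias de Reyna's constants (the $\tfrac17$ and $\tfrac12$ in the remainder bounds, the $\tfrac{\sqrt2}{2\pi}$, the $2^{3\sigma/2}$ and $2^{1/2-\sigma}$ factors, the $((3-2\log 2)\pi)^{k/2}$ in \eqref{ck-bound-2}, etc.) through each change of variable and verify they remain valid for \emph{all} $T'>0$, not merely asymptotically as $T'\to\infty$; in particular one must handle the degenerate range $a<1$, where $N=0$ and $p\in(-1,1]$, and check that the hypothesis $K+\sigma\ge 2$ in \eqref{rsk-bound-2} is exactly what keeps the relevant Gamma-function and geometric factors finite and the expansion meaningful. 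Assuming the estimates of \cite{arias} are available in the required explicit form, the proof of Proposition \ref{arias-prop} reduces to citing them after this dictionary of conventions has been set up.
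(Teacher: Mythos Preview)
Your proposal is correct and takes essentially the same approach as the paper: the paper's proof consists entirely of citing \cite[Theorems 3.1, 4.1, 4.2]{arias} together with \cite[(3.2), (5.2)]{arias}, noting that the suppressed $\sigma$- and $s$-dependence of $C_k$ and $RS_K$ can be read off from \cite[(3.6), (3.7), (3.9), (3.10)]{arias}. Your additional exposition of the underlying saddle-point mechanism is accurate but not required; the proposition is purely a quotation of Arias de Reyna's effective bounds once the notational dictionary is set up.
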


\begin{proof} This follows from \cite[Theorems 3.1, 4.1, 4.2]{arias} combined with \cite[(3.2), (5.2)]{arias}.  The dependence of $C_k(p,\sigma), k \geq 1$ on $\sigma$ and the dependence of $RS_K(s)$ on $s$ is suppressed in \cite{arias}, but can be discerned from the definitions of these quantities (and the related quantities $g(\tau,z), P_k(z) = P_k(z,\sigma), Rg_K(\tau,z)$) in \cite[(3.9), (3.10), (3.7), (3.6)]{arias}.
\end{proof}

Note that $p$ ranges in the interval $[-1,1]$.  One can show that 
\begin{equation}\label{cop}
|C_0(p)| \leq \frac{1}{2}
\end{equation}
for all $p \in [-1,1]$; this follows for instance from the $n=0$ case of \cite[Theorem 6.1]{arias}.  See also Figure \ref{c0p}.

\begin{figure}[ht!]
  \includegraphics[width=0.7\linewidth]{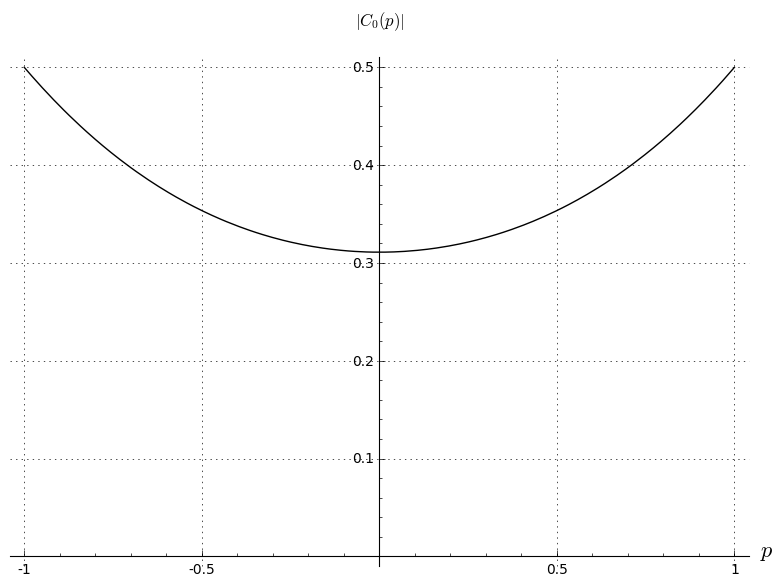}
  \caption{Plot of $|C_0(p)|$ for $-1 \leq p \leq 1$.}
	\label{c0p}
\end{figure}

Informally, the above proposition (and \eqref{RON-def}, \eqref{M-def}) yield the approximation
\begin{align*}
 R_{0,N}(s) &\approx \frac{1}{8} \frac{s(s-1)}{2} \pi^{-s/2} \Gamma\left(\frac{s}{2}\right) (-1)^{N-1} U a^{-\sigma} C_0(p) \\
&\approx (-1)^{N-1} U M_0(s) a^{-\sigma} C_0(p).
\end{align*}
If one writes $s = \sigma+iT$, then by using the approximation $\alpha(s) \approx \frac{1}{2} \log \frac{iT}{2\pi}$ for the log-derivative of $M_0$, one can then obtain the approximate formula
$$ R_{0,N}(s) \approx (-1)^{N-1} U e^{\pi i \sigma/4} M_0(iT) C_0(p).$$
In fact we have the more general approximation
$$ R_{t,N}(s) \approx (-1)^{N-1} U e^{\pi i \sigma/4} \exp\left( \frac{t \pi^2}{64}\right) M_0(iT') C_0(p)$$
where $T' \coloneqq T + \frac{\pi t}{8}$.  More precisely, we have

\begin{proposition}[Estimate for $R_{t,N}$]\label{RTN-prop}  Let $0 \leq \sigma \leq 1$, let $T \geq 100$, and let $0 < t \leq 1/2$.  Set
$$ T' \coloneqq T + \frac{\pi t}{8} $$
and then define $a,N,p,U,C_0(p)$ using \eqref{a-def}, \eqref{N-def}, \eqref{U-def}, \eqref{C0-def}.
Then 
$$
 R_{t,N}(\sigma+iT) = (-1)^{N-1} U e^{\pi i \sigma/4} \exp\left( \frac{t \pi^2}{64}\right) M_0(iT') \left( C_0(p) + O_{\leq}( \tilde \eps(\sigma+iT))\right)$$
where
\begin{equation}\label{epsp-def}
 \tilde \eps(\sigma+iT) \coloneqq \left(\frac{0.397 \times 9^\sigma}{a-0.865} + \frac{5}{3(T-6)}\right) \exp\left( \frac{3.49}{T-4} \right).
\end{equation}
\end{proposition}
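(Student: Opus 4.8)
The plan is to adapt the argument used for Proposition~\ref{rtn-prop}: represent $R_{t,N}(\sigma+iT)$ as a Gaussian average of $R_{0,N}$ along a suitably shifted contour, substitute the Riemann--Siegel expansion of Proposition~\ref{arias-prop}, approximate the remaining $\Gamma$-factor by $M_0$ via Lemma~\ref{elem-lem}(v), and evaluate the resulting Gaussian integral while tracking all constants. Concretely, I would start from the shifted representation \eqref{RTN-def} with the specific choice $\beta_N = \frac{i\pi}{4}$. This choice is essentially forced: writing $s' = (\sigma+iT) + \sqrt{t}\,v + \frac{t}{2}\beta_N = \sigma' + iT'$ with $\sigma' = \sigma + \sqrt{t}\,v$ (so $\Re\beta_N = 0$) and $T' = T + \frac{\pi t}{8}$, the imaginary part of $s'$ equals $T'$ independently of $v$, so the integer $N = \lfloor\sqrt{T'/2\pi}\rfloor$ and the quantities $a,p,U$ produced by applying Proposition~\ref{arias-prop} at $s'$ coincide with those in the statement; moreover this is the value of $\beta_N$ rendering the phase of the integrand stationary at $v=0$. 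With this choice the prefactor $\exp(-\frac{t}{4}\beta_N^2)$ becomes exactly $\exp(\frac{t\pi^2}{64})$, and the sign hypothesis in \eqref{RTN-def} holds since $\Im(\sigma+iT)=T>0$ and $\Im s'=T'>0$.

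Next, for each real $v$ I would apply Proposition~\ref{arias-prop} at $s'$ with $K=1$ and replace $\frac{1}{8}\frac{s'(s'-1)}{2}\pi^{-s'/2}\Gamma(s'/2)$ by $M_0(s')\exp\bigl(O_{\leq}(\tfrac{1}{6(|s'|-0.66)})\bigr)$ using Lemma~\ref{elem-lem}(v). Then, exactly as in the proof of Proposition~\ref{rtn-prop}, I would Taylor-expand $\log M_0$ about $iT'$ along the real segment to $s'$, bounding the quadratic remainder by $O_{\leq}\bigl(\tfrac{\sigma'^2}{2(2T'-6)}\bigr)$ via \eqref{alpha-deriv-bound}; combining the linear term $\alpha(iT')\,\sigma'$ with the factor $a^{-\sigma'}$ and using the explicit value $\alpha(iT') = \frac{1}{2}\log\frac{T'}{2\pi} + \frac{i\pi}{4} + O_{\leq}(\tfrac{3}{2T'})$ from \eqref{alpha-form} leaves $M_0(iT')\,e^{i\pi\sigma'/4}$ times exponentially small corrections. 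Finally $e^{i\pi\sigma'/4}$ multiplied by the leftover factor $\exp(-\sqrt{t}\,v\,\beta_N) = e^{-i\pi\sqrt{t}\,v/4}$ collapses to the $v$-independent $e^{i\pi\sigma/4}$, so that $R_{t,N}(\sigma+iT)$ equals $(-1)^{N-1} U e^{i\pi\sigma/4}\exp(\frac{t\pi^2}{64}) M_0(iT')$ times $\int_\R\bigl(C_0(p) + \frac{C_1(p,\sigma')}{a} + RS_1(s')\bigr)\exp\bigl(O_{\leq}(\mathrm{err}(v))\bigr)\frac{1}{\sqrt\pi}e^{-v^2}\,dv$, where $\mathrm{err}(v)$ collects the Stirling error, the Taylor remainder, and the $\sigma'\cdot O_{\leq}(\tfrac{3}{2T'})$ term.

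It then remains to show this last integral is $C_0(p) + O_{\leq}(\tilde\eps(\sigma+iT))$, which I would do by splitting off the $C_0(p)$ summand. The $C_0(p)$ piece is controlled by $|C_0(p)|\leq\frac12$ (from \eqref{cop}) together with Lemma~\ref{elem-lem}(iv) applied to the exponentiated $O(1/T)$ errors — this produces the $\frac{5}{3(T-6)}$ term, with the $\exp(\frac{3.49}{T-4})$ factor arising as the amplification in Lemma~\ref{elem-lem}(iv). The $\frac{C_1}{a}$ piece is bounded via \eqref{ck-bound-1}, where $9^{\sigma'} = 9^\sigma 9^{\sqrt{t}\,v}$ and Gaussian averaging of $9^{\sqrt{t}\,v}$ via \eqref{gaussian} (using $t\leq\frac12$) produces the numerator $0.397\times 9^\sigma$; and the $RS_1(s')$ piece, which is $O(1/a^2)$ by \eqref{rsk-bound-1}, combines with the preceding term through Lemma~\ref{elem-lem}(i) to give the denominator $a-0.865$. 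Assembling these three contributions should reproduce \eqref{epsp-def}.

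I expect the main difficulties to be of two kinds. The routine-but-delicate part is arranging all the explicit constants so they collapse exactly into \eqref{epsp-def}: one must bound Gaussian moments such as $\int_\R\sigma'^2 e^{-v^2}\,dv$ and $\int_\R 9^{\sqrt t v}e^{-v^2}\,dv$, control the compound $\exp(O_{\leq}(\cdot))-1$ factors, and use monotonicity in $T$ (together with $T' = T + \pi t/8 \leq T + \pi/16$) to reduce to the worst case. The genuinely non-routine point is that the Arias de Reyna bounds \eqref{ck-bound-2}, \eqref{rsk-bound-2} take a different shape when $\Re s' = \sigma' = \sigma+\sqrt{t}\,v$ is negative, and \eqref{rsk-bound-2} moreover requires $K+\sigma'\geq 2$, which fails for $K=1$. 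For the range of $v$ with $\sigma'<0$ I would instead apply Proposition~\ref{arias-prop} with a larger (possibly $v$-dependent) value of $K$ chosen so that $K+\sigma'\geq 2$ — the bounds \eqref{ck-bound-2}, \eqref{rsk-bound-2} then decay geometrically in the summation index and comfortably dominate the growth of $2^{-\sigma'}$ against the Gaussian weight — or, more crudely, bound that portion of the integral using only the rapid decay $e^{-v^2}$ and a trivial estimate for the normalized contour integral; in either case the extra contribution is absorbed into $\tilde\eps$.
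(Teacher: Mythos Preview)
Your proposal is correct and follows essentially the same approach as the paper's proof: the choice $\beta_N = \pi i/4$, the Stirling replacement of $\Gamma$ by $M_0$, the Taylor expansion of $\log M_0$ about $iT'$, the split into the $C_0(p)$ piece (yielding the $\tfrac{5}{3(T-6)}$ term via $|C_0(p)|\le\tfrac12$) and the remainder, and the use of a $v$-dependent $K$ when $\sigma'=\sigma+\sqrt{t}\,v<0$ are all exactly what the paper does. The paper organises the remainder slightly differently (changing variables to $u=\sigma+\sqrt{t}\,v$ and splitting into three pieces $\delta_1,\delta_2,\delta_3$ according to which Arias de Reyna bound is in force), but this is a presentational rather than a substantive difference; in particular the constant $0.397$ arises there as $0.366+0.031$, the second summand coming from the $2^{-u}$ bound \eqref{ck-bound-2} in the $u<0$ range that you flagged.
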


\begin{proof}  We apply \eqref{RTN-def} with $\beta_N := \pi i/4$ to obtain
$$ R_{t,N}(\sigma+iT) = \exp\left( \frac{t \pi^2}{64}\right) \int_\R \exp\left( - \frac{\sqrt{t} v \pi i}{4}\right) R_{0,N}( \sigma+iT' + \sqrt{t} v) \frac{1}{\sqrt{\pi}} e^{-v^2}\ dv.$$
From \eqref{RON-def} we have
$$ R_{0,N}( \sigma+iT' + \sqrt{t} v) = \frac{1}{8} \frac{s_v(s_v-1)}{2} \pi^{-s_v/2} \Gamma\left(\frac{s_v}{2}\right) (-1)^{N-1} U a^{-\sigma-\sqrt{t} v}
\left(\sum_{k=0}^{K_v} \frac{C_k(p,\sigma + \sqrt{t} v)}{a^k} + RS_{K_v}(s_v)\right) $$
for any positive integer $K_v$ that we permit to depend (in a measurable fashion) on $v$, where $s_v \coloneqq \sigma + iT' + \sqrt{t} v$. From \eqref{M-def} and Lemma \ref{elem-lem}(v) we thus have
$$ R_{0,N}( \sigma+iT' + \sqrt{t} v) = M_0(s_v) \exp\left( O_{\leq}\left(\frac{1}{12(\frac{T'}{2}-0.33)}\right) \right) (-1)^{N-1} U a^{-\sigma-\sqrt{t} v}
\left(\sum_{k=0}^{K_v} \frac{C_k(p,\sigma + \sqrt{t} v)}{a^k} + RS_{K_v}(s_v)\right).$$
From \eqref{alpha-deriv-bound} and Taylor expansion of the logarithm $\log M_0$ defined in \eqref{logM}, we have
$$ M_0(s_v) = M_0(iT') \exp\left( \alpha(iT') (\sigma + \sqrt{t} v) + O_{\leq}\left( \frac{(\sigma + \sqrt{t} v)^2}{4(T-3)} \right) \right).$$
From \eqref{alpha-form}, \eqref{a-def} one has
$$ \alpha(iT') = O_{\leq}\left( \frac{1}{2T'}\right) + O_{\leq}\left( \frac{1}{T'}\right) + \frac{1}{2} \Log \frac{iT'}{2\pi} = \log a + \frac{i\pi}{4} + O_{\leq}\left( \frac{3}{2T'}\right)$$
and hence (bounding $\frac{3}{2T'}$ by $\frac{6}{4(T-3)}$)
$$ \alpha(iT') (\sigma + \sqrt{t} v)  = (\sigma + \sqrt{t} v)  \log a + \frac{\pi i \sigma}{4} + \frac{\sqrt{t} v \pi i}{4} + O_{\leq}\left( \frac{6 |\sigma+\sqrt{t} v|}{4(T-3)} \right).$$ 
We conclude (bounding $\frac{1}{12(\frac{T'}{2}-0.33)} \leq \frac{1/3}{4(T-3)}$) that
\begin{align*}
\exp\left( - \frac{\sqrt{t} v \pi i}{4}\right) R_{0,N}( \sigma+iT' + \sqrt{t} v) &= 
M_0(iT') \exp\left( O_{\leq}\left(\frac{(\sigma + \sqrt{t} v)^2+6|\sigma+\sqrt{t} v|+\frac{1}{3}}{4(T-3)} \right)\right) \times \\
&\quad \times (-1)^{N-1} U  e^{\pi i \sigma/4} \left(\sum_{k=0}^{K_v} \frac{C_k(p, \sigma+\sqrt{t} v)}{a^k} + RS_{K_v}(s_v)\right).
\end{align*}
Bounding $6|\sigma+\sqrt{t} v| \leq 3 (\sigma + \sqrt{t} v)^2 + 3$, we have
$$ \frac{(\sigma + \sqrt{t} v)^2+6|\sigma+\sqrt{t} v|+\frac{1}{3}}{4(T'-0.33)}  \leq \frac{(\sigma + \sqrt{t} v)^2 + \frac{5}{6}}{T-3}.$$
Putting all this together, we obtain
\begin{align*}
 R_{t,N}(\sigma+iT) &= (-1)^{N-1} U e^{\pi i \sigma/4} \exp\left( \frac{t \pi^2}{64}\right) M_0(iT') \times \\
&\quad \times \int_\R \exp\left( O_{\leq}\left(\frac{(\sigma + \sqrt{t} v)^2 + \frac{5}{6}}{T-3} \right) \right) \left(\sum_{k=0}^{K_v} \frac{C_k(p,\sigma + \sqrt{t} v)}{a^k} + RS_{K_v}(s_v)\right) \frac{1}{\sqrt{\pi}} e^{-v^2}\ dv.
\end{align*}
We separate the $k=0$ term from the rest.
By Lemma \ref{elem-lem}(iv) and the fact that $\frac{1}{\sqrt{\pi}} e^{-v^2}$ integrates to one, we can write the above expression as
\begin{equation}\label{rtnst}
 R_{t,N}(\sigma+iT) = (-1)^{N-1} U e^{\pi i \sigma/4} \exp\left( \frac{t \pi^2}{64}\right) M_0(iT') \left( C_0(p) (1 + O_{\leq}(\epsilon)) + O_{\leq}(\delta) \right)
\end{equation}
where
$$ \epsilon := \int_\R \left( \exp\left( \frac{(\sigma + \sqrt{t} v)^2 + \frac{5}{6} }{T-3} \right)  - 1\right) \frac{1}{\sqrt{\pi}} e^{-v^2}\ dv$$
and
$$ \delta := \int_\R \exp\left( \frac{(\sigma + \sqrt{t} v)^2 + \frac{5}{6}}{T-3} \right) \left(\sum_{k=1}^{K_v} \frac{|C_k(p,\sigma+\sqrt{t}v)|}{a^k} + |RS_{K_v}(s_v)|\right) \frac{1}{\sqrt{\pi}} e^{-v^2}\ dv.$$
Bounding $(\sigma + \sqrt{t} v)^2 \leq 2 \sigma^2 + 2 t v^2$ and using \eqref{gaussian} we obtain
$$ \epsilon \leq \exp\left( \frac{2\sigma^2 + \frac{5}{6}}{T-3} \right) \left(1 - \frac{2t}{T'-0.33}\right)^{-1/2} - 1.$$
Applying Lemma \ref{elem-lem}(ii) and using the hypotheses $t \leq 1/2$, $T \geq 100$, one has
$$ 1 - \frac{2t}{T-3} = \exp\left( O_{\leq}\left( \frac{2t}{T-6} \right)\right)$$
and hence
$$
 \epsilon \leq \exp\left( \frac{2\sigma^2 + t + \frac{5}{6}}{T-6} \right) - 1.
$$
With $t \leq 1/2$ and $0 \leq \sigma \leq 1$, one has $2\sigma^2 + t + \frac{5}{6} \leq \frac{10}{3}$.  By the mean value theorem we then have
\begin{equation}\label{eep}
 \epsilon \leq \frac{10}{3(T-6)} \exp\left( \frac{10}{3(T-6)}\right).
\end{equation}

Now we work on $\delta$.  Making the change of variables $u \coloneqq \sigma + \sqrt{t} v$, we have
$$ \delta = \int_\R \exp\left( \frac{u^2 + \frac{5}{6}}{T-3} \right) \left(\sum_{k=1}^{\tilde K_u} \frac{|C_k(p,u)|}{a^k} + |RS_{\tilde K_u}(u + iT')|\right) \frac{1}{\sqrt{\pi t}} e^{-(u-\sigma)^2/t}\ du,$$
where $\tilde K_u$ is a positive integer parameter that can depend arbitrarily on $u$ (as long as it is measurable, of course).  

We choose $\tilde K_u$ to equal $1$ when $u \geq 0$ and $\max( \lfloor -u \rfloor + 3, \lfloor \frac{T'}{\pi} \rfloor )$ when $u < 0$, so that Proposition \ref{arias-prop} applies.  The expression
$$ \sum_{k=1}^{\tilde K_u} \frac{|C_k(p,u)|}{a^k} + |RS_{\tilde K_u}(u + iT')| $$
is then bounded by
\begin{equation}\label{u0}
 \frac{\sqrt{2}}{2\pi} \frac{9^u \Gamma(1/2)}{2a} + \frac{1}{7} 2^{3u/2} \frac{\Gamma(1)}{(a/1.1)^2}
\leq \frac{0.200 \times 9^u}{a} + \frac{0.173 \times 2^{3u/2}}{a^2} 
\end{equation}
for $u \geq 0$ and
\begin{equation}\label{laf}
 \sum_{1 \leq k \leq \tilde K_u} \frac{2^{\frac{1}{2}-u}}{2\pi} \frac{\Gamma(k/2)}{2\pi ((3-2\log 2)\pi)^{k/2} a^k} + \frac{1}{2} (9/10)^{\lceil -u \rceil} \frac{\Gamma((\tilde K_u + 1)/2)}{(a/1.1)^{\tilde K_u + 1}}
\end{equation}
for $u < 0$.  One can calculate that
$$ \frac{2^{\frac{1}{2}}}{2\pi} \frac{1}{2\pi} \leq 0.036 \leq \frac{1}{2}$$
and
$$ \frac{1}{((3-2\log 2)\pi)^{1/2}} \leq 0.445 \leq 1.1$$
and hence we can bound \eqref{laf} by
$$  (0.036) 2^{-u} \sum_{1 \leq k \leq \frac{T'}{\pi}} (0.445)^k \frac{\Gamma(k/2)}{a^k} 
+ \frac{1}{2} 2^{-u} \sum_{\frac{T'}{\pi} \leq k \leq -u+4} \frac{\Gamma(k/2)}{(a/1.1)^k}.$$

For $u \geq 0$, we can estimate \eqref{u0} by
$$ 0.2 \times 9^u \left(\frac{1}{a} + \frac{0.865}{a^2}\right) \leq \frac{0.2 \times 9^u}{a - 0.865}$$
thanks to Lemma \ref{elem-lem}(i).  For $u<0$, we observe that if $k \leq 2 a^2 = \frac{T'}{\pi}$ then
$$ \frac{\Gamma(\frac{k+2}{2})}{a^{k+2}} = \frac{k}{2 a^2} \frac{\Gamma(k/2)}{a^k} \leq \frac{\Gamma(k/2)}{a^k}$$
and hence by the geometric series formula
$$ \sum_{2 \leq k \leq \frac{T'}{\pi}, k\ \mathrm{even}} (0.445)^k \frac{\Gamma(k/2)}{a^k}  \leq \frac{(0.445)^2}{1-(0.445)^2} \frac{\Gamma(2/2)}{a^2} \leq \frac{0.247}{a^2}$$
and similarly
$$ \sum_{3 \leq k \leq \frac{T'}{\pi}, k\ \mathrm{odd}} (0.445)^k \frac{\Gamma(k/2)}{a^k}  \leq \frac{(0.445)^3}{1-(0.445)^2} \frac{\Gamma(3/2)}{a^3} \leq \frac{0.098}{a^3}$$
and hence we can bound \eqref{laf} by
$$ (0.036) 2^{-u} \left(\frac{0.445 \sqrt{\pi}}{a} + \frac{0.247}{a^2} + \frac{0.098}{a^3}\right) + \frac{1}{2} 2^{-u}  \sum_{\frac{T'}{\pi} \leq k \leq -u+4} \frac{\Gamma(k/2)}{(a/1.1)^k}.$$
By Lemma \ref{elem-lem}(i) we have
$$ 0.036 \left(\frac{0.445 \sqrt{\pi}}{a} + \frac{0.247}{a^2} + \frac{0.098}{a^3}\right) \leq \frac{0.029}{a - 0.353}$$
and thus we can bound \eqref{laf} by
$$ \frac{0.029 \times 2^{-u}}{a - 0.353} + \frac{1}{2} 2^{-u} \sum_{\frac{T'}{\pi} \leq k \leq -u+4} (1.1)^{k} \frac{\Gamma(k/2)}{a^k}.$$

Putting this together, we conclude that
$$
\sum_{k=1}^{\tilde K_u} \frac{|C_k(p,u)|}{a^k} + |RS_{\tilde K_u}(u + iT')| \leq 
\frac{0.2 \times 9^u}{a-0.865} + \frac{0.029 \times 2^{-u}}{a - 0.353} + \frac{2^{-u}}{2} \sum_{\frac{T'}{\pi} \leq k \leq -u+4} (1.1)^{k} \frac{\Gamma(k/2)}{a^k}$$
for all $u$ (positive or negative).  We conclude that $\delta \leq \delta_1 + \delta_2 + \delta_3$, where
\begin{align}
\delta_1 &\coloneqq \int_\R \exp\left( \frac{u^2 + \frac{5}{6}}{T-3} \right) \frac{0.2 \times 9^u}{a-0.865} \frac{1}{\sqrt{\pi t}} e^{-(u-\sigma)^2/t}\ du \nonumber\\
\delta_2 &\coloneqq \int_\R \exp\left( \frac{u^2 + \frac{5}{6}}{T-3} \right) \frac{0.029 \times 2^{-u}}{a - 0.353} \frac{1}{\sqrt{\pi t}} e^{-(u-\sigma)^2/t}\ du \nonumber\\
\delta_3 &\coloneqq \int_\R \exp\left( \frac{u^2 + \frac{5}{6}}{T-3} \right) \frac{2^{-u}}{2} \sum_{\frac{T'}{\pi} \leq k \leq -u+4} (1.1)^{k} \frac{\Gamma(k/2)}{a^k} \frac{1}{\sqrt{\pi t}} e^{-(u-\sigma)^2/t}\ du.\label{delta3-def}
\end{align}
For $\delta_1$, we translate $u$ by $\sigma$ to obtain
$$ \delta_1 = \frac{0.2 \times 9^\sigma}{a-0.865} \int_\R \exp\left( \frac{u^2 + 2 \sigma u + \sigma^2 + \frac{5}{6}}{T'-0.33}  + 2 u \log 3 \right) \frac{1}{\sqrt{\pi t}} e^{-u^2/t}\ du$$
and hence by \eqref{gaussian}
\begin{equation}\label{delta1}
 \delta_1 = \frac{0.2 \times 9^\sigma}{a-0.865} \exp\left( \frac{\sigma^2 + \frac{5}{6}}{T'-0.33} + \frac{t(\log 3 + \frac{\sigma}{T'-0.33})^2}{1 - \frac{t}{T-3}} \right) \left(1 - \frac{t}{T-3}\right)^{-1/2}.
\end{equation}
One can write
\begin{equation}\label{hit}
 \frac{1}{1 - \frac{t}{T-3}} = 1 + \frac{t}{T-3-t} \leq 1 + \frac{t}{T-3.5}
\end{equation}
while by Lemma \ref{elem-lem}(ii) we have
\begin{equation}\label{hit2}
 1 - \frac{t}{T-3} = \exp\left( O_{\leq}\left( \frac{t}{T-3-t} \right) \right) = \exp\left( O_{\leq}\left( \frac{t}{T-3.5} \right) \right).
\end{equation}
We conclude that
$$ \delta_1 \leq \frac{0.2 \times 9^\sigma}{a-0.865} \exp\left( \frac{5+3t+6\sigma^2}{6(T-3.5)} + t\left(\log 3 + \frac{\sigma}{T-3}\right)^2 \left(1 + \frac{t}{T-3.5}\right) \right).$$
From Lemma \ref{elem-lem}(i) and the hypothesis $0 \leq \sigma \leq 1$, we have
\begin{align*}
\left(\log 3 + \frac{\sigma}{T-3}\right)^2 &\leq (\log^2 3) \left(1 + \frac{2 \sigma / \log 3}{T - 3 - \frac{\sigma}{2\log 3}}\right) \\
&\leq  (\log^2 3) \left(1 + \frac{2 \sigma / \log 3}{T - 3.5}\right)
\end{align*}
and therefore by a further application of Lemma \ref{elem-lem}(i)
\begin{align*}
\left(\log 3 + \frac{\sigma}{T-3}\right)^2 \left(1 + \frac{t}{T-3.5}\right) 
&\leq \log^2 3 \left(1 + \frac{\frac{2 \sigma}{\log 3} + t}{T - 3.5 - \frac{2\sigma t/\log 3}{2\sigma/\log 3 + t}}\right) \\
&\leq \log^2 3 \left(1 + \frac{\frac{2 \sigma}{\log 3} + t}{T - 3.5 - t}\right) \\
&\leq \log^2 3 \left(1 + \frac{\frac{2 \sigma}{\log 3} + t}{T - 4}\right) 
\end{align*}
and thus
$$ \delta_1 \leq \frac{0.2 \times 9^\sigma \exp( t \log^2 3 )}{a-0.865} \exp\left( \frac{5+3t+6\sigma^2 + 12 t \sigma \log 3 + 6t^2 \log^2 3}{6(T-4)} \right).$$

By repeating the proof of \eqref{delta1}, we have
$$
 \delta_2 = \frac{0.029 \times 2^{-\sigma}}{a - 0.353} \exp\left( \frac{\sigma^2 + \frac{5}{6}}{T-3} + \frac{t\left(-\log \sqrt{2} + \frac{\sigma}{T-3}\right)^2}{1 - \frac{t}{T-3}} \right) \left(1 - \frac{t}{T-3}\right)^{-1/2}.$$
We can bound $(-\log \sqrt{2} + \frac{\sigma}{T-3})^2$ by $\log^2 \sqrt{2}$.  Using \eqref{hit}, \eqref{hit2} we thus have
$$
 \delta_2 \leq \frac{0.029 \times 2^{-\sigma} \exp( t \log^2 \sqrt{2})}{a - 0.353} \exp\left( \frac{5 + 3t + 6\sigma^2 + 6t^2 \log^2 \sqrt{2}}{6(T-4)} \right).
$$

With $t \leq 1/2$ and $0 \leq \sigma \leq 1$ one has
\begin{align*}
 0.2 \exp(t \log^2 3) &\leq 0.366 \\
0.029 \exp( t \log^2 \sqrt{2}) &\leq 0.031 \\
\frac{5 + 3t + 6\sigma^2 + 6t^2 \log^2 \sqrt{2}}{6} \leq \frac{5 + 3t + 6\sigma^2 + 12 t \sigma \log 3 + 6t^2 \log^2 3}{6} &\leq 3.49
\end{align*}
and hence
$$ \delta_1 \leq \frac{0.366 \times 9^\sigma}{a-0.865} \exp\left( \frac{3.49}{T-4} \right)$$
and
$$ \delta_2 \leq \frac{0.031 \times 2^{-\sigma}}{a-0.353} \exp\left( \frac{3.49}{T-4} \right).$$

Now we turn to $\delta_3$, which will end up being extremely small compared to $\delta_1$ or $\delta_2$. By \eqref{delta3-def} and the Fubini-Tonelli theorem, we have
$$ \delta_3 = \frac{1}{2 \sqrt{\pi t}} \sum_{k \geq \frac{T'}{2.2 \pi}} (1.1)^{k} \frac{\Gamma(k/2)}{a^k} \int_{-\infty}^{4-k} \exp\left( \frac{u^2 + \frac{5}{6}}{T'-0.33}  - \frac{(u-\sigma)^2}{t} - u \log 2\right)\ du.$$
Since $u \leq 4-k$, $k \geq \frac{T'}{2.2\pi}$, and $T' \geq T \geq 100$, we have $k \geq 14$ and $u \leq -10$; since $\sigma \geq 0$, we may thus lower bound $(u-\sigma)^2/t$ by $u^2/t$.  Since $t \leq 1/2$, we can upper bound $\frac{u^2 + \frac{5}{6}}{T'-0.33} - \frac{u^2}{t}$ by (say) $-\frac{u^2}{2t}$, thus
$$ \delta_3 \leq \frac{1}{2 \sqrt{\pi t}} \sum_{k \geq \frac{T'}{2.2 \pi}} (1.1)^{k} \frac{\Gamma(k/2)}{a^k} \int_{-\infty}^{4-k} e^{-\frac{u^2}{2t} - u \log 2}\ du.$$
We can bound $e^{-\frac{u^2}{2t}} \leq e^{\frac{(k-4)u}{2t}}$, in the range of integration and thus
$$ \int_{-\infty}^{4-k} e^{-\frac{u^2}{2t} - u \log 2}\ du \leq \frac{1}{\frac{k-4}{2t} - \log 2} e^{-\frac{(k-4)^2}{2t} + (k-4) \log 2} \leq \frac{1}{\frac{k-4}{2t} - \log 2} e^{-(k-4)^2 + (k-4) \log 2};$$
bounding
$$ \frac{k-4}{2t} - \log 2 = \frac{k-4-2t \log 2}{2t} \geq \frac{k-6}{2t}$$
we conclude that
$$ \delta_3 \leq \frac{\sqrt{t}}{\sqrt{\pi}} \sum_{k \geq \frac{T'}{2.2 \pi}} (1.1)^{k} \frac{\Gamma(k/2)}{(k-6) a^k} e^{-(k-4)^2 + (k-4) \log 2}.$$
For $k \geq 14$ one can easily verify that $(1.1)^{k} \Gamma(k/2) e^{-(k-4)^2 + (k-4) \log 2} \leq 10^{-30}$; discarding the $\frac{\sqrt{t}}{\sqrt{\pi}}$ and $\frac{1}{k-6}$ factors we thus have
$$ \delta_3 \leq \sum_{k \geq 14} \frac{10^{-30}}{a^k} \leq \frac{2 \times 10^{-30}}{a^{14}}$$
(say).   Since
$$ \frac{0.031 \times 2^{-\sigma}}{a-0.353} + \frac{2 \times 10^{-30}}{a^{14}} \leq \frac{0.031 \times 2^{-\sigma}}{a-0.865}$$
we thus have
$$ \delta \leq \delta_1+\delta_2+\delta_3 \leq \frac{0.366 \times 9^\sigma + 0.031 \times 2^{-\sigma}}{a-0.865} \exp\left( \frac{3.49}{T-4} \right).$$
Inserting this and \eqref{eep}, \eqref{cop} into \eqref{rtnst}, and crudely bounding $2^{-\sigma}$ by $9^\sigma$, we obtain the claim.
\end{proof}

\subsection{Combining the estimates}

Combining Propositions \ref{rtn-prop}, \ref{RTN-prop} with \eqref{htz-expand} and the triangle inequality (and noting that $M_0 = M_0^*$, $M_t = M_t^*$ and $\alpha = \alpha^*$, and that $U$ has magnitude $1$), we conclude the following ``$A+B-C$ approximation to $H_t$'':

\begin{corollary}[$A+B-C$ approximation]\label{abc-cor}  Let $t,x,y$ obey \eqref{region}.  Set
\begin{equation}\label{tp-def}
 T' \coloneqq \frac{x}{2} + \frac{\pi t}{8} 
\end{equation}
and then define $a,N,p,U,C_0(p)$ using \eqref{a-def}, \eqref{N-def}, \eqref{U-def}, \eqref{C0-def}.  Define the quantities
\begin{align*}
s_+ = s_+(x+iy) &\coloneqq \frac{1+y-ix}{2} \\
s_- = s_-(x+iy) &\coloneqq \frac{1-y+ix}{2} \\
A_{t,N}(x+iy) &\coloneqq M_t(s_-) \sum_{n=1}^N \frac{b_n^t}{n^{s_- +\frac{t}{2} \alpha(s_-)}}  \\
B_{t,N}(x+iy) &\coloneqq M_t(s_+) \sum_{n=1}^N \frac{b_n^t}{n^{s_+ +\frac{t}{2} \alpha(s_+)}}  \\
C_t(x+iy) &\coloneqq  2 e^{-\pi i y/8} (-1)^{N} \exp\left( \frac{t \pi^2}{64}\right) \Re ( M_0(iT') C_0(p) U e^{\pi i/8} )
\end{align*}
where $M_0, b_n^t$ were defined in \eqref{Mt-def}, \eqref{bn-def}.
Then
$$ H_t(x+iy) = A_{t,N}(x+iy) + B_{t,N}(x+iy) - C_t(x+iy) + O_{\leq}(E_A(x+iy) + E_B(x+iy) + E_C(x+iy))$$
where
\begin{align*}
E_A(x+iy) &\coloneqq \left|M_t(s_-)\right| \sum_{n=1}^N \frac{b_n^t}{n^{\frac{1-y}{2}+\frac{t}{2} \Re \alpha(s_-)}} \eps_{t,n}(s_-) \\
E_B(x+iy) &\coloneqq \left|M_t(s_+)\right|\sum_{n=1}^N  \frac{b_n^t}{n^{\frac{1+y}{2}+\frac{t}{2} \Re \alpha(s_+)}} \eps_{t,n}(s_+)  \\
E_C(x+iy) &\coloneqq \exp\left( \frac{t \pi^2}{64}\right) |M_0(iT')| \left(\tilde \eps(s_-) + \tilde \eps(s_+)\right)  
\end{align*}
and $\eps_{t,n}, \tilde \eps$ were defined in \eqref{eps-def}, \eqref{epsp-def}. 
\end{corollary}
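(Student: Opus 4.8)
The plan is to feed Propositions~\ref{rtn-prop} and~\ref{RTN-prop} into the Riemann--Siegel-type identity~\eqref{htz-expand}. Since $0<t\leq 1/2$ and $z=x+iy$ with $x\geq 200$ is not purely imaginary, \eqref{htz-expand} is available; we take the truncation parameter there to be the integer $N=\lfloor a\rfloor$ attached by~\eqref{a-def},~\eqref{N-def} to $T'=\tfrac{x}{2}+\tfrac{\pi t}{8}$ (which coincides with~\eqref{N-def-main}). For $z=x+iy$ one has $\tfrac{1+iz}{2}=s_-$ and $\tfrac{1-iz}{2}=s_+$, so~\eqref{htz-expand} reads
\begin{equation}\label{abc-start}
 H_t(x+iy)=\sum_{n=1}^N r_{t,n}(s_-)+\sum_{n=1}^N r_{t,n}^*(s_+)+R_{t,N}(s_-)+R_{t,N}^*(s_+).
\end{equation}
In the region~\eqref{region} we have $\Im s_-=\Im\overline{s_+}=\tfrac{x}{2}\geq 100$, $\Re s_-=\tfrac{1-y}{2}\in[0,\tfrac12]$ and $\Re\overline{s_+}=\tfrac{1+y}{2}\in[\tfrac12,1]$, so the hypotheses $T>10$ of Proposition~\ref{rtn-prop} and $T\geq 100$, $0\leq\sigma\leq 1$ of Proposition~\ref{RTN-prop} hold at the relevant points; moreover the integer that Proposition~\ref{RTN-prop} produces at both $s_-$ and $\overline{s_+}$ equals the $N$ fixed above, since it depends only on $T'=\tfrac{x}{2}+\tfrac{\pi t}{8}$.

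For the first sum in~\eqref{abc-start}, apply Proposition~\ref{rtn-prop} at $\sigma+iT=s_-$, multiply through, and split off the error using $|n^{s_-+\frac{t}{2}\alpha(s_-)}|=n^{\frac{1-y}{2}+\frac{t}{2}\Re\alpha(s_-)}$ together with the triangle inequality; this gives $\sum_{n=1}^N r_{t,n}(s_-)=A_{t,N}(x+iy)+O_{\leq}(E_A(x+iy))$. For the second sum, write $r_{t,n}^*(s_+)=\overline{r_{t,n}(\overline{s_+})}$, apply Proposition~\ref{rtn-prop} at $\overline{s_+}$, and conjugate, using $M_t=M_t^*$ and $\alpha=\alpha^*$ (hence $\overline{M_t(\overline{s_+})}=M_t(s_+)$ and $\overline{\alpha(\overline{s_+})}=\alpha(s_+)$), the reality of $b_n^t$, and $\overline{n^{w}}=n^{\overline{w}}$ for $n\geq 1$; this produces $r_{t,n}^*(s_+)=M_t(s_+)\,b_n^t\,n^{-s_+-\frac{t}{2}\alpha(s_+)}(1+O_{\leq}(\eps_{t,n}(s_+)))$, where $\eps_{t,n}(s_+)$ denotes the quantity~\eqref{eps-def} evaluated at $\overline{s_+}$ (legitimate because $|\alpha(s_+)-\log n|=|\alpha(\overline{s_+})-\log n|$). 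Summing over $n$ yields $\sum_{n=1}^N r_{t,n}^*(s_+)=B_{t,N}(x+iy)+O_{\leq}(E_B(x+iy))$.

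The substantive step is to combine $R_{t,N}(s_-)+R_{t,N}^*(s_+)$ into $-C_t$. Apply Proposition~\ref{RTN-prop} at $s_-$ (with $\sigma=\tfrac{1-y}{2}$) and at $\overline{s_+}$ (with $\sigma=\tfrac{1+y}{2}$); these two invocations share the data $T',a,N,p,U,C_0(p)$. Conjugating the second, the two main terms equal $(-1)^{N-1}\exp(\tfrac{t\pi^2}{64})$ times $U e^{\pi i(1-y)/8}M_0(iT')C_0(p)$ and $\overline{U}\,e^{-\pi i(1+y)/8}\,\overline{M_0(iT')}\,\overline{C_0(p)}$ respectively. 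Pulling the common factor $e^{-\pi i y/8}$ out of each leaves the residual $W:=M_0(iT')C_0(p)U e^{\pi i/8}$ and its complex conjugate $\overline{W}$, so the sum of the two main terms is $2(-1)^{N-1}\exp(\tfrac{t\pi^2}{64})\,e^{-\pi i y/8}\,\Re W=-C_t(x+iy)$, by the definition of $C_t$ and the identity $(-1)^{N-1}=-(-1)^N$. The accompanying errors are bounded, using $|U|=1$, by $\exp(\tfrac{t\pi^2}{64})|M_0(iT')|\,\tilde\eps(s_-)$ and $\exp(\tfrac{t\pi^2}{64})|M_0(iT')|\,\tilde\eps(\overline{s_+})$, whose sum is $E_C(x+iy)$ (with $\tilde\eps(s_+)$ read off~\eqref{epsp-def} at $\overline{s_+}$). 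Inserting the three contributions into~\eqref{abc-start} and applying the triangle inequality gives the stated identity. The one genuine delicacy lies in this last paragraph---tracking the phases $e^{\pm\pi i/8}$, the conjugations arising from the ${}^*$ operation, and the sign flip $(-1)^{N-1}=-(-1)^N$---together with the harmless convention of evaluating $\eps_{t,n}$ and $\tilde\eps$ at the reflected point $\overline{s_+}$ rather than $s_+$; the rest is routine term-by-term estimation.
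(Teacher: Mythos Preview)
Your argument is correct and follows precisely the route the paper indicates: plug Propositions~\ref{rtn-prop} and~\ref{RTN-prop} into \eqref{htz-expand}, use $M_t=M_t^*$, $\alpha=\alpha^*$, $|U|=1$, and the triangle inequality. Your careful tracking of the phases $e^{\pm\pi i/8}$, the conjugations, and the sign $(-1)^{N-1}=-(-1)^N$ to assemble $-C_t$, together with your remark that $\eps_{t,n}(s_+)$ and $\tilde\eps(s_+)$ must be read as the values at $\overline{s_+}$, fills in exactly the details the paper leaves implicit.
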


In our applications, we will just use the cruder ``$A+B$'' approximation that is immediate from the above corollary and \eqref{cop}:

\begin{corollary}[$A+B$ approximation]\label{ab-cor} With the notation and hypotheses as in Corollary \ref{abc-cor}, we have
$$ H_t(x+iy) = A_{t,N}(x+iy) + B_{t,N}(x+iy) + O_{\leq}(E_A(x+iy) + E_B(x+iy) + E_{C,0}(x+iy))$$
where
\begin{align*}
E_{C,0}(x+iy) &\coloneqq \exp\left( \frac{t \pi^2}{64}\right) |M_0(iT')| \left(1 + \tilde \eps(s_-) + \tilde \eps(s_+)\right). 
\end{align*}
\end{corollary}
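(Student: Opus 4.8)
The plan is to derive Corollary \ref{ab-cor} directly from Corollary \ref{abc-cor}, the only additional ingredient being the elementary bound $|C_0(p)| \leq \frac{1}{2}$ recorded in \eqref{cop}. The point is that the main term $-C_t(x+iy)$ appearing in the $A+B-C$ approximation is already comparable in size to $\exp(\frac{t\pi^2}{64})|M_0(iT')|$, which is exactly the scale on which the error term $E_C$ lives; hence it may simply be absorbed into the error, at the cost of replacing $E_C$ by the slightly larger $E_{C,0}$.

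Concretely, I would start from the identity furnished by Corollary \ref{abc-cor},
$$ H_t(x+iy) = A_{t,N}(x+iy) + B_{t,N}(x+iy) - C_t(x+iy) + O_{\leq}\bigl( E_A(x+iy) + E_B(x+iy) + E_C(x+iy) \bigr),$$
so that it remains only to estimate $|C_t(x+iy)|$. From the definition of $C_t$ in Corollary \ref{abc-cor},
$$ C_t(x+iy) = 2 e^{-\pi i y/8} (-1)^{N} \exp\left( \frac{t\pi^2}{64} \right) \Re\bigl( M_0(iT') C_0(p) U e^{\pi i/8} \bigr),$$
I would use that $y$ and $T'$ are real — so that $|e^{-\pi i y/8}| = |e^{\pi i/8}| = 1$ and, from \eqref{U-def}, $|U| = 1$ — together with $|(-1)^N| = 1$ and the trivial estimate $|\Re z| \leq |z|$, to obtain
$$ |C_t(x+iy)| \leq 2 \exp\left( \frac{t\pi^2}{64} \right) |M_0(iT')|\, |C_0(p)|.$$
Invoking \eqref{cop} then gives $|C_t(x+iy)| \leq \exp\left( \frac{t\pi^2}{64} \right) |M_0(iT')|$.

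Finally, I would fold this bound into the error term via the triangle inequality. Since the $O_{\leq}(\cdot)$ term in the identity above denotes a quantity bounded in magnitude by $E_A(x+iy) + E_B(x+iy) + E_C(x+iy)$, with $E_C(x+iy) = \exp(\frac{t\pi^2}{64})|M_0(iT')|\,(\tilde \eps(s_-) + \tilde \eps(s_+))$, adding the quantity $-C_t(x+iy)$ to it produces a quantity bounded in magnitude by
$$ E_A(x+iy) + E_B(x+iy) + \exp\left( \frac{t\pi^2}{64} \right) |M_0(iT')| \bigl( 1 + \tilde \eps(s_-) + \tilde \eps(s_+) \bigr) = E_A(x+iy) + E_B(x+iy) + E_{C,0}(x+iy),$$
which is precisely the asserted approximation. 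I do not expect any genuine obstacle here: the entire argument reduces to the bound $|C_0(p)| \leq \frac{1}{2}$ together with the unimodularity of the phase factors $e^{-\pi i y/8}$, $e^{\pi i/8}$, $U$ and the harmless sign $(-1)^N$ appearing in $C_t$. The only point meriting a moment's attention is that $C_t(x+iy)$ is built from a real part, so one should use $|\Re z| \leq |z|$ rather than attempt to track the argument of $M_0(iT') C_0(p) U e^{\pi i/8}$.
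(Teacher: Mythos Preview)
Your proposal is correct and follows exactly the approach indicated in the paper, which simply notes that the corollary is ``immediate from the above corollary and \eqref{cop}''. You have merely spelled out the details: bounding $|C_t(x+iy)|$ via $|\Re z| \leq |z|$, the unimodularity of $U$, $e^{-\pi i y/8}$, $e^{\pi i/8}$, and the bound $|C_0(p)| \leq \frac{1}{2}$, then absorbing this into $E_C$ to produce $E_{C,0}$.
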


We can now prove Theorem \ref{eff}.  Dividing by the expression $B_t$ from \eqref{bo-def}, and using \eqref{ft-def}, we conclude that
\begin{equation}\label{ratio-form-refined}
\frac{H_t(x+iy)}{B_t(x+iy)} = f_t(x+iy) - \frac{C_t(x+iy)}{B_t(x+iy)} + O_{\leq}\left( e_A + e_B + e_{C} \right)
\end{equation}
and
\begin{equation}\label{ratio-form}
\frac{H_t(x+iy)}{B_t(x+iy)} = f_t(x+iy) + O_{\leq}\left( e_A + e_B + e_{C,0} \right)
\end{equation}
where
\begin{align}
e_A \coloneqq e_A(x+iy) &\coloneqq |\gamma| \sum_{n=1}^N n^y \frac{b_n^t}{n^{\Re s + \Re \kappa}} \eps_{t,n}(s_-) \label{ea-def}\\
e_B \coloneqq e_B(x+iy) &\coloneqq \sum_{n=1}^N  \frac{b_n^t}{n^{\Re s}} \eps_{t,n}(s_+) \label{eb-def} \\
e_{C} \coloneqq e_{C}(x+iy) &\coloneqq \frac{\exp\left( \frac{t \pi^2}{64}\right) |M_0(iT')|}{|M_t(s_+)|} \left(\tilde \eps(s_-) + \tilde \eps(s_+)\right).\label{ecc-def}\\
e_{C,0} \coloneqq e_{C,0}(x+iy) &\coloneqq \frac{\exp\left( \frac{t \pi^2}{64}\right) |M_0(iT')|}{|M_t(s_+)|} \left(1 + \tilde \eps(s_-) + \tilde \eps(s_+) \right),\label{ec-def}
\end{align}
and where $\gamma,s_*,\kappa$ were defined in \eqref{lambda-def}, \eqref{sn-def}, \eqref{kappa-def}.  Note also from \eqref{tp-def}, \eqref{a-def}, \eqref{N-def} that $N$ is given by \eqref{N-def-main}.

To conclude the proof of Theorem \ref{eff} it thus suffices to obtain the following estimates.

\begin{proposition}[Estimates]\label{estimates}  Let the notation and hypotheses be as above. 
\begin{itemize}
\item[(i)]  One has
$$ |\gamma| \leq e^{0.02 y} \left( \frac{x}{4\pi} \right)^{-y/2} $$
\item[(ii)] One has
$$ \Re s_* \geq \frac{1+y}{2} +\frac{t}{4} \log \frac{x}{4\pi} - \frac{(1-3y+\frac{8y(1-y)}{x^2})_+ t}{2x^2}.$$
\item[(iii)]  One has
$$ \kappa = O_{\leq} \left( \frac{ty}{2(x-6)} \right).$$
\item[(iv)]  One has
$$ e_A \leq |\gamma| N^{|\kappa|} \sum_{n=1}^N n^{y} \frac{b_n^t}{n^{\Re s_*}} \left( \exp\left( \frac{\frac{t^2}{16} \log^2 \frac{x}{4\pi n^2} + 0.626}{x-6.66} \right)-1 \right).$$
\item[(v)]  One has
$$ e_B \leq \sum_{n=1}^N  \frac{b_n^t}{n^{\Re s_*}} \left( \exp\left( \frac{\frac{t^2}{16} \log^2 \frac{x}{4\pi n^2} + 0.626}{x-6.66} \right)-1 \right).$$
\item[(vi)] One has
$$ e_{C} \leq \left(\frac{x}{4\pi}\right)^{-\frac{1+y}{4}} \exp\left( - \frac{t}{16} \log^2 \frac{x}{4\pi} + \frac{3 |\log \frac{x}{4\pi} + i \frac{\pi}{2}|+3.58}{x-8.52} \right) \left(\frac{1.24 \times (3^y+3^{-y})}{N-0.125} + \frac{6.92}{x-12}\right).
$$
and
$$ e_{C,0} \leq \left(\frac{x}{4\pi}\right)^{-\frac{1+y}{4}} \exp\left( - \frac{t}{16} \log^2 \frac{x}{4\pi} + \frac{3 |\log \frac{x}{4\pi} + i \frac{\pi}{2}|+3.58}{x-8.52} \right) \left(1 + \frac{1.24 \times (3^y+3^{-y})}{N-0.125} + \frac{6.92}{x-12}\right).
$$
\end{itemize}
\end{proposition}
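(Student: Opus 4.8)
The plan is to verify the six estimates in turn, in each case peeling off an explicit main term and controlling the remainder by Lemma \ref{elem-lem} together with the closed forms \eqref{logM}, \eqref{alpha-form}, \eqref{alpha-deriv-bound}. Write $L := \log\frac{x}{4\pi}$ and $s_+ := \frac{1+y-ix}{2}$, $s_- := \frac{1-y+ix}{2}$, so that \eqref{sn-def}, \eqref{lambda-def}, \eqref{kappa-def} read $s_* = s_+ + \frac{t}{2}\alpha(s_+)$, $\gamma = M_t(s_-)/M_t(s_+)$, $\kappa = \frac{t}{2}(\alpha(s_-)-\alpha(\overline{s_+}))$, and note the identities $s_++s_-=1$, $s_--1=-s_+$, $\overline{s_+}=\frac{1+y+ix}{2}$, as well as the fact that every argument of $\alpha$ or $\log M_0$ below has imaginary part $\pm x/2$ with $x/2\ge 100$, so these functions are holomorphic there and $\alpha'(s)=O_{\leq}\big(\frac{1}{2\Im s-6}\big)$ (from \eqref{alpha-deriv-bound}) holds on every horizontal segment used. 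From \eqref{alpha-form} one reads off the shape $\alpha(s_\pm) = \frac12 L \pm i\frac{\pi}{4} + \eta_\pm$, where the correction $\eta_\pm$ — collecting the $\frac{1}{2s}+\frac{1}{s-1}$ terms and the deviation of $\arg s_\pm$ from $\mp\frac{\pi}{2}$ — satisfies $\Re\eta_\pm = O_{\leq}(cx^{-2})$ (the two rational terms nearly cancel in the real part) and $\Im\eta_\pm = O_{\leq}(4x^{-1})$, for a small absolute constant $c$.

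Estimates (ii) and (iii) are quick. For (ii), $\Re s_* = \frac{1+y}{2}+\frac{t}{2}\Re\alpha(s_+)$, and \eqref{alpha-form} gives $\Re\alpha(s_+) = \frac{1+y}{(1+y)^2+x^2} + \frac{2(y-1)}{(y-1)^2+x^2} + \frac12 L + \frac14\log(1+\frac{(1+y)^2}{x^2})$; keeping the surplus $\frac14\log(1+\frac{(1+y)^2}{x^2}) \ge \frac{(1+y)^2}{4x^2} - \frac{(1+y)^4}{8x^4}$ and lower-bounding the two rational terms by their geometric series (Lemma \ref{elem-lem}(i)), the three $x^{-2}$-order contributions combine to at least $\frac{3y-1}{x^2}$ once the $x^{-4}$ terms are dominated using $x\ge 200$; since $\frac{3y-1}{x^2} \ge -\frac{(1-3y)_+}{x^2} \ge -\frac{(1-3y+\frac{8y(1-y)}{x^2})_+}{x^2}$, multiplying by $\frac{t}{2}$ gives the claim. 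For (iii), $s_-$ and $\overline{s_+}$ differ only by $-y$ in real part and have common imaginary part $x/2>3$, so $\kappa = \frac{t}{2}\int_{\overline{s_+}}^{s_-}\alpha'$ along that segment; bounding $|\alpha'|\le\frac{1}{x-6}$ via \eqref{alpha-deriv-bound} yields $|\kappa|\le\frac{ty}{2(x-6)}$. For (i), $|\gamma| = \exp\Re(\log M_t(s_-)-\log M_t(s_+))$ and $\log M_t = \frac{t}{4}\alpha^2+\log M_0$ by \eqref{Mt-def}. Expanding $\Re(\log M_0(s_-)-\log M_0(s_+))$ via \eqref{logM}: the $\Log s+\Log(s-1)$ part gives $\log|s_-|+\log|s_--1|-\log|s_+|-\log|s_+-1|$, which vanishes \emph{exactly} because $s_--1=-s_+$ and $s_-=1-s_+$ force $\{|s_-|,|s_--1|\}=\{|s_+-1|,|s_+|\}$; the $-\frac{s}{2}\log\pi-\frac{s}{2}$ part gives $\frac{y}{2}\log\pi+\frac{y}{2}$; and the Stirling part $\Re[(\frac{s}{2}-\frac12)\Log\frac{s}{2}]$, written with $\Log\frac{s_\pm}{2}=\log\frac{|s_\pm|}{2}+i\arg s_\pm$, gives a main term $-\frac{y}{2}\log\frac{x}{4}-\frac{y}{2}$ (the $-\frac{y}{2}$ being $-\frac{x}{4}$ times the $\frac{2y}{x}+O(x^{-3})$ spread of $\arg s_-$ and $-\arg s_+$) plus an $O_{\leq}(x^{-2})$ error; these assemble to $-\frac{y}{2}L+O_{\leq}(x^{-2})$. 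Finally $\frac{t}{4}\Re(\alpha(s_-)^2-\alpha(s_+)^2)$ vanishes at $y=0$ (there $s_-=\overline{s_+}$, so the bracket is purely imaginary) and is smooth in $y$; bounding its $y$-derivative (which is $O(tL/x)$) together with the $x^{-2}$ term using $x\ge 200$, $t\le\frac12$ gives $|\gamma|\le e^{0.02y}(\frac{x}{4\pi})^{-y/2}$.

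Estimates (iv)--(vi) form the structural heart. For (iv)--(v): since $\alpha(\overline{s_+})=\overline{\alpha(s_+)}$, we have $\Re\alpha(s_-)=\Re\alpha(s_+)+\frac{2}{t}\Re\kappa$, so the exponent $\frac{1-y}{2}+\frac{t}{2}\Re\alpha(s_-)$ in \eqref{ea-def} equals $\Re s_*-y+\Re\kappa$, whence $n^{-(\frac{1-y}{2}+\frac{t}{2}\Re\alpha(s_-))} = n^y n^{-\Re s_*-\Re\kappa}$ and, using $n^{-\Re\kappa}\le N^{|\kappa|}$ for $1\le n\le N$, $e_A\le|\gamma|N^{|\kappa|}\sum_{n\le N}\frac{n^y b_n^t}{n^{\Re s_*}}\eps_{t,n}(s_-)$, while $e_B=\sum_{n\le N}\frac{b_n^t}{n^{\Re s_*}}\eps_{t,n}(s_+)$ from \eqref{eb-def}; it then suffices to show $\eps_{t,n}(s_\pm)\le\exp\big(\frac{\frac{t^2}{16}\log^2\frac{x}{4\pi n^2}+0.626}{x-6.66}\big)-1$. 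By \eqref{eps-def} and $|\Im s_\pm|=\frac{x}{2}$ this reduces to $\frac{t^2}{4}|\alpha(s_\pm)-\log n|^2+\frac{t}{2}+\frac13\le\frac{t^2}{16}\log^2\frac{x}{4\pi n^2}+0.626$; writing $\alpha(s_\pm)-\log n=\frac12\log\frac{x}{4\pi n^2}\pm i\frac{\pi}{4}+\eta_\pm$, expanding the square, and inserting $\Re\eta_\pm=O_{\leq}(cx^{-2})$, $\Im\eta_\pm=O_{\leq}(4x^{-1})$, $|\log\frac{x}{4\pi n^2}|\le L$, one gets $|\alpha(s_\pm)-\log n|^2\le\frac14\log^2\frac{x}{4\pi n^2}+\frac{\pi^2}{16}+O_{\leq}(0.033)$ for $x\ge 200$, so with $t\le\frac12$ the left side is $\le\frac{t^2}{16}\log^2\frac{x}{4\pi n^2}+\frac{\pi^2}{256}+\frac{0.033}{16}+\frac14+\frac13<\frac{t^2}{16}\log^2\frac{x}{4\pi n^2}+0.626$. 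For (vi), write (via \eqref{Mt-def}) $e_C=\frac{\exp(t\pi^2/64)|M_0(iT')|}{|M_0(s_+)|}\exp(-\frac{t}{4}\Re\alpha(s_+)^2)(\tilde\eps(s_-)+\tilde\eps(s_+))$; use $|M_0(s_+)|=|M_0(\overline{s_+})|$ and Taylor-expand $\log M_0$ between the nearby points $\overline{s_+}$ and $iT'$ (which differ by $\frac{1+y}{2}-i\frac{\pi t}{8}$), with $\alpha(iT')=\frac12 L+i\frac{\pi}{4}+O_{\leq}(c/x)$ and remainder bounded by \eqref{alpha-deriv-bound}, obtaining $\log\frac{|M_0(iT')|}{|M_0(s_+)|}=-\frac{1+y}{4}L-\frac{\pi^2 t}{32}+O_{\leq}(\cdot)$; combined with $-\frac{t}{4}\Re\alpha(s_+)^2=-\frac{t}{16}L^2+\frac{t\pi^2}{64}+O_{\leq}(\cdot)$ the three $\pi^2 t$-constants cancel ($\frac{1}{64}+\frac{1}{64}-\frac{1}{32}=0$), leaving $(\frac{x}{4\pi})^{-(1+y)/4}\exp(-\frac{t}{16}L^2+\frac{3|L+i\frac{\pi}{2}|+3.58}{x-8.52})$ after packaging all residual $O(\cdot/x)$ terms (the largest proportional to $|L+i\frac{\pi}{2}|$, from the Taylor remainder and the $\alpha$-approximation) into the stated denominator shift via Lemma \ref{elem-lem}(i),(vi); meanwhile $\tilde\eps(s_-)+\tilde\eps(s_+)=\big(\frac{1.191(3^y+3^{-y})}{a-0.865}+\frac{20}{3(x-12)}\big)\exp(\frac{6.98}{x-8})$ by \eqref{epsp-def} and $9^{(1\mp y)/2}=3^{1\mp y}$, and bounding the $\exp(\frac{6.98}{x-8})$ factor (Lemma \ref{elem-lem}(vi)) and comparing $a$ with $N=\lfloor a\rfloor$ (using $x\ge 200$) yields $\le\frac{1.24(3^y+3^{-y})}{N-0.125}+\frac{6.92}{x-12}$; the $e_{C,0}$ bound follows on adding the $1$ that accounts for the crude bound $|C_0(p)|\le\frac12$ of \eqref{cop} folded into $E_{C,0}$ in Corollary \ref{ab-cor}.

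The proof is essentially one long verification; the only genuinely delicate part is the constant-chasing in (iv)--(vi). Because these bounds are near-sharp at the boundary $x=200$, $t=\frac12$ (e.g. in (iv)--(v) one lands within about $2\times 10^{-3}$ of the $0.626$ threshold), none of the $O(x^{-1})$ or $O(L/x)$ corrections can be discarded: the constants only come out after spotting the exact cancellations — the length cancellation $s_--1=-s_+$ in (i), and in (vi) the phase cancellation $-\frac{\pi T'}{4}+\frac{\pi x}{8}=-\frac{\pi^2 t}{32}$ together with $\frac{t\pi^2}{64}+\frac{t\pi^2}{64}-\frac{t\pi^2}{32}=0$ — and then shepherding every remaining term through repeated uses of Lemma \ref{elem-lem}(i)--(iii),(vi). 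Everything else is routine bookkeeping.
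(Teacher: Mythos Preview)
Your proof is correct and tracks the paper's argument closely for parts (iii)--(vi), with essentially the same estimates and constant-chasing (the paper bounds $|\alpha(s_\pm)-\log n|^2$ via a single $|\eta|\le\frac{3}{x-6}$ from \eqref{asig} rather than your real/imaginary split, but lands on the same $0.626$; the $M_0$-ratio computation and the handling of $\tilde\eps(s_-)+\tilde\eps(s_+)$ in (vi) are identical in substance). For (ii) you keep the positive surplus $\tfrac14\log(1+\tfrac{(1+y)^2}{x^2})$ to absorb the $x^{-4}$ errors and reach the clean $\tfrac{3y-1}{x^2}$; the paper instead discards that surplus and uses the exact algebraic identity
\[
\frac{1+y}{(1+y)^2+x^2}-\frac{2(1-y)}{(1-y)^2+x^2}=-\frac{1-3y}{(1+y)^2+x^2}-\frac{8y(1-y)}{((1+y)^2+x^2)((1-y)^2+x^2)},
\]
which is where the specific $\tfrac{8y(1-y)}{x^2}$ in the statement comes from. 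Both routes work; yours gives a marginally stronger intermediate bound that you then relax.

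The one genuine methodological difference is (i). The paper applies the mean value theorem in the real direction: $\log|\gamma|=-y\,\partial_\sigma\log|M_t(\sigma+\tfrac{ix}{2})|$ for some $\sigma\in[\tfrac{1-y}{2},\tfrac{1+y}{2}]$, then bounds that derivative as $\Re[(1+\tfrac{t}{2}\alpha')\alpha]$ via \eqref{asig} and \eqref{alpha-deriv-bound} in one stroke. You instead expand $\Re(\log M_0(s_-)-\log M_0(s_+))$ term by term from \eqref{logM}, using the exact cancellation $\{|s_-|,|s_--1|\}=\{|s_+|,|s_+-1|\}$ forced by $s_-=1-s_+$, and then treat the $\tfrac{t}{4}\alpha^2$ piece separately via vanishing at $y=0$ plus a $y$-derivative bound. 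Both arrive at $-\tfrac{y}{2}L+O(y)$ with the same numerical constant; the paper's route is shorter, while yours makes the functional-equation symmetry doing the work more visible.
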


Note that to obtain the bound \eqref{ec-bound} from Proposition \ref{estimates}(vi) we may simply use the inequality $1+u \leq \exp(u)$ for any $u \in \R$, and then bound $\frac{1}{x-8.52} \leq \frac{1}{x-12}$.

\begin{proof}
From the mean value theorem (and noting that $M_t = M_t^*$, so that $\left|M_t\left(\frac{1+y-ix}{2}\right)\right| = \left|M_t\left(\frac{1+y+ix}{2}\right)\right|$), we have
$$ \log|\gamma| = -y \frac{d}{d\sigma} \log \left|M_t\left( \sigma + \frac{ix}{2}\right)\right| $$
for some $\frac{1-y}{2} \leq \sigma \leq \frac{1+y}{2}$.  From \eqref{alpha-def}, \eqref{Mt-def} we have
$$ \frac{d}{d\sigma} \log \left|M_t\left( \sigma + \frac{ix}{2}\right)\right| = \Re \left( \frac{t}{2} \alpha\left(\sigma+\frac{ix}{2}\right) \alpha'\left(\sigma+\frac{ix}{2}\right) + \alpha\left(\sigma+\frac{ix}{2}\right) \right).$$
From \eqref{alpha-deriv-bound} one has
\begin{equation}\label{alphap-b}
 \alpha'\left(\sigma+\frac{ix}{2}\right) = O_{\leq}\left( \frac{1}{x-6} \right)
\end{equation}
and from Taylor expansion we also have
$$ \alpha(\sigma+\frac{ix}{2}) = \alpha\left(\frac{ix}{2}\right) + O_{\leq}\left( \frac{\sigma}{x-6} \right);$$
from \eqref{alpha-form} one has
$$ \alpha\left(\frac{ix}{2}\right) = O_{\leq}\left(\frac{1}{x}\right) + O_{\leq}\left(\frac{1}{x}\right) + \frac{1}{2} \Log \frac{ix}{4\pi} 
= \frac{1}{2} \log \frac{x}{4\pi} + i \frac{\pi}{4} + O_{\leq}\left( \frac{2}{x} \right) $$
and hence
\begin{equation}\label{asig}
 \alpha(\sigma+\frac{ix}{2}) = \frac{1}{2} \log \frac{x}{4\pi} + i \frac{\pi}{4} + O_{\leq}\left( \frac{2+\sigma}{x-6} \right).
\end{equation}
Inserting these bounds, we conclude that
$$ \log|\gamma| = -y \Re \left( \left(\frac{1}{2} \log \frac{x}{4\pi} + i \frac{\pi}{4} + O_{\leq}\left( \frac{2+\sigma}{x-6} \right)\right) \left(1 + O_{\leq}\left(\frac{t}{2(x-6)}\right)\right) \right).$$
Expanding this out, we have
$$ \log|\gamma| = -y \left(\frac{1}{2} \log \frac{x}{4\pi} + O_{\leq}\left( \frac{2+\sigma + \frac{t}{4} \log \frac{x}{4\pi} + \frac{t\pi}{8} + \frac{t(2+\sigma)}{2(x-6)}}{x-6} \right)\right).$$
In the region \eqref{region}, which implies that $0 \leq \sigma \leq 1$, we have
$$ 2 + \sigma + \frac{t\pi}{8} + \frac{t(2+\sigma)}{2(x-6)} \leq 3.21$$
and thus
$$ \log|\gamma| \leq -\frac{y}{2} \log \frac{x}{4\pi} + y \frac{\frac{t}{4} \log \frac{x}{4\pi} + 3.21}{x-6}.$$
The function $x \mapsto \frac{\log \frac{x}{4\pi}}{x-6}$ is decreasing for $x \geq 200$ thanks to Lemma \ref{elem-lem}(vi), hence
$$ y \frac{\frac{t}{4} \log \frac{x}{4\pi} + 3.21}{x-6} \leq y \frac{\frac{t}{4} \log \frac{200}{4\pi} + 3.21}{200-6} \leq 0.02 y.$$
Claim (i) follows.  We remark that one can improve the $e^{0.02 y}$ factor here by Taylor expanding $\alpha$ to second order rather than first order, but we will not need to do so here.

To prove claim (ii), it suffices by \eqref{res-bound} to show that
$$ \Re \alpha(s_+) \geq \frac{1}{2} \log \frac{x}{4\pi} - \frac{(1-3y)_+}{x^2} - \frac{4y(1+y)}{x^4}.$$
By \eqref{alpha-form} one has
$$ \Re \alpha(s_+) = \frac{1+y}{(1+y)^2+x^2} - \frac{2(1-y)}{(1-y)^2+x^2} + \frac{1}{2} \log \frac{\sqrt{(1+y)^2+x^2}}{4\pi}.$$
We bound $\sqrt{(1+y)^2+x^2} \geq x$ and calculate
\begin{align*}
 \frac{1+y}{(1+y)^2+x^2} - \frac{2(1-y)}{(1-y)^2+x^2} &= - \frac{1-3y}{(1+y)^2+x^2} -\frac{8y(1-y)}{((1+y)^2+x^2)((1-y)^2+x^2)} \\
&\geq - \frac{1-3y+\frac{8y(1-y)}{x^2}}{(1+y)^2+x^2}.
\end{align*}
Lower bounding the numerator by its nonnegative part and then lower bounding $(1+y)^2+x^2$ by $x^2$, we obtain the claim.

Claim (iii) is immediate from \eqref{alphap-b} and the fundamental theorem of calculus.  Now we turn to (iv), (v).  From \eqref{asig} one has
$$ \alpha\left(\frac{1 \pm y + ix}{2}\right) - \log n = \frac{1}{2} \log \frac{x}{4\pi n^2} + i \frac{\pi}{4} + O_{\leq}\left( \frac{3}{x-6}\right)$$
for either choice of sign $\pm$.  In particular, we have
\begin{equation}\label{alphn}
 \left|\alpha\left(\frac{1 \pm y + ix}{2}\right) - \log n\right|^2 = \frac{1}{4} \log^2 \frac{x}{4\pi n^2} + \frac{\pi^2}{16} + 
O_{\leq}\left( \frac{3 |\log \frac{x}{4\pi n^2} + i \frac{\pi}{2}|}{x-6} + \frac{9}{(x-6)^2}\right).
\end{equation}
For any $1 \leq n \leq N$, we have
$$ 1 \leq n^2 \leq N^2 \leq a^2 = \frac{x+\frac{\pi t}{4}}{4\pi};$$
in the region \eqref{region}, the right-hand side is certainly bounded by $(\frac{x}{4\pi})^2$, so that
$$ \frac{4\pi}{x} \leq \frac{x}{4\pi n^2} \leq \frac{x}{4\pi}$$
and hence
$$ \left|\log \frac{x}{4\pi n^2} + i \frac{\pi}{2}\right| \leq \left|\log \frac{x}{4\pi} + i \frac{\pi}{2}\right|.$$
In the region \eqref{region} we have  $x \geq 200$, we see from Lemma \ref{elem-lem}(vi) (after squaring) that $\frac{|\log \frac{x}{4\pi} + i \frac{\pi}{2}|}{x-6}$ is decreasing in $x$.  Thus
\begin{align*}
\frac{\pi^2}{16} + \frac{3 |\log \frac{x}{4\pi n^2} + i \frac{\pi}{2}|}{x-6} + \frac{9}{(x-6)^2}
&\leq \frac{\pi^2}{16} +  \frac{3 |\log \frac{200}{4\pi} + i \frac{\pi}{2}|}{200-6} + \frac{9}{(200-6)^2} \\
&\leq 0.667.
\end{align*}
Similarly, in \eqref{region} we also have
$$ \frac{t^2}{8} \times 0.667 + \frac{t}{4} + \frac{1}{6} \leq 0.313.$$ 
We conclude from \eqref{eps-def} that
$$
\eps_{t,n}\left(\frac{1 \pm y + ix}{2} \right) \leq \exp\left( \frac{\frac{t^2}{32} \log^2 \frac{x}{4\pi n^2} + 0.313}{T-3.33} \right)-1.$$
Inserting this bound into \eqref{ea-def}, \eqref{eb-def}, we obtain claims (iv), (v).

Now we establish (vi).  From \eqref{Mt-def} we have
$$\frac{\exp\left( \frac{t \pi^2}{64}\right) |M_0(iT')|}{|M_t(s_+)|}
= \exp\left( \frac{t \pi^2}{64} - \frac{t}{4} \Re(\alpha(s_+)^2)\right) \frac{|M_0(iT')|}{|M_0(s_+)|}.$$
Note that $\frac{1+y+ix}{2} = iT' + \frac{1+y}{2} - \frac{\pi i t}{8}$.  From \eqref{alpha-deriv-bound} we see that $|\alpha'(s)| \leq \frac{1}{x-6}$ for any $s$ on the line segment between $iT'$ and $\frac{1+y+ix}{2}$.  From Taylor's theorem with remainder applied to a branch of $\log M_0$, and noting that $|M_0(s_+)| = |M_0(\frac{1+y+ix}{2})|$, we conclude that
$$ \frac{|M_0(iT')|}{|M_0(s_+)|} = \exp\left( \Re\left( \left(-\frac{1+y}{2}+\frac{\pi i t}{8}\right) \alpha(iT') \right) + O_{\leq}\left( \frac{|-\frac{1+y}{2}+\frac{\pi i t}{8}|^2}{2(x-6)} \right) \right).$$
For $0 \leq y \leq 1$ and $0 < t \leq \frac{1}{2}$ we have
$$\frac{|-\frac{1+y}{2}+\frac{\pi i t}{8}|^2}{2} \leq 0.52$$
and from \eqref{alpha-form} one has
$$ \alpha(iT') = O_{\leq}\left(\frac{1}{2T'}\right) +  O_{\leq}\left(\frac{1}{T'}\right) + \frac{1}{2} \Log \frac{iT'}{2\pi} = \frac{1}{2} \log \frac{T'}{2\pi} + \frac{i\pi}{4} + O_{\leq}( \frac{3}{2T'} ) $$
and hence
$$
\frac{|M_0(iT')|}{|M_0(s_+)|}
= \exp\left( -\frac{1+y}{4} \log \frac{T'}{2\pi} - \frac{t\pi^2}{32} + O_{\leq}\left( \frac{3|-\frac{1+y}{2}+\frac{\pi i t}{8}|}{2T'} + \frac{0.52}{x-6} \right) \right).$$
Bounding $\frac{1}{2T'} \leq \frac{1}{x-6}$ and $|-\frac{1+y}{2}+\frac{\pi i t}{8}| \leq 1.02$, this becomes
$$
\frac{|M_0(iT')|}{|M_0(s_+)|}
= \left(\frac{T'}{2\pi}\right)^{-\frac{1+y}{4}} \exp\left( - \frac{t\pi^2}{32} + O_{\leq}\left( \frac{3.58}{x-6} \right) \right)$$
and hence
$$\frac{\exp\left( \frac{t \pi^2}{64}\right) |M_0(iT')|}{|M_t(\frac{1+y+ix}{2})|} 
= \left(\frac{T'}{2\pi}\right)^{-\frac{1+y}{4}} \exp\left( - \frac{t \pi^2}{64} - \frac{t}{4} \Re (\alpha\left(\frac{1+y+ix}{2}\right)^2) + O_{\leq}\left( \frac{3.58}{x-6} \right) \right).$$
By repeating the proof of \eqref{alphn} we have
$$
\Re (\alpha(\frac{1 \pm y + ix}{2})^2) = \frac{1}{4} \log^2 \frac{x}{4\pi} - \frac{\pi^2}{16} + 
O_{\leq}\left( \frac{3 |\log \frac{x}{4\pi} + i \frac{\pi}{2}|}{x-6} + \frac{9}{(x-6)^2}\right).
$$
As before, in the region \eqref{region} we have
$$\frac{3 |\log \frac{x}{4\pi n^2} + i \frac{\pi}{2}|}{x-6} + \frac{9}{(x-6)^2} \leq \frac{3 |\log \frac{x}{4\pi} + i \frac{\pi}{2}|}{x-6} + \frac{9}{(x-6)^2}
$$
and thus
\begin{align*}
\frac{\exp\left( \frac{t \pi^2}{64}\right) |M_0(iT')|}{|M_t(s_+)|}
&= \left(\frac{T'}{2\pi}\right)^{-\frac{1+y}{4}} \exp\left( - \frac{t}{16} \log^2 \frac{x}{4\pi} + O_{\leq}\left( \frac{3 |\log \frac{x}{4\pi} + i \frac{\pi}{2}|+3.58}{x-6} + \frac{9}{(x-6)^2} \right) \right)\\
&= \left(\frac{T'}{2\pi}\right)^{-\frac{1+y}{4}} \exp\left( - \frac{t}{16} \log^2 \frac{x}{4\pi} + O_{\leq}\left( \frac{3 |\log \frac{x}{4\pi} + i \frac{\pi}{2}|+3.58}{x-8.52} \right) \right)
\end{align*}
thanks to Lemma \ref{elem-lem}(i).  Finally, since $T' \geq \frac{x}{2} \geq 100$ in \eqref{region}, one has 
$$\exp\left(\frac{3.49}{T-4}\right) \leq 1.037$$
and hence by \eqref{epsp-def}
$$ 
\tilde \eps\left(\frac{1 \pm y+ix}{2}\right) \leq \frac{1.24 \times 3^{\pm y}}{a-0.125} + \frac{1.73}{T-6}.$$
Hence
$$
\tilde \eps(s_+) + \tilde \eps(s_-) \leq \frac{1.24 \times (3^y+3^{-y})}{a-0.125} + \frac{3.46}{T-6}$$
giving the claim (substituting $T' = x/2$ and $a \geq N$).
\end{proof}

\section{Fast evaluation of multiple sums}\label{multiple-sec}

Fix $t \geq 0$.  For the verification of the barrier criterion (Theorem \ref{ubc-0}(iii)) using Corollary \ref{zero-test}, we will need to evaluate the quantity $f_t(s)$ to reasonable accuracy for a large number of values of $s$ in the vicinity of a fixed complex number $X+iy$.  From \eqref{ft-def} we have
\begin{equation}\label{fts}
f_t(s) = \sum_{n=1}^N \frac{n^b b_n^t}{n^{\frac{1+y-iX}{2}}} + \gamma(s) \sum_{n=1}^N \frac{n^a b_n^t}{n^{\frac{1-y+iX}{2}}},
\end{equation}
where $b_n^t$ is given by \eqref{bn-def}, $\gamma(s)$ is given by \eqref{lambda-def}, $N$ is given by \eqref{N-def-main} and
$$ b = b(s) \coloneqq  \frac{1+y-iX}{2} - s_* $$
and
$$ a = a(s) \coloneqq  \frac{1-y-iX}{2} - \overline{s_*} - \kappa$$
with $s_*, \kappa$ defined by \eqref{sn-def}, \eqref{kappa-def}.  In practice the exponents $a,b$ will be rather small, and $N$ will be fixed (in our main verification we will in fact have $N = 69098$).

A naive computation of $f_t(s)$ for $M$ values of $s$ would take time $O(NM)$, which turns out to be somewhat impractical for for the ranges of $N,M$ we will need; indeed, for our main theorem, the total number of pairs $(t,s)$ at which we need to perform the evaluation is $785052$ (spread out over $152$ values of $t$), and direct computation of all this data required $78.5$ hours of computer time, which was still feasible at this order of magnitude of $X$ but would not scale to significantly higher magnitudes.  However, one can significantly speed up the computation (to about $0.025$ hours) to extremely high accuracy by using Taylor series expansion to factorise the sums in \eqref{fts} into combinations of sums that do not depend on $s$ and thus can be computed in advance.

We turn to the details.  To make the Taylor series converge\footnote{One can obtain even faster speedups here by splitting the summation range $\sum_{n=1}^N$ into shorter intervals and using a Taylor expansion for each interval, although ultimately we did not need to exploit this.} faster, we recenter the sum in $n$, writing
$$ \sum_{n=1}^N F(n) = \sum_{h=-\lfloor N/2\rfloor+1}^{\lfloor (N+1)/2\rfloor} F(n_0 + h)$$
for any function $F$, where $n_0 \coloneqq \lfloor N/2 \rfloor$.  We thus have
$$ f_t(s) = B(b) + \gamma(s) A(a)$$
where
$$ B(b) \coloneqq \sum_{h=-\lfloor N/2\rfloor+1}^{\lfloor (N+1)/2\rfloor} \frac{(n_0+h)^b b_{n_0+h}^t}{(n_0+h)^{\frac{1+y-iX}{2}}}$$
and
$$ A(a) \coloneqq \sum_{h=-\lfloor N/2\rfloor+1}^{\lfloor (N+1)/2\rfloor} \frac{(n_0+h)^a b_{n_0+h}^t}{(n_0+h)^{\frac{1-y+iX}{2}}}.$$
We discuss the fast computation of $B(b)$ for multiple values of $b$; the discussion for $A(a)$ is analogous.  We can write the numerator $(n_0+h)^b b_{n_0+h}^t$ as
$$ \exp( b \log(n_0+h) + \frac{t}{4} \log^2(n_0+h) );$$
writing $\log(n_0+h) = \log n_0 + \log(1+\frac{h}{n_0})$, this becomes
$$ n_0^{b + \frac{t}{4} \log n_0} \exp( \frac{t}{4} \log^2(1+\frac{h}{n_0}) ) \exp( (b + \frac{t}{2} \log n_0) \log(1+\frac{h}{n_0}) ).$$
By Taylor expanding\footnote{It is also possible to proceed by just performing Taylor expansion on the second exponential and leaving the first exponential untouched; this turns out to lead to a comparable numerical run time.} the exponentials, we can write this as
$$ n_0^{b + \frac{t}{4} \log n_0} \sum_{i=0}^\infty \sum_{j=0}^\infty \frac{( \frac{t}{4} \log^2(1+\frac{h}{n_0}) )^i}{i!} \log^j(1+\frac{h}{n_0}) \frac{(b+\frac{t}{2} \log n_0)^j}{j!}$$
and thus the expression $B(b)$ can be written as
$$ B(b) = n_0^{b + \frac{t}{4} \log n_0} \sum_{i=0}^\infty \sum_{j=0}^\infty B_{i,j} \frac{(b+\frac{t}{2} \log n_0)^j}{j!}$$
where
$$ B_{i,j} \coloneqq \sum_{h=-\lfloor N/2\rfloor+1}^{\lfloor (N+1)/2\rfloor} \frac{( \frac{t}{4} \log^2(1+\frac{h}{n_0}) )^i}{i!} \frac{\log^j(1+\frac{h}{n_0})}{(n_0+h)^{\frac{1+y-iX}{2}}}.$$
If we truncate the $i,j$ summations at some cutoff $E$, we obtain the approximation
$$ B(b) \approx n_0^{b + \frac{t}{4} \log n_0} \sum_{i=0}^{E-1} \sum_{j=0}^{E-1} B_{i,j}(n_0) \frac{(b+\frac{t}{2} \log n_0)^i}{i!}.$$
The quantities $B_{i,j}, i,j=0,\dots,{E-1}$ may be evaluated in time $O(N E^2)$, and then the sums $B(b)$ for $M$ values of $b$ may be evaluated in time $O(ME^2)$, leading to a total computation time of $O((N+M) E^2)$ which can be significantly faster than $O(NM)$ even for relatively large values of $E$.  We took $E=50$, which is more than adequate to obtain extremely high accuracy\footnote{One can obtain more than adequate analytic bounds for the error (which are several orders of magnitude more than necessary) for the parameter ranges of interest by very crude bounds, e.g., bounding $b$ and $\log(1+\frac{h}{n_0})$ by (say) $O_{\leq}(2)$, and relying primarily on the $i!$ and $j!$ terms in the denominator to make the tail terms small.  We omit the details as they are somewhat tedious.}; for $f_t(s)$; see Figure \ref{fig1}.  The code for implementing this may be found in the file

\centerline{\tt dbn\_upper\_bound/pari/barrier\_multieval\_t\_agnostic.txt}

in the github repository \cite{github}.

\section{A new upper bound for the de Bruijn-Newman constant}\label{newup-sec}

In this section we prove Theorem \ref{new-upper}. 

\subsection{Selection of parameters}\label{select}

As stated in the introduction, it suffices to verify the conditions (i), (ii), (iii) of Theorem \ref{ubc-0} $t_0 \coloneqq 0.2$, $X \coloneqq X_0-0.5$, and $y_0 \coloneqq 0.2$, where $X_0 \coloneqq 6 \times 10^{10} + 83952$.  

The choice $t_0=y_0=0.2$ is due to the limitations of our numerical verifications, particularly the known numerical verification of RH.  We now explain the choice of $X_0$.  Recall the familiar Euler product factorization
$$ \zeta(s) = \prod_p \left(1 - \frac{1}{p^s}\right)^{-1}$$
for the Riemann zeta function.  This leads to the heuristic
$$ H_0(x+iy) \propto \prod_{p \leq P} \left(1 - \frac{1}{p^s}\right)^{-1}$$
for some small prime cutoff $P$, where $s = \frac{1+y+ix}{2}$ and we are extremely vague as to what the proportionality symbol $\propto$ means.  This heuristic extends to non-zero times $t$ as
$$ H_t(x+iy) \propto \prod_{p \leq P} \left(1 - \frac{b_p^t}{p^s}\right)^{-1}$$
and we also have
\begin{equation}\label{oscil}
 f_t(x+iy) \propto \prod_{p \leq P} \left(1 - \frac{b_p^t}{p^s}\right)^{-1}.
\end{equation}
One can non-rigorously justify the latter assertion by by inspecting the first series of $f_t(x+iy)$ in \eqref{ft-def} and ignoring the fact that the sequence $n \mapsto b_n^t$ is not multiplicative when $t \neq 0$.  

We will be relying heavily on Corollary \ref{zero-test}, and therefore seek to ensure that $|f_t(x+iy)|$ is as large as possible.
It would therefore seem to be advantageous to try to work as much as possible in regions where Euler product
$$ \prod_{p \leq P} \left(1 - \frac{b_n^t}{p^s}\right),$$
is small, which heuristically corresponds to $\frac{x}{4\pi} \log p$ being close to an integer for $p \leq P$ (so that $p^s$ has argument close to zero).  If one chooses $x$ to lie in the vicinity of
$$ X \coloneqq 6 \times 10^{10} + 83952 - 0.5$$
then indeed the fractional parts $\{ \frac{X}{4\pi} \log p\}$ for $p \leq 11$ are somewhat close to zero:
\begin{align*}
\{ \frac{X}{4\pi} \log 2 \} &= 0.0275\dots \\
\{ \frac{X}{4\pi} \log 3 \} &= 0.0437\dots \\
\{ \frac{X}{4\pi} \log 5 \} &= 0.0640\dots \\
\{ \frac{X}{4\pi} \log 7 \} &= 0.0774\dots\\
\{ \frac{X}{4\pi} \log 11 \} &= 0.0954\dots
\end{align*}
We found this shift by the following somewhat \emph{ad hoc} procedure.  We first introduced the quantity
$$ \mathrm{eulerprod}(x,p_n) \coloneqq \left|\prod\limits_{p \leq p_n}\frac{1}{1-\frac{1}{p^{1-ix/2}}}\right|,$$
which is the  exponent corresponding to $y=1$ (where the minimum value of $|f_t(x+iy)|$ in the barrier region is expected to occur).  We numerically located candidate integers $1 \leq q \leq 10^5$ for which the quantity
$$ \min_{x - 6 \times 10^{10} - q \in \{-0.5,0,0.5\}} |\mathrm{eulerprod}(x,29)|$$
exceeded a threshold (we chose $4$), to obtain seven candidates for $q$: $1046$, $22402$, $24198$, $52806$, $77752$, $83952$, and $99108$.  Among these candidates, we selected the value of $q$ which maximised the quantity
$$ \min_{x - 6 \times 10^{10} - q \in \{-0.5,0,0.5\}} |f_0(x+i)|,$$
namely $q = 83952$ (this quantity being $\approx 4.32$ for this value of $q$).
\begin{figure}[ht!]
  \includegraphics[width=\linewidth]{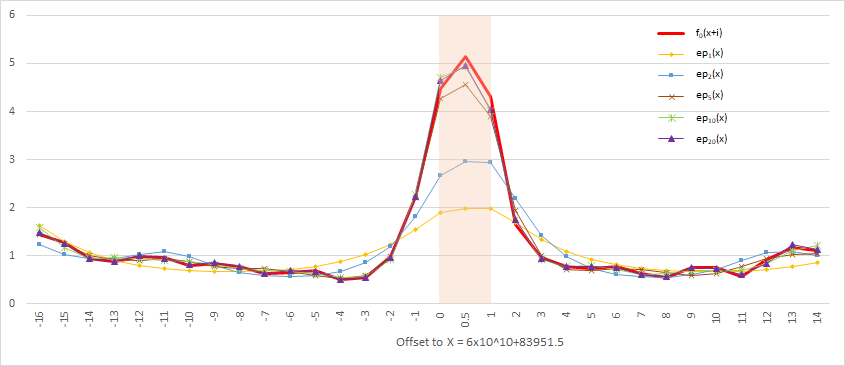}
  \caption{Improving approximation of $\mathrm{ep}_n(x)$ w.r.t. $|f_0(x+iy)|$ as $n$ is increased, where $\mathrm{ep}_n(x)=\mathrm{eulerprod}(x,p_n)$, shown near $X=6 \times 10^{10} + 83951.5$, which was chosen as a barrier location.}
	\label{euler}
\end{figure}

\subsection{Verifications of claims}

Claim (i) of Theorem \ref{ubc-0} is immediate from the result of Platt \cite{platt} that all the non-trivial zeroes of $\zeta$ with imaginary part between $0$ and $3.06 \times 10^{10}$ lie on the critical line $\{ \Re s = 1/2\}$.  For the remaining claims (ii), (iii) of Theorem \ref{ubc-0}, it will suffice to verify that $H_t(x+iy) \neq 0$ for the following three regions of $(x,y,t)$:
\begin{itemize}
\item[(ii)]  $x \geq X_0 - 0.5 + \sqrt{0.96}$, $0.2 \leq y \leq \sqrt{0.6}$, and $t = 0.2$. 
\item[(iii)]  $X_0 - 0.5 \leq x \leq X_0 + 0.5$, $0.2 \leq y \leq \sqrt{0.6}$, and $0 \leq t \leq 0.2$.
\end{itemize}
Here we have enlarged the region (iii) for simplicity.  Both of these regions lie in \eqref{region}.  We can of course replace $H_t(x+iy)$ by $H_t(x+iy)/B_t(x+iy)$.  

Set
$$ N \coloneqq \left\lfloor \sqrt{\frac{x}{4\pi} + \frac{t}{16}} \right\rfloor $$ 
so in particular
\begin{equation}\label{xnn}
 x_N \leq x < x_{N+1}
\end{equation}
where
$$ x_N \coloneqq 4 \pi N^2 - \frac{\pi t}{4}.$$
Write $N_0 \coloneqq 69098$ and $N_1 \coloneqq 1.5 \times 10^6$.
In region (ii) we then have $N \geq N_0$, while in region (iii) we have $N = N_0$.  It will now suffice to verify $\frac{H_t(x+iy)}{B_t(x+iy)} \neq 0$ in the following three regions:
\begin{itemize}
\item[(a)]  When $X_0 - 0.5 \leq x \leq X_0 + 0.5$, $N = N_0$, $0 \leq t \leq 0.2$, and $0.2 \leq y \leq 1$.
\item[(b)]  When $x \geq X_0 - 0.5$, $N_0 \leq N \leq N_1$, $t = 0.2$, and $0.2 \leq y \leq 1$.
\item[(c)]  When $N \geq N_1$, $t = 0.2$, and $0.2 \leq y \leq 1$.
\end{itemize}

In all three regions we use the following approximation:

\begin{proposition}\label{sweep}  Let $(x,y,t)$ lie in one of the regions (a), (b), (c).
Define
\begin{align*}
f_t(x+iy) &\coloneqq \sum_{n=1}^N \frac{b_n^t}{n^{s_*}} + \gamma \sum_{n=1}^N n^y \frac{b_n^t}{n^{\overline{s_*} + \kappa}}\\
b_n^t &\coloneqq \exp( \frac{t}{4} \log^2 n),\\
\end{align*}
where $\kappa, s_*, \gamma$ are as in Theorem \ref{eff}.  Then
\begin{equation}\label{bbb}
\frac{H_t(x+iy)}{B_t(x+iy)} = f_t(x+iy) + O_{\leq}( 1.25 \times 10^{-3} ).
\end{equation}
\end{proposition}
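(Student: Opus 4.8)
The plan is to obtain the estimate as an immediate consequence of Theorem \ref{eff}. All three parameter regions (a), (b), (c) are contained in the region \eqref{region} (the endpoint $t=0$ occurring in region (a) is a limiting case: one either passes to the limit $t\to 0^+$ in the continuous error bounds below, or applies the classical Riemann--Siegel expansion directly, whose error is even smaller). Since the function $f_t$ defined in the statement is literally the function $f_t$ of \eqref{ft-def}, the approximation \eqref{ratio-form-eff} gives
\[ \frac{H_t(x+iy)}{B_t(x+iy)} = f_t(x+iy) + O_{\leq}\bigl(e_A(x+iy) + e_B(x+iy) + e_{C,0}(x+iy)\bigr), \]
so the whole proposition reduces to verifying the single inequality
\[ e_A(x+iy) + e_B(x+iy) + e_{C,0}(x+iy) \leq 1.25 \times 10^{-3} \]
throughout (a), (b), (c). For this one works with the explicit majorants \eqref{eab-bound} and \eqref{ec-bound}, together with \eqref{gamma-bound}, \eqref{res-bound}, \eqref{kappa-bound}.

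In region (a) one has $N = N_0 = 69098$ fixed, and in region (b) one has $N_0 \leq N \leq N_1 = 1.5\times 10^6$; in both cases the right-hand sides of \eqref{eab-bound}, \eqref{ec-bound} are finite sums of at most $N_1$ terms, hence amenable to direct numerical evaluation. For each admissible value of $N$, and hence on each block $\{x_N \leq x < x_{N+1}\}$ on which $N$ is constant (with $x_N = 4\pi N^2 - \pi t/4$), one reduces the supremum over the remaining parameters to a few extreme points: monotonicity in $x$ of the factors $\left(\tfrac{x}{4\pi}\right)^{-y/2}$, $\left(\tfrac{x}{4\pi}\right)^{-(1+y)/4}$, $\tfrac{\log^2(x/4\pi n^2)}{x - 6.66}$ and $\tfrac{|\log(x/4\pi) + i\pi/2|}{x - 8.52}$ follows from Lemma \ref{elem-lem}(vi); the dependence on $y\in[0.2,1]$ enters only through the elementary quantities $3^y + 3^{-y}$, $|\gamma|\, n^y \leq e^{0.02y}$ (from \eqref{gamma-bound} and $n \leq N \leq \sqrt{x/4\pi}$) and $N^{|\kappa|} \leq e^{1/8}$ (from \eqref{kappa-bound}); and in region (a) one additionally checks that the $t$-dependence over $0 \leq t \leq 0.2$ is worst at $t = 0.2$, since $b_n^t$, $\eps_{t,n}$ and $e^{t\pi^2/64}$ are all increasing in $t$. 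The resulting numerical check then meets the threshold $1.25\times 10^{-3}$ comfortably (the errors being several orders of magnitude smaller than $1.25\times 10^{-3}$ for these enormous values of $x$).

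The main obstacle is region (c), where $N \geq N_1$ is unbounded, so a finite numerical verification is impossible and an analytic argument is needed. The strategy is to build from \eqref{eab-bound}--\eqref{ec-bound}, using the lower bound \eqref{res-bound} on $\Re s_*$ (which grows like $\tfrac{t}{4}\log\tfrac{x}{4\pi}$), a clean majorant $\Psi(x)$ for $e_A + e_B + e_{C,0}$, uniform in $y \in [0.2,1]$, which is manifestly nonincreasing in $x$ for $x \geq x_{N_1}$ and, crucially, does not jump upward as $x$ crosses a block boundary $x_N$, where $N$ (and with it the number of summands) increments by one. For the $e_{C,0}$ part this is clear, since \eqref{ec-bound} is the product of a negative power of $\tfrac{x}{4\pi}$, the rapidly decaying factor $\exp\bigl(-\tfrac{t}{16}\log^2\tfrac{x}{4\pi}\bigr)$, and terms that are nonincreasing by Lemma \ref{elem-lem}(vi); for $e_A + e_B$ one bounds the per-term prefactor $1 + |\gamma|\,N^{|\kappa|}\, n^y$ crudely by an absolute constant as above, and then uses $\tfrac{b_n^t}{n^{\Re s_*}} \leq n^{-(1+y)/2 - (t/4)\log(x/4\pi n)}$ (valid since $n \leq N \leq \sqrt{x/4\pi}$ forces $\log(x/4\pi n) \geq \log n \geq 0$) together with the fact that the factor $\exp\bigl(\tfrac{(t^2/16)\log^2(x/4\pi n^2) + 0.626}{x - 6.66}\bigr) - 1$ is $O(\log^2 x / x)$ and nonincreasing in $x$; the single extra summand introduced at a block boundary is more than compensated by the decrease of the remaining summands and the increase of the $\Re s_*$-exponent. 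Evaluating $\Psi$ at $x = x_{N_1} \approx 2.8\times 10^{13}$ — where $\Re s_*$, the number of terms, and the super-polynomially small $e_{C,0}$ factor are all already extremely favourable — gives a value far below $1.25\times 10^{-3}$, and by monotonicity this persists for all larger $x$, covering region (c). The delicate point, and the technical heart of this last step, is verifying the across-block monotonicity of $\Psi$.
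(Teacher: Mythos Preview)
Your reduction to bounding $e_A+e_B+e_{C,0}$ via Theorem \ref{eff} is exactly right, and your handling of $e_{C,0}$ (monotonicity in $x$ and $y$, then evaluate at the worst endpoint) matches the paper's. But from there on your route diverges from the paper's and leaves the central step unproved.

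The paper does \emph{not} split into regions (a), (b), (c) at all. All three regions share the constraints $x\geq X_0-0.5$, $0\le t\le 0.2$, $0.2\le y\le 1$, and the paper gives a single analytic bound valid throughout. The key device you are missing for $e_A+e_B$ is this: from \eqref{res-bound} one has $\Re s_*\ge 0.5999+\tfrac{t}{4}\log\tfrac{x}{4\pi}$, and for $1\le n\le N\le\sqrt{x/4\pi}$ one checks $b_n^t\,n^{-(t/4)\log(x/4\pi)}\le 1$. This collapses the awkward $N$-term sum $\sum_{n\le N} b_n^t/n^{\Re s_*}$ to $\sum_{n\le N} n^{-0.5999}$, which the integral test bounds by $\tfrac{1}{0.4001}(N+1)^{0.4001}=O(x^{0.2})$. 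Multiplied by the factor $e^{\delta_1}-1=O(\log^2 x/x)$ from \eqref{dela}, this gives $e_A+e_B=O(\log^2 x/x^{0.8})\le 3.5\times 10^{-9}$ uniformly, with no block-by-block analysis and no jump-at-$x_N$ issue whatsoever. The $e_{C,0}$ term then carries essentially the entire budget, coming in at $\approx 1.249\times 10^{-3}$.

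By contrast, your plan for (a) and (b) is a numerical sweep over every $N$ from $N_0$ to $N_1=1.5\times 10^6$, which is not carried out; and for (c) you correctly identify the across-block monotonicity of your majorant $\Psi$ as ``the technical heart'' but then only assert that the new summand at $x_N$ is ``more than compensated'' without proving it. That is a genuine gap: the compensation claim is plausible but not automatic, and the paper's trick of absorbing $b_n^t$ into the $n^{(t/4)\log(x/4\pi)}$ part of the exponent is precisely what makes the whole issue disappear.
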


\begin{proof} 
By Theorem \ref{eff} it suffices to show that
$$ e_A + e_B + e_{C,0} \leq 1.25 \times 10^{-3}.$$
From Theorem \ref{eff} again, we have
\begin{equation}\label{eaeb-bound}
 e_A + e_B \leq (e^{\delta_1}-1) (F_{N,t}(\Re s_*) + |\gamma| N^{|\kappa|} F_{N,t}( \Re s_* - y ) )
\end{equation}
where
\begin{equation}\label{fnt-def}
 F_{N,t}( \sigma ) := \sum_{n=1}^N \frac{b_n^t}{n^\sigma}.
\end{equation}
and
\begin{equation}\label{dela}
 \delta_1 \coloneqq \frac{\frac{t^2}{16} \log^2 \frac{x}{4\pi} + 0.626}{x-6.66}.
\end{equation}
From Lemma \ref{elem-lem}(vi), the quantity $\delta_1$ is monotone decreasing in $x$ in the region \eqref{region}.  Thus we have
\begin{equation}\label{delta1-bound}
 \delta_1 \leq \frac{\frac{(0.2)^2}{16} \log^2 \frac{X_0-0.5}{4\pi} + 0.626}{X_0-0.5-6.66}
\end{equation}
whenever $x \geq X_0-0.5 \geq 200$ and $0 \leq t \leq 0.2$.  Computing the right-hand side, we conclude that
$$  \delta_1 \leq 3.12 \times 10^{-11}$$
and hence by Taylor expansion
$$ e^{\delta_1} - 1 \leq 1.001 \delta_1$$
(say).
Also, from Theorem \ref{eff} and \eqref{fnt-def} we can bound
\begin{align*}
|\gamma| F_{N,t}( \Re s_* - y - |\kappa| ) &\leq |\gamma| N^y N^{|\kappa|} F_{N,t}( \Re s_* ) \\
&\leq \exp\left( 0.02 y + y \left(\log N - \frac{1}{2} \log \frac{x}{4\pi}\right) + \frac{ty}{2(x-6)} \log N \right) F_{N,t}( \Re s_* )  \\
&\leq \exp\left( 0.02 y + \left(y + \frac{ty}{2(x-6)}\right) \frac{1}{2} \log(1 + \frac{\pi t}{4x}) + \frac{ty}{4(x-6)} \log \frac{x}{4\pi} \right) F_{N,t}( \Re s_* ).
\end{align*}
For $0.2 \leq y \leq 1$, $0 \leq t \leq 0.2$, and $x \geq X_0 - 0.5$ we see from Lemma \ref{elem-lem}(vi) that
$$ \frac{ty}{4(x-6)} \log \frac{x}{4\pi} \leq \frac{0.2}{4(X_0 - 0.5-6)}\log \frac{X_0 - 0.5}{4\pi} \leq 1.86 \times 10^{-11}$$
and
$$ \left(y + \frac{ty}{2(x-6)}\right) \frac{1}{2} \log(1 + \frac{\pi t}{4x}) \leq \left(1 + \frac{0.2}{2(X_0-0.5-6)}\right) \frac{1}{2} \log\left(1 + \frac{0.2 \pi}{4(X_0-0.5)}\right) \leq 1.31 \times 10^{-12}$$
and thus
\begin{equation}\label{gafn}
 |\gamma| F_{N,t}( \Re s_* - y - |\kappa| ) \leq 1.021  F_{N,t}( \Re s_* ).
\end{equation}
Thus
$$  e_A + e_B \leq 1.023 \delta_1 F_{N,t}( \Re s_* ).$$
To estimate $\Re s_*$, we use Proposition \ref{estimates}(ii) or \eqref{kappa-bound}, together with the inequality
$$ \frac{t}{2x^2} \left(1-3y+\frac{8y(1-y)}{x^2}\right)_+ \leq \frac{0.2}{2 (X_0-0.5)^2} \left(1 - 3 \times 0.2 + \frac{8}{(X_0-0.5)^2}\right)_+
\leq 1.2 \times 10^{-23}$$
to obtain 
$$ \Re s_* \geq 0.5999 + \frac{t}{4} \log \frac{x}{4\pi}$$
(say).  Since $F_{N,t}(\sigma)$ is non-increasing in $\sigma$, we conclude
$$  e_A + e_B \leq 1.023 \delta_1 F_{N,t}\left( 0.5999 + \frac{t}{4} \log \frac{x}{4\pi} \right).$$
Since
$$ N = \left\lfloor \sqrt{\frac{x}{4\pi} + \frac{t}{16}} \right\rfloor,$$
$0 \leq t \leq 0.2$, and $x \geq X_0-0.5$, it is easy to see that
$$ N \leq \frac{x}{4\pi}$$
and hence by \eqref{bn-def}
$$ \frac{b_n^t}{n^{\frac{t}{4} \log \frac{x}{4\pi}}} \leq 1$$
for all $1 \leq n \leq N$.  Therefore
$$ F_{N,t}\left( 0.5999 + \frac{t}{4} \log \frac{x}{4\pi} \right) \leq \sum_{n=1}^N \frac{1}{n^{0.6001}}$$
and hence by the integral test
$$ F_{N,t}\left( 0.5999 + \frac{t}{4} \log \frac{x}{4\pi} \right) \leq \int_1^{N+1} \frac{ds}{s^{0.5999}} = \frac{1}{0.4001} (N+1)^{0.4001} $$
so that (by \eqref{dela})
$$ e_A + e_B \leq 1.023 \frac{\frac{(0.2)^2}{16} \log^2 \frac{x}{4\pi} + 0.626}{x-6.66} \frac{1}{0.5999} (N+1)^{0.5999}.$$
We have
$$ N+1 \leq (1.001) \left(\frac{x-6.66}{4\pi}\right)^{1/2} $$
(say), and hence
$$ e_A + e_B \leq 0.7220 \frac{0.0025 \log^2 \frac{x}{4\pi} + 0.626}{(x-6.66)^{0.80005}}$$
From Lemma \ref{elem-lem}(vi), the right-hand side is monotone decreasing in the region $x \geq X_0-0.5$, thus
\begin{align*}
 e_A + e_B &\leq 0.7220 \frac{0.0025 \log^2 \frac{X_0-0.5}{4\pi} + 0.626}{(X_0-0.5-6.66)^{0.80005}} \\
&\leq 3.444 \times 10^{-9}.
\end{align*}

Meanwhile, from Proposition \ref{estimates}(vi) one has
$$ e_{C,0} \leq \left(\frac{x}{4\pi}\right)^{-\frac{1+y}{4}} \exp\left( - \frac{t}{16} \log^2 \frac{x}{4\pi} + \frac{3 |\log \frac{x}{4\pi} + i \frac{\pi}{2}|+3.58}{x-8.52} \right) \left(1 + \frac{1.24 \times (3^y+3^{-y})}{N-0.125} + \frac{6.92}{x-6.66}\right).$$
From Lemma \ref{elem-lem}(vi), the quantity $\frac{\log^2 \frac{x}{4\pi} + \frac{\pi^2}{4}}{(x-8.52)^2}$ is monotone decreasing in $x$ in \eqref{region}, hence $\frac{|\log \frac{x}{4\pi} + i \frac{\pi}{2}|}{x-8.52}$ is also monotone decreasing.  Also the expression is monotone decreasing in $y$. We conclude that
\begin{equation}\label{ec0-bound}
\begin{split}
 e_{C,0} &\leq \left(\frac{X_0-0.5}{4\pi}\right)^{-\frac{1+0.2}{4}} \exp\left( \frac{3 |\log \frac{X_0-0.5}{4\pi} + i \frac{\pi}{2}|+3.58}{X_0-0.5-8.52} \right) \left(1 + \frac{1.24 \times (3^{\sqrt{0.6}}+3^{-\sqrt{0.6}})}{N_0-0.125} + \frac{6.92}{X_0-0.5-6.66}\right) \\
&\leq 1.249 \times 10^{-3}
\end{split}
\end{equation}
where we have discarded the negative term $- \frac{t}{16} \log^2 \frac{X_0-0.5}{4\pi}$.  Combining the estimates, we obtain the claim.
\end{proof}

Now we attend to the three claims.

\subsection{Proof of claim (c)}\label{c-bound}  

We begin with claim (c), which is the easiest.  
By Proposition \ref{sweep}, it suffices to establish the bound
$$ |f_t(x+iy)| > 1.25 \times 10^{-3}.$$
In fact we will establish the stronger estimate
\begin{equation}\label{ft0}
f_t(x+iy) = 1 + O_{\leq}( 0.955 ).
\end{equation}

In the region (c) we have from \eqref{xnn} that
$$ x \geq x_{N_1} \geq  2.82 \times 10^{13}.$$

Our main tool here is the triangle inequality.  From \eqref{ft-def} one has
$$
f_t(x+iy) = 1 + \sum_{n=2}^N \frac{b_n^t}{n^{s_*}} + \gamma \sum_{n=1}^N n^y \frac{b_n^t}{n^{\overline{s_*} + \kappa}}$$
and hence
$$ f_t(x+y) = 1 + O_{\leq}\left( \sum_{n=2}^N \frac{b_n^t}{n^{\sigma}} + |\gamma| \sum_{n=1}^N n^y \frac{b_n^t}{n^{\sigma - |\kappa|}} \right)$$
where $\sigma \coloneqq \Re s_*$.  Restoring the $n=1$ term in the first sum and recalling that $t=0.2$, it thus suffices to show that
\begin{equation}\label{nan}
 \sum_{n=1}^N \frac{b_n^{0.2}}{n^\sigma} + |\gamma| \sum_{n=1}^N n^y \frac{b_n^{0.2}}{n^{\sigma-|\kappa|}}
< 1.955.
\end{equation}
Our main tool here will be 

\begin{lemma}\label{largen}
Let $N \geq N_0 \geq 1$ be natural numbers, and let $\sigma,t > 0$ be such that
$$ \sigma > \frac{t}{2} \log N.$$
Then
$$ \sum_{n=1}^N \frac{b_n^t}{n^\sigma} \leq \sum_{n=1}^{N_0}
\frac{b_n^t}{n^\sigma}  + 
\max( N_0^{1-\sigma} b_{N_0}^t, N^{1-\sigma} b_N^t ) \log \frac{N}{N_0}.$$
\end{lemma}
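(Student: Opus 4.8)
The plan is to bound the tail $\sum_{n=N_0+1}^{N} b_n^t n^{-\sigma}$ by an integral, after peeling off a factor of $n$ so that the exponent left over is convex. Introduce the real-variable function
$$ \phi(u) \coloneqq \frac{b_u^t}{u^\sigma} = \exp\left( \tfrac{t}{4}\log^2 u - \sigma \log u\right), \qquad u \ge 1,$$
so that $\phi(n) = b_n^t/n^\sigma$ at integers, and set $v \coloneqq \log u$, so $\log\phi = \tfrac{t}{4}v^2 - \sigma v$. The first observation is that the hypothesis $\sigma > \tfrac{t}{2}\log N$ forces $\phi$ to be decreasing on $[1,N]$: the derivative of $\log\phi$ in $v$ equals $\tfrac{t}{2}v - \sigma$, which for $0 \le v \le \log N$ is at most $\tfrac{t}{2}\log N - \sigma < 0$. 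Consequently, for every integer $n$ with $N_0 < n \le N$ (and recalling $N_0 \ge 1$, so $[n-1,n] \subseteq [1,N]$) we have $\phi(n) \le \int_{n-1}^{n}\phi(u)\,du$, and summing gives $\sum_{n=N_0+1}^{N}\phi(n) \le \int_{N_0}^{N}\phi(u)\,du$.

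Next I would write $\phi(u) = g(u)/u$ with $g(u) \coloneqq u^{1-\sigma} b_u^t = \exp\left((1-\sigma)v + \tfrac{t}{4}v^2\right)$. As a function of $v$, the exponent $(1-\sigma)v + \tfrac{t}{4}v^2$ has positive second derivative $\tfrac{t}{2}$ and is therefore convex, so over the interval $v \in [\log N_0, \log N]$ it attains its maximum at one of the endpoints; exponentiating, $g(u) \le \max\bigl(g(N_0), g(N)\bigr) = \max\bigl(N_0^{1-\sigma} b_{N_0}^t,\ N^{1-\sigma} b_N^t\bigr)$ for all $u \in [N_0,N]$. Hence
$$ \int_{N_0}^{N}\phi(u)\,du = \int_{N_0}^{N} g(u)\,\frac{du}{u} \le \max\bigl(N_0^{1-\sigma} b_{N_0}^t,\ N^{1-\sigma} b_N^t\bigr)\int_{N_0}^{N}\frac{du}{u} = \max\bigl(N_0^{1-\sigma} b_{N_0}^t,\ N^{1-\sigma} b_N^t\bigr)\log\frac{N}{N_0}.$$
Adding back the terms $n = 1,\dots,N_0$ gives the claimed inequality (the case $N_0 = N$ being trivial, with both added quantities vanishing).

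There is no genuine obstacle here; this is a standard integral-comparison argument. The only mildly non-obvious step is the decision to split $\phi(u)$ as $g(u)/u$: the weight $\phi$ itself is monotone (under our hypothesis) but not convex in $\log u$, whereas the extra factor of $u$ shifts the exponent so that what remains, $g$, is genuinely convex in $\log u$, which is exactly what licenses the endpoint-maximum bound. One should also check that the monotonicity hypothesis $\sigma > \tfrac{t}{2}\log N$ is precisely what is needed to pass from the discrete sum over $n$ to the integral $\int_{N_0}^N$; no stronger hypothesis enters.
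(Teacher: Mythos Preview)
Your proof is correct and follows essentially the same approach as the paper: both use the hypothesis $\sigma > \tfrac{t}{2}\log N$ to obtain monotonicity of $n \mapsto b_n^t/n^\sigma$, apply the integral test, and then bound the integrand via the convexity of $(1-\sigma)v + \tfrac{t}{4}v^2$ in $v = \log u$. Your explicit factorisation $\phi(u) = g(u)/u$ is exactly the paper's substitution $a = e^u$ viewed from the other side.
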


\begin{proof}  From the identity
$$ \frac{b_n^t}{n^\sigma} = \frac{\exp\left( \frac{t}{4} (\log N - \log n)^2 - \frac{t}{4} (\log N)^2\right) }{n^{\sigma - \frac{t}{2} \log N}}$$
we see that the summands $\frac{b_n^t}{n^\sigma}$ are decreasing for $1 \leq n \leq N$, hence by the integral test one has
\begin{equation}\label{rado}
 \sum_{n=1}^N \frac{b_n^t}{n^\sigma} \leq \sum_{n=1}^{N_0}
\frac{b_n^t}{n^\sigma}  + \int_{N_0}^N \frac{b_a^t}{a^\sigma}\ da.
\end{equation}
Making the change of variables $a = e^u$, the right-hand side becomes
$$\sum_{n=1}^{N_0} \frac{b_n^t}{n^\sigma} \int_{N_0}^N \exp( (1-\sigma) u + \frac{t}{4} u^2 )\ du.$$
The expression $(1-\sigma) u + \frac{t}{4} u^2$ is convex in $u$, and is thus bounded by the maximum of its values at the endpoints $u = \log N_0, \log N$; thus
$$\exp( (1-\sigma) u + \frac{t}{4} u^2) \leq N_0^{1-\sigma} b_{N_0}^t, N^{1-\sigma} b_N^t.$$
The claim follows. 
\end{proof}

\begin{remark}  The right-hand side of \eqref{rado} can be evaluated exactly as
$$
\sum_{n=1}^{N_0}
\frac{b_n^t}{n^\sigma}  + \frac{\sqrt \pi}{\sqrt t} \exp(\frac{-(\sigma - 1)^2}{t}) \left( \operatorname{erfi}\left(\frac{\frac{t}{2} \log N  - \sigma + 1}{\sqrt t} \right) - \operatorname{erfi}\left(\frac{\frac{t}{2} \log N_0  - \sigma + 1}{\sqrt t}\right) \right)$$
where $\operatorname{erfi}(z) = -i \operatorname{erf}(iz)$ is the imaginary error function, with $\operatorname{erf}(z) \coloneqq \frac{2}{\sqrt{\pi}} \int_0^z e^{-t^2}\ dt$.

In practice, this upper bound for $\sum_{n=1}^N \frac{b_n^t}{n^\sigma}$ is slightly more accurate than the one in Lemma \ref{largen}, and is a good approximation even for relatively small values of $N_0$ (e.g., $N_0=100$).  However, the cruder bound above suffices for the numerical values of parameters needed to establish the bound $\Lambda \leq 0.22$.
\end{remark}

Observe from \eqref{kappa-bound} that
$$ |\kappa| \leq \frac{0.2 \times 1}{2(2.82 \times 10^{13}-6)} \leq 3.55 \times 10^{-15}$$
while from \eqref{gamma-bound} one has
$$
|\gamma| \leq 1.005 \left( \frac{x_N}{4\pi} \right)^{-y/2}$$
so in particular since $0.2 \leq y \leq 1$ and $n \leq 1.0001 (\frac{x_N}{4\pi})^{1/2} \leq 1.0002 N$
$$
|\gamma| n^y \leq 1.006 N^{-0.2} n^{0.2}.$$
Also from \eqref{res-bound} one has
\begin{align*}
\sigma &\geq 0.6 + \frac{0.2}{4} \log \frac{x_N}{4\pi} - \frac{0.2}{2x_{N_1}^2} \left(0.4+\frac{0.96}{x_{N_1}^2}\right)_+ \\
&\geq 0.6 + 0.1 \log N + 0.05 \log \left(1 - \frac{t}{16 (N_1)^2}\right) - \frac{0.2}{2x_{N_1}^2} \left(0.4+\frac{0.96}{x_{1.5\times 10^6}^2}\right)_+ \\
&\geq 0.6 + 0.1 \log N - 5.1 \times 10^{-29}.
\end{align*}
We can then apply Lemma \ref{largen} twice to bound the left-hand side of \eqref{nan} by $A+B$, where
\begin{align*}
A &\coloneqq \sum_{n=1}^{N_0} \frac{b_n^{0.2}}{n^{\sigma'}} + 1.006 \left( \frac{x_N}{4\pi} \right)^{-0.1} \sum_{n=1}^{N_0} \frac{b_n^{0.2}}{n^{\sigma''}}, \\
B &\coloneqq (\max( N_0^{1-\sigma'} b_{N_0}^{0.2}, N^{1-\sigma'} b_N^{0.2} ) + 1.006 N^{-0.2} 
\max( N_0^{1-\sigma''} b_{N_0}^{0.2}, N^{1-\sigma''} b_N^{0.2} ) )\log \frac{N}{N_0} \\
\sigma' &:= 0.6 + 0.1 \log N - 5.1 \times 10^{-29} \\
\sigma'' &:= 0.4 + 0.1 \log N - 7.09 \times 10^{-16}.
\end{align*}
The quantity $A$ is decreasing in $N$, so we may bound it by its value at $N = N_1$.  Performing the sum numerically, we obtain
$$ A \leq 1.88.$$
Finally, the quantity $B$ can also be seen to be decreasing\footnote{This is visually apparent from Figure \ref{bnp}, but to prove it analytically, it suffices to show that the quantities $N_0^{-\sigma'} \log \frac{N}{N_0}$, $N^{1-\sigma'} b_N^{0.2} \log \frac{N}{N_0}$, $N^{-0.2} N_0^{-\sigma''} \log \frac{N}{N_0}, N^{-0.2} N^{1-\sigma''} b_N^{0.2} \log \frac{N}{N_0}$ are decreasing in $N$ for $N \geq N_1$.  This can in turn be established by computing the log-derivative of all these quantities, multiplied by $N$; there will be a negative term $-0.1 \log N_0$ or $-0.1 \log N$ which dominates all the other terms when $N \geq N_1$.  We leave the details to the interested reader.}  in $N$ in the range $N \geq N_1$, and obeys the bound
$$ B \leq 0.075.$$
The claim \eqref{ft0} follows.

\begin{figure}[ht!]
  \includegraphics[width=0.7\linewidth]{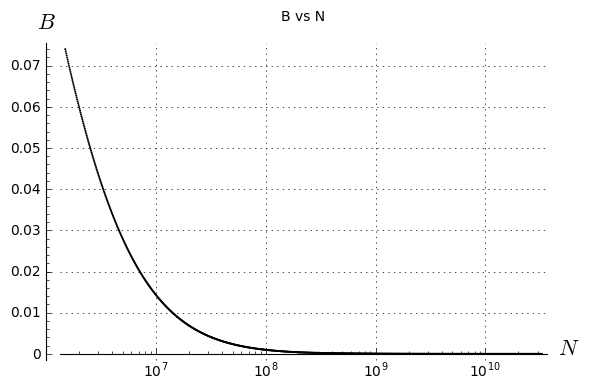}
  \caption{Plot of $B$ vs $N$}\label{bnp}
\end{figure}

\subsection{Proof of claim (a)}\label{barrier-sec}

As $N = N_0$ is constant in this region, the function $f_t(x+iy)$ is holomorphic, so by Rouche's theorem, it suffices to show that for each time $0 \leq t \leq 0.2$, as $x+iy$ traverses the boundary $\partial R$ of the rectangle 
$$ R \coloneqq \{ x+iy: X_0 - 0.5 \leq x \leq X_0 + 0.5; 0.2 \leq y \leq 1 \},$$
the function $f_t(x+iy)$ stays outside of the ball $B \coloneqq \{ z: |z| \leq 1.25 \times 10^{-3} \}$, and furthermore has a winding number of zero around the origin.

To verify this claim numerically for a given value of $t$, we subdivide each edge of $\partial R$ into some number $n$ of equally spaced mesh points, thus approximating $\partial R$ by a discrete mesh $x_j+iy_j$, $j=1,\dots,4n$, with any two adjacent points on this mesh separated by a distance at most $1/n$.  Using the techniques in Section \ref{multiple-sec}, we evaluate $f_t(x_j+iy_j)$ numerically for each such value of $j$.  The polygonal path connecting these points then winds around the origin with winding number
$$ \frac{1}{2\pi} \sum_{j=1}^{4n} \mathrm{arg}( f_t(x_{j+1}+iy_{j+1}) / f_t(x_j + i y_j) )$$
which can be easily computed (and verified to be zero).  To pass from this polygonal path to the true trajectory $f_t(\partial R)$ of $f_t(x+iy)$ on $\partial R$, we again use Rouche's theorem.  If one has a derivative bound $|\frac{\partial}{\partial z} f_t(z)| \leq D_z$ on the boundary of the rectangle, the polygonal path and the true trajectory $f_t(\partial R)$ differ by a distance of at most $\frac{D_z}{2n}$, and the latter will have the same winding number around the origin (and stay outside of the ball $B$) as long as
$$ |f_t(x_j + iy_j)| > 1.25 \times 10^{-3} + \frac{D_z}{2n}.$$
Furthermore, the same is true for nearby times $t \leq t' \leq 0.2$ to $t$, as long as one has the stronger bound
\begin{equation}\label{cond}
 |f_t(x_j + iy_j)| > 1.25 \times 10^{-3} + \frac{D_z}{2n} + \frac{D_t |t'-t|}{2n}
\end{equation}
and a bound of the form $|\frac{\partial}{\partial t} f_{\tilde t}(z)| \leq D_t$
for $t \leq \tilde t \leq 0.2$ and $z \in \partial R$.

This gives the following algorithm to verify (a) for the entire range $0 \leq t \leq 0.2$.  We start with $t=0$ and obtain bounds $D_t, D_z$ for the derivatives of $f_{\tilde t}(z)$ in the indicated ranges.  Because of the way the barrier location $X$ was selected, we expect $|f_t(x+iy)|$ to stay well above $1$ in magnitude.  We thus choose $n$ so that $\frac{D_z}{2n} \leq 1$, and evaluate $f_t(x_j+iy_j)$ at all the mesh points (in particular confirming that $|f_t(x_j+iy_j)|$ does stay well above $1$).  Using the minimum value of $|f_t(x_j+iy_j)|$, we can then use the condition \eqref{cond} to establish the claim for times in the interval $[t,t')$ where $t'>t$ is chosen so that \eqref{cond} holds (or $t'=0.2$, if that is also possible); the most aggressive choice of $t'$ would be one in which \eqref{cond} held with equality, but in practice we can afford to take more conservative values of $t'$ and still obtain good runtime performance.  If $t' < 0.2$, we then repeat the process, replacing $t$ by $t'$, until the entire range $0 \leq t \leq 0.2$ is verified.

To run this algorithm, we need bounds on $D_t$ and $D_z$.  This is achieved by the following lemma, which gives bounds which are somewhat complicated but which can be easily upper bounded numerically on $\partial R$:

\begin{lemma} In the region \eqref{region}, and away from the jump discontinuities of $N$, we have
\begin{align*}
 \left|\frac{\partial f_t}{\partial z}\right| &\leq  \sum_{n=1}^N \frac{b_n^t}{n^{\Re s_*}} \left(\frac{\log n}{2} + \frac{t \log n}{4(x-6)}\right) \\
&\quad + |\gamma| N^{|\kappa|} \sum_{n=1}^N \frac{b_n^t n^{y} }{n^{\Re s_{*}}}
\left( \frac{t \log n}{4(x-6)} + \left(\log \frac{|1+y+ix|}{4\pi} + \pi + \frac{3}{x}\right) \left(\frac{1}{2} + \frac{t}{4(x-6)}\right)\right)
\end{align*}
and
\begin{align*} \left|\frac{\partial f_t}{\partial t}\right| &\leq \sum_{n=1}^N \frac{b_n^t}{n^{\Re s_*}} \left(\frac{1}{4} \log n \log \frac{x}{4\pi n} + \frac{\pi}{8} \log n + \frac{2 \log n}{x-6}\right) \\
&\quad + |\gamma| N^{|\kappa|} \sum_{n=1}^N \frac{b_n^t n^y}{n^{\Re s_{*}}}
\left(\frac{1}{4} \log n \log \frac{x}{4\pi n} + \frac{\pi}{8} \log n + \frac{2 \log n}{x-6} + \frac{1}{4} \left(\frac{\pi}{2} + \frac{8}{x-6}\right) \left(\log \frac{x}{4\pi} + \frac{8}{x-6}\right)\right).
\end{align*}
\end{lemma}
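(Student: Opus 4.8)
The plan is to use that, for $N$ held fixed, $f_t$ is a genuine holomorphic function of $z=x+iy$ (as remarked after Theorem~\ref{eff}), so that $\partial f_t/\partial z$ is an honest complex derivative; then to differentiate the defining series term by term, handling the non-elementary factors $\gamma$, $b_n^t$, and the $M_t$'s hidden inside $\gamma$ by logarithmic differentiation, and finally to bound everything by the triangle inequality using the elementary estimates for $\alpha$ and $\alpha'$ already in hand. As a first step I would put $f_t$ into a manifestly holomorphic form. Set $s_+:=\frac{1+y-ix}{2}=\frac{1-iz}{2}$ and $s_-:=\frac{1-y+ix}{2}=\frac{1+iz}{2}$, both holomorphic in $z$. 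Using $\alpha=\alpha^*$ one checks that the exponent $y-\overline{s_*}-\kappa$ in the second sum of \eqref{ft-def} collapses to $-(s_-+\tfrac t2\alpha(s_-))$: conjugation turns $\alpha(\tfrac{1+y-ix}{2})$ into $\alpha(\tfrac{1+y+ix}{2})$, which is then exactly cancelled by the corresponding term of $\kappa$ in \eqref{kappa-def}. Writing $\sigma_*:=s_-+\tfrac t2\alpha(s_-)$ (so that $s_*=s_++\tfrac t2\alpha(s_+)$ and $\gamma=M_t(s_-)/M_t(s_+)$) we obtain
\[ f_t(z)=\sum_{n=1}^N b_n^t\,n^{-s_*}+\gamma(z)\sum_{n=1}^N b_n^t\,n^{-\sigma_*}, \]
with every factor holomorphic. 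For the magnitude prefactors I record $|b_n^t n^{-s_*}|=b_n^t n^{-\Re s_*}$, and, since $\Re(-\sigma_*)=y-\Re s_*-\Re\kappa$ with $|\Re\kappa|\le|\kappa|$, also $|\gamma b_n^t n^{-\sigma_*}|\le|\gamma|N^{|\kappa|}b_n^t n^{y-\Re s_*}$ for $1\le n\le N$; these account for the factors standing in front of the asserted sums.

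Next I differentiate. From $M_t=\exp(\tfrac t4\alpha^2)M_0$ and $(\log M_0)'=\alpha$ one has $(\log M_t)'(s)=\alpha(s)(1+\tfrac t2\alpha'(s))$ and $\partial_t\log M_t(s)=\tfrac14\alpha(s)^2$. Since $\tfrac{d}{dz}s_+=-\tfrac i2$, $\tfrac{d}{dz}s_-=\tfrac i2$, logarithmic differentiation gives $\tfrac{d}{dz}s_*=-\tfrac i2(1+\tfrac t2\alpha'(s_+))$, $\tfrac{d}{dz}\sigma_*=\tfrac i2(1+\tfrac t2\alpha'(s_-))$, $\tfrac{d}{dz}\log\gamma=\tfrac i2[\alpha(s_-)(1+\tfrac t2\alpha'(s_-))+\alpha(s_+)(1+\tfrac t2\alpha'(s_+))]$, and $\partial_t b_n^t=\tfrac14\log^2 n\,b_n^t$, $\partial_t s_*=\tfrac12\alpha(s_+)$, $\partial_t\sigma_*=\tfrac12\alpha(s_-)$, $\partial_t\log\gamma=\tfrac14(\alpha(s_-)^2-\alpha(s_+)^2)$. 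Assembling,
\[
\begin{aligned}
\frac{\partial f_t}{\partial z}&=\frac i2\sum_{n=1}^N b_n^t n^{-s_*}\log n\Bigl(1+\tfrac t2\alpha'(s_+)\Bigr)\\
&\quad+\frac i2\,\gamma\sum_{n=1}^N b_n^t n^{-\sigma_*}\Bigl[(\alpha(s_-)-\log n)\bigl(1+\tfrac t2\alpha'(s_-)\bigr)+\alpha(s_+)\bigl(1+\tfrac t2\alpha'(s_+)\bigr)\Bigr],
\end{aligned}
\]
\[
\begin{aligned}
\frac{\partial f_t}{\partial t}&=\tfrac14\sum_{n=1}^N b_n^t n^{-s_*}\log n\,\bigl(\log n-2\alpha(s_+)\bigr)\\
&\quad+\gamma\sum_{n=1}^N b_n^t n^{-\sigma_*}\Bigl[\tfrac14\log n\,\bigl(\log n-2\alpha(s_-)\bigr)+\tfrac14\bigl(\alpha(s_-)^2-\alpha(s_+)^2\bigr)\Bigr].
\end{aligned}
\]

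Now I take absolute values inside each sum and insert elementary estimates valid for $z$ in \eqref{region}. From \eqref{alpha-deriv-bound}, $|\alpha'(s_\pm)|\le\frac1{x-6}$, so each factor $\tfrac i2(1+\tfrac t2\alpha'(s_\pm))$ has modulus $\le\frac12+\frac t{4(x-6)}$; this already yields the first sums in both claimed bounds (using $\log n\ge0$). For the $\gamma$-sums one needs bounds on $\alpha(s_\pm)$: writing $\alpha(s)=\frac1{2s}+\frac1{s-1}+\frac12\Log\frac s{2\pi}$ and using $|s_\pm|\ge\frac x2$, $|s_\pm-1|\ge\frac x2$, $|s_\pm|\le\frac{|1+y+ix|}2$, and $\Re s_\pm\ge0$ (so $|\arg s_\pm|\le\frac\pi2$), one gets $|\alpha(s_\pm)|\le\frac12\log\frac{|1+y+ix|}{4\pi}+\frac\pi4+O(\tfrac1x)$; combining the two and noticing that $1\le n\le N$ forces $4\pi n^2\le x+\frac{\pi t}4$ (keeping the real part of $\alpha(s_\pm)-\log n$ in line) gives $|\alpha(s_-)+\alpha(s_+)-\log n|\le\log\frac{|1+y+ix|}{4\pi}+\pi+\frac3x$, $|\alpha(s_-)|+|\alpha(s_+)|\le\log\frac{|1+y+ix|}{4\pi}+\pi+\frac3x$, $|\log n-2\alpha(s_\pm)|\le\log\frac x{4\pi n}+\frac\pi2+O(\tfrac1{x-6})$, and $|\alpha(s_-)^2-\alpha(s_+)^2|\le(\frac\pi2+O(\tfrac1{x-6}))(\log\frac x{4\pi}+O(\tfrac1{x-6}))$ (this last after bounding $|\alpha(s_-)-\alpha(s_+)|$ and $|\alpha(s_-)+\alpha(s_+)|$ separately). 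Substituting these, rounding the lower-order terms up using $x\ge200$, and converting stray $\frac1x$'s into $\frac1{x-6}$ via Lemma~\ref{elem-lem}(i),(vi), produces exactly the two asserted inequalities; in particular, in the $\partial_z$ estimate one must combine the bare $-\log n$ with the $\alpha(s_-)+\alpha(s_+)$ term so that no $\tfrac{\log n}2$ survives in the $\gamma$-sum.

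The main obstacle is Step~3: obtaining the $\alpha$-term bounds with precisely the stated explicit constants, which is routine but delicate. Two points need care. First, the cancellation in the reformulation must be verified exactly (this is where $\alpha=\alpha^*$ and the precise form of $\kappa$ enter), so that $\partial_z$ really is the holomorphic derivative and not a Wirtinger derivative. Second, in the $\partial_t$ bounds the quantity $\tfrac14\log n(\log n-2\alpha(s_\pm))$ is controlled by exploiting that $\Re\alpha(s_\pm)\approx\frac12\log\frac x{4\pi}$ strictly exceeds $\log n$ (since $n\le N\approx\sqrt{x/4\pi}$), so that $|\log n-2\Re\alpha(s_\pm)|$ collapses to $\log\frac x{4\pi n}$ rather than something larger; tracking this together with the imaginary-part contribution $\tfrac\pi2$ and the $O(1/(x-6))$ corrections is what yields the stated $\tfrac14\log n\log\frac x{4\pi n}+\tfrac\pi8\log n+\tfrac{2\log n}{x-6}$.
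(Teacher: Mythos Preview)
Your approach is correct and essentially identical to the paper's. Both proofs rewrite $f_t$ in the manifestly holomorphic form $\sum b_n^t n^{-s_*}+\gamma\sum b_n^t n^{-\sigma_*}$ (the paper writes $\sigma_*$ as $s_{**}$ and uses $s=\tfrac{1-y+ix}{2}$, $1-s=\tfrac{1+y-ix}{2}$ in place of your $s_-,s_+$), then differentiate term by term via logarithmic differentiation of $M_t$, and finish with the triangle inequality together with the bounds \eqref{alpha-deriv-bound} on $\alpha'$ and the explicit formula \eqref{alpha-form} for $\alpha$. The only cosmetic difference is the grouping of the $\alpha'$ contributions: you keep the factors $(1+\tfrac t2\alpha'(s_\pm))$ intact, while the paper splits $\tfrac{\partial s_{**}}{\partial x}=\tfrac i2+O_\le(\tfrac t{4(x-6)})$ immediately; after one ungrouping step these yield the same bound, and your remark about combining $-\log n$ with $\alpha(s_-)+\alpha(s_+)$ is exactly the manoeuvre the paper performs.
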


\begin{proof}
We begin with the first estimate.  Write 
$$ s_{**} \coloneqq \overline{s_*} - y + \kappa = \frac{1-y+ix}{2} + \frac{t}{2} \alpha\left(\frac{1-y+ix}{2}\right)$$
then
\begin{equation}\label{ftne}
f_t = \sum_{n=1}^N \frac{b_n^t}{n^{s_*}} + \gamma \sum_{n=1}^N \frac{b_n^t}{n^{s_{**}}}.
\end{equation}
One can check that $s_*, s_{**}, \gamma$ are holomorphic functions of $x+iy$, hence by the Cauchy-Riemann equations
$$ \left|\frac{\partial f_t}{\partial x}\right| = \left|\frac{\partial f_t}{\partial y}\right|.$$
By the product and chain rules, we may calculate
$$ 
\frac{\partial f_t}{\partial x} = - \sum_{n=1}^N \frac{b_n^t}{n^{s_*}} \frac{\partial s_*}{\partial x} \log n + \gamma \sum_{n=1}^N \frac{b_n^t}{n^{s_{**}}}
\left( \frac{\partial}{\partial x} \log \gamma - \frac{\partial s_{**}}{\partial x} \log n\right).$$
From \eqref{sn-def}, \eqref{alpha-deriv-bound} we have
\begin{align*}
 \frac{\partial s_*}{\partial x} &= -\frac{i}{2} - \frac{it}{4} \alpha'\left(\frac{1+y-ix}{2}\right) \\
&= -\frac{i}{2} + O_{\leq}\left( \frac{t}{4(x-6)} \right).
\end{align*}
Similarly we have
$$ \frac{\partial s_{**}}{\partial x} = \frac{i}{2} + O_{\leq}\left( \frac{t}{4(x-6)} \right).$$
Writing $s = \frac{1-y+ix}{2}$, we have from \eqref{lambda-def}, \eqref{Mt-def} that
$$ \log \gamma = \frac{t}{4} (\alpha(s)^2 - \alpha(1-s)^2) + \log M_0(s) - \log M_0(1-s) $$
and hence by \eqref{alpha-def}
$$ \frac{\partial}{\partial x} \log \gamma = \frac{it}{4} (\alpha(s) \alpha'(s) + \alpha(1-s) \alpha'(1-s))
+ \frac{i}{2} \alpha(s) + \frac{i}{2} \alpha(1-s).$$
From the triangle inequality and \eqref{alpha-deriv-bound}, we thus have
\begin{align*}
|\frac{\partial f_t}{\partial x}| &\leq \sum_{n=1}^N \frac{b_n^t}{n^{\Re s_*}} \left(\frac{\log n}{2} + \frac{t \log n}{4(x-6)}\right) \\
&\quad + |\gamma| \sum_{n=1}^N \frac{b_n^t}{n^{\Re s_{**}}}
\left( \frac{t \log n}{4(x-6)} + \frac{|\alpha(s)| + |\alpha(1-s)| - \log n}{2} + \frac{t (|\alpha(s)| + |\alpha(1-s)|)}{4(x-6)}\right).
\end{align*}
We have from \eqref{alpha-form} that
$$ |\alpha(s)|, |\alpha(s) - \frac{1}{2} \log n| \leq \frac{1}{2} \log \frac{|1-y+ix|}{4\pi} + \frac{\pi}{2} + \frac{3}{2x} $$
since $n \leq N \leq \frac{x}{4\pi} \leq \frac{|1-y+ix|}{4\pi}$.  Similarly
$$ |\alpha(1-s)|, |\alpha(1-s) - \frac{1}{2} \log n| \leq \frac{1}{2} \log \frac{|1+y+ix|}{4\pi} + \frac{\pi}{2} + \frac{3}{2x} $$
and thus
$$ |\alpha(s)+\alpha(1-s)|, |\alpha(s)+\alpha(1-s)-\log n| \leq \log \frac{|1+y+ix|}{4\pi} + \pi + \frac{3}{x}.$$
Writing $\Re s_{**} = \Re s_* - y + \Re \kappa$, we then have the first estimate.

Now we estimate the time derivative.  Since
\begin{align*}
 \frac{\partial}{\partial t} \log b_n^t &= \frac{1}{4} \log^2 n \\
 \frac{\partial}{\partial t} s_* &= \frac{1}{2} \alpha(1-s) \\
 \frac{\partial}{\partial t} s_{**} &= \frac{1}{2} \alpha(s) \\
 \frac{\partial}{\partial t} \log \gamma &= \frac{1}{4} \left(\alpha(s)^2 - \alpha^2(1-s)\right)
\end{align*}
we see from differentiating \eqref{ftne} that, we obtain
\begin{align*}
 \frac{\partial f_t}{\partial t} &= \sum_{n=1}^N \frac{b_n^t}{n^{s_*}} \left(\frac{\log^2 n}{4} - \frac{\alpha(1-s)}{2} \log n\right) \\
&+ \gamma \sum_{n=1}^N \frac{b_n^t}{n^{s_{**}}}
\left(\frac{\log^2 n}{4} - \frac{\alpha(s)}{2} \log n + \frac{1}{4} (\alpha(s)^2 - \alpha^2(1-s))\right).
\end{align*}
From \eqref{alpha-deriv-bound}, \eqref{alpha-form} we have
\begin{align*}
 \alpha\left(\frac{1 \pm y+ix}{2}\right) &= \alpha\left(\frac{ix}{2}\right) + O_{\leq}\left( \frac{1}{x-6} \right) \\
&= \frac{1}{2} \log \frac{x}{4\pi} + \frac{\pi i}{4} + O_{\leq}\left( \frac{4}{x-6} \right) 
\end{align*}
and hence (since $\alpha = \alpha^*$)
$$ \alpha\left(\frac{1 \pm y-ix}{2}\right) = \frac{1}{2} \log \frac{x}{4\pi} - \frac{\pi i}{4} + O_{\leq}\left( \frac{4}{x-6} \right) $$
so in particular (recalling that $1-s = \frac{1+y-ix}{2}$ and $s = \frac{1-y+ix}{2}$)
$$ \alpha(s) - \alpha(1-s) = \frac{\pi i}{2} + O_{\leq}\left( \frac{8}{x-6} \right)$$
and
$$ \alpha(s) + \alpha(1-s) = \log \frac{x}{4\pi} + O_{\leq}\left( \frac{8}{x-6} \right)$$
so that
$$ \left|\alpha(s)^2 - \alpha(1-s)^2\right| \leq \left(\frac{\pi}{2} + \frac{8}{x-6}\right) \left(\log \frac{x}{4\pi} + \frac{8}{x-6}\right).$$
We conclude from the triangle inequality that
\begin{align*}
 \left|\frac{\partial f_t}{\partial t}\right| &\leq \sum_{n=1}^N \frac{b_n^t}{n^{\Re s_*}} \left(\frac{1}{4} \log n \log \frac{x}{4\pi n} + \frac{\pi}{8} \log n + \frac{2 \log n}{x-6}\right) \\
&+ |\gamma| \sum_{n=1}^N \frac{b_n^t}{n^{\Re s_{**}}}
\left(\frac{1}{4} \log n \log \frac{x}{4\pi n} + \frac{\pi}{8} \log n + \frac{2 \log n}{x-6} + \frac{1}{4} \left(\frac{\pi}{2} + \frac{8}{x-6}\right) \left(\log \frac{x}{4\pi} + \frac{8}{x-6}\right)\right)
\end{align*}
giving the second claim.
\end{proof}

The next few graphs summarize the numerical output of the algorithm for the following barrier parameters: $x=6\times 10^{10}+83952 \pm 0.5, y = 0.2 \dots 1, t=0 \dots 0.2$.  

The first step in the barrier verification process was to precalculate a `stored sum' of Taylor expansion terms that allows for fast recreation of $f_t(x+iy)$ during the execution of the algorithm as per Section \ref{multiple-sec}. The number of Taylor terms required was determined through an iterative process targeted to achieve a $20$ decimal accuracy.  Figure \ref{fig1} illustrates that the achieved accuracy for all rectangular mesh points at $t=0$.

\begin{figure}[ht!]
  \includegraphics[width=0.7\linewidth]{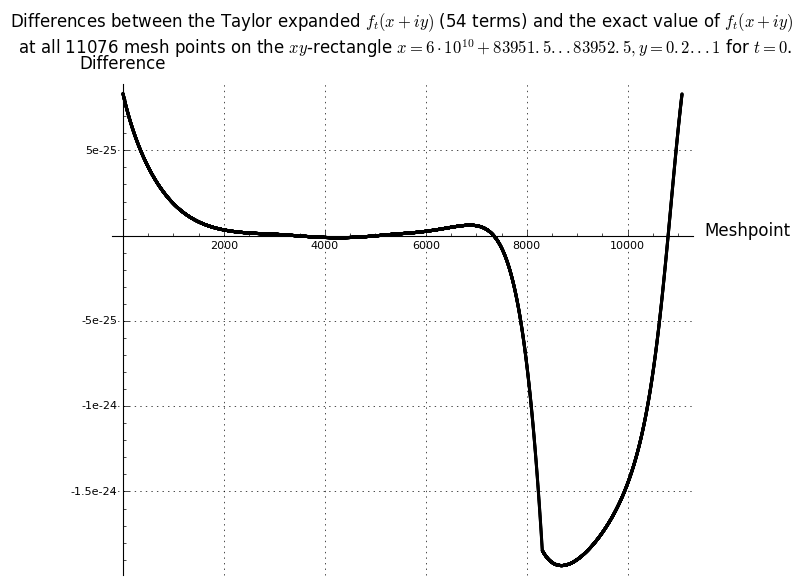}
  \caption{Achieved error term in the Taylor expansion at $t=0$. Target was set at $20$ decimal places accuracy.}\label{fig1}
\end{figure}

The derivative bounds determine the number of mesh points required on each $xy$-rectangle and in the $t$-direction. Figure \ref{fig2} illustrates that these bounds have been chosen quite conservatively:

\begin{figure}[ht!]
  \includegraphics[width=1.0\linewidth]{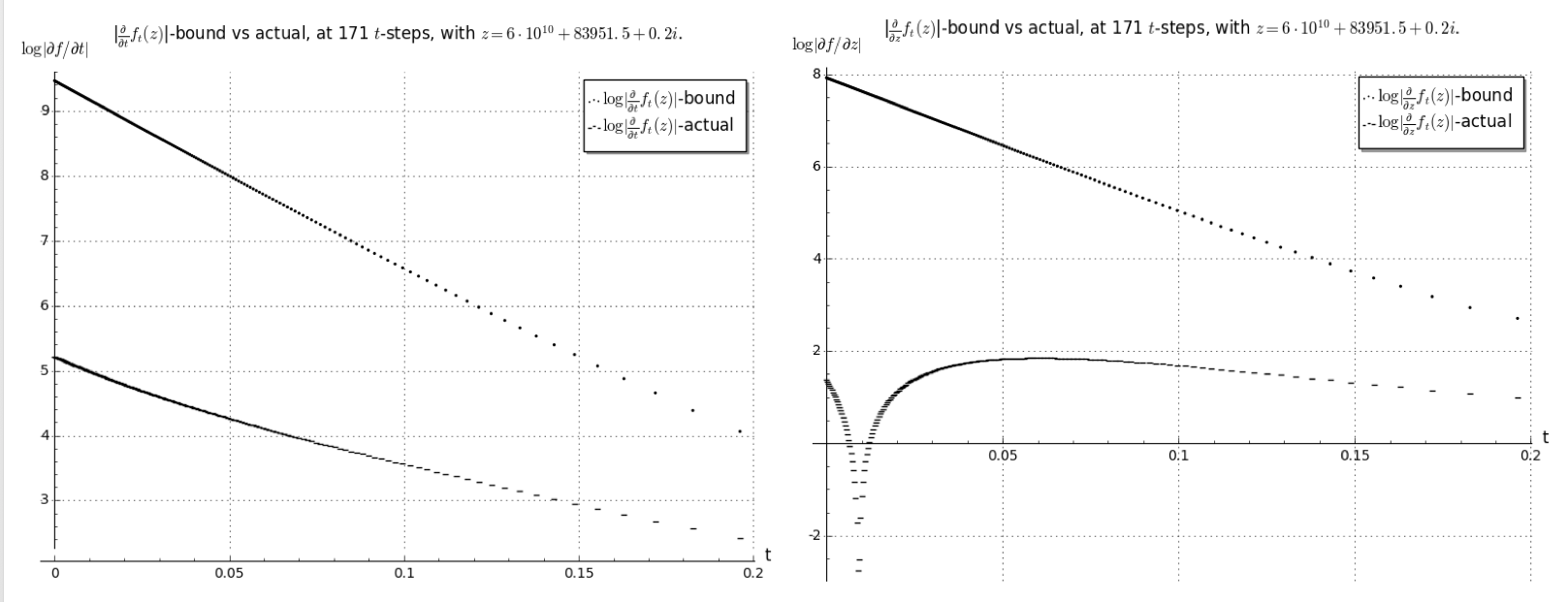}
  \caption{Derivative bounds versus their actual values at all required steps of $t$.}\label{fig2}
\end{figure}

The number of rectangle mesh points varies with $t$ ranging from $11076$ at $t=0$ to $56$ at $t=0.195$; see Figure \ref{fig3}.

\begin{figure}[ht!]
  \includegraphics[width=0.7\linewidth]{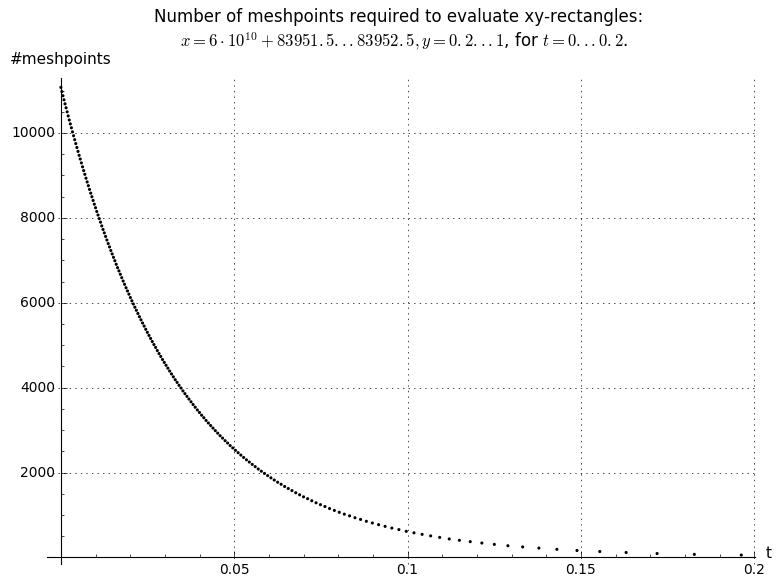}
  \caption{The number of mesh points required per rectangle for each step of $t$.}\label{fig3}
\end{figure}

The overall winding number for the barrier at this specific location came out at $0$. Figures \ref{fig4}, \ref{fig4a} show the winding process at $t=0, 0.2$ respectively. Due to the choice of the barrier location and the small barrier width compared to the wavelength in the $x$ variable, little oscillation is expected to occur in each rectangle.

\begin{figure}[ht!]
  \includegraphics[width=0.7\linewidth]{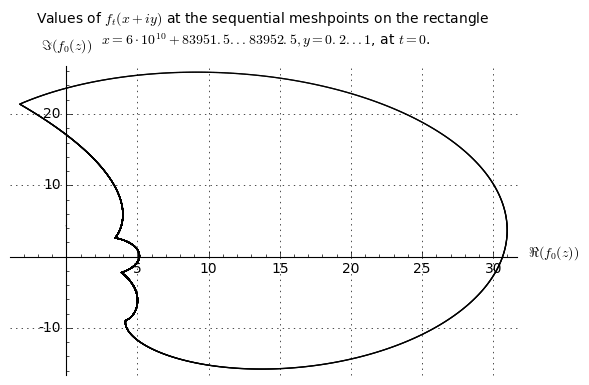}
  \caption{Winding the rectangle at $t=0$.}\label{fig4}
\end{figure}

\begin{figure}[ht!]
  \includegraphics[width=0.7\linewidth]{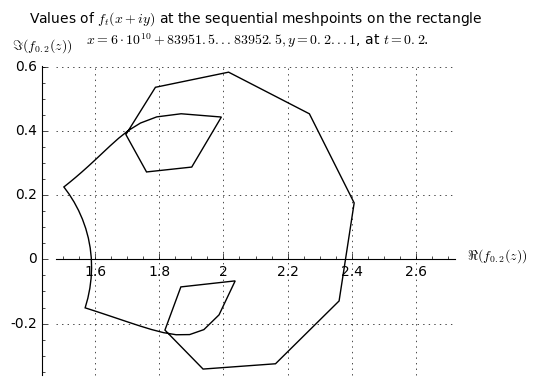}
  \caption{Winding the rectangle at $t=0.2$.}\label{fig4a}
\end{figure}

The code for implementing these computations may be found in the directory

\centerline{\tt dbn\_upper\_bound/arb}

in the github repository \cite{github}.

\subsection{Proof of claim (b)}\label{b-bound}

Fix $t=0.2$, and let $R$ denote the rectangle
$$ R \coloneqq \{ x+iy: 0.2 \leq y \leq 1; x \geq X_0 - 0.5; N \leq N_1 \}.$$
We wish to show that the holomorphic function $H_t(x+iy)/B_t(x+iy)$ does not vanish in this rectangle.  We would like to establish this by the argument principle, however this turns out to be difficult to accomplish due to the oscillation in $H_t(x+iy)/B_t(x+iy) \approx f_t(x+iy)$ indicated by the heuristic \eqref{oscil}.  To damp out this oscillation, we introduce an\footnote{In the literature one also sees other choices of mollifier than this Euler product used, for instance to control the extreme values of Dirichlet polynomials; however our numerical experimentations with alternative mollifiers to $f_t(x+iy)$ turned out to give inferior results for our application.} ``Euler mollifier''
$$ E_{t,5}(x+iy) \coloneqq \prod_{p \leq 5} \left( 1 - \frac{b_p^t}{p^{s_*}}\right),$$
where $s_*$ is given by \eqref{sn-def} the choice to use the first three primes $2,3,5$ was obtained after significant trial and error as giving the best numerical results in this range of $t,x,y$.  We have the upper bound
$$
|E_{t,5}(x+iy)| \leq \prod_{p \leq 5} \left(1 + \frac{b_p^t}{p^{\Re s_*}} \right).
$$
When $N \geq N_0$, $0.2 \leq y \leq 1$ and $t = 0.2$, one easily verifies from \eqref{res-bound} that
$$ \Re s_* \geq 1.7143 $$
and hence
$$ |E_{t,5}(x+iy)| \leq 1.635 $$ 
In particular, by Proposition \ref{sweep} we have
$$ E_{t,5}(x+iy) \frac{H_t(x+iy)}{B_t(x+iy)} = E_{t,5}(x+iy) f_t(x+iy) + O_{\leq}( 2.05 \times 10^{-3} ).$$  
The left-hand side remains holomorphic in $x+iy$ (even though $f_t(x+iy)$ has jump discontinuities).  Thus, by the argument principle, it will suffice to show that the left-hand side avoids the negative real axis $(-\infty,0]$ as $x+iy$ traverses the boundary $\partial R$ of the rectangle $R$.  In other words, it will suffice to show that
\begin{equation}\label{star}
 \mathrm{dist}( E_{t,5}(x+iy) f_t(x+iy), (-\infty,0]) > 2.05 \times 10^{-3}
\end{equation}
for $x+iy \in\partial R$.
For future reference we also observe that for any $x+iy \in R$,
the magnitude of $E_{t,5}(x+iy)$ can be bounded below by
\begin{equation}\label{et5-lower}
 |E_{t,5}(x+iy)| \geq \prod_{p \leq 5} \left(1 - \frac{b_p^t}{p^{\Re s_*}} \right) \geq 0.534 
\end{equation} 
and the argument has magnitude at most
\begin{equation}\label{et5-phase}
|\mathrm{arg}(E_{t,5}(x+iy))| \leq \sum_{p \leq 5} \sin^{-1} \frac{b_p^t}{p^{\Re s_*}} \leq 0.553. 
\end{equation} 

Of the four sides of the rectangle $\partial R$, the required estimate \eqref{star} will hold with significant room to spare, and we can proceed using rather crude estimates.  We first attend to the right edge of $\partial R$, in which $N = N_1$.  
From \eqref{ft0} we have
$$
f_t(x+iy) = 1 + O_{\leq}( 0.955 )$$
in this region.  In particular $f_t(x+iy)$ has argument of magnitude at most $\sin^{-1} 0.955 \leq 1.27$.  Combining this with \eqref{et5-lower}, \eqref{et5-phase}, we conclude that $E_{t,5}(x+iy) f_t(x+iy)$ has magnitude at least $0.024$ and argument at most $1.823$ in magnitude, giving the claim \eqref{star} on this side from elementary trigonometry.

Now we attend to the left edge of $\partial R$, in which $x = X_0 - 0.5$ and $0.2 \leq y \leq 1$.  From the calculations for part (a) (see in particular Figure \ref{fig4a}) one can verify that (for instance) $|f_t(x+iy)| \geq 1$ and $|\mathrm{arg} f_t(x+iy)| \leq \frac{\pi}{2}$ in this region.  Combining this with \eqref{et5-lower}, \eqref{et5-phase}, we conclude that $E_{t,5}(x+iy) f_t(x+iy)$ has magnitude at least $0.534$ and argument at most $2.13$ in magnitude, again giving the claim \eqref{star} on this side from elementary trigonometry.

Now we attend to the upper edge of $\partial R$, in which $N_0 \leq N \leq N_1$ and $y=1$.  From \eqref{res-bound} one now has
$$ \Re s_* \geq 2.1143.$$
In particular\footnote{The sum $\sum_{n=2}^{N_1} \frac{b_n^t}{n^{2.1143}}$ can be numerically computed directly, but one could also use Lemma \ref{largen} (using for instance $N_0 = 69098$) to obtain a usable upper bound as well.  Similarly for the other sums of this type that appear in this argument.}
$$ \sum_{n=1}^N \frac{b_n^t}{n^{s_*}} = 1 + O_{\leq}\left( \sum_{n=2}^{N_1} \frac{b_n^t}{n^{2.1143}} \right) = 1 + O_{\leq}( 0.7 ).$$
Meanwhile, from \eqref{gamma-bound} one has
$$ |\gamma| \leq e^{0.02} \left( \frac{x}{4\pi} \right)^{-1/2} \leq 1.03 N^{-1}$$
and from \eqref{kappa-bound} one has
$$ |\kappa| \leq \frac{ty}{2(x-6)} \leq 4 \times 10^{-13}$$
and hence
$$ |\gamma| \frac{n^y}{n^{s_* + \kappa}} = O_{\leq}\left( \frac{1.03}{N n^{1.1142}} \right)$$
and
$$   \gamma \sum_{n=1}^N n^y \frac{b_n^t}{n^{\overline{s_*} + \kappa}} = O_{\leq}\left( \sum_{n=1}^{N_0} \frac{1.03 b_n^t}{N_0 n^{1.1142}} + \sum_{n=N_0+1}^{N_1} \frac{1.03 b_n^t}{n^{2.1142}} \right) = O_{\leq}( 0.1 )$$
and hence
$$ f_t(x+iy) = 1 + O_{\leq}(0.8).$$
In particular $f_t(x+iy)$ has magnitude at least $0.2$ and argument at most $0.928$ in magnitude, hence $E_t(x+iy) f_t(x+iy)$ has magnitude at least $0.1068$ and argument at most $1.481$, at which point the claim follows from elementary trigonometry.

It remains to attend to the lower edge of $\partial R$, in which $N_0 \leq N \leq N_1$ and $y=0.2$.  This is by far the most delicate side of the rectangle for the purposes of verifying \eqref{star}.  We will split the range $[N_0,N_1]$ into a number of subintervals $[N_-,N_+]$ and obtain a uniform lower bound for $\mathrm{dist}( E_{t,5}(x+iy) f_t(x+iy), (-\infty,0])$ when $N$ is in one of these subintervals $[N_-,N_+]$.

Fix $[N_-,N_+] \subset [N_0,N_1]$, suppose that $N \in [N_-,N_+]$, and write $s_* = \sigma + iT$.  We first deal with the $\kappa$ term in the definition of $f_t(x+iy)$ by writing
$$ n^{-\kappa} = 1 + O_{\leq}( n^{|\kappa|}-1)$$
and hence
\begin{equation}\label{ftxy}
 f_t(x+iy) = \sum_{n=1}^N \frac{b_n^t}{n^{\sigma+iT}} + \gamma \sum_{n=1}^N n^y \frac{b_n^t}{n^{\sigma-iT}}
+ O_{\leq}\left( |\gamma| \sum_{n=1}^N n^y \frac{b_n^t}{n^\sigma} (n^{|\kappa|}-1) \right).
\end{equation}
In particular, using \eqref{gamma-bound} to bound
\begin{align*}
|\gamma|n^y  &\leq e^{0.02y} (x_N/4\pi n^2)^{-y/2} \\
&\leq e^{0.02} \left( (N^2 - \frac{1}{80}) / n^2  \right)^{-y/2} \\
&\leq 1.03 (N^2/n^2)^{-0.1}\\
&\leq 1.03 (n/N_-)^{-0.2}.
\end{align*}
we have
$$ E_{t,5}(x+iy) f_t(x+iy) = E_{t,5}(x+iy) \sum_{n=1}^N \frac{b_n^t}{n^{\sigma+iT}} + O_{\leq}\left( 1.03 |E_{t,5}(x+iy)| 
\left|\sum_{n=1}^N (n/N_-)^{0.2} \frac{b_n^t}{n^{\sigma-iT}}\right| \right)
+ O_{\leq}( Z )$$
where
$$ Z \coloneqq 1.644 \sum_{n=1}^N (n/N)^{0.2} \frac{b_n^t}{n^\sigma} (n^{|\kappa|}-1).$$
We can write
$$ E_{t,5}(x+iy) = \sum_{d|D} \frac{\lambda_d}{d^{\sigma+iT}}$$
where $D \coloneqq 2 \times 3 \times 5$ and
$$ \lambda_d \coloneqq \prod_{p|d} (-b_p^t).$$
As a consequence, we have
$$ E_{t,5}(x+iy) \sum_{n=1}^N \frac{b_n^t}{n^{\sigma+iT}} = \sum_{n=1}^{DN} \frac{\beta_{n}}{n^{\sigma+iT}}$$
where
$$ \beta_{n} \coloneqq \sum_{d|n,D} \lambda_d b_{n/d}^t.$$
Thus for instance $\beta_1 = 1$ and $\beta_p = 0$ for $p=2,3,5$, so that the Dirichlet series $\sum_{n=1}^{DN} \frac{\beta_n}{n^{\sigma+iT}}$ is expected to experience less oscillation than the series $\sum_{n=1}^N \frac{b_n^t}{n^{\sigma+iT}}$.

The product of $E_{t,5}(x+iy)$ and $\sum_{n=1}^N (n/N_-)^{0.2} \frac{b_n^t}{n^{\sigma-iT}}$ is not favorable due to the negative sign in the $\sigma-iT$ exponent.  But since $|z| |w| = |z \overline{w}|$, we have
\begin{align*}
1.03 |E_{t,5}(x+iy)| \left|\sum_{n=1}^N (n/N_-)^{0.2} \frac{b_n^t}{n^{\sigma-iT}}\right|  &=
1.03 \left|E_{t,5}(x+iy) \sum_{n=1}^N (n/N_-)^{0.2} \frac{b_n^t}{n^{\sigma+iT}}\right|  \\
&= \left|\sum_{n=1}^{DN} \frac{N_-^{-0.2} \alpha_{n}}{n^{\sigma+iT}}\right|
\end{align*}
where
$$ \alpha_{n} \coloneqq 1.03 \sum_{d|n,D} \lambda_d (n/d)^{0.2} b_{n/d}^t.$$
Note that the coefficients $\alpha_{n}, \beta_n$ are both real.  We now have
\begin{equation}\label{ets}
 E_{t,5}(x+iy) f_t(x+iy) = \sum_{n=1}^{DN} \frac{\beta_n}{n^{\sigma+iT}} + O_{\leq}\left( \left|\sum_{n=1}^{DN} \frac{N_-^{-0.2} \alpha_{n}}{n^{\sigma+iT}}\right| \right) + O_{\leq}(Z).
\end{equation}
A naive application of the triangle inequality (using $\beta_1=1$) would give the lower bound
\begin{equation}\label{eft}
 \mathrm{dist}(E_{t,5}(x+iy) f_t(x+iy), (-\infty,0]) \geq 1 - N_-^{-0.2} \alpha_{1} - \sum_{n=2}^{DN} \frac{|\beta_n| + |N_-^{-0.2} \alpha_{n}|}{n^{\sigma}} - Z.
\end{equation}
As it turns out, this bound is not quite strong enough to be satisfactory for the numerical ranges of parameters we need.  To do better we need to exploit the fact that when the sum $\sum_{n=1}^{DN} \frac{\beta_n}{n^{\sigma+iT}}$ exhibits significant cancellation, then the sum $\sum_{n=1}^{DN} \frac{N^{-0.2} \alpha_{n}}{n^{\sigma+iT}}$ will also. The key tool here is

\begin{lemma}[Improved triangle inequality]  We have
$$ \mathrm{dist}(E_{t,5}(x+iy) f_t(x+iy), (-\infty,0]) \geq 1 - N_-^{-0.2} \alpha_{1} - \sum_{n=2}^{DN} \frac{\max(|\beta_n-N_-^{-0.2} \alpha_{n}|, \frac{1-N_-^{-0.2} \alpha_{1}}{1+N_-^{-0.2} \alpha_{1}} |\beta_n+N_-^{-0.2} \alpha_{n}|)}{n^{\sigma}} - Z.$$
\end{lemma}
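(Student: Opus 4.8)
The plan is to turn the distance bound into a lower bound for a real part, and then exploit that the main sum $S_\beta:=\sum_{n=1}^{DN}\beta_n n^{-\sigma-iT}$ and the error‑controlling sum $S_\alpha:=\sum_{n=1}^{DN}N_-^{-0.2}\alpha_n n^{-\sigma-iT}$ in \eqref{ets} are Dirichlet polynomials sharing the same frequencies $n^{-iT}$. The only soft tool needed is the elementary fact that $\mathrm{dist}(w,(-\infty,0])\ge \Re(w\bar u)$ for every unit $u$ with $\Re u\ge 0$: for $x\le 0$ one has $|w-x|\ge \Re((w-x)\bar u)=\Re(w\bar u)-x\,\Re u\ge \Re(w\bar u)$. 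Write the term $O_{\leq}(|S_\alpha|)$ in \eqref{ets} as $\zeta S_\alpha$ with $|\zeta|\le 1$ (take $\zeta=1$ if $S_\alpha=0$), so that $E_{t,5}f_t=\sum_{n=1}^{DN}(\beta_n+\zeta N_-^{-0.2}\alpha_n)\,n^{-\sigma-iT}+O_{\leq}(Z)$. Whereas \eqref{eft} amounts to testing against $u=1$ and crudely bounding $|\beta_n+\zeta N_-^{-0.2}\alpha_n|\le|\beta_n|+N_-^{-0.2}|\alpha_n|$, the improvement comes from choosing $u$ adapted to the leading coefficient and trading the resulting gain against the tail.

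Concretely, set $a_1:=N_-^{-0.2}\alpha_1\in(0,1)$ and $\rho:=(1-a_1)/(1+a_1)$, and test against $u:=(1+\zeta a_1)/|1+\zeta a_1|$, which has $\Re u\ge 0$ since $\Re(1+\zeta a_1)\ge 1-a_1>0$. Because $\beta_1=1$, the $n=1$ term contributes exactly $|1+\zeta a_1|$ to $\Re\big((E_{t,5}f_t)\,\bar u\big)$, while the triangle inequality on the remaining terms and on the $O_{\leq}(Z)$ yields
\[
\mathrm{dist}\big(E_{t,5}f_t,(-\infty,0]\big)\ \ge\ |1+\zeta a_1|\ -\ \sum_{n=2}^{DN}\frac{|\beta_n+\zeta N_-^{-0.2}\alpha_n|}{n^\sigma}\ -\ Z .
\]
Everything then reduces to the pointwise inequality
\begin{equation}
(1-a_1)\,|b+\zeta c|\ \le\ |1+\zeta a_1|\,\max\!\big(|b-c|,\ \rho\,|b+c|\big) \tag{$\heartsuit$}
\end{equation}
valid for all real $b,c$ and all $|\zeta|\le1$. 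Granting $(\heartsuit)$ with $b=\beta_n$, $c=N_-^{-0.2}\alpha_n$, one gets $\sum_{n\ge2}|\beta_n+\zeta N_-^{-0.2}\alpha_n|\,n^{-\sigma}\le \tfrac{|1+\zeta a_1|}{1-a_1}\,\Sigma$ with $\Sigma:=\sum_{n\ge2}\max(|\beta_n-N_-^{-0.2}\alpha_n|,\rho|\beta_n+N_-^{-0.2}\alpha_n|)\,n^{-\sigma}$, hence $\mathrm{dist}(E_{t,5}f_t,(-\infty,0])\ge |1+\zeta a_1|\big(1-\Sigma/(1-a_1)\big)-Z$. If $\Sigma\ge 1-a_1$ the asserted bound is $\le -Z\le0$ and there is nothing to prove; otherwise $1-\Sigma/(1-a_1)\ge0$ and $|1+\zeta a_1|\ge 1-a_1$ give exactly $\mathrm{dist}(E_{t,5}f_t,(-\infty,0])\ge (1-a_1)-\Sigma-Z$, which is the claim.

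It remains to prove $(\heartsuit)$, the crux of the argument. Both sides are positively homogeneous in $(b,c)$, and for $b<0$ one checks directly that $(\heartsuit)(b,c)$ coincides with $(\heartsuit)(|b|,-c)$ (replace $b$ by $-|b|$ and pull signs out of each absolute value); for $b=0$ the inequality reads $(1-a_1)|\zeta c|\le|1+\zeta a_1|\,|c|$, which holds since $|1+\zeta a_1|\ge 1-a_1\ge(1-a_1)|\zeta|$. So it suffices to take $b=1$, i.e.\ to show $\tfrac{|1+\zeta c|^2}{|1+\zeta a_1|^2}\le \tfrac{\max(|1-c|,\rho|1+c|)^2}{(1-a_1)^2}$ for $|\zeta|\le 1$. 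Writing $\zeta=re^{i\theta}$, the left side equals $\tfrac{1+2rc\cos\theta+r^2c^2}{1+2ra_1\cos\theta+r^2a_1^2}$, a Möbius function of $\cos\theta$ whose pole lies at $\cos\theta\le-1$ (since $\tfrac{1+r^2a_1^2}{2ra_1}\ge1$ by AM--GM), hence monotone on $[-1,1]$ and maximized at $\cos\theta=\pm1$, giving $\big(\tfrac{1\pm rc}{1\pm ra_1}\big)^2$; and $r\mapsto\tfrac{1\pm rc}{1\pm ra_1}$ is monotone on $[0,1]$, so these are bounded by $\max\!\big(1,\tfrac{(1+c)^2}{(1+a_1)^2},\tfrac{(1-c)^2}{(1-a_1)^2}\big)$. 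Finally the $1$ is redundant, since $\tfrac{(1+c)^2}{(1+a_1)^2}\ge1$ whenever $c\ge a_1$ and $\tfrac{(1-c)^2}{(1-a_1)^2}\ge1$ whenever $c\le a_1$. Multiplying through by $(1-a_1)^2$ and using $(1-a_1)^2/(1+a_1)^2=\rho^2$ gives the square of the right-hand side of $(\heartsuit)$.

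The only genuine obstacle is isolating and proving $(\heartsuit)$; once this Möbius/trigonometric fact is available, the assembly above is routine. Two bookkeeping points should be recorded: the argument uses $N_-^{-0.2}\alpha_1<1$ (so that $\rho>0$ and $\Re u>0$), which holds because $N_-\ge N_0$ on the lower edge of $\partial R$ under consideration; and the $\zeta$ introduced to encode the error term may depend on $x+iy$ (i.e.\ on $\sigma,T$), but $(\heartsuit)$ is uniform in $|\zeta|\le1$, so this causes no loss.
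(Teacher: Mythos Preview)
Your proof is correct and follows essentially the same route as the paper's. Both arguments reduce to the pointwise bound $|\beta_n+\zeta N_-^{-0.2}\alpha_n|/|1+\zeta N_-^{-0.2}\alpha_1|\le \max(|\beta_n-N_-^{-0.2}\alpha_n|,\rho|\beta_n+N_-^{-0.2}\alpha_n|)/(1-N_-^{-0.2}\alpha_1)$, proved by the same M\"obius-in-$\cos\theta$ computation; the only cosmetic differences are that the paper first reduces to $|\zeta|=1$ by convexity (you instead treat $|\zeta|\le1$ directly and add the monotonicity-in-$r$ step), and the paper uses $\mathrm{dist}(1+\zeta a_1,(-\infty,0])=|1+\zeta a_1|$ directly where you obtain the same via your unit-vector test $u=(1+\zeta a_1)/|1+\zeta a_1|$.
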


\begin{proof}
Write 
$$ Y \coloneqq \sum_{n=2}^{DN} \frac{\max(|\beta_n-N_-^{-0.2} \alpha_n|, \frac{1-N_-^{-0.2} \alpha_1}{1+N_-^{-0.2} \alpha_1} |\beta_n+N_-^{-0.2} \alpha_n|)}{n^{\sigma}}.$$
We may assume that $N_-^{-0.2} \alpha_1+Y < 1$, otherwise the claim is trivial.  By \eqref{ets} and convexity, it suffices to show that
$$
\mathrm{dist}( \sum_{n=1}^{DN} \frac{\beta_n + e^{i\theta} N_-^{-0.2} \alpha_n}{n^{\sigma+iT}}, (-\infty,0]) \geq 1 - N_-^{-0.2} \alpha_1 - Y$$
for all phases $\theta \in \R$.  We may write
\begin{align*}
\sum_{n=1}^{DN} \frac{\beta_n + e^{i\theta} N_-^{-0.2} \alpha_n}{n^{\sigma+iT}} &= 1 + e^{i\theta} N_-^{-0.2} \alpha_1 + O_{\leq}\left( \sum_{n=2}^{DN}
\frac{|\beta_n + e^{i\theta} N_-^{-0.2} \alpha_n|}{n^{\sigma}} \right) \\
&=
(1 + e^{i\theta} N_-^{-0.2} \alpha_1) \left(1 + O_{\leq}\left( \sum_{n=2}^{DN}
\frac{|\beta_n + e^{i\theta} N_-^{-0.2} \alpha_n|/|1+e^{i\theta} N_-^{-0.2} \alpha_1|}{n^{\sigma}} \right) \right).
\end{align*}
By the cosine rule, we have
$$ \left(|\beta_n + e^{i\theta} N_-^{-0.2} \alpha_n| / |1 + e^{i\theta} N_-^{-0.2} \alpha_1|\right)^2 = \frac{\beta_n^2 + N_-^{-0.4} \alpha_n^2 + 2 N_-^{-0.2} \alpha_n \beta_n \cos \theta}{1 + N_-^{-0.4} \alpha_1^2 + 2 N_-^{-0.2} \alpha_1 \cos \theta}.$$
This is a fractional linear function of $\cos \theta$ with no poles in the range $[-1,1]$ of $\cos \theta$.  Thus this function is monotone on this range and attains its maximum at either $\cos \theta=+1$ or $\cos \theta = -1$.  We conclude that
$$ \frac{|\beta_n + e^{i\theta} N_-^{-0.2} \alpha_n|}{|1 + e^{i\theta} N_-^{-0.2} \alpha_1|} \leq \max\left( \frac{|\beta_n-N_-^{-0.2} \alpha_n|}{1-N_-^{-0.2} \alpha_1}, \frac{|\beta_n+N_-^{-0.2} \alpha_n|}{1+N_-^{-0.2} \alpha_1} \right)$$
and thus
$$ \sum_{n=2}^{DN} \frac{|\beta_n + e^{i\theta} N_-^{-0.2} \alpha_n|/|1+e^{i\theta} N_-^{-0.2} \alpha_1|}{n^{\sigma}} \leq \frac{1}{1-N_-^{-0.2} \alpha_1} Y.$$
We conclude from the triangle inequality that
$$ \sum_{n=1}^{DN} \frac{\beta_n + e^{i\theta} N_-^{-0.2} \alpha_n}{n^{\sigma+iT}} = 1 + e^{i\theta} N_-^{-0.2} \alpha_1 + O_{\leq}\left( \frac{|1+e^{i\theta} N_-^{-0.2} \alpha_1|}{1-N_-^{-0.2} \alpha_1} Y \right).$$
By further application of the triangle inequality
\begin{align*}
\mathrm{dist}( \sum_{n=1}^{DN} \frac{\beta_n + e^{i\theta} N_-^{-0.2} \alpha_n}{n^{\sigma+iT}}, (-\infty,0]) &\geq \mathrm{dist}( 1 + e^{i\theta} N_-^{-0.2} \alpha_1, (-\infty,0]) - \frac{|1+e^{i\theta} N_-^{-0.2} \alpha_1|}{1-N_-^{-0.2} \alpha_1} Y \\
&= |1 + e^{i\theta} N_-^{-0.2} \alpha_1| - \frac{|1+e^{i\theta} N_-^{-0.2} \alpha_1|}{1-N_-^{-0.2} \alpha_1} Y  \\
&= |1 + e^{i\theta} N_-^{-0.2} \alpha_1| \left(1 - \frac{Y}{1-N_-^{-0.2} \alpha_1} \right) \\
&\geq (1-N_-^{-0.2} \alpha_1) \left(1 - \frac{Y}{1-N_-^{-0.2} \alpha_1} \right) \\
&= 1 - N_-^{-0.2} \alpha_1 - Y
\end{align*}
as desired, where we have used the fact that $1+e^{i\theta} N_-^{-0.2} \alpha_1$ lies to the right of the imaginary axis (so that the closest element of $(-\infty,0]$ is the origin).
\end{proof}

Bounding $DN \leq DN_+$, we see that in order to establish \eqref{star} in the range $N \in [N_-, N_+]$, it suffices to verify the inequality
$$ 1 - N_-^{-0.2} \alpha_1 - \sum_{n=2}^{DN_+} \frac{\max(|\beta_n-N_-^{-0.2} \alpha_n|, \frac{1-N_-^{-0.2} \alpha_1}{1+N_-^{-0.2} \alpha_1} |\beta_n+N^{-0.2} \alpha_n|)}{n^{\sigma}} - Z
\geq 2.14 \times 10^{-3}.$$
From \eqref{res-bound}, \eqref{xnn} one has 
\begin{align*}
 \sigma &\geq \frac{1+y}{2} +\frac{t}{4} \log \frac{x_N}{4\pi} - \frac{t}{2x_N^2} \left(1-3y+\frac{4y(1+y)}{x_N^2}\right)_+ \\
&= 0.6 + \frac{1}{20} \log \frac{x_N}{4\pi} - \frac{1}{10 x_N^2} \left( 0.4 + \frac{0.96}{x_N^2} \right) \\
&= 0.6 + \frac{1}{20} \log (N^2 - \frac{1}{80}) - \frac{1}{10 x_{N_0}^2} \left( 0.4 + \frac{0.96}{x_{N_0}^2} \right) \\
&\geq \sigma_{N_-}
\end{align*}
where
$$ \sigma_{N_-} \coloneqq 0.599 + \frac{1}{10} \log N_- $$
while from \eqref{kappa-bound}, \eqref{xnn} one has
$$ |\kappa| \leq \frac{ty}{2(x_N-6)} \leq \frac{0.02}{x_{N_0}-6} \leq 4 \times 10^{-13}$$
and hence it suffices to show that
$$ F_{N_-,N_+} - Z_{N_-,N} \geq 2.14 \times 10^{-3}$$
where
$$ F_{N_-,N_+} \coloneqq 1 - N_-^{-0.2} \alpha_1 - \sum_{n=2}^{DN_+} \frac{\max(|\beta_n-N_-^{-0.2} \alpha_n|, \frac{1-N_-^{-0.2} \alpha_1}{1+N_-^{-0.2} \alpha_1} |\beta_n+N_-^{-0.2} \alpha_n|)}{n^{\sigma_{N_-}}}$$
and
$$ Z_{N_-,N} \coloneqq 1.644 \sum_{n=1}^N (n/N)^{0.2} \frac{b_n^t}{n^{\sigma_{N_-}}} (n^{4 \times 10^{-13}}-1).$$
Since $\sigma_{N_-} \geq 1.714$, we may crudely bound
$$ Z_{N_-,N} \leq 1.644 \sum_{n=1}^{N_1} \frac{b_n^t}{n^{1.714}} (n^{4 \times 10^{-13}}-1) \leq 10^{-10}$$
so it will suffice to show that
$$ F_{N_-,N_+} \geq 2.15 \times 10^{-3}$$
for a collection of intervals $[N_-,N_+]$ covering $[N_0, N_1]$.  This can be done by \emph{ad hoc} numerical experimentation; for instance, one can calculate that
\begin{align*}
F_{69098, 8 \times 10^4} &= 0.0263\dots \\
F_{8 \times 10^4, 1.1 \times 10^5} &= .0470\dots \\
F_{1.1 \times 10^5, 2.2 \times 10^5} &= 0.093\dots \\
F_{2.2 \times 10^5, 1.5 \times 10^6} &= 0.060\dots .
\end{align*}

\section{Asymptotic results}\label{asymptotic-sec}

In this section we use the effective estimates from Theorem \ref{eff} to obtain asymptotic information about the function $H_t$, which improves (and makes more effective) the results of Ki, Kim, and Lee \cite{kkl}, by establishing Theorem \ref{Zero}.

We begin with 

\begin{proposition}[Preliminary asymptotics]\label{asymp}  Let $0 < t \leq 1/2$, $x \geq 200$, and $-10 \leq y \leq 10$.
\begin{itemize}
\item[(i)]  If $x \geq \exp(\frac{C}{t})$ for a sufficiently large absolute constant $C$, then
$$ H_t(x+iy) = (1 + O(x^{-ct})) M_t\left(\frac{1+y-ix}{2}\right) + (1 + O(x^{-ct})) M_t\left(\frac{1-y+ix}{2} \right) $$
for an absolute constant $c>0$, where $M_t$ is defined in \eqref{Mt-def}.  
\item[(ii)]  If instead we have $3 \leq y \leq 4$ and $x \geq C$ for a sufficiently large absolute constant $C$, then
$$ H_t(x+iy) = (1 + O_{\leq}(0.7)) M_t\left(\frac{1+y-ix}{2}\right).$$
\item[(iii)]  If $x = x_0 + O(1)$ for some $x_0 \geq 200$, then
$$ H_t(x+iy) = O\left( x_0^{O(1)} \left|M_t\left(\frac{1+y-ix}{2}\right)\right| \right) = O\left( x_0^{O(1)} \left|M_t\left(\frac{ix_0}{2}\right)\right| \right).$$
\end{itemize}
\end{proposition}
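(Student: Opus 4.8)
The plan is to derive all three parts from Theorem~\ref{eff} in the form \eqref{ratio-form}, extended from the region \eqref{region} to the wider range $-10 \leq y \leq 10$ (this requires only rerunning the arguments of Sections~\ref{heatflow-sec}--\ref{initial-sec} with the clean numerical constants replaced by crude $O(1)$ bounds, since in Propositions~\ref{rtn-prop} and~\ref{RTN-prop} the restriction $0 \leq \sigma \leq 1$ was used only for numerical cleanliness and the underlying Proposition~\ref{arias-prop} already covers all real $\sigma$). Multiplying \eqref{ratio-form} through by $B_t(x+iy) = M_t\!\left(\frac{1+y-ix}{2}\right)$ (see \eqref{bo-def}) and using $B_t(x+iy)\,\gamma(x+iy) = M_t\!\left(\frac{1-y+ix}{2}\right)$ (see \eqref{lambda-def}), the $n=1$ terms of \eqref{ft-def} split off and we obtain
\begin{equation*}
H_t(x+iy) = M_t\!\left(\frac{1+y-ix}{2}\right)\!\left(1+\Sigma_+\right) + M_t\!\left(\frac{1-y+ix}{2}\right)\!\left(1+\Sigma_-\right) + O_{\leq}\!\left(|B_t(x+iy)|\,(e_A+e_B+e_{C,0})\right),
\end{equation*}
where $\Sigma_+ := \sum_{n=2}^N b_n^t n^{-s_*}$ and $\Sigma_- := \sum_{n=2}^N n^y b_n^t n^{-\overline{s_*}-\kappa}$. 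Everything then reduces to estimating $\Sigma_\pm$ and the three error sums against $\bigl|M_t\!\left(\frac{1+y-ix}{2}\right)\bigr|$ and $\bigl|M_t\!\left(\frac{1-y+ix}{2}\right)\bigr|$.

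For part~(i), put $L := \log\frac{x}{4\pi}$. By \eqref{res-bound}, $\Re s_* \geq \frac{1+y}{2} + \frac{t}{4}L - O(x^{-1})$; by \eqref{N-def-main}, $\log N \leq \frac{L}{2} + O(x^{-1})$; by \eqref{kappa-bound}, $|\kappa| = O(x^{-1})$. Since $b_n^t n^{-\frac{t}{2}\log N} = \exp\!\left(\frac{t}{4}(\log N - \log n)^2 - \frac{t}{4}\log^2 N\right) \leq 1$ for $1 \leq n \leq N$, and $\frac{t}{4}\log^2 n \leq \frac{t}{4}(\log n)(\log N)$ there, one gets for $2 \leq n \leq N$
\begin{equation*}
\frac{b_n^t}{n^{\Re s_*}} = \exp\!\left(\frac{t}{4}\log^2 n - \Re s_*\log n\right) \leq n^{-\frac{tL}{8}-\frac{1+y}{2}+O(x^{-1})},
\end{equation*}
and likewise each term of $\Sigma_-$ is $\leq n^{-\frac{tL}{8}-\frac{1-y}{2}+O(x^{-1})}$. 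If $x \geq \exp(C/t)$ for a large absolute constant $C$, then $\frac{tL}{8} \geq \frac{C}{8} - O(1) > 6$ (say), so for $|y|\leq 10$ both exponents $\frac{tL}{8}+\frac{1\pm y}{2}$ exceed $1$; hence both sums converge and are dominated by their $n=2$ terms, giving $|\Sigma_\pm| = O(2^{-tL/8}) = O(x^{-ct})$ for any fixed $c < \frac{\log 2}{8}$, the implied constant (which absorbs the bounded factors $2^{-\frac{1\mp y}{2}} \leq 2^{9/2}$ and $(4\pi)^{ct}$) being absolute. The error terms go the same way: in \eqref{eab-bound} the factor $\exp(\cdot)-1$ is $O(L^2/x) = O(x^{-ct})$ while $\sum_{n=1}^N (1+|\gamma|N^{|\kappa|}n^y)\,b_n^t n^{-\Re s_*} = O(1) + O(|\gamma|)$, so $|B_t|\,(e_A+e_B) = O(x^{-ct})\bigl(|M_t(\tfrac{1+y-ix}{2})| + |M_t(\tfrac{1-y+ix}{2})|\bigr)$; and \eqref{ec-bound} bounds $e_{C,0}$ by $\left(\frac{x}{4\pi}\right)^{-\frac{1+y}{4}}\exp\!\left(-\frac{t}{16}L^2 + O(3^{|y|}/N) + O(L/x)\right)$, where the super-polynomially decaying $\exp(-\frac{t}{16}L^2)$ swamps the polynomial factor, yielding $e_{C,0} = O(x^{-ct})$. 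Folding the resulting $O(x^{-ct})\bigl(|M_t(\tfrac{1+y-ix}{2})| + |M_t(\tfrac{1-y+ix}{2})|\bigr)$ error into the two coefficients $1 + O(x^{-ct})$ proves~(i).

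Part~(ii) is the same computation without any power saving. For $3 \leq y \leq 4$ one has $\frac{|M_t(s_-)|}{|M_t(s_+)|} = |\gamma| \leq e^{O(1)}\left(\frac{x}{4\pi}\right)^{-y/2}$ and $|\Sigma_+| \leq \sum_{n\geq 2} n^{-\Re s_* + \frac{t}{2}\log N} \leq \sum_{n\geq 2} n^{-\frac{1+y}{2}+O(x^{-1})} \leq \frac{\pi^2}{6}-1 + o(1)$ (worst case $y=3$, $x\to\infty$), while $|M_t(s_-)|\,|1+\Sigma_-|$ and $|B_t|\,(e_A+e_B+e_{C,0})$ are $O(x^{-\delta})\,|M_t(\tfrac{1+y-ix}{2})|$ for some $\delta > 0$ (because $|\gamma|$ and $|M_0(iT')|/|M_t(s_+)|$ each carry a negative power of $x$ once $y\geq 3$). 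Hence for $x$ at least a suitable absolute constant $C$, $\bigl|H_t(x+iy) - M_t(\tfrac{1+y-ix}{2})\bigr| \leq \bigl(\tfrac{\pi^2}{6}-1+0.05\bigr)\bigl|M_t(\tfrac{1+y-ix}{2})\bigr| < 0.7\,\bigl|M_t(\tfrac{1+y-ix}{2})\bigr|$, which is~(ii). For part~(iii) only crude size bounds are needed: $\Re s_* - \frac{t}{2}\log N \geq \frac{1+y}{2} - O(x_0^{-1}) \geq -O(1)$ gives $\sum_{n=1}^N b_n^t n^{-\Re s_*} \leq \sum_{n=1}^N n^{O(1)} = x_0^{O(1)}$, and with $|\gamma| = x_0^{O(1)}$, $|\kappa| = O(x_0^{-1})$ and $N = O(\sqrt{x_0})$ one gets $|f_t(x+iy)| = x_0^{O(1)}$; the error sums are also $x_0^{O(1)}$, the $\exp(\cdot)-1$ and $\exp(\cdot)$ factors in \eqref{eab-bound}, \eqref{ec-bound} being $O(1)$ for $x\geq 200$. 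Multiplying by $|B_t| = |M_t(\tfrac{1+y-ix}{2})|$ gives the first estimate; for the second, $|M_t(\tfrac{1+y-ix}{2})| = |M_t(\tfrac{1+y+ix}{2})|$ (as $M_t = M_t^*$), and Taylor expanding $\log M_t = \frac{t}{4}\alpha^2 + \log M_0$ along the segment from $\frac{ix_0}{2}$ to $\frac{1+y+ix}{2}$ (which stays in $\{\Im s > 10\}$, well away from the branch cut $(-\infty,1]$) while using $|\alpha(s)| = O(\log x_0)$ and $|\alpha'(s)| = O(x_0^{-1})$ there (from \eqref{alpha-form}, \eqref{alpha-deriv-bound}) shows the two magnitudes differ by a factor $x_0^{O(1)}$.

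The main obstacle is the tail bound in part~(i): one must see that the potentially large factor $b_n^t = \exp(\frac{t}{4}\log^2 n)$ for $n$ near $N \asymp \sqrt{x/4\pi}$ is more than compensated by $\Re s_* \gtrsim \frac{t}{4}L$, leaving a genuine saving $n^{-\frac{t}{8}L}$, and that the hypothesis $x \geq \exp(C/t)$ is precisely what makes this saving both (a) large enough to overcome the possibly very negative exponent $\frac{1\mp y}{2} \geq -\frac{9}{2}$ so that the tail converges, and (b) an honest power saving $O(x^{-ct})$ with $c$ and the implied constant absolute, i.e.\ uniform in both $t$ and $y$. The routine-but-necessary extension of Theorem~\ref{eff} (hence of Propositions~\ref{rtn-prop} and~\ref{RTN-prop}) to $|y| \leq 10$ should also be acknowledged at the outset.
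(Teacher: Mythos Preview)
Your proof is correct and follows essentially the same approach as the paper: both start from the expansion \eqref{exp} (extended to $|y|\leq 10$) and reduce to bounding the tails $\Sigma_\pm$ and the errors $e_A,e_B,e_{C,0}$. The only organizational difference is that the paper first invokes the symmetry $H_t=H_t^*$ to reduce part~(i) to $y\geq 0$ (so that $|\gamma|\leq 1$ and one needs only the single tail estimate $\sum_{n\geq 2} b_n^t n^{-\frac{1+y}{2}-\frac{t}{4}L}=O(x^{-ct})$), whereas you treat $\Sigma_+$ and $\Sigma_-$ symmetrically and fold the error into both $M_t$-terms; both routes arrive at the same place with the same key inequality $\frac{t}{4}\log^2 n - \Re s_*\,\log n \leq -\frac{tL}{8}\log n - \frac{1\pm y}{2}\log n + O(x^{-1}\log n)$.
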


\begin{proof}  We begin with (i).  Since $H_t = H_t^*$ and $M_t = M_t^*$, we may assume without loss of generality that $y \geq 0$.
Using \eqref{lambda-def}, \eqref{bo-def} we may write the desired estimate as
$$ \frac{H_t(x+iy)}{B_t(x+iy)} = 1 + O(x^{-ct}) + \gamma.$$
We apply Theorem \ref{eff}.  (Strictly speaking, the estimates there required $y \leq 1$ rather than $y \leq 10$; however, as remarked at the beginning of Section \ref{initial-sec}, all the estimates in that section would continue to hold under this weaker hypothesis if one adjusted all the numerical constants appropriately.)  This gives
\begin{equation}\label{exp}
\frac{H_t(x+iy)}{B_t(x+iy)} = \sum_{n=1}^N \frac{b_n^t}{n^{s_*}} + \gamma \sum_{n=1}^N n^y \frac{b_n^t}{n^{\overline{s_*} + \kappa}} + O_{\leq}\left( e_A + e_B + e_{C,0} \right)
\end{equation}
where
\begin{align*}
\gamma &= O( x^{-y/2} ) \\
\kappa &= O(x^{-1}) \\
\Re s_* &\geq \frac{1+y}{2} + \frac{t}{2} \log \frac{x}{4\pi}  - O(x^{-2}) \\
e_A &= O\left( x^{-y/2} \sum_{n=1}^N b_n^t n^{-\frac{1-y}{2} - \frac{t}{2} \log \frac{x}{4\pi} - O(x^{-1})} \frac{\log^2 x}{x} \right) \\ 
e_B &= O\left( \sum_{n=1}^N b_n^t n^{-\frac{1+y}{2} - \frac{t}{2} \log \frac{x}{4\pi} + O(x^{-1})} \frac{\log^2 x}{x} \right) \\ 
e_{C,0} &= O\left( x^{-\frac{1+y}{4}} \right)
\end{align*}
Since $N = O(x^{1/2})$, we have $x^{-y/2} n^{y} = O(1)$ and $n^{O(x^{-1})} = O(1)$ for all $1 \leq n \leq N$.  We conclude that
$$\frac{H_t(x+iy)}{B_t(x+iy)} = 1 + \gamma + O\left( \frac{\log^2 x}{x} + \sum_{n=2}^N \frac{b_n^t}{n^{\frac{1+y}{2} + \frac{t}{2} \log \frac{x}{4\pi}}} + x^{-\frac{1+y}{4}} \right) $$
so it will suffice (for $c$ small enough) to show that
$$ \sum_{n=2}^N \frac{b_n^t}{n^{\frac{1+y}{2} + \frac{t}{2} \log \frac{x}{4\pi}}} = O( x^{-ct} ).$$
By \eqref{bn-def} we can write the left-hand side as
$$ \sum_{n=2}^N \frac{1}{n^{\frac{1+y}{2} + \frac{t}{2} \log \frac{x}{4\pi \sqrt{n}}}} = O( x^{-ct} ).$$
For $2 \leq n \leq N$, we have
$$ \frac{1+y}{2} + \frac{t}{2} \log \frac{x}{4\pi \sqrt{n}} \geq c t \log x $$
for some absolute constant $c>0$.  By the integral test, the left-hand side is then bounded by
$$ \frac{1}{2^{c t \log x}} + \int_2^\infty \frac{1}{u^{c t \log x}}\ du$$
which, for $x \geq \exp(C/t)$ and $C$ large, is bounded by $O(2^{-ct\log x})$.  The claim then follows after adjusting $c$ appropriately.

Now we prove (ii).  As before we have the expansion \eqref{exp}.  We have
\begin{align*}
\gamma \sum_{n=1}^N n^y \frac{b_n^t}{n^{\overline{s_*} + \kappa}} &= O\left( x^{-y/2} \sum_{n=1}^N \frac{b_n^t}{n^{\frac{1-y}{2} + \frac{t}{2} \log \frac{x}{4\pi}}} \right) \\
&= O\left( x^{-y/2} \sum_{n=1}^N n^{\frac{y-1}{2}} \right) \\
&= O(x^{-\frac{y-1}{4}});
\end{align*}
similar arguments give $e_A = O( \frac{\log^2 x}{x} x^{\frac{1-y}{4}} )$, while
\begin{align*}
e_B &= O\left( \frac{\log^2 x}{x} \sum_{n=1}^N b_n^t n^{-\frac{1+y}{2}-\frac{t}{2} \log \frac{x}{4\pi}} \right) \\
&= O\left( \frac{\log^2 x}{x} \sum_{n=1}^N n^{-2} \right) \\
&= O\left( \frac{\log^2 x}{x} \right).
\end{align*}
We conclude that
\begin{align*}
\frac{H_t(x+yi)}{B_t(x+yi)} &= \sum_{n=1}^N \frac{b_n^t}{n^{s_*}} + O( x^{-\frac{y-1}{4}} ) \\
&= 1 + O_{\leq}\left( \sum_{n=2}^N n^{-\frac{1+y}{2} - \frac{t}{2} \log \frac{x}{4\pi} - O(x^{-1})} \right) + O( x^{-\frac{y-1}{4}} ) \\
&= 1 + O_{\leq}\left( \sum_{n=2}^N n^{-2} \right) + O( x^{-1/2}) \\
&= 1 + O_{\leq}\left( \frac{\pi^2}{6} - 1 \right) + O( x^{-1/2} ) \\
&= 1 + O_{\leq}( 0.7 )
\end{align*}
as claimed, if $x \geq C$ for $C$ large enough.

Finally, we prove (iii).  Again our starting point is \eqref{exp}.  The right-hand side can be bounded crudely by $O( x^{O(1)}) = O(x_0^{O(1)})$, hence
$$ H_t(x+iy) = O\left( x_0^{O(1)} \left|M_t\left( \frac{1+y+ix}{2} \right)\right| \right).$$
However, from \eqref{Mt-def}, \eqref{M-def}, \eqref{alpha-form} it is not hard to see that the log-derivative of $M_t(s)$ is of size $O( \log x_0 )$ in the region $s = \frac{ix_0}{2} + O(1)$.  Thus
$$ \left|M_t\left( \frac{1+y+ix}{2} \right)\right| = O\left( x_0^{O(1)} \left|M_t\left( \frac{ix_0}{2} \right)\right| \right),$$
giving the claim.
\end{proof}

To understand the behavior of $M_t(x+iy)$, we make the following simple observations:

\begin{lemma}[Behavior of $M_t$]\label{mtform}  Let $0 < t \leq 1/2$, let $x_* > 0$ be sufficiently large, and let $x+iy = x_* + O(1)$.  Then
$$ M_t\left(\frac{1+y+ix}{2}\right) = M_t\left(\frac{1+ix_*}{2}\right) \exp\left( (i(x-x_*)+y) \left(\frac{1}{4} \log \frac{x_*}{4\pi} + \frac{\pi i}{8}\right) + O\left( \frac{\log x_*}{x_*}\right) \right).$$
Also, there is a continuous branch of $\mathrm{arg} M_t\left(\frac{1+ix_*}{2}\right)$ for all large real $x_*$ such that
$$ \mathrm{arg} M_t\left(\frac{1+ix_*}{2}\right) = \frac{t \pi}{16} \log \frac{x_*}{4\pi} + \frac{7\pi}{8} 
+ \frac{x_*}{4} \log \frac{x_*}{4\pi} - \frac{x_*}{4} + O( \frac{\log x_*}{x_*} ).$$
\end{lemma}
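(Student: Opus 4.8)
The plan is to prove both displays by integrating the logarithmic derivative of $M_t$ out from the base point $s_* \coloneqq \frac{1+ix_*}{2}$ and then evaluating the relevant quantities there explicitly. From \eqref{Mt-def} and \eqref{alpha-def} one has $(\log M_t)'(s) = \alpha(s) + \frac{t}{2}\alpha(s)\alpha'(s)$. In the region $\Im s = \frac{x_*}{2} + O(1)$ of interest, \eqref{alpha-deriv-bound} gives $\alpha'(s) = O(1/x_*)$, while writing out $\Log\frac{s}{2\pi}$ in \eqref{alpha-form} gives $\alpha(s) = \frac{1}{2}\log\frac{x_*}{4\pi} + \frac{i\pi}{4} + O(1/x_*)$ uniformly for $s = s_* + O(1)$; hence $(\log M_t)'(s) = \alpha(s) + O\!\left(\frac{\log x_*}{x_*}\right)$.

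\textbf{First display.} Set $s \coloneqq \frac{1+y+ix}{2} = s_* + \frac{y+i(x-x_*)}{2}$, which lies at distance $O(1)$ from $s_*$, and note that both points and the segment joining them lie in a simply connected subdomain of $\C\setminus(-\infty,1]$ (in the open upper half-plane) on which the branch $\log M_t = \frac{t}{4}\alpha^2 + \log M_0$ coming from \eqref{logM} is holomorphic. I would integrate $(\log M_t)'$ along this segment: since $\alpha$ varies by $O(1/x_*)$ along a path of length $O(1)$, the integral equals $\alpha(s_*)(s-s_*) + O\!\left(\frac{\log x_*}{x_*}\right)$. Substituting the expansion of $\alpha(s_*)$ and using $\frac{1}{2}\!\left(\frac{1}{2}\log\frac{x_*}{4\pi} + \frac{i\pi}{4}\right) = \frac{1}{4}\log\frac{x_*}{4\pi} + \frac{i\pi}{8}$, one obtains $\log M_t(s) - \log M_t(s_*) = (i(x-x_*)+y)\!\left(\frac{1}{4}\log\frac{x_*}{4\pi} + \frac{i\pi}{8}\right) + O\!\left(\frac{\log x_*}{x_*}\right)$; exponentiating gives the first display.

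\textbf{Second display.} Here $\arg M_t(s_*) = \Im\log M_t(s_*) = \frac{t}{4}\Im\bigl(\alpha(s_*)^2\bigr) + \Im\log M_0(s_*)$. Squaring the expansion of $\alpha(s_*)$ yields $\frac{t}{4}\Im\bigl(\alpha(s_*)^2\bigr) = \frac{t\pi}{16}\log\frac{x_*}{4\pi} + O\!\left(\frac{\log x_*}{x_*}\right)$. For $\Im\log M_0(s_*)$ I would take imaginary parts of the six summands in \eqref{logM} at $s = \frac{1}{2} + i\frac{x_*}{2}$, using $\arg\!\left(\frac{1}{2}+i\frac{x_*}{2}\right) = \frac{\pi}{2} + O(1/x_*)$, $\arg\!\left(-\frac{1}{2}+i\frac{x_*}{2}\right) = \frac{\pi}{2} + O(1/x_*)$, $\log\bigl|\tfrac{s_*}{2}\bigr| = \log\frac{x_*}{4} + O(1/x_*^2)$, etc.: the pieces $-\frac{x_*}{4}\log\pi$, $\frac{x_*}{4}\log\frac{x_*}{4}$ and $-\frac{x_*}{4}$ combine to $\frac{x_*}{4}\log\frac{x_*}{4\pi} - \frac{x_*}{4}$; the constant pieces $\frac{\pi}{2}+\frac{\pi}{2}-\frac{\pi}{8}$ combine to $\frac{7\pi}{8}$; and everything else is $O(1/x_*)$. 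Adding the two contributions gives the stated formula. Continuity of this branch of $\arg M_t\!\left(\frac{1+ix_*}{2}\right)$ in $x_*$ is immediate, since for $x_* > 0$ the arguments $s_*$, $s_*-1$, $\frac{s_*}{2}$ of the occurring $\Log$'s all avoid the cut $(-\infty,0]$ and $\alpha^2$ is holomorphic on $\C\setminus(-\infty,1]$, so the right-hand side of \eqref{logM} together with $\frac{t}{4}\alpha(s_*)^2$ is a continuous function of $x_*$.

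Everything reduces to routine Stirling/Taylor bookkeeping; the one point requiring slight care — the main obstacle such as it is — is tracking that the error terms are uniformly $O(\log x_*/x_*)$ rather than $O(1/x_*)$, the logarithmic loss entering precisely through the cross term $\frac{t}{2}\alpha\alpha'$ (in which $\alpha$ is of size $\log x_*$) and through multiplying the $O(1/x_*)$ error in $\alpha(s_*)$ by the $\log x_*$-sized main term when squaring.
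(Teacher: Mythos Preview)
Your proof is correct and follows essentially the same route as the paper: compute $(\log M_t)' = \alpha + \tfrac{t}{2}\alpha\alpha'$, use \eqref{alpha-form} and \eqref{alpha-deriv-bound} to evaluate it as $\tfrac{1}{2}\log\tfrac{x_*}{4\pi} + \tfrac{i\pi}{4} + O(\tfrac{\log x_*}{x_*})$ near $s_*$, integrate for the first display, and then read off $\Im\log M_t(s_*)$ term by term from \eqref{logM} and \eqref{Mt-def} for the second. Your tracking of where the $\log x_*$ loss enters (through $\alpha\alpha'$ and through squaring $\alpha$) matches the paper exactly.
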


\begin{proof}
By \eqref{Mt-def}, \eqref{alpha-def}, the log-derivative of $M_t$ is given by
\begin{equation}\label{mt-deriv}
 \frac{M'_t}{M_t} = \alpha + \frac{t}{2} \alpha \alpha'.
\end{equation}
For $s = \frac{ix_*}{2} + O(1)$, we have from \eqref{alpha-form} that
\begin{equation}\label{alpha-ex}
\alpha(s) = \frac{1}{2} \log \frac{x_*}{4\pi} + \frac{\pi i}{4} + O\left( \frac{1}{x_*}\right) 
\end{equation}
and from this and \eqref{alpha-deriv-bound} we conclude that
$$
 \frac{M'_t(s)}{M_t(s)} = \frac{1}{2} \log \frac{x_*}{4\pi} + \frac{\pi i}{4} + O\left( \frac{\log x_*}{x_*}\right) $$
whenever $s = \frac{ix_*}{2} + O(1)$.  The first claim then follows by applying the fundamental theorem of calculus to a branch of $\log M_t$.

For the second claim, we calculate
\begin{align*}
\mathrm{arg} M_t\left(\frac{1+ix_*}{2}\right) &= \frac{t}{4} \Im(\alpha\left(\frac{1+ix_*}{2}\right)^2) + \pi - \frac{x_*}{4} \log \pi + \Im \left( \frac{-1+ix_*}{4} \log \frac{1+ix_*}{4} - \frac{1+ix_*}{4}\right) \\
&= \frac{t}{4} \left(\frac{\pi}{4} \log \frac{x_*}{4\pi} + O(\frac{\log x_*}{x_*})\right) + \pi - \frac{x_*}{4} \log \pi 
+ \Im \left( \frac{-1+ix_*}{4} \left(\log \frac{x_*}{4} + \frac{i\pi}{2} - \frac{i}{x_*} + O\left(\frac{1}{x_*^2}\right) \right) \right) - \frac{x_*}{4} \\
&= \frac{t \pi}{16} \log \frac{x_*}{4\pi} + \pi - \frac{x_*}{4} \log \pi - \frac{x_*}{4}
+ \frac{x_*}{4} \log \frac{x_*}{4} - \frac{\pi}{8} + O\left( \frac{\log x_*}{x_*} \right) \\
&= \frac{t \pi}{16} \log \frac{x_*}{4\pi} + \frac{7\pi}{8} 
+ \frac{x_*}{4} \log \frac{x_*}{4\pi} - \frac{x_*}{4} + O\left( \frac{\log x_*}{x_*} \right) 
\end{align*}
as desired.
\end{proof}

Now we can prove Theorem \ref{Zero}.  We begin with (ii).  Let $n \geq \exp( \frac{C}{t})$, and suppose that $x+iy = x_n + O(1)$.  
By Proposition \ref{asymp}(i) and Lemma \ref{mtform} we have
\begin{equation}\label{htap}
\begin{split}
 H_t(x+iy) &= \overline{M_t\left(\frac{1+ix_n}{2}\right)} \exp\left( (-i(x-x_n)+y) \left(\frac{1}{4} \log \frac{x_n}{4\pi} - \frac{\pi i}{8}\right) + O( x_n^{-ct} )\right)\\
&\quad  + M_t\left(\frac{1+ix_n}{2}\right) \exp\left( (i(x-x_n)-y) \left(\frac{1}{4} \log \frac{x_n}{4\pi} + \frac{\pi i}{8}\right) + O( x_n^{-ct} )\right).
\end{split}
\end{equation}
From Lemma \ref{mtform} and \eqref{lip} one has
$$ \mathrm{arg} M_t\left(\frac{1+ix_n}{2}\right)  = -\frac{\pi}{2} + O\left( \frac{\log x_n}{x_n} \right)\hbox{ mod } \pi $$
and hence 
\begin{equation}\label{ma}
\overline{M_t\left(\frac{1+ix_n}{2}\right)} = - \exp\left( O( \frac{\log x_n}{x_n} ) \right) M_t\left(\frac{1+ix_n}{2}\right).
\end{equation}
If we now make the further assumption $y = O\left( \frac{1}{\log x_n}\right)$, we can thus simplify the above approximation as
\begin{equation}\label{ht-eff}
\begin{split}
 H_t(x+iy) &= - M_t\left(\frac{1+ix_n}{2}\right) e^{-\pi (x-x_n)/8} \exp\left( (-i(x-x_n)+y) \frac{1}{4} \log \frac{x_n}{4\pi} + O( |y| \log x_n + x_n^{-ct} ) \right)\\
&\quad + M_t\left(\frac{1+ix_n}{2}\right) e^{-\pi (x-x_n)/8} \exp\left( (i(x-x_n)-y) \frac{1}{4} \log \frac{x_n}{4\pi} + O( |y| \log x_n + x_n^{-ct} ) \right)\\
&= 2 i M_t\left(\frac{1+ix_n}{2}\right) e^{-\pi (x-x_n)/8} \left( \sin\left(\frac{x+iy-x_n}{4} \log \frac{x_n}{4\pi}\right) + O( |y| \log x_n + x_n^{-ct} )\right ).
\end{split}
\end{equation}
In particular, if $x+iy$ traverses the circle $\{ x_n + \frac{c}{\log n} e^{i\theta}: 0 \leq \theta \leq 2\pi\}$ once anti-clockwise and $c$ is small enough, the quantity $H_t(x+iy)$ will wind exactly once around the origin, and hence by the argument principle there is precisely one zero of $H_t$ inside this circle.  As the zeroes of $H_t$ are symmetric around the real axis, this zero must be real.  This proves (ii).

Now we prove (i).
Suppose that $H_t(x+iy)=0$ and $x \geq \exp(\frac{C}{t})$.  We can assume $|y| \leq 1$ since it is known (e.g., from \cite[Theorem 13]{debr}) that there are no zeroes with $|y|>1$.  

Let $n$ be a natural number that minimises $|x-x_n|$, then $x = x_n+O\left(\frac{1}{\log x_n}\right)$ since the derivative of the left-hand side of \eqref{lip} in $x_n$ is comparable to $\log x_n$.  From \eqref{htap} we have
\begin{align*}
0 &= \overline{M_t\left(\frac{1+ix_n}{2}\right)} \exp\left( (-i(x-x_n)+y) \left(\frac{1}{4} \log \frac{x_n}{4\pi} - \frac{\pi i}{8}\right) + O( x_n^{-ct} )\right) \\
&\quad + M_t(\frac{1+ix_n}{2}) \exp\left( (i(x-x_n)-y) \left(\frac{1}{4} \log \frac{x_n}{4\pi} + \frac{\pi i}{8}\right) + O( x_n^{-ct} )\right).
\end{align*}
Thus both summands on the right-hand side have the same magnitude, which on taking logarithms and canceling like terms implies that
$$ y \frac{1}{4} \log \frac{x_n}{4\pi} + O( x_n^{-ct} ) = -y \frac{1}{4} \log \frac{x_n}{4\pi} + O( x_n^{-ct} )$$
and hence $y = O\left( \frac{x_n^{-ct}}{\log x_n} \right)$.  We can now apply \eqref{ht-eff} to conclude that
$$ \sin\left(\frac{x+iy-x_n}{4} \log \frac{x_n}{4\pi}\right) + O( x_n^{-ct} ) = 0$$
which (when combined with the hypothesis that $|x-x_n|$ is minimal) forces $x - x_n = O\left( \frac{x_n^{-ct}}{\log x_n} \right)$.  This gives the claim.

Next, we prove (iii).  In view of parts (i) and (ii), and adjusting $C$ if necessary, we may assume that $X$ takes the form $X = x_n+\frac{c}{\log x_n}$ for some $n \geq \exp( \frac{C}{t})$.  By the argument principle, $N_t(X)$ is equal to $\frac{-1}{2\pi}$ times the variation in the argument of $H_t$ on the boundary of the rectangle $\{ x+iy: 0 \leq x \leq X; -3 \leq y \leq 3 \}$ traversed clockwise, since there are no zeroes with imaginary part of magnitude greater than one.  By compactness, the variation on the left edge $\{ iy: -3 \leq y \leq 3 \}$ is $O(1)$, and similarly for any fixed portion $\{ x+3i: 0 \leq x \leq C \}$ of the upper edge.  From Proposition \ref{asymp} (and \eqref{htap}), we see that the variation of $H_t(x+iy) / M_t(\frac{1+y-ix}{2})$ on the remaining upper edge $\{ x+3i: C \leq x \leq X \}$ and on the top half $\{ X+iy: 0 \leq y \leq 3 \}$ of the right edge are both equal to $O(1)$.  Since $H_t = H_t^*$, the variation on the lower half of the rectangle is equal to that of the upper half. We thus conclude that
$$ N_t(X) = -\frac{1}{\pi} \mathrm{arg} M_t\left(\frac{1-iX}{2}\right)+ O(1)$$
where we use a continuous branch of the argument of $M_t\left(\frac{1-iX}{2}\right)$ that is bounded at $3i$.  The claim now follows from Lemma \ref{mtform} (using $M_t = M_t^*$ to work with $\frac{1+iX}{2}$ instead of $\frac{1-iX}{2}$).

Finally, we prove (iv).  From the Hadamard factorization theorem as in the proof of Proposition \ref{dynam} we have
\begin{equation}\label{hata}
 \frac{H'_t(z)}{H_t(z)} = \sum_{n>0} \left(\frac{1}{z-z_n} + \frac{1}{z+z_n}\right)
\end{equation}
where the zeroes of $H_t$ are indexed in pairs $\pm z_n$.
Setting $z = X+4i$, we see from Proposition \ref{asymp} and the generalized Cauchy integral formula that the logarithmic derivative of $H_t(x+iy)/M_t\left(\frac{1+y-ix}{2}\right)$ is equal to $O(1)$ at $X+4i$ for all sufficiently large $X$, and hence for all $X$ by symmetry and compactness.  On the other hand, from Stirling's formula (or the logarithmic growth of the digamma function) one easily verifies that the logarithmic derivative of $M_t\left(\frac{1+y-ix}{2}\right)$  is equal to $O( \log(2+X) )$ at $X+4i$.  Hence $\frac{H'_t(X+4i)}{H_t(X+4i)} = O(\log(2+X))$.  Taking imaginary parts, we conclude that
$$ \sum_{n>0} -\frac{4-y_n}{(X-x_n)^2 + (4-y_n)^2} - \frac{4+y_n}{(X+x_n)^2 + (4+y_n)^2} = O(\log(2+X))$$
where we write $z_n = x_n + iy_n$; equivalently one has
$$ \sum_n \frac{(4-y_n)}{(X-x_n)^2 + (4-y_n)^2} = O(\log(2+X))$$
where the sum now ranges over all zeroes, including any at the origin. Since $|y_n| \leq 1$, every zero in $[X,X+1]$ makes a contribution of at least $\frac{1}{100}$ (say).  As the summands are all positive, the first part of claim (iv) follows.  To prove the second part, we may assume by compactness that $x \geq C$.  Repeating the proof of (iii), and reduce to showing that the variation of $\mathrm{arg} H_t$ on the short vertical interval $\{ X+iy: 0 \leq y \leq 3 \}$ is $O( \log X )$.  If we let $\theta$ be a phase such that $e^{i\theta} H_t(X+3i)$ is real and positive, we see that this variation is at most $\pi(m+1)$, where $m$ is the number of zeroes of $\Re (e^{i\theta} H_t(X+yi))$ for $0 \leq y \leq 3$, since every increment of $\pi$ in $\mathrm{arg} e^{i\theta} H_t$ must be accompanied by at least one such zero.  As $H_t = H_t^*$, this is also the number of zeroes of $e^{i\theta} H_t(X+yi) + e^{-i\theta} H_t(2X - (X+yi))$.  On the other hand, from Proposition \ref{asymp}(ii), (iii) and Jensen's formula we see that the number of such zeroes is $O( \log X )$, and the claim follows.

\begin{remark}\label{time-complexity}  Theorem \ref{Zero} gives good control on $H_t(x+iy)$ whenever $x \geq \exp( C/t )$.  As a consequence (and assuming for sake of argument that the Riemann hypothesis holds), then for any $\Lambda_0 > 0$, the bound $\Lambda \leq \Lambda_0$ should be numerically verifiable in time $O( \exp( O(1/\Lambda_0) ))$, by applying the arguments of previous sections with $t$ and $y$ set equal to small multiples of $\Lambda_0$.  We leave the details to the interested reader.
\end{remark}

\begin{remark} Our discussion here will be informal.  In view of the results of \cite{csv}, it is expected that the zeroes $z_j(t)$ of $H_t(x+iy)$ should evolve according to the system of ordinary differential equations
$$ \frac{d}{dt} z_k(t) = 2 \sum_{j \neq k}^{\prime} \frac{1}{z_k(t) - z_j(t)}$$
where the sum is evaluated in a suitable principal value sense, and one avoids those times where the zero $z_k(t)$ fails to be simple; see \cite[Lemma 2.4]{csv} for a verification of this in the regime $t > \Lambda$.  In view of the Riemann-von Mangoldt formula (as well as variants such Corollary \ref{Zero}, it is expected that the number of zeroes in any region of the form $\{ x+iy: x+iy = x_* + O(1) \}$ for large $x_*$ should be of the order of $\log x_*$.  As a consequence, we expect a typical zero $z_k(t)$ to move with speed $O( \log |z_k(t)| )$, although one may occasionally move much faster than this if two zeroes are exceptionally close together, or less than this if the zeroes are close to being evenly spaced.  As a consequence, if the Riemann hypothesis fails and there is a zero $x+iy$ of $H_0$ with $y$ comparable to $1$, it should take time comparable to $\frac{1}{\log x}$ for this zero to move towards the real axis, leading to the heuristic lower bound $\Lambda \gg \frac{1}{\log x}$.  Thus, in order to obtain an upper bound $\Lambda \leq \Lambda_0$, it will probably be necessary to verify that there are no zeroes $x+iy$ of $H_0$ with $y$ comparable to $1$ and $|x| \leq c \log \frac{1}{\Lambda}$ for some small absolute constant $c>0$.  This suggests that the time complexity bound in Remark \ref{time-complexity} is likely to be best possible (unless one is able to prove the Riemann hypothesis, of course).

In \cite[Lemma 2.1]{csv} it is also shown that the velocity of a given zero $z(t)$ is given by the formula
$$ \frac{d}{dt} z(t) = \frac{H''_t(z(t))}{H'_t(z(t))}$$
assuming that the zero is simple.  By using the asymptotics in Proposition \ref{asymp} and Corollary \ref{Zero} together with the generalized Cauchy integral formula to then obtain asymptotics for $H'_t$ and $H''_t$, it is possible to show that for the zeroes $x(t)$ that are real and larger than $\exp(C/t)$, and move leftwards with velocity
$$ \frac{d}{dt} x(t) = - \frac{\pi}{4} + O( x^{-ct} );$$
we leave the details to the interested reader.  

\begin{figure}[ht!]
  \includegraphics[width=0.9\linewidth]{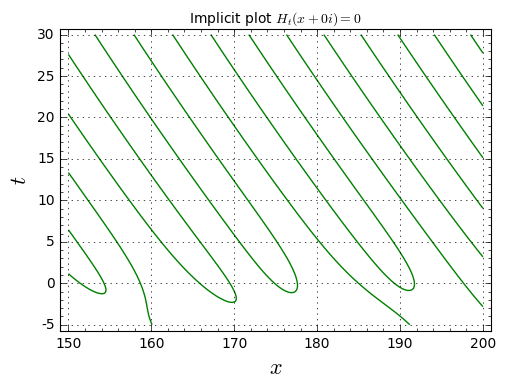}
  \caption{Real zeros moving up leftwards and getting `solidified'.}
\end{figure}

\begin{figure}[ht!]
  \includegraphics[width=0.9\linewidth]{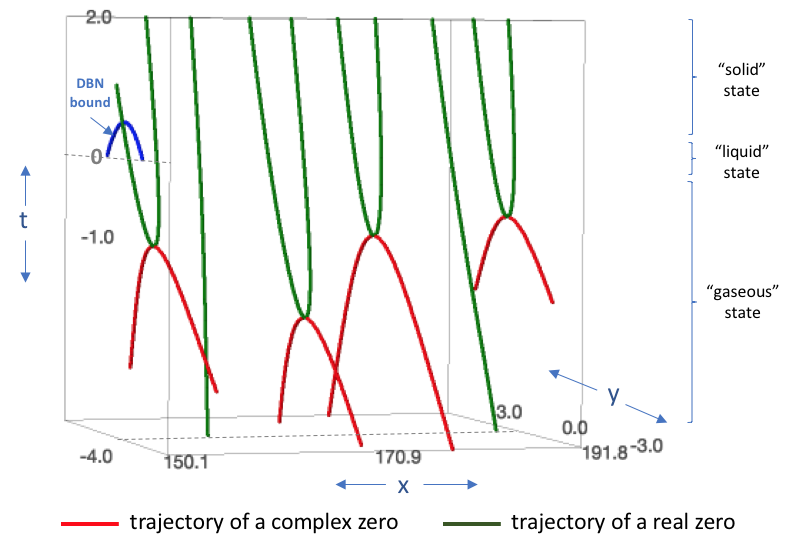}
  \caption{Actual trajectories of some real and complex zeros.}
\end{figure}

\end{remark}

\section{Further numerical results}\label{further-sec}

By Theorem \ref{Zero}, one can verify the second hypothesis of Theorem \ref{ubc-0} when $X \geq \exp(C/t_0)$ for a large constant $C$.  If we ignore for sake of discussion the third hypothesis of Theorem \ref{ubc-0} (which turns out to be relatively easy to verify numerically in practice), this suggests that one can obtain a bound of the form $\Lambda \leq O(t_0)$ provided that one can verify the Riemann hypothesis up to a height $\exp(C/t_0)$.  In other words, if one has numerically verified the Riemann hypothesis up to a large height $T$, this should soon lead to a bound of the form $\Lambda \leq O \left( \frac{1}{\log T} \right )$.

Aside from improving the implied constant in this bound, it does not seem easy to improve this sort of implication without a major breakthrough on the Riemann hypothesis (such as a massive expansion of the known zero-free regions for the zeta function inside the critical strip).  We shall justify this claim heuristically as follows. Suppose that there was a counterexample to the Riemann hypothesis at a large height $T$, so that $H_0(2T + iy) = 0$ for some positive $y$, which for this discussion we will take to be comparable to $1$.  The Riemann von Mangoldt formula indicates that the number of zeroes of $H_0$ within a bounded distance of this zero should be comparable to $\log T$; the majority of these zeroes should obey the Riemann hypothesis and thus stay at roughly unit distance from our initial zero $2T+iy$.  Proposition \ref{dynam} then suggests that as time $t$ advances, this zero should move at speed comparable to $\log T$.  Thus one should not expect this zero to reach the real axis until a time comparable to $\frac{1}{\log T}$.  This heuristic analysis therefore indicates that it is unlikely that one can significantly improve the bound $\Lambda \leq O \left( \frac{1}{\log T} \right )$ without being able to exclude significant violations of the Riemann hypothesis at height $T$.

The table below collects some numerical results verifying the second two hypotheses of Theorem \ref{ubc-0} for larger values of $X$, and smaller values of $t_0,y_0$, than were considered in Section \ref{newup-sec}.  This leads to improvements to the bound $\Lambda \leq 0.22$ conditional on the assumption that the Riemann Hypothesis can be numerically verified beyond the height $T \approx 3.06 \times 10^{10}$ used in Section \ref{newup-sec}.  For instance, the final row of the table implies that one has the bound $\Lambda \leq 0.1$ assuming that the Riemann hypothesis is verified up to the height $T \approx 4.5 \times 10^{21}$.  Note that this is broadly consistent with the previous heuristic that the upper bound on $\Lambda$ is proportional to $\frac{1}{\log T}$.

\begin{table}[ht!]
  \begin{center}
    \caption{Conditional $\Lambda$ Results}
    \label{tab:table1}
    \begin{tabular}{l|r|r|r|c|r|c} 
      $X$ & $t_{0}$ & $y_{0}$ & $\Lambda$ & $\textbf{Winding Number}$ & $N_{0}$ & $|f_t(x+iy)|$ lower bound\\
      \hline
      $2 \times 10^{12} + 129093$ & 0.198 & 0.15492 & 0.21 & 0 & 398942 & 0.0341\\
      $5 \times 10^{12} + 194858$ & 0.186 & 0.16733 & 0.20 & 0 & 630783 & 0.0376\\
      $2 \times 10^{13} + 131252$ & 0.180 & 0.14142 & 0.19 & 0 & 1261566 & 0.0349\\
      $6 \times 10^{13} + 123375$ & 0.168 & 0.15492 & 0.18 & 0 & 2185096 & 0.0377\\
      $3 \times 10^{14} + 188911$ & 0.161 & 0.13416 & 0.17 & 0 & 4886025 & 0.0369\\
      $2 \times 10^{15} + 122014$ & 0.153 & 0.11832 & 0.16 & 0 & 12615662 & 0.0532\\
      $7 \times 10^{15} + 68886$ & 0.139 & 0.14832 & 0.15 & 0 & 23601743 & 0.0350\\
      $6 \times 10^{16} + 156984$ & 0.132 & 0.12649 & 0.14 & 0 & 69098829 & 0.0307\\
      $6 \times 10^{17} + 88525$ & 0.122 & 0.12649 & 0.13 & 0 & 218509686 & 0.0347\\
      $9 \times 10^{18} + 35785$ & 0.113 & 0.11832 & 0.12 & 0 & 846284375 & 0.0318\\
      $2 \times 10^{20} + 66447$ & 0.102 & 0.12649 & 0.11 & 0 & 3989422804 & 0.0305\\
      $9 \times 10^{21} + 70686$ & 0.093 & 0.11832 & 0.1 & 0 & 26761861742 & 0.0321\\
    \end{tabular}
  \end{center}
\end{table}

The selection of parameters in this table proceeded as follows.  One first located parameters $t_0, y_0, N_0$ (with the quantity $\Lambda = t_0 + \frac{1}{2} y_0^2$ as small as possible) for which one could obtain a good lower bound for $f_{t_0}(x+iy_0)$ when $x=N_0$; we arbitrarily chose a target lower bound of $|f_{t_0}(x+iy_0)| \geq 0.03$ to provide an adequate safety margin.  From \eqref{ftxy} one had
\begin{equation}\label{ftxy-2}
 f_{t_0}(x+iy_0) = \sum_{n=1}^N \beta_n + O_{\leq}( |\gamma| |\sum_{n=1}^N \alpha_n| ) + O_{\leq}\left( |\gamma| \sum_{n=1}^N n^{y_0} \frac{b_n^{t_0}}{n^\sigma} (n^{|\kappa|}-1) \right)
\end{equation}
where
$$ \beta_n \coloneqq \frac{b_n^{t_0}}{n^{\sigma+iT}}$$
and
$$ \alpha_n \coloneqq n^{y_0} \frac{b_n^{t_0}}{n^{\sigma+iT}}.$$
The final term on the right-hand side of \eqref{ftxy-2} can be estimated as in Section \ref{b-bound} and is negligible in practice.  To control the other two terms, we use the following lemma (which roughly speaking corresponds to a simplified version of the ``Euler 2 mollifier'' version of the ``Euler 5 mollifier'' analysis in Section \ref{b-bound}):

\begin{lemma}\label{trib2}  Let $\alpha_1,\dots,\alpha_N$ be complex numbers, and let $\beta_2$ be a number such that whenever $1 \leq n \leq N$ is even, $\beta_2 \alpha_{n/2}$ lies on the line segment $\{ \theta \alpha_n: 0 \leq \theta \leq 1\}$ connecting 0 with $\alpha_n$.  Then we have the lower bound
$$ |1-\beta_2| \left|\sum_{n=1}^N \alpha_n\right| \geq 2 |\alpha_1| - (1-|\beta_2|) \sum_{n=1}^N |\alpha_n| - 2 |\beta_2| \sum_{N/2 < n \leq N} |\alpha_n|$$
and the upper bound
$$ |1-\beta_2| \left|\sum_{n=1}^N \alpha_n\right| \leq (1-|\beta_2|) \sum_{n=1}^N |\alpha_n| + 2 |\beta_2| \sum_{N/2 < n \leq N} |\alpha_n|.$$
\end{lemma}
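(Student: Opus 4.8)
The plan is to derive both inequalities at once from a single algebraic rearrangement of $(1-\beta_2)\sum_{n=1}^N\alpha_n$, after which the forward and reverse triangle inequalities plus elementary index bookkeeping finish the job. Write $S_1\coloneqq\sum_{n=1}^N|\alpha_n|$ and $S_2\coloneqq\sum_{N/2<n\le N}|\alpha_n|$, so that the two claimed bounds become $|1-\beta_2|\,|\sum_n\alpha_n|\ge 2|\alpha_1|-(1-|\beta_2|)S_1-2|\beta_2|S_2$ and $|1-\beta_2|\,|\sum_n\alpha_n|\le(1-|\beta_2|)S_1+2|\beta_2|S_2$.

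First I would expand $(1-\beta_2)\sum_{n=1}^N\alpha_n=\sum_{n=1}^N\alpha_n-\beta_2\sum_{n=1}^N\alpha_n$ and split the second sum according to whether $2n\le N$ or $2n>N$; in the first range the substitution $m=2n$ turns $\sum_{n\colon 2n\le N}\beta_2\alpha_n$ into $\sum_{m\text{ even},\,2\le m\le N}\beta_2\alpha_{m/2}$. Pairing this with the corresponding even-index terms of $\sum_{n=1}^N\alpha_n$ gives the identity
$$(1-\beta_2)\sum_{n=1}^N\alpha_n=\alpha_1+\sum_{\substack{2\le n\le N\\ n\text{ odd}}}\alpha_n+\sum_{\substack{2\le n\le N\\ n\text{ even}}}\bigl(\alpha_n-\beta_2\alpha_{n/2}\bigr)-\beta_2\sum_{N/2<n\le N}\alpha_n.$$
The key point is that the hypothesis says $\beta_2\alpha_{n/2}=\theta_n\alpha_n$ with $\theta_n\in[0,1]$ for even $n$ (taking $\theta_n=0$ in the degenerate case $\alpha_n=0$), so $\alpha_n-\beta_2\alpha_{n/2}=(1-\theta_n)\alpha_n$ and hence $|\alpha_n-\beta_2\alpha_{n/2}|=|\alpha_n|-|\beta_2|\,|\alpha_{n/2}|$ exactly.

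I would then apply the triangle inequality to the displayed identity for the upper bound, and the reverse triangle inequality peeling off the isolated $\alpha_1$ (which is the term not cancelled by any $\beta_2\alpha_{n/2}$, and which is responsible for the factor $2|\alpha_1|$ in the lower bound) for the lower bound. Substituting $|\alpha_n-\beta_2\alpha_{n/2}|=|\alpha_n|-|\beta_2||\alpha_{n/2}|$ and using the elementary identities $\sum_{\substack{3\le n\le N\\ n\text{ odd}}}|\alpha_n|+\sum_{\substack{2\le n\le N\\ n\text{ even}}}|\alpha_n|=\sum_{n=2}^N|\alpha_n|=S_1-|\alpha_1|$ and $\sum_{m=1}^{\lfloor N/2\rfloor}|\alpha_m|=S_1-S_2$, the combined "middle" contribution collapses to $S_1-|\alpha_1|-|\beta_2|(S_1-S_2)$, and both inequalities fall out after cancellation. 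There is no real obstacle here; the only care required is in the reindexing $m=2n$ and in keeping the $\alpha_1$ term separate, and a one-line remark handles the possibility $\alpha_n=0$.
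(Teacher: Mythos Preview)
Your proof is correct and is essentially the same argument as the paper's, only written with the cases spelled out rather than compressed into indicator notation. The paper writes $(1-\beta_2)\sum_{n=1}^N\alpha_n=\sum_{n=1}^{2N}\bigl(1_{n\le N}\,\alpha_n-1_{2\mid n}\,\beta_2\alpha_{n/2}\bigr)$, which is exactly your four-piece decomposition (the odd $n\le N$ terms, the even $n\le N$ terms $\alpha_n-\beta_2\alpha_{n/2}$, and the tail $-\beta_2\alpha_{n/2}$ for $N<n\le 2N$), and then applies the forward and reverse triangle inequalities and the identity $|\alpha_n-\beta_2\alpha_{n/2}|=|\alpha_n|-|\beta_2|\,|\alpha_{n/2}|$ just as you do.
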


\begin{proof} The quantity $|1-\beta_2| |\sum_{n=1}^N \alpha_n|$ can be written as
$$ \left|\sum_{n=1}^{2N} (1_{n \leq N} \alpha_n - 1_{2|n} \alpha_{n/2} \beta_2)\right|.$$
By the triangle inequality, this is bounded above by
$$ \sum_{n=1}^{2N} |1_{n \leq N} \alpha_n - 1_{2|n} \alpha_{n/2} \beta_2|$$
and below by
$$ 2 |\alpha_1| - \sum_{n=1}^{2N} |1_{n \leq N} \alpha_n - 1_{2|n} \alpha_{n/2} \beta_2|.$$
We have
$$ \sum_{n=1}^{2N} |1_{n \leq N} \alpha_n - 1_{2|n} \alpha_{n/2} \beta_2| = \sum_{n=1}^N |\alpha_n| - 1_{2|n} |\alpha_{n/2}| |\beta_2| + \sum_{n=N+1}^{2N} 1_{2|n} |\alpha_{n/2}| |\beta_2|$$
which we can rearrange as
$$ (1 - |\beta_2|) \sum_{n=1}^N |\alpha_n| + 2 |\beta_2| \sum_{N/2 < n \leq N} |\alpha_n|$$
and the claim follows.
\end{proof}

Using this lemma to lower bound $|\sum_{n=1}^N \beta_n|$ and upper bound $|\sum_{n=1}^N \alpha_n|$, and then using the triangle inequality, yields a lower bound on $|f_{t_0}(x+iy_0)|$ when $N=N_0$.  These quantities can be readily computed for many values of $t_0,y_0,N_0$, leading to an envelope for $\Lambda$ and $x \approx 4 \pi N_0^2$ that is depicted in Figure \ref{tradeoff}.

\begin{figure}[!ht]
  \includegraphics[width=1.0\linewidth]{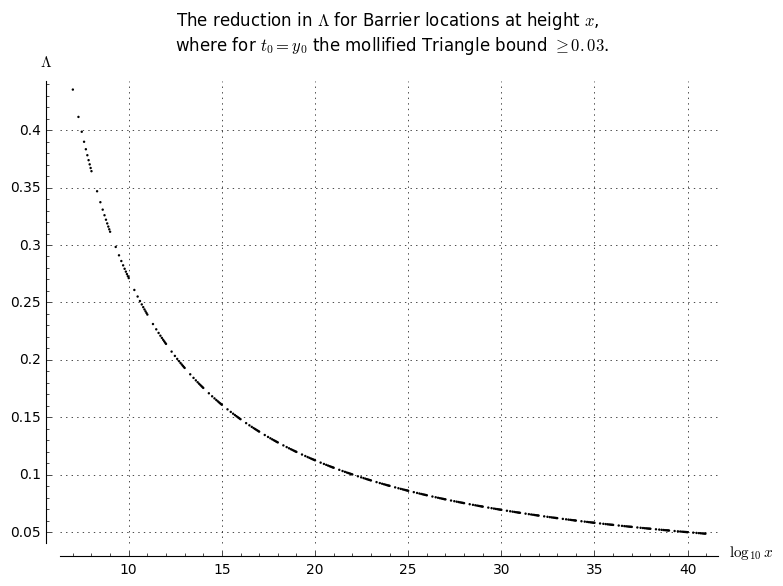}
  \caption{The envelope of potential choices of $x, \Lambda$.  Note the approximate inverse relationship between $\Lambda$ and $\frac{1}{\log x}$.}
  \label{tradeoff}
\end{figure}

By working in intervals $N \in [N_-,N_+]$ for some finite number of intervals $[N_-,N_+]$ covering $[N_0,N_1]$ for some large $N_1$ as in Section \ref{b-bound}, and then using a crude triangle inequality bound for $N \geq N_1$ as in Section \ref{c-bound}, we thus (in view of the conservative safety margin in our lower bounds for $|f_{t_0}(x+iy_0)|$) expect to be able to verify the hypothesis in Theorem \ref{ubc-0}(iii) for any choice of parameters $t_0, y_0, N_0$ as above.  The main remaining difficulty is then to verify the barrier hypothesis (Theorem \ref{ubc-0}(ii)).  This is by far the most numerically intensive step, and we proceed as in Section \ref{barrier-sec}, after using the \emph{ad hoc} procedure in Section \ref{select} to select $X_0$.   The graphs in figure \ref{fig:meshbarrier} illustrate that for increasing $x$, the number of xy-rectangles to be evaluated within the barrier, as well as the number of mesh points required per rectangle (measured at $t=0$), increase exponentially. 

\begin{figure}[!ht]
  \includegraphics[width=1.0\linewidth]{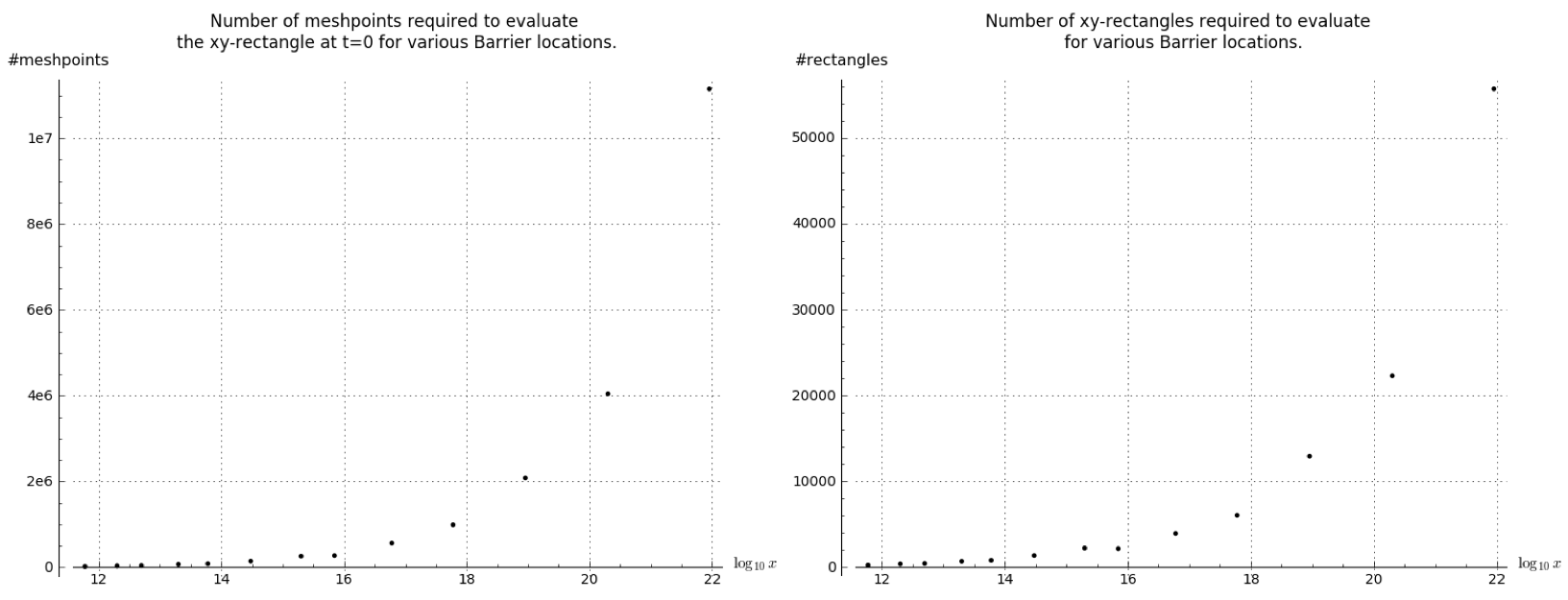}
  \caption{The left graph shows how the number mesh points of the xy-rectangle at $t=0$ increases with $x$ for each barrier. The graph on the right does the same, but now for the total number of xy-rectangles that need to be evaluated per barrier. }
  \label{fig:meshbarrier}
\end{figure}

All barrier runs generated a winding number of zero for each rectangle and the scripts completed successfully without any errors. For all barrier locations, the computations of the mesh points where calculated at $20$ digits accuracy except for the highest two where $10$ digits where used (to be able to compute it within a reasonable time). Checks where made before each formal run to assure the target accuracy would be achieved. 

The computations for $X=2 \times 10^{20} + 66447$ and $X=9 \times 10^{21} + 70686$ in the above table were massive, and performed using a Boinc \cite{anderson} based grid computing setup, in which a few hundred volunteers participated. Their contributions can be tracked at {\tt anthgrid.com/dbnupperbound}.

\section{Conflict of interest statement}

There are no conflicts of interest for this paper.


\begin{thebibliography}{99} 

  \bibitem{anderson}
D. P. Anderson, \emph{BOINC: A System for Public-Resource Computing and Storage}, In GRID '04: Proceedings of the Fifth IEEE/ACM International Workshop on Grid Computing (2004), 4--10.

\bibitem{arias}
J. Arias de Reyna, \emph{High-precision computation of Riemann's zeta function by the Riemann-Siegel asymptotic formula, I}, Mathematics of Computation, \textbf{80} (2011), 995--1009.

\bibitem{arias-lune}
J. Arias de Reyna, J. Van de lune, \emph{On the exact location of the non-trivial zeroes of Riemann's zeta function}, Acta Arith. \textbf{163} (2014), 215--245.

\bibitem{boyd}
W. G. C. Boyd, \emph{Gamma Function Asymptotics by an Extension of the Method of Steepest Descents}, Proceedings: Mathematical and Physical Sciences, Vol. 447, No. 1931 (Dec. 8, 1994), 609--630.

\bibitem{debr}
N. C. de Bruijn, \emph{The roots of trigonometric integrals}, Duke J. Math. \textbf{17} (1950), 197--226.

%

\bibitem{cnv}
G. Csordas, T. S. Norfolk, R. S. Varga, \emph{A lower bound for the de Bruijn-Newman constant $\Lambda$}, Numer. Math. \textbf{52} (1988), 483--497.

\bibitem{cosv}
G. Csordas, A. M. Odlyzko, W. Smith, R. S. Varga, \emph{A new Lehmer pair of zeros and a new lower bound for the De Bruijn-Newman constant Lambda}, Electronic Transactions on Numerical Analysis. \textbf{1} (1993), 104--111.

\bibitem{crv}
G. Csordas, A. Ruttan, R.S. Varga, \emph{The Laguerre inequalities with applications
to a problem associated with the Riemann hypothesis}, Numer. Algorithms, \textbf{1} (1991), 305--329.

\bibitem{csv}
G. Csordas, W. Smith, R. S. Varga, \emph{Lehmer pairs of zeros, the de Bruijn-Newman constant $\Lambda$, and the Riemann hypothesis},  Constr. Approx. \textbf{10} (1994), no. 1, 107--129. 

\bibitem{kkl}
H. Ki, Y. O. Kim, and J. Lee, \emph{On the de Bruijn-Newman constant}, Advances in Mathematics, \textbf{22} (2009), 281--306.

\bibitem{lehmer}
D. H. Lehmer, \emph{On the roots of the Riemann zeta-function}, Acta Math. \textbf{95} (1956), 291--298.

\bibitem{montgomery}
H. L. Montgomery, \emph{The pair correlation of zeros of the zeta function}, Analytic number theory (Proc. Sympos. Pure Math., Vol. XXIV, St. Louis Univ., St. Louis, Mo., 1972), 181--193. Amer. Math. Soc., Providence, R.I., 1973.

\bibitem{MoVa}
H. L. Montgomery, R. C. Vaughan, Multiplicative number theory. I. Classical theory. Cambridge Studies in Advanced Mathematics, 97. Cambridge University Press, Cambridge, 2007.

\bibitem{newman}
C. M. Newman, \emph{Fourier transforms with only real zeroes}, Proc. Amer. Math. Soc. \textbf{61} (1976), 246--251.

\bibitem{nrv}
T. S. Norfolk, A. Ruttan, R. S. Varga, \emph{A lower bound for the de Bruijn-Newman
constant $\Lambda$ II.}, in A. A. Gonchar and E. B. Saff, editors, Progress in Approximation
Theory, 403--418. Springer-Verlag, 1992.

\bibitem{odlyzko}
A. M. Odlyzko, \emph{An improved bound for the de Bruijn-Newman constant}, Numerical Algorithms \textbf{25} (2000), 293--303.

\bibitem{euler}
T. K. Petersen, Eulerian numbers.  With a foreword by Richard Stanley. Birkh\"auser Advanced Texts: Basler Lehrb\"ucher. Birkh\"auser/Springer, New York, 2015.

\bibitem{platt}
D. J. Platt, \emph{Isolating some non-trivial zeros of zeta}, Math. Comp. 86 (2017), 2449--2467.

\bibitem{polya}
G. Polya, {\"Uber trigonometrische Integrale mit nur reelen Nullstellen}, J. Reine Angew. Math. \textbf{58} (1927), 6--18. 

\bibitem{github}
D. H. J. Polymath, {\tt github.com/km-git-acc/dbn\_upper\_bound}

\bibitem{sharkfin}
D. H. J. Polymath, \emph{Zeroes of the heat flow evolution of the Riemann $\xi$ function at negative times: numerical experiments and heuristic justifications}, {\tt github.com/km-git-acc/dbn\_upper\_bound/blob/master/Writeup/Sharkfin/sharkfin.pdf}


\bibitem{brad}
B. Rodgers, T. Tao, \emph{The De Bruijn-Newman constant is nonnegative}, preprint. {\tt arXiv:1801.05914}

\bibitem{saouter}
Y. Saouter, X. Gourdon, P. Demichel, \emph{An improved lower bound for the de Bruijn-Newman constant}, Mathematics of Computation. \textbf{80} (2011), 2281--2287. 

\bibitem{stopple}
J. Stopple, \emph{Notes on Low discriminants and the generalized Newman conjecture}, Funct. Approx. Comment.
Math., vol. 51, no. 1 (2014), 23--41.

\bibitem{stopple-2}
J. Stopple, \emph{Lehmer pairs revisited}, Exp. Math. \textbf{26} (2017), no. 1, 45--53. 

\bibitem{tr}
H. J. J. te Riele, \emph{A new lower bound for the de Bruijn-Newman constant}, Numer. Math., \textbf{58} (1991), 661--667.

\bibitem{titch}
E. C. Titchmarsh, The Theory of the Riemann Zeta-function, Second ed. (revised by D. R. Heath-Brown), Oxford University Press, Oxford, 1986.	


\end{thebibliography}
\end{document}